\newcounter{zlist}
\newenvironment{zlist}{\begin{list}{{\rm(\arabic{zlist})}}{
\usecounter{zlist}\leftmargin2.5em\labelwidth2em\labelsep0.5em
\topsep0.6ex\itemsep0.3ex plus0.2ex minus0.3ex
\parsep0.3ex plus0.2ex minus0.1ex}}{\end{list}}
\newcounter{blist}
\newenvironment{blist}{\begin{list}{{\rm(\alph{blist})}}{
\usecounter{blist}\leftmargin2.5em\labelwidth2em\labelsep0.5em
\topsep0.6ex \itemsep0.3ex plus0.2ex minus0.3ex
\parsep0.3ex plus0.2ex minus0.1ex}}{\end{list}}
\newcounter{rlist}
\newenvironment{rlist}{\begin{list}{{\rm(\roman{rlist})}}{
\usecounter{rlist}\leftmargin2.5em\labelwidth2em\labelsep0.5em
\topsep0.6ex\itemsep0.3ex plus0.2ex minus0.3ex
\parsep0.3ex plus0.2ex minus0.1ex}}{\end{list}}
\newtheorem{theorem}{Theorem}[section]
\newtheorem{lemma}[theorem]{Lemma}
\newtheorem{thm}[theorem]{}
\newtheorem{proposition}[theorem]{Proposition}
\newtheorem{corollary}[theorem]{Corollary}
\newtheorem{remark}[theorem]{Remark}
\numberwithin{equation}{section}
\newcommand{\A}{{\mathbb {A}}}
\newcommand{\B}{{\mathbb {B}}}
\newcommand{\C}{{\mathbb {C}}}
\newcommand{\G}{\mathbf{G}}
\newcommand{\D}{{\mathbb {D}}}
\newcommand{\V}{{\mathbb {V}}}
\newcommand{\X}{\mathbb{X}}
\newcommand{\Y}{\mathbb{Y}}
\newcommand{\bV}{{\mathbb {V}}}
\newcommand{\cC}{{\mathcal {C}}}
 \newcommand{\bF}{\mathbf{F}}
\newcommand{\bG}{\mathbf{G}}
\newcommand{\oH}{{\overline{H}}}
\newcommand{\woH}{{\widehat{\overline{H}}}}
\newcommand{\oR}{{\overline{R}}}
\newcommand{\oF}{{\overline{F}}}
\newcommand{\wG}{{\widehat{G}}}
\newcommand{\uH}{{\underline{H}}}
\newcommand{\bH}{{\bf{H}}}
\newcommand{\bT}{{\bf{T}}}
\newcommand{\ve}{\varepsilon}
\newcommand{\vth}{\vartheta}
\newcommand{\vareps}{\varepsilon}
\newcommand{\lra}{\longrightarrow}
\newcommand{\ot}{\otimes}
\newcommand{\II}{\mathbb{I}}
\newcommand{\Comon}{{\rm Comon}}
\newcommand{\End}{{\rm End}}
\newcommand{\Hom}{{\rm Hom}}
\newcommand{\Mon}{{\rm Mon}}
\newcommand{\Mor}{{\rm Mor}}
\newcommand{\Bimon}{{\rm Bimon}}
\newcommand{\Set}{{\rm Set}}
\newcommand{\Inj}{{\rm {\bf Inj}}}
\newcommand{\Proj}{{\rm Proj}}
\newcommand{\nG}{G}
\begin{document}

\title{Galois functors and entwining structures}
 \author{Bachuki Mesablishvili, Robert Wisbauer}

\begin{abstract}
{\em Galois comodules} over a coring can be characterised by properties of the
relative injective comodules.
They motivated the definition of
{\em Galois functors} over some comonad (or monad) on any category and in the
first section of the present paper we investigate the role of the relative injectives (projectives) in this context.

Then we generalise the notion of corings (derived from an
entwining of an algebra and a coalgebra) to the entwining of a
monad and a comonad. Hereby a key role is played by the notion of
a {\em grouplike natural transformation} $g:I\to G$ generalising
the grouplike elements in corings. We apply the evolving theory to
Hopf monads on arbitrary categories, and to comonoidal functors on
monoidal categories in the sense of A. Brugui\`{e}res and A.
Virelizier. As well-know, for any set $G$ the product $G\times-$
defines an endofunctor on the category of sets and this is a Hopf
monad if and only if $G$ allows for a group structure. In the
final section the elements of this case are generalised to
arbitrary categories with finite products  leading to {\em Galois
objects} in the sense of Chase and Sweedler.
\smallskip

Key Words: Corings, (Galois) comodules, Galois functors,
relative injectives (projectives), equivalence of categories.

AMS classification: 18A40,  16T15.
\end{abstract}

\maketitle

\tableofcontents

\section*{Introduction}

For any ring $A$, consider an $A$-coring $\cC$ and a right
$\cC$-comodule $P$ with $S=\End^\cC(P)$. Then, in \cite{WisGal},
$P$ is called a {\em Galois comodule} provided the natural
transformation $\Hom_A(P,-)\ot_S P \to -\ot_A \cC$ is an
isomorphism. Such modules can be characterized by properties of
the $(\cC,A)$-injective comodules \cite[4.1]{WisGal}.

This notion was extended to {\em comodule functors} for some
comonad in \cite[3.5]{MW} and, in the first section of the present
paper, properties of relative injective objects will be
investigated. Dually, module functors are considered for some {\em
monad functors} leading to the study of relative projectives.

In Section 2 the interplay of the Galois property for entwining
structures (mixed distributive laws) is studied, while in Section
3 the characterizations of Galois corings in module categories
(e.g. \cite[28.18]{BW}) is transferred to entwining structures on
any categories. Applying this to bimonads in the sense of
\cite{MW} leads to new characterizations of Hopf monads (Section
4). As another application we consider bimonads in the sense of
Brugui\`eres and Virelizier \cite{BV} and eloborate the relation
between the different approaches (Section 5).

In the final section we generalise known properties of the
endofunctors $G\times-$ on the category of sets, $G$ any set, to
categories with finite products. This relates our notions with
{\em Galois objects} in the sense of Chase and Sweedler \cite{CS}
(in the category opposite to commutative algebras) and we
obtain a more general form of their Theorem 12.5 by
replacing the condition on the Hopf algebra to be
finitely generated and projective over the base ring by flatness without
finiteness condition.

 \section{Galois comodule and module functors}

Let $\A$ and $\B$ denote any categories.
Recall (e.g. from \cite{EM}) that a {\em monad} $\bT$ on $\A$ is a
triple $(T,m,e)$ where $T:\A\to \A$ is a functor with natural
transformations $m:TT\to T$, $e:I\to T$ satisfying associativity
and unitality conditions. A {\em $T$-module} is an object $a\in
\A$ with a morphism $h_a:T(a)\to a$ subject to associativity and
unitality conditions. The (Eilenberg-Moore) category of
$\bT$-modules is denoted by $\A_T$ and there is a free functor
$\phi_T:\A\to \A_T,\; a\mapsto (T(a),m_a)$ which is left adjoint
to the forgetful functor $U_T:\A_T\to \A$.

Dually, a {\em comonad} $\bG$ on $\A$ is a triple $(G,\delta,\ve)$
where $G:\A\to \A$ is a functor with natural transformations
$\delta:G\to GG$, $\ve:G\to I$, and {\em $G$-comodules} are
objects $a\in \A$ with morphisms $\rho_a:a\to G(a)$. Both notions
are subject to coassociativity and counitality conditions. The
(Eilenberg-Moore) category of $\bG$-comodules is denoted by $\A^G$
and there is a cofree functor $\phi^G:\A\to \A^G,\; a\mapsto
(G(a),\delta_a)$ which is right adjoint to the forgetful functor
$U^G:\A^G\to \A$.

For convenience we recall some notions from \cite[Section 3]{MW}.

\begin{thm}\label{com-fun}{\bf $\bG$-comodule functors.} \em
Given a comonad $\textbf{G}=(G,\delta,\varepsilon)$ on
$\A$, a functor $F: \B \to \A$ is a {\em left $\textbf{G}$-comodule}
if there exists a natural transformation $\beta: F \to GF$ with commutative
 diagrams

\begin{equation} \xymatrix{
F \ar@{=}[dr] \ar[r]^-{\beta}& GF \ar[d]^-{\varepsilon F}\\
& F,} \qquad \xymatrix{
F \ar[r]^-{\beta} \ar[d]_-{\beta}& GF \ar[d]^-{\delta F}\\
GF \ar[r]_-{G \beta}& GGF.}
\end{equation}
Obviously $(G,\delta)$ and $(GG, \delta G)$
both are left $\textbf{G}$-comodules.

A $\bG$-comodule structure on $F : \B \to \A$ is equivalent to
the existence of a functor (dual to \cite[Proposition II.1.1]{D})
$\overline{F} : \B \to \A^G$ leading to a commutative diagram
$$\xymatrix{ \B \ar[r]^\oF \ar[dr]_F & \A^G \ar[d]^{U^G}\\
  & \A .}$$

Indeed,
if $\overline{F}$ is such
a functor, then $\overline{F}(b)=(F(b), \beta_{b})$ for some morphism
$\beta_{b}: F(b) \to GF(b)$
and the collection $\{\beta_b,\, b \in\B\}$
constitutes a natural transformation $\beta:F \to GF$  making $F$ a $\textbf{G}$-comodule.
Conversely, if $(F,\beta: F\to GF)$
is a $\textbf{G}$-module, then $\overline{F} :
\B \to \A^G$ is defined by $\overline{F}(b)=(F(b), \beta_b)$.

 If a $\textbf{G}$-comodule $(F,\beta)$
 admits a right adjoint $R: \A \to \B$, with
counit $\sigma : FR \to 1$, then the composite
$$\xymatrix{t_{\overline{F}}:FR \ar[r]^-{\beta R}& GFR
\ar[r]^-{G \sigma }  & G}$$
is a comonad morphism from the comonad
generated by the adjunction $F \dashv R$ to the comonad $\textbf{G}$.
\end{thm}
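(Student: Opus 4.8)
The statement to prove is that, given a $\bG$-comodule $(F,\beta)$ with right adjoint $R$ and counit $\sigma:FR\to 1$, the composite $t_{\oF}=(G\sigma)\circ(\beta R):FR\to G$ is a morphism of comonads from $FR$ (with its canonical comonad structure coming from the adjunction $F\dashv R$) to $\bG$. So first I would write out explicitly the comonad structure on $FR$: its comultiplication is $F\eta R:FR\to FRFR$ where $\eta:1\to RF$ is the unit, and its counit is $\sigma:FR\to 1$. A comonad morphism $t:FR\to G$ must then satisfy two conditions: compatibility with counits, $\varepsilon\circ t=\sigma$, and compatibility with comultiplications, $\delta\circ t=(tt)\circ(F\eta R)$, i.e. the square

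\begin{equation*}\xymatrix{
FR \ar[r]^-{t} \ar[d]_-{F\eta R} & G \ar[d]^-{\delta}\\
FRFR \ar[r]_-{t\,t} & GG}
\end{equation*}
commutes, where $tt$ denotes the horizontal composite $t*t$.

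**The counit condition.** For the counit compatibility I would compute $\varepsilon\circ t_{\oF}=\varepsilon\circ(G\sigma)\circ(\beta R)$. By naturality of $\varepsilon:G\to I$ applied to the morphism $\sigma:FR\to 1$, we have $\varepsilon\circ(G\sigma)=\sigma\circ(\varepsilon FR)$, so this equals $\sigma\circ(\varepsilon FR)\circ(\beta R)=\sigma\circ((\varepsilon F\circ\beta)R)$. The first comodule axiom (the triangle in the displayed diagrams of \ref{com-fun}) says $\varepsilon F\circ\beta=\mathrm{id}_F$, hence $(\varepsilon F\circ\beta)R=\mathrm{id}_{FR}$, and we are left with $\sigma$, as required. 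This step is routine.

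**The comultiplication condition — the main obstacle.** This is where the real work lies. I would expand both legs of the square. The right-then-down leg is $\delta\circ(G\sigma)\circ(\beta R)$. Using naturality of $\delta:G\to GG$ on $\sigma$, $\delta\circ(G\sigma)=(GG\sigma)\circ(\delta FR)$, and then the second comodule axiom (the square in \ref{com-fun}), $\delta F\circ\beta=GF\circ\beta\cdot\ldots$ — more precisely $\delta F\circ\beta=G\beta\circ\beta$ — lets me rewrite $(\delta FR)\circ(\beta R)=(G\beta R)\circ(\beta R)$. So the right-then-down leg becomes $(GG\sigma)\circ(G\beta R)\circ(\beta R)$. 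The down-then-right leg is $(t_{\oF}*t_{\oF})\circ(F\eta R)$; unwinding the horizontal composite, $t_{\oF}*t_{\oF}=(G\sigma\circ\beta R)*(G\sigma\circ\beta R)$ which as a natural transformation $FRFR\to GG$ can be written, using interchange, as $(G\,(G\sigma\circ\beta R))\circ(G\sigma\circ\beta R)\,FR$ — i.e. one copy of $t_{\oF}$ applied on the outside with $G$ whiskered on, composed with the other applied to $FR$. The key identities I will need to reconcile the two sides are: naturality of $\beta:F\to GF$ applied to the morphism $\eta R:R\to RFR$ (so that $\beta$ commutes past $F\eta R$), naturality of $\sigma$, and the triangle identity $\sigma F\circ F\eta=\mathrm{id}_F$ of the adjunction $F\dashv R$, which is exactly what collapses the extra $F\eta R$ and one of the $\sigma$'s. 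The bookkeeping of whiskerings — keeping straight which functor each natural transformation is whiskered by at each stage — is the part most likely to go wrong, so I would draw the full pasting diagram and chase it region by region rather than manipulate strings of symbols blindly. Once the interchange law and the triangle identity are applied in the right order, both legs should reduce to $(GG\sigma)\circ(G\beta R)\circ(\beta R)$, completing the proof. I expect no conceptual obstacle beyond this diagram chase; it is essentially the dual of the standard fact that a monad morphism arises from a compatible left adjoint lifting, so one could alternatively invoke the dual of \cite[Proposition II.1.1]{D} together with the correspondence between $\bG$-comodule structures on $F$ and liftings $\oF$, but carrying out the chase directly is the cleanest route.
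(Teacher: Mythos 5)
Your verification is correct. Note that the paper itself gives no proof of this claim: the item is recalled from \cite[Section 3]{MW} (ultimately the dual of Dubuc's Proposition II.1.1), so there is nothing in the text to compare against except the citation. Your direct chase is exactly the standard argument one would extract from those sources: the counit condition follows from naturality of $\varepsilon$ and the first comodule axiom, and for the comultiplication condition your choice of decomposition $t_{\oF}*t_{\oF}=G t_{\oF}\circ t_{\oF}FR$ is the convenient one, since after applying naturality of $\delta$, the coassociativity axiom $\delta F\circ\beta=G\beta\circ\beta$, and naturality of $\beta$ along $\eta R$, the inner factor $G\sigma FR\circ GF\eta R=G\bigl((\sigma F\circ F\eta)R\bigr)$ collapses by the triangle identity and both legs equal $GG\sigma\circ G\beta R\circ\beta R$. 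No gap.
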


\begin{proposition}\label{P.1.7} {\rm (\cite[Theorem 4.4]{M})} The functor $\overline{F}$ is an equivalence of categories
if and only if the functor $F$ is comonadic and $t_{\overline{F}}$ is an
isomorphism of comonads.
\end{proposition}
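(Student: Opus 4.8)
The plan is to factor $\oF$ through the Eilenberg--Moore comparison functor of the adjunction $F\dashv R$ and to read off both implications from that factorization. Write $\eta:1\to RF$ for the unit of $F\dashv R$; then $F\dashv R$ generates a comonad on $\A$ with underlying functor $FR$, comultiplication $F\eta R$ and counit $\sigma$, with Eilenberg--Moore category $\A^{FR}$, forgetful functor $U^{FR}$, cofree functor $\phi^{FR}$ and comparison functor $K_F:\B\to\A^{FR}$, $b\mapsto(Fb,F\eta_b)$; by definition $F$ is comonadic exactly when $K_F$ is an equivalence. Moreover, a comonad morphism $t:FR\to G$ induces a functor $\A^{t}:\A^{FR}\to\A^{G}$, $(a,\rho)\mapsto(a,t_a\circ\rho)$, with $U^G\circ\A^t=U^{FR}$. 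The first step is the identity $\oF=\A^{t_{\oF}}\circ K_F$: indeed $\A^{t_{\oF}}(K_Fb)=(Fb,\ (t_{\oF})_{Fb}\circ F\eta_b)=(Fb,\ G\sigma_{Fb}\circ\beta_{RFb}\circ F\eta_b)$, and by naturality of $\beta$ together with the triangle identity $\sigma F\circ F\eta=1$ this equals $(Fb,\ G\sigma_{Fb}\circ GF\eta_b\circ\beta_b)=(Fb,\beta_b)=\oF(b)$, and the same computation is natural in $b$.

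Sufficiency is then immediate. If $F$ is comonadic, $K_F$ is an equivalence; if $t_{\oF}$ is an isomorphism of comonads, $\A^{t_{\oF}}$ is an isomorphism of categories with inverse $\A^{(t_{\oF})^{-1}}$, in particular an equivalence; hence $\oF=\A^{t_{\oF}}\circ K_F$ is an equivalence.

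For necessity, assume $\oF$ is an equivalence. First, $F=U^G\circ\oF$ is comonadic: a right adjoint of $F$ is $R\cong\oF^{-1}\phi^G$, the generated comonad has underlying functor $FR\cong U^G\oF\,\oF^{-1}\phi^G\cong G$ and is isomorphic, as a comonad, to $\bG$, and under this identification $K_F$ becomes $\oF$ up to composition with equivalences; hence $K_F$ is an equivalence. By the factorization, $\A^{t_{\oF}}=\oF\circ K_F^{-1}$ is then an equivalence as well, and it remains to promote this to the statement that $t_{\oF}$ itself is an isomorphism. Choose a right adjoint $D$ of the equivalence $\A^{t_{\oF}}$ (necessarily a quasi-inverse); taking right adjoints in $U^{FR}=U^G\circ\A^{t_{\oF}}$ produces a natural isomorphism $\phi^{FR}\cong D\phi^G$, and composing with the counit $\A^{t_{\oF}}D\to 1$ a natural isomorphism $\nu:\A^{t_{\oF}}\phi^{FR}\xrightarrow{\ \sim\ }\phi^G$ which is compatible, by construction, with the counits $\sigma:FR\to1$ and $\ve:G\to1$ of the two comonads. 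Writing $n_a:=U^G(\nu_a):FRa\to Ga$, the component $\nu_a$ is a $\bG$-comodule isomorphism $(FRa,\,(t_{\oF})_{FRa}\circ F\eta_{Ra})\xrightarrow{\sim}(Ga,\delta_a)$, whence $(t_{\oF})_{FRa}\circ F\eta_{Ra}=G(n_a)^{-1}\circ\delta_a\circ n_a$, while counit compatibility gives $\sigma_a=\ve_a\circ n_a$. Substituting into $(t_{\oF})_a=G\sigma_a\circ(t_{\oF})_{FRa}\circ F\eta_{Ra}$ — which holds by naturality of $t_{\oF}$ and the triangle identity $R\sigma\circ\eta R=1$ — and using the counit axiom $G\ve\circ\delta=1$ of $\bG$ collapses everything to $(t_{\oF})_a=n_a$. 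Thus $t_{\oF}$ is an isomorphism, and it is a morphism of comonads by \ref{com-fun}.

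The factorization identity and the sufficiency direction are routine. The main obstacle is the last step of the necessity argument: an equivalence of Eilenberg--Moore categories lying over $\A$ need not be an isomorphism of categories, so one cannot simply invert $\A^{t_{\oF}}$ to invert $t_{\oF}$; the argument must use that $\A^{t_{\oF}}$ commutes \emph{strictly} with the forgetful functors to $\A$ and that its effect on the cofree comodules recovers $t_{\oF}$ on the nose. Alternatively, one may invoke \cite[Theorem 4.4]{M} at precisely this point.
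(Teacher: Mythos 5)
Your proof is correct. The paper itself gives no argument for this proposition (it is quoted from \cite[Theorem 4.4]{M}), but your factorization $\overline{F}=\A^{t_{\overline{F}}}\circ K_F$ is exactly the mechanism the cited source uses and the one the paper relies on implicitly in the proof of Theorem \ref{T.3.11}; your careful treatment of the necessity step (extracting the isomorphism $t_{\overline{F}}=n$ from the counit-compatible isomorphism $\A^{t_{\overline{F}}}\phi^{G'}\cong\phi^G$, rather than trying to invert the equivalence $\A^{t_{\overline{F}}}$ directly) is sound and is precisely the point that needs care.
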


\begin{thm}{\bf Definition.}\label{def-galois} \em (\cite[Definition 3.5]{MW})
 A left $\textbf{G}$-comodule $F: \B \to \A$ with a right
adjoint $R: \A \to \B$ is said to be $\textbf{G}$-\emph{Galois} if the
corresponding morphism $t_{\overline{F}}: FR \to G$ of comonads on
$\A$ is an isomorphism.
\end{thm}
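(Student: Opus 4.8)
The final statement is a \emph{definition}, not an assertion, so strictly speaking there is nothing to prove: it merely attaches the name ``$\textbf{G}$-Galois'' to those comodule functors $(F,\beta)$ whose associated comonad morphism $t_{\overline{F}}$ is invertible. The only point at which a verification is even implicit is that the condition be \emph{well-posed} --- that is, that $t_{\overline{F}}=(G\sigma)\cdot(\beta R)$ genuinely is a morphism of comonads, so that requiring it to be ``an isomorphism of comonads'' is meaningful in the first place. That verification is precisely the content of the last paragraph of \ref{com-fun}, which I am entitled to assume; hence no further argument is required here.

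Were one to carry out the underlying check from scratch, the plan would be as follows. Write $\eta:1\to RF$ and $\sigma:FR\to 1$ for the unit and counit of $F\dashv R$, so that the comonad generated by the adjunction is $(FR,\,F\eta R,\,\sigma)$. To see that $t_{\overline{F}}$ respects counits one computes $\varepsilon\cdot t_{\overline{F}}=\varepsilon\cdot G\sigma\cdot\beta R$; naturality of $\varepsilon$ along $\sigma$ rewrites this as $\sigma\cdot(\varepsilon F\cdot\beta)R$, and the left-hand triangle of \ref{com-fun} gives $\varepsilon F\cdot\beta=1_F$, leaving $\sigma$ as desired. Compatibility with comultiplications --- the equality $\delta\cdot t_{\overline{F}}=(t_{\overline{F}}t_{\overline{F}})\cdot(F\eta R)$ --- is the only mildly delicate step: one feeds in the right-hand square of \ref{com-fun}, namely $\delta F\cdot\beta=G\beta\cdot\beta$, and then shuffles the remaining factors past one another using naturality of $\beta$ and $\sigma$ together with the triangle identity $\sigma F\cdot F\eta=1_F$.

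Since the statement is definitional, its real justification lies not in a proof but in its intended use, and I would resist the temptation to smuggle any such use into the definition itself. It is framed exactly so that, via \ref{P.1.7}, being $\textbf{G}$-Galois becomes one of the two ingredients --- the other being comonadicity of $F$ --- that together force $\overline{F}:\B\to\A^G$ to be an equivalence; but that is a separate theorem, not something to be established at the point of definition. I would therefore record the definition as stated and defer all substantive work to the characterisations that build on it.
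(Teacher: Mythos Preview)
Your assessment is correct: this is a definition, and the paper supplies no proof for it. You rightly note that the only implicit well-posedness issue --- that $t_{\overline{F}}$ is a comonad morphism --- is handled in \ref{com-fun}, exactly as the paper does.
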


Thus, $\oF$ is an equivalence if and only if $F$ is $\bG$-Galois
and comonadic.

\begin{thm}\label{right-adj}{\bf Right adjoint for $\oF$.} \em
When the category $\B$ has equalisers of coreflexive pairs, the
functor $\overline{F}$ has a right adjoint which can be described
as follows (see \cite{D}): With the composite
$$\gamma:\xymatrix{R
\ar[r]^-{\eta R}& RFR \ar[r]^{R t_{\overline{F}}}& RG,}$$ a right
adjoint to $\overline{F}$ is the equaliser $(\overline{R},
\overline{e})$ of the diagram
$$\xymatrix{ R U^G
\ar@{->}@<0.5ex>[rr]^-{RU^G \eta^G} \ar@ {->}@<-0.5ex> [rr]_-{\gamma
U^G}&& RGU^G=RU^G \phi^GU^G,}$$
with $\eta^G:1\to\phi^GU^G$ the unit of $U^G \dashv \phi^G$.

An easy inspection shows that for any $(a, \theta_a)\in \A^G$, the
$(a, \theta_a)$-component of the above diagram is
$$\xymatrix{ R(a) \ar@{->}@<0.5ex>[r]^-{R(\theta_a)} \ar@
{->}@<-0.5ex> [r]_-{\gamma_a}& RG(a).}$$
\end{thm}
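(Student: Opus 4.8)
\emph{Proof plan.} The description of a right adjoint to $\oF$ as the displayed equaliser is the standard construction for the functor induced by a comodule structure (see \cite{D}), so I would take that part as given and concentrate on the concrete claim: identifying the $(a,\theta_a)$-component of the parallel pair $RU^G\eta^G,\ \gamma U^G : RU^G\rightrightarrows RGU^G$, and, along the way, checking that this pair is coreflexive — which is exactly what makes "equalisers of coreflexive pairs in $\B$" the right hypothesis and what guarantees that the object $\oR(a,\theta_a)$ exists.

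First I would evaluate the three functors at $(a,\theta_a)\in\A^G$. One has $RU^G(a,\theta_a)=R(a)$, and, using $G=U^G\phi^G$, $RGU^G(a,\theta_a)=RU^G\phi^GU^G(a,\theta_a)=RU^G\phi^G(a)=RU^G(G(a),\delta_a)=RG(a)$, so both legs of the pair run from $R(a)$ to $RG(a)$. The only step that is not purely formal is pinning down the unit component $\eta^G_{(a,\theta_a)}:(a,\theta_a)\to\phi^G(a)=(G(a),\delta_a)$ of the adjunction $U^G\dashv\phi^G$: by the standard description of the Eilenberg--Moore adjunction it is the coaction $\theta_a$ itself, regarded as a $\bG$-comodule morphism — the dual of the familiar fact that the counit of the free--forgetful adjunction at an algebra over a monad is its structure map. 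I would verify this directly from the triangle identities, using counitality and coassociativity of $\theta_a$. Granting it, $U^G\eta^G_{(a,\theta_a)}=\theta_a$, so applying $R$ gives the first leg $R(\theta_a):R(a)\to RG(a)$. The second leg is immediate from the definitions above: $(\gamma U^G)_{(a,\theta_a)}=\gamma_{U^G(a,\theta_a)}=\gamma_a$, the composite $R(a)\xrightarrow{\eta_{R(a)}}RFR(a)\xrightarrow{R(t_{\oF})_a}RG(a)$. This yields precisely the displayed pair.

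Finally, to record coreflexivity I would exhibit $R(\varepsilon_a):RG(a)\to R(a)$ as a common retraction. Counitality of $(a,\theta_a)$ gives $R(\varepsilon_a)\circ R(\theta_a)=R(\varepsilon_a\circ\theta_a)=1_{R(a)}$; and combining naturality of $\varepsilon$ with the counit condition $\varepsilon F\circ\beta=1_F$ for the $\bG$-comodule $F$ (whence $\varepsilon_a\circ G(\sigma_a)\circ\beta_{R(a)}=\sigma_a\circ\varepsilon_{FR(a)}\circ\beta_{R(a)}=\sigma_a$, i.e. $\varepsilon_a\circ(t_{\oF})_a=\sigma_a$) one gets $R(\varepsilon_a)\circ\gamma_a=R(\sigma_a)\circ\eta_{R(a)}=1_{R(a)}$ by the triangle identity for $F\dashv R$. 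Hence each $\oR(a,\theta_a)$ is the equaliser in $\B$ of a coreflexive pair, so it exists under the stated hypothesis, and the universal property assembles these into the functor $\oR$ right adjoint to $\oF$. I do not anticipate a genuine obstacle; the one place demanding a moment's care is the identification of $\eta^G$ at a comodule with its coaction, and everything else is unwinding composites and applying naturality.
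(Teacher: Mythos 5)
Your proposal is correct and follows essentially the same route as the paper, which likewise defers the general equaliser construction to Dubuc and then identifies the $(a,\theta_a)$-component of the parallel pair as $(R(\theta_a),\gamma_a)$ via $U^G\eta^G_{(a,\theta_a)}=\theta_a$. Your explicit verification that $R(\varepsilon_a)$ is a common retraction, making the pair coreflexive, is a worthwhile addition the paper leaves implicit, since it is exactly what makes the hypothesis on $\B$ suffice.
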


 Now, for any $a\in \A$,
$(\overline{R}(\overline{F}))(a)$ can be seen as the equaliser

$$\xymatrix{(\overline{R}(\overline{F}))(a)\ar[r]^-{\overline{e}_{\overline{F}(a)}} &
RF(a) \ar@{->}@<0.5ex>[r]^-{R(\beta_a)} \ar@ {->}@<-0.5ex>
[r]_-{\gamma_{F(a)}}& RGF(a).}$$ Thus, writing $P$ for the monad on
$\A$ generated by the adjunction $\overline{F} \dashv \overline{R}$,
the diagram
$$\xymatrix{ P \ar[r]^-{\overline{e}}& RF\ar@{->}@<0.5ex>[r]^-{R\beta} \ar@
{->}@<-0.5ex> [r]_-{\gamma F}& RGF}$$
is an equalier diagram.
\smallskip

In view of the characterization of Galois functors we have a
closer look at some related classes of relative injective objects.

Let $F : \B \to \A$ be any functor. Recall (from \cite{Sob}) that an object
$b \in \B$ is said to be $F$-\textit{injective} if for any diagram
in $\B$,
$$\xymatrix{b_1 \ar[d]_g \ar[r]^f & b_2 \ar@{.>}[dl]^h\\
b&}$$
with $F(f)$ a split monomorphism in $\A$, there exists a
morphism $h: b_2 \to b$ such that $hf=g$. We write
$\Inj (F, \B)$ for the full subcategory of $\B$ with
objects all $F$-injectives.

The following result from \cite{Sob} will be needed.

\begin{proposition}\label{P.3.3}
Let $\eta, \varepsilon: F \dashv R : \A \to \B$ be an adjunction.
For any object $b \in \B$, the following assertions are
equivalent:

\begin{blist}
\item  $b$ is $F$-injective;
\item $b$ is a coretract for some $R(a)$, with $a \in \A$;
\item the $b$-component $\eta_b : b \to RF(b)$ of $\eta$ is
      a split monomorphism.
\end{blist}
\end{proposition}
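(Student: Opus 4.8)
The plan is to prove the cycle of implications $(a)\Rightarrow(c)\Rightarrow(b)\Rightarrow(a)$, using the triangle identities of the adjunction $F\dashv R$ throughout.

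First, for $(a)\Rightarrow(c)$: I would apply the $F$-injectivity of $b$ to the specific diagram with $b_1=b$, $b_2=RF(b)$, $f=\eta_b:b\to RF(b)$, and $g=\mathrm{id}_b:b\to b$. To invoke the defining lifting property I must check that $F(\eta_b)$ is a split monomorphism in $\A$; this is exactly one of the triangle identities, since $\varepsilon F\cdot F\eta = \mathrm{id}_F$ shows $F(\eta_b)$ is split by $\varepsilon_{F(b)}$. The resulting lift $h:RF(b)\to b$ with $h\circ\eta_b=\mathrm{id}_b$ is precisely a retraction showing $\eta_b$ is a split monomorphism.

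Next, $(c)\Rightarrow(b)$ is immediate: if $\eta_b$ is a split monomorphism, then $b$ is a coretract of $RF(b)=R(a)$ with $a=F(b)\in\A$.

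Finally, for $(b)\Rightarrow(a)$, suppose $b$ is a coretract of $R(a)$, say $i:b\to R(a)$ and $p:R(a)\to b$ with $p\circ i=\mathrm{id}_b$. Given a diagram as in the definition of $F$-injectivity, with $f:b_1\to b_2$ such that $F(f)$ is a split monomorphism in $\A$, say with retraction $r:F(b_2)\to F(b_1)$, and with $g:b_1\to b$, I would produce the lift by transposing across the adjunction. Concretely, form the composite $a\xrightarrow{\;?\;}$ — more precisely, take the adjunct of $i\circ g:b_1\to R(a)$, which is a morphism $\widetilde{ig}:F(b_1)\to a$, then define $k:F(b_2)\to a$ as $\widetilde{ig}\circ r$, and let $h:b_2\to b$ be $p$ composed with the adjunct $b_2\to R(a)$ of $k$. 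The verification that $h\circ f=g$ reduces, after using naturality of the adjunction bijection and the triangle identities, to the identity $k\circ F(f)=\widetilde{ig}$, which holds because $r\circ F(f)=\mathrm{id}_{F(b_1)}$; then $p\circ i\circ g=g$ finishes it.

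The main obstacle is the bookkeeping in $(b)\Rightarrow(a)$: one must track the adjunction transposition carefully so that the retraction $r$ of $F(f)$ is applied on the correct side, and confirm via naturality squares that passing back and forth across the adjunction interacts correctly with $f$, $r$, $i$, and $p$. Everything else is a direct application of the triangle identities. One could alternatively phrase $(b)\Rightarrow(a)$ by first checking that every cofree object $R(a)$ is $F$-injective (a short transposition argument) and then noting that $F$-injectives are closed under coretracts, which may be cleaner than the explicit diagram chase.
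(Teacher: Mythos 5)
Your proof is correct. Note that the paper itself gives no proof of this proposition --- it is quoted from Sobral's paper \cite{Sob} --- so there is nothing internal to compare against; but every step of your argument checks out. In (a)$\Rightarrow$(c) the triangle identity $\varepsilon F\cdot F\eta=\mathrm{id}$ does indeed make $F(\eta_b)$ a split monomorphism, so the lifting property applies to $f=\eta_b$, $g=\mathrm{id}_b$ and yields the retraction. In (b)$\Rightarrow$(a) your construction $h=p\circ\bigl(R(k)\circ\eta_{b_2}\bigr)$ with $k=\widetilde{ig}\circ r$ works: naturality of $\eta$ gives $\eta_{b_2}\circ f=RF(f)\circ\eta_{b_1}$, then $k\circ F(f)=\widetilde{ig}$ since $r\circ F(f)=\mathrm{id}$, and the adjunction bijection returns $R(\widetilde{ig})\circ\eta_{b_1}=i\circ g$, whence $h\circ f=p\circ i\circ g=g$. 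The alternative route you sketch at the end --- first showing each $R(a)$ is $F$-injective and then using closure of $F$-injectives under coretracts --- is exactly the content of the paper's Remark \ref{R.3.4}, so it is arguably the cleaner and more in-the-spirit-of-the-paper way to finish, but the explicit transposition argument is equally valid.
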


\begin{remark}\label{R.3.4} \em
For any $a \in ¸\A$, $R(\varepsilon_a)\cdot \eta_{R(a)}=1$ by one
of the triangular identities for the adjunction $F \dashv R$.
Thus, $R(a) \in \Inj(F, \B)$ for all $a \in \A$.
Moreover, since the composite of coretracts is again a coretract,
it follows from (b) that $\Inj(F, \B)$ is closed
under coretracts.
\end{remark}

\begin{thm}\label{P.3.5}{\bf Functor between injectives.} \em
Let $F:\B\to \A$ be a $\bG$-module with a right adjoint $R:\A\to \B$.
and unit $\eta:I\to RF$.
Write $\bG'$ for the comonad on $\A$ generated by the adjunction
$F\dashv R$ and consider the comparison functor
  $K_{G'}: \B \to \A^{\nG'}$.
 If $b\in \B$ is $F$-injective, then $K_{G'}(b)=(F(b), F(\eta_b))$ is
$U_{G'}$-injective, since by the fact
that $\eta_b$ is a split monomorphism in $\B$,
 $(\eta_{G'})_{\phi^{G'}(b)}=F(\eta_b)$
is a split monomorphism in $\A^{\nG'}$.
Thus the functor $K_{G'}: \B \to \A_{G'}$ yields a functor
$$ \Inj (K_{G'}):  \Inj(\bF,\B) \to \Inj(\phi^{G'},\A^{\nG'}).$$
  When $\B$ has equalisers, this functor 
is an equivalence of categories (see \cite{Sob}).
\end{thm}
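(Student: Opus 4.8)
\smallskip
\noindent\textbf{Proof proposal.}
The only assertion that needs an argument is the last one --- that $\Inj(K_{G'})\colon\Inj(\bF,\B)\to\Inj(\phi^{G'},\A^{G'})$ is an equivalence when $\B$ has equalisers; well-definedness of this functor is the content of the preceding paragraph, and, by Proposition~\ref{P.3.3}(b) applied to the cofree adjunction $U^{G'}\dashv\phi^{G'}$, the category $\Inj(\phi^{G'},\A^{G'})$ consists of the coretracts in $\A^{G'}$ of the cofree comodules $\phi^{G'}(a)$, $a\in\A$. The plan is to realise $\Inj(K_{G'})$ as the passage to idempotent completions of an equivalence between two incarnations of the co-Kleisli category of $\bG'$.

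I would begin with the identity $K_{G'}\circ R=\phi^{G'}$: the comonad $\bG'$ generated by $F\dashv R$ has underlying functor $FR$, comultiplication $F\eta R$ and counit $\sigma$ (the counit of $F\dashv R$), so $K_{G'}(R(a))=(FR(a),F\eta_{R(a)})=(\bG'(a),\delta'_a)=\phi^{G'}(a)$. Hence $K_{G'}$ sends the family $\{R(a):a\in\A\}$ --- which generates $\Inj(\bF,\B)$ under coretracts by Proposition~\ref{P.3.3}(b) --- onto the family $\{\phi^{G'}(a):a\in\A\}$, which generates $\Inj(\phi^{G'},\A^{G'})$ under coretracts.

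The key step is that $K_{G'}$ is full and faithful on the full subcategory of $\B$ spanned by the objects $R(a)$. For this I would compute both hom-sets through $\A$: the adjunction $F\dashv R$ gives a bijection $\Hom_\B(R(a),R(a'))\cong\Hom_\A(FR(a),a')$, $h\mapsto\sigma_{a'}\cdot F(h)$, while the adjunction $U^{G'}\dashv\phi^{G'}$ gives $\Hom_{\A^{G'}}(\phi^{G'}(a),\phi^{G'}(a'))\cong\Hom_\A(FR(a),a')$, $\psi\mapsto\sigma_{a'}\cdot\psi$ (using that the counit of $U^{G'}\dashv\phi^{G'}$ is again $\sigma$). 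Since $K_{G'}$ acts by $h\mapsto F(h)$ on morphisms, it realises the composite of these two bijections, so it is bijective on these hom-sets; being also bijective on objects ($R(a)\mapsto\phi^{G'}(a)$), it restricts to an equivalence of the two full subcategories. Faithfulness of $K_{G'}$ on all of $\Inj(\bF,\B)$ is in fact immediate: if $f,g\colon b\to b'$ in $\Inj(\bF,\B)$ satisfy $F(f)=F(g)$, then $\eta_{b'}f=RF(f)\,\eta_b=RF(g)\,\eta_b=\eta_{b'}g$ by naturality of $\eta$, and $\eta_{b'}$ is a split monomorphism by Proposition~\ref{P.3.3}(c). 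Fullness of $K_{G'}$ on all of $\Inj(\bF,\B)$ follows by conjugating a morphism $\psi\colon K_{G'}(b)\to K_{G'}(b')$, using coretraction data for $b$ over $R(a)$ and for $b'$ over $R(a')$, into a morphism $\phi^{G'}(a)\to\phi^{G'}(a')$, lifting the latter along the full-faithful step, and conjugating back.

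Essential surjectivity of $\Inj(K_{G'})$ is where the hypothesis is used: a category with equalisers is idempotent complete, since the equaliser of $(e,1_b)$ splits an idempotent $e$. Given $X\in\Inj(\phi^{G'},\A^{G'})$, realise $X$ as a coretract of $\phi^{G'}(a)=K_{G'}(R(a))$ and let $e$ be the corresponding idempotent on $\phi^{G'}(a)$; by the full-faithful step, $e=K_{G'}(\bar e)$ for a (necessarily idempotent) endomorphism $\bar e$ of $R(a)$, which splits in $\B$ through some object $b$. Then $b$ is a coretract of $R(a)$, hence lies in $\Inj(\bF,\B)$ by Proposition~\ref{P.3.3}/Remark~\ref{R.3.4}, and applying $K_{G'}$ to the splitting of $\bar e$ shows that $K_{G'}(b)$ splits $e$ in $\A^{G'}$; since splittings of an idempotent are unique up to canonical isomorphism, $K_{G'}(b)\cong X$. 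Combined with the previous paragraph, this shows $\Inj(K_{G'})$ is an equivalence. I expect the main obstacle to be the bookkeeping in the key step --- verifying that $K_{G'}$ really induces the canonical hom-set bijection, coherently in both variables --- everything else being formal manipulation with adjoints, coretracts and split idempotents.
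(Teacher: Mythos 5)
Your proposal is correct, and it supplies an argument that the paper itself does not contain: for the assertion that $\Inj(K_{G'})$ is an equivalence when $\B$ has equalisers, the paper only records the easy half (that $K_{G'}$ carries $F$-injectives to $U^{G'}$-injectives, via $(\eta_{G'})_{K_{G'}(b)}=F(\eta_b)$ being split monic) and then defers the equivalence itself to \cite{Sob}. Your route is the expected one behind that citation, reconstructed in full: both $\Inj(F,\B)$ and $\Inj(U^{G'},\A^{G'})$ are the closures under coretracts of the families $\{R(a)\}$ and $\{\phi^{G'}(a)\}$ (Proposition \ref{P.3.3}, Remark \ref{R.3.4}); $K_{G'}R=\phi^{G'}$ matches the generators; full faithfulness on the generators follows by factoring both hom-sets through $\Hom_\A(FR(a),a')$ via the two adjunctions sharing the counit $\sigma$; and essential surjectivity comes from splitting idempotents, which equalisers provide. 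All the individual steps check out -- in particular the identification of $K_{G'}$ with the composite hom-set bijection (by uniqueness in the adjunction $U^{G'}\dashv\phi^{G'}$, since $F(h)$ is a comodule morphism with $\sigma_{a'}\cdot F(h)=\sigma_{a'}\cdot F(h)$), the faithfulness argument via naturality of $\eta$ and splitness of $\eta_{b'}$, the conjugation trick for fullness on arbitrary coretracts, and the uniqueness of idempotent splittings. Two cosmetic remarks: you tacitly correct the paper's notation $\Inj(\phi^{G'},\A^{\nG'})$ to what is actually meant, namely the $U^{G'}$-injectives $\Inj(U^{G'},\A^{\nG'})$ (consistent with the use made of this statement in Theorem \ref{T.3.11}); and "bijective on objects" should read "surjective onto the generating objects", which is all the equivalence needs. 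Neither affects correctness.
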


We shall henceforth assume that $\B$ has equalisers.

\begin{proposition}\label{P.3.6} The functor $\overline{R}: \A^\nG \to \B$
restricts to a functor
$$\overline{R}': \Inj(U^{G},\A^{\nG}) \to \Inj(F, \B).$$
\end{proposition}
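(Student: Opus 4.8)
The plan is to transport the coretract characterisation of relative injectives (Proposition~\ref{P.3.3} and Remark~\ref{R.3.4}) along $\overline{R}$, the essential point being that $\overline{R}$ followed by the cofree functor recovers $R$. So the first step is the natural isomorphism $\overline{R}\,\phi^G\cong R$: the commutative triangle in \ref{com-fun} gives $U^G\overline{F}=F$, and since $\overline{F}\dashv\overline{R}$ (Theorem~\ref{right-adj}) and $U^G\dashv\phi^G$, the composite $U^G\overline{F}=F$ has $\overline{R}\,\phi^G$ as a right adjoint; by uniqueness of adjoints, $\overline{R}\,\phi^G\cong R$, and in particular $\overline{R}\,\phi^G(a)\cong R(a)$ for each $a\in\A$.

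Now take $(a,\theta_a)\in\Inj(U^G,\A^G)$. By Proposition~\ref{P.3.3} applied to the adjunction $U^G\dashv\phi^G\colon\A\to\A^G$, the unit component $\eta^G_{(a,\theta_a)}\colon(a,\theta_a)\to\phi^G(a)$ --- which as a morphism of $\A$ is just $\theta_a\colon a\to G(a)$ --- is a split monomorphism in $\A^G$; that is, $(a,\theta_a)$ is a coretract of $\phi^G(a)$. Since any functor preserves coretracts, $\overline{R}(a,\theta_a)$ is a coretract of $\overline{R}\,\phi^G(a)\cong R(a)$ in $\B$. As $R(a)\in\Inj(F,\B)$ and $\Inj(F,\B)$ is closed under coretracts (Remark~\ref{R.3.4}), we conclude $\overline{R}(a,\theta_a)\in\Inj(F,\B)$. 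Thus $\overline{R}$ carries $\Inj(U^G,\A^G)$ into $\Inj(F,\B)$ and restricts to the asserted functor $\overline{R}'$.

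I expect the only step that is not purely formal to be the identification $\overline{R}\,\phi^G\cong R$; after that the argument is merely the application of the functor $\overline{R}$ to a split monomorphism together with closure of relative injectives under coretracts. One could instead extract the same isomorphism directly from the equaliser presentation of $\overline{R}$ in Theorem~\ref{right-adj}, by verifying that $\gamma_a\colon R(a)\to RG(a)$ exhibits $R(a)$ as the equaliser of $R(\delta_a)$ and $\gamma_{G(a)}$ --- that is, as $\overline{R}(\phi^G(a))$ --- but that route needs a short direct computation with $\gamma$.
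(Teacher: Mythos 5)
Your proof is correct, and it reaches the same structural conclusion as the paper's --- both arguments reduce the claim to the coretract characterisation of Proposition~\ref{P.3.3} together with the closure of $\Inj(F,\B)$ under coretracts from Remark~\ref{R.3.4} --- but the key step is justified by a genuinely different mechanism. The paper never invokes uniqueness of adjoints: it takes the coretraction $f,g$ exhibiting $(a,\theta_a)$ as a retract of some $\phi^G(a_0)$, applies $R$ to everything, and assembles the commutative diagram (\ref{D.3.3}) whose top row is a split equaliser computing $R(a_0)$ and whose bottom row is the defining equaliser of $\overline{R}(a,\theta_a)$; the induced map between the equalisers then exhibits $\overline{R}(a,\theta_a)$ as a coretract of $R(a_0)$ directly. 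You instead establish once and for all that $\overline{R}\,\phi^G\cong R$ by composing the adjunctions $\overline{F}\dashv\overline{R}$ and $U^G\dashv\phi^G$ and using uniqueness of right adjoints, after which the statement follows purely formally from ``functors preserve coretracts.'' Your route is shorter and avoids any computation with the equaliser presentation of $\overline{R}$ (you correctly flag that the equaliser route would need the small verification the paper actually carries out); the paper's route has the side benefit of producing the explicit isomorphism $\overline{R}(G(a_0),\delta_{a_0})\simeq R(a_0)$ as a split equaliser, which it reuses later in the proof of Proposition~\ref{P.3.9}. Both are complete proofs under the standing hypothesis that $\B$ has equalisers, which is what guarantees the existence of $\overline{R}$ in the first place.
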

\begin{proof} Let $(a, \theta_a)$ be an arbitrary object of
$\Inj (U^{G}, \A^{\nG})$. Then, by Proposition \ref{P.3.3},
there exists an object $a_0 \in \A$ such that $(a, \theta_a)$ is a
coretraction of $\phi^G(a_0)=(G(a_0), \delta_{a_0})$ in $\A^\nG$, i.e.,
there exist morphisms
\begin{center}
$f: (a, \theta_a) \to (G(a_0),
\delta_{a_0})$ and $g: (G(a_0), \delta_{a_0}) \to (a, \theta_a)$
\end{center}
in $\A^\nG$ with $gf=1$. Since $f$ and $g$ are morphisms in $\A^\nG$,
the diagram
$$\xymatrix{
G(a_0) \ar@{->}@<+1.5ex>[d]^-{g} \ar[r]^{(\delta_G)_{a_0}} &
GG(a_0)\ar@{->}@<+1.5ex>[d]^-{G(g)}\\
a \ar@{->}@<-0.5ex>[u]^-{f} \ar[r]_{\theta_a}&
G(a)\ar@{->}@<-0.5ex>[u]^-{G(f)} }
$$commutes. By naturality of $\gamma$ (see \ref{right-adj}), the  diagram
$$\xymatrix{
RG(a_0) \ar@{->}@<+1.5ex>[d]^-{R(g)} \ar[rr]^{\gamma_{G(a_0)}}
&&RGG(a_0)
\ar@{->}@<+1.5ex>[d]^-{RG(g)}\\
R(a) \ar@{->}@<-0.5ex>[u]^-{R(f)} \ar[rr]_{\gamma_a}&&
RG(a)\ar@{->}@<-0.5ex>[u]^-{RG(f)} }$$
also commutes. Consider now the following commutative diagram
\begin{equation}\label{D.3.3}\xymatrix{R(a_0) \ar@{.>}@<+1.5ex>[dd] \ar[r]^{\gamma_{a_0}}&
RG(a_0) \ar@{->}@<+1.5ex>[dd]^-{R(g)}
\ar@{->}@<0.5ex>[rr]^{\gamma_{G(a_0)}}
\ar@{->}@<-0.5ex>[rr]_{R((\delta_G)_{a_0})} &&RGG(a_0)\ar@{->}@<+1.5ex>[dd]^-{RG(g)}\\\\
\overline{R}(a, \theta_a)\ar@{.>}@<-0.5ex>[uu]
\ar[r]_{\overline{e}_{(a, \theta_a)}}&R(a)
\ar@{->}@<-0.5ex>[uu]^-{R(f)} \ar@{->}@<0.5ex>[rr]^{\gamma_a}
\ar@{->}@<-0.5ex>[rr]_{R(\theta_a)}&&
RG(a)\ar@{->}@<-0.5ex>[uu]^-{RG(f)} .} \end{equation}
 It is not
hard to see that the top row of this diagram is a (split)
equaliser (see \cite{G}), and since the bottom row is an
equaliser by the very definition of $\overline{e}$, it follows
from the commutativity of the diagram that $\overline{R}(a,
\theta_a)$ is a coretract of $R(a_0)$, and thus is an object of
$\Inj (F, \B)$ (see Remark \ref{R.3.4}). It means that the
functor $\overline{R}: \A^\nG \to \B$ can be restricted to a functor
$\overline{R}': \Inj (U^G, \A^\nG)\to
\Inj (F, \B)$.
\end{proof}

\begin{proposition}\label{P.3.7}
Suppose that for any $b \in \B$, $(t_{\overline{F}})_{F(b)}$ is an
isomorphism. Then the functor $\overline{F}: \B \to \A^\nG$ can be
restricted to a functor
$$\overline{F}': \Inj (F, \B)\to \Inj (U^G, \A^\nG).$$
\end{proposition}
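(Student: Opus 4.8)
The statement is dual to Proposition~\ref{P.3.6}: there we showed that $\overline{R}$ sends $U^G$-injectives to $F$-injectives, and now we want $\overline{F}$ to send $F$-injectives to $U^G$-injectives, under the hypothesis that $(t_{\overline{F}})_{F(b)}$ is an isomorphism for every $b\in\B$. The natural strategy is to combine the characterisation of relative injectives from Proposition~\ref{P.3.3} with the diagrams of~\ref{right-adj}. So let $b\in\B$ be $F$-injective; by Proposition~\ref{P.3.3} there is $b_0\in\B$ together with morphisms $b \xrightarrow{f} RF(b_0)$ and $RF(b_0)\xrightarrow{g} b$ in $\B$ with $gf=1$. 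Applying $\overline{F}$ we get $\overline{F}(b)$ as a coretract of $\overline{F}(RF(b_0))$ in $\A^G$, so by Remark~\ref{R.3.4} (closure of $\Inj(U^G,\A^G)$ under coretracts) it suffices to show that $\overline{F}R$ carries objects of $\A$ into $\Inj(U^G,\A^G)$, i.e.\ that $\overline{F}RF(b_0)\in\Inj(U^G,\A^G)$; more precisely it is enough to prove $\overline{F}R(a)\in\Inj(U^G,\A^G)$ for every $a\in\A$.

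To identify $\overline{F}R(a)$ concretely, recall that $\overline{F}(b')=(F(b'),\beta_{b'})$, so $\overline{F}R(a)=(FR(a),\beta_{R(a)})$. I would now use the hypothesis: the composite $t_{\overline{F}}=G\sigma\cdot\beta R:FR\to G$ is, componentwise at $F(b)$, an isomorphism — but in fact what we need is that $(t_{\overline{F}})_{a}:FR(a)\to G(a)$ be a $U^G$-comodule morphism, which it is by~\ref{com-fun} (it is a comonad morphism, hence compatible with the cofree coactions), and that its source object $\overline{F}R(a)=(FR(a),\beta_{R(a)})$ be a coretract in $\A^G$ of a cofree comodule. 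The key point is that $(FR(a),\beta_{R(a)})$ is isomorphic, as an object of $\A^G$, to $(FR(a),\beta_{R(a)})$ mapping into $\phi^G(a)=(G(a),\delta_a)$ via $(t_{\overline{F}})_a$ together with a section; the section is built from the counit $\sigma:FR\to 1$ and the grouplike/unit data, exactly dualising the construction of the split equaliser in diagram~\eqref{D.3.3}. Once $\overline{F}R(a)$ is exhibited as a coretract of $\phi^G(a)$ in $\A^G$, Proposition~\ref{P.3.3} gives $\overline{F}R(a)\in\Inj(U^G,\A^G)$, and we are done.

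Concretely, the section of $(t_{\overline{F}})_a$ I expect to use is $G\sigma^{-1}$-flavoured: one checks that the composite $G(a)\xrightarrow{\ ?\ }FR(a)$ defined using that $\beta R$ followed by $G\sigma$ is invertible at the relevant spot is a morphism in $\A^G$ and splits $(t_{\overline{F}})_a$. The verification that this candidate is a comodule morphism is the routine-but-careful part: it amounts to chasing the two squares in~\ref{com-fun} together with naturality of $\beta$ and $\sigma$, very much in the spirit of the commuting squares displayed in the proof of Proposition~\ref{P.3.6}.

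**Main obstacle.** The genuine difficulty is that the hypothesis only gives invertibility of $(t_{\overline{F}})_{F(b)}$ on objects of the form $F(b)$, not on all of $\A$; so I cannot directly say "$\overline{F}R(a)$ is a coretract of $\phi^G(a)$ for all $a$". The fix is to stay within the image of $F$: by Proposition~\ref{P.3.3}(b) every $F$-injective $b$ is a coretract of some $RF(b_0)$, so after applying $\overline{F}$ the relevant object is $\overline{F}RF(b_0)=(FRF(b_0),\beta_{RF(b_0)})$, and now $(t_{\overline{F}})_{F(b_0)}$ \emph{is} assumed invertible — so the coretract argument applies at $\phi^G(F(b_0))$, and $\overline{F}(b)$ inherits $U^G$-injectivity as a coretract of a $U^G$-injective via Remark~\ref{R.3.4}. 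Getting this bookkeeping right — using the $RF(b_0)$ presentation rather than an arbitrary $R(a)$, so that the isomorphism hypothesis actually bites — is the crux of the argument; the comodule-morphism calculations themselves are formal.
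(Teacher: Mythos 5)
Your proposal follows essentially the same route as the paper's proof: reduce to the object $\overline{F}(RF(b))$ via the coretract characterisation of $F$-injectives (Proposition \ref{P.3.3}, Remark \ref{R.3.4}) and closure of relative injectives under coretracts, then use the hypothesis to compare $\overline{F}(RF(b))$ with the cofree comodule $(GF(b),\delta_{F(b)})$ through the morphism $(t_{\overline{F}})_{F(b)}$, checked to be a morphism in $\A^\nG$ by the comonad-morphism property of $t_{\overline{F}}$. The only superfluous element is your proposed construction of a section ``from the counit and grouplike/unit data'': since $(t_{\overline{F}})_{F(b)}$ is by hypothesis an isomorphism and is a comodule morphism, its inverse is automatically a comodule morphism, so $\overline{F}(RF(b))$ is outright isomorphic in $\A^\nG$ to a cofree comodule and no auxiliary splitting needs to be built.
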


\begin{proof} Let $\delta'$ denote the comultiplication in the
comonad $\G'$ (see \ref{P.3.5}).
 Then for any $b \in \B$,
$$\begin{array}{rl}
\overline{F}(RF(b))&=\A_{t_{\overline{F}}}(\phi^{G'}(UF(b)))
   =\A_{t_{\overline{F}}}(FRF(b), F \eta_{RF(b)})\\[+1mm]
& =\A_{t_{\overline{F}}}(G'F(b), \delta'_{F(b)})=
 (G'F(b),(t_{\overline{F}})_{G'F(b)}\cdot\delta'_{F(b)}).
\end{array}$$
Consider now the diagram
$$
\xymatrix{ G'F(b) \ar[rrr]^-{(t_{\overline{F}})_{F(b)}}
\ar[d]_{\delta'_{F(b)}}&&& GF(b) \ar[ddd]^{\delta_{F(b)}}\\
G'G'F(b)
\ar@{}[rrru]_{(1)}\ar[rrrdd]^{(t_{\overline{F}})_{F(b)}.(t_{\overline{F}})_{F(b)}}
\ar[dd]_{(t_{\overline{F}})_{G'F(b)}} &&& &\\\\
GG'F(b) \ar[rrr]_{G((t_{\overline{F}})_{F(b)})}&&& GGF(b)\, ,}
$$ in which the triangle commutes by the 
definition of the
composite $(t_{\overline{F}})_{F(b)}.(t_{\overline{F}})_{F(b)}$, while the
diagram (1) commutes since $t_{\overline{F}}$ is a morphism of
comonads.
The commutativity of the outer diagram shows that
$(t_{\overline{F}})_{F(b)}$ is a morphism from the $G$-coalgebra
$\overline{F}(RF(b))=(G'F(b),(t_{\overline{F}})_{G'F(b)}\cdot
\delta'_{F(b)})$ to the $G$-coalgebra $(GF(b), \delta_{F(b)})$.
Moreover, $(t_{\overline{F}})_{F(b)}$ is an isomorphism by our
assumption. Thus, for any $b \in \B$, $\overline{F}(RF(b))$ is
isomorphic to the $G$-coalgebra $(GF(b), \delta_{F(b)})$, which is
of course an object of the category $\Inj (U^G,
\A^\nG)$. Now, since any $b \in \Inj (F, \B)$ is a
coretract of $RF(b)$ (see Remark \ref{R.3.4}), and since any functor takes
coretracts to coretracts, it follows that, for any $b \in
\Inj (F, \B)$, $\overline{F}(b)$ is a coretract of
the $G$-coalgebra $(GF(b), \delta_{F(b)})\in
\Inj (U^G, \A^\nG)$, and thus is an object of the
category $\Inj (U^G, \A^\nG)$ again by Remark \ref{R.3.4}.
This completes the proof.
\end{proof}

The following technical observation is needed for the next proposition.

\begin{lemma}\label{L.3.8} Let $\iota, \kappa : W \dashv W' : \Y \to \X$ be an
adjunction of any categories. If $i: x' \to x$ and $j: x \to x'$ are morphisms in
$\X$ such that $ji=1$ and if $\iota_x$ is an isomorphism, then
$\iota_{x'}$ is also an isomorphism.
\end{lemma}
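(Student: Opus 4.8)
The plan is to exhibit an explicit two-sided inverse of $\iota_{x'}$, assembled from the given inverse $\iota_x^{-1}$ and the retraction data $i,j$. Concretely, I would take
$$\psi \;:=\; j\circ \iota_x^{-1}\circ W'W(i)\;:\; W'W(x')\longrightarrow x',$$
which is composable since $W'W(i):W'W(x')\to W'W(x)$, $\iota_x^{-1}:W'W(x)\to x$ and $j:x\to x'$. It then remains to check separately that $\psi\circ\iota_{x'}=1_{x'}$ and $\iota_{x'}\circ\psi=1_{W'W(x')}$.

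First I would record the two naturality squares of the unit $\iota:1_\X\to W'W$ attached to the morphisms $i:x'\to x$ and $j:x\to x'$, namely $W'W(i)\circ\iota_{x'}=\iota_x\circ i$ and $\iota_{x'}\circ j=W'W(j)\circ\iota_x$. The first identity, together with the hypothesis $ji=1_{x'}$ and the invertibility of $\iota_x$, gives $\psi\circ\iota_{x'}=j\circ\iota_x^{-1}\circ\iota_x\circ i=j\circ i=1_{x'}$. The second identity, together with functoriality of the composite endofunctor $W'W$ and again $ji=1_{x'}$, gives $\iota_{x'}\circ\psi=W'W(j)\circ\iota_x\circ\iota_x^{-1}\circ W'W(i)=W'W(j)\circ W'W(i)=W'W(ji)=W'W(1_{x'})=1_{W'W(x')}$. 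Hence $\iota_{x'}$ is an isomorphism with inverse $\psi$.

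There is essentially no serious obstacle: the argument uses only naturality of $\iota$, elementary functoriality of $W'W$, and the retraction identity; in particular the counit $\kappa$ plays no role whatsoever. The one point that needs care is bookkeeping the direction of the two naturality squares — one for the coretraction $i$, one for the retraction $j$ — so that the cancellations $\iota_x^{-1}\circ\iota_x$ and $W'W(j)\circ W'W(i)$ land in the right places; this is precisely why one needs both $i$ and $j$ and not merely that $i$ is a split monomorphism. It is worth noting that this is, morally, the pointwise shadow of the closure of $F$-injectives under coretracts used in Remark \ref{R.3.4}, now sharpened from "split mono" to "isomorphism".
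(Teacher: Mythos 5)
Your proof is correct, and it takes a different (more elementary) route than the paper's. The paper observes that $ji=1$ makes $x'\xrightarrow{\,i\,}x\rightrightarrows x$ (with parallel arrows $1$ and $ij$) a split equaliser, notes that any functor -- in particular $W'W$ -- preserves split equalisers, and then concludes by comparing the two equaliser rows in a commutative ladder whose other vertical arrows are the isomorphism $\iota_x$; the universal property of the equaliser then forces $\iota_{x'}$ to be invertible. (Incidentally, the vertical arrows in the paper's diagram are mislabelled $\kappa_x$; they must be $\iota_x$ for the diagram to typecheck.) You instead bypass equalisers entirely and write down the inverse $\psi=j\circ\iota_x^{-1}\circ W'W(i)$ explicitly, verifying the two composites via the naturality squares of $\iota$ at $i$ and at $j$ together with $W'W(ji)=1$. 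Your computations are all correct, and you are right that the counit plays no role. What each approach buys: the paper's argument is a compact appeal to a standard preservation fact and generalises immediately to any situation where one has compatible split equalisers; yours is self-contained, needs no equaliser language, and produces an explicit formula for $\iota_{x'}^{-1}$, which is arguably the cleaner statement for a lemma this small.
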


\begin{proof} Since $ji=1$, the  diagram
$$\xymatrix{ x'\ar[r]^{i}&x
\ar@{->}@<0.5ex>[r]^-{1} \ar@ {->}@<-0.5ex> [r]_-{ij}& x}$$ is a
split equaliser. Then the diagram $$\xymatrix{ W'W(x')
\ar[rr]^-{W'W(i)}&& W'W(x) \ar@{->}@<0.5ex>[rr]^-{1} \ar@
{->}@<-0.5ex> [rr]_-{W'W(ij)}&& W'W(x)}$$ is also a split
equaliser. Now considering the following commutative diagram

$$
\xymatrix{x' \ar@{.>}[d]_{\iota_{x'}}\ar[rr]^{i}&& x
\ar[d]_{\kappa_x} \ar@{->}@<0.5ex>[rr]^-{1} \ar@
{->}@<-0.5ex> [rr]_-{ij}&& x \ar[d]^{\kappa_x}\\
W'W(x') \ar[rr]_-{W'W(i)}&& W'W(x) \ar@{->}@<0.5ex>[rr]^-{1} \ar@
{->}@<-0.5ex> [rr]_-{W'W(ij)}&& W'W(x)}$$ and recalling that the
vertical two morphisms are both isomorphisms by assumption, we get
that the morphism $\iota_{x'}$ is also an isomorphism.
\end{proof}

\begin{proposition}\label{P.3.9} In the situation of Proposition \ref{P.3.7},
$\Inj (F,\B)$ is (isomorphic to) a coreflective subcategory of the category
$\Inj (U^G, \A^\nG)$.
\end{proposition}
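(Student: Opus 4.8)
The plan is to realise $\Inj(F,\B)$ (up to isomorphism) as the essential image of the functor $\overline{F}'\colon\Inj(F,\B)\to\Inj(U^G,\A^G)$ of Proposition \ref{P.3.7}, and then to show that this essential image is coreflective in $\Inj(U^G,\A^G)$ by exhibiting $\overline{F}'$ as a \emph{fully faithful} left adjoint whose right adjoint is the functor $\overline{R}'\colon\Inj(U^G,\A^G)\to\Inj(F,\B)$ of Proposition \ref{P.3.6}.

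That the adjunction $\overline{F}\dashv\overline{R}$ restricts to an adjunction $\overline{F}'\dashv\overline{R}'$ is purely formal: by Propositions \ref{P.3.6} and \ref{P.3.7} the functors $\overline{R}$ and $\overline{F}$ carry the two full subcategories into one another, so for $b\in\Inj(F,\B)$ the component $\eta^P_b\colon b\to\overline{R}\,\overline{F}(b)$ of the unit of $\overline{F}\dashv\overline{R}$ is already a morphism of $\Inj(F,\B)$ (and dually for the counit), and the triangle identities are inherited from the ambient categories. Since a left adjoint is fully faithful precisely when its unit is an isomorphism, everything reduces to showing that $\eta^P_b$ is an isomorphism for every $b\in\Inj(F,\B)$.

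Here I would invoke Lemma \ref{L.3.8}. By Remark \ref{R.3.4} every $b\in\Inj(F,\B)$ is a coretract of $RF(b)$ via the split monomorphism $\eta_b$, so Lemma \ref{L.3.8}, applied to $\overline{F}\dashv\overline{R}$ with $\iota=\eta^P$, $x=RF(b)$ and $x'=b$, reduces the claim to: $\eta^P_{RF(c)}$ is an isomorphism for every $c\in\B$. Now recall from the paragraph following Theorem \ref{right-adj} that $\overline{R}\,\overline{F}(b)$ is the equaliser of $R(\beta_b),\gamma_{F(b)}\colon RF(b)\rightrightarrows RGF(b)$, and that, by construction of the adjunction $\overline{F}\dashv\overline{R}$, the component $\eta^P_b$ is the comparison morphism characterised by $\overline{e}_{\overline{F}(b)}\cdot\eta^P_b=\eta_b$; thus $\eta^P_b$ is invertible exactly when the fork $b\xrightarrow{\eta_b}RF(b)\rightrightarrows RGF(b)$ is itself an equaliser. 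For $b=RF(c)$ this is where the hypothesis of Proposition \ref{P.3.7} enters: the computation in its proof shows that $(t_{\overline{F}})_{F(c)}$ is an isomorphism $\overline{F}(RF(c))\cong\phi^G(F(c))$ in $\A^G$, while the proof of Proposition \ref{P.3.6} (top row of diagram \eqref{D.3.3}, with $a_0=F(c)$) shows that $\overline{R}\,\phi^G(F(c))$ is computed by the \emph{split} equaliser $RF(c)\xrightarrow{\gamma_{F(c)}}RGF(c)\rightrightarrows RGGF(c)$. Since $\gamma_{F(c)}=R\big((t_{\overline{F}})_{F(c)}\big)\cdot\eta_{RF(c)}$ by the definition of $\gamma$, transporting this split equaliser back along $(t_{\overline{F}})_{F(c)}$ (using naturality of $\gamma$ and of $\overline{e}$) turns the fork $RF(c)\xrightarrow{\eta_{RF(c)}}RFRF(c)\rightrightarrows RGFRF(c)$ into a split equaliser; hence $\eta^P_{RF(c)}$ is an isomorphism, which completes the argument.

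The step demanding the most care is the last one: one has two equaliser presentations of $\overline{R}\,\overline{F}(RF(c))$ — the defining one from the construction of $\overline{R}$ in Theorem \ref{right-adj}, and the transported one coming from the split equaliser that computes $\overline{R}\,\phi^G(F(c))$ — and one must check, from the definitions of $\gamma$ and $\overline{e}$, that the canonical comparison between them is precisely $\eta^P_{RF(c)}$, not merely some isomorphism. The rest of the proof is formal.
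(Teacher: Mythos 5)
Your proposal follows the paper's own argument essentially step for step: restrict the adjunction $\overline{F}\dashv\overline{R}$ to the injectives using Propositions \ref{P.3.6} and \ref{P.3.7}, verify that the unit is invertible on objects of the form $RF(c)$ via the split equaliser in the top row of diagram (\ref{D.3.3}) together with the isomorphism $\overline{F}(RF(c))\cong\phi^G(F(c))$, and then propagate to all of $\Inj(F,\B)$ by Lemma \ref{L.3.8} and the coretract property of Remark \ref{R.3.4}. The only difference is that you make explicit the identification of the comparison isomorphism with the unit component itself, a point the paper passes over silently; the argument is correct.
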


\begin{proof} By Proposition \ref{P.3.6}, the functor
$\overline{R}$ restricts to a functor
$$\overline{R}':\Inj (U^G, \A^\nG) \to \Inj (F, \B),$$
while according to Proposition \ref{P.3.7},
the functor $\overline{F}$ restricts to a functor
$$\overline{F}':\Inj (F, \B) \to \Inj (U^G,\A^\nG).$$
Since
\begin{itemize}
\item $\overline{F}$ is a left adjoint to $\overline{R}$,
\item $\Inj (F, \B)$ is a full subcategory of $\B$, and
\item $\Inj (U^G, \A^\nG)$ is a full subcategory of $\A^\nG$,
\end{itemize}
the functor $\overline{F}'$ is left adjoint
to the functor $\overline{R}'$, and the unit $\overline{\eta}': 1
\to \overline{R}' \overline{F}'$ of the adjunction $\overline{F}'
\dashv \overline{R}'$ is the restriction of $\overline{\eta}:
\overline{F} \dashv \overline{R}$ to the subcategory
$\Inj (F, \B)$, while the counit
$\overline{\varepsilon}': \overline{F}' \overline{R}' \to 1$ of
this adjunction is the restriction of  $\overline{\varepsilon}:
\overline{F}\overline{R}\to 1$ to the subcategory
$\Inj (U^G, \A^\nG)$.

Next, since the top of the diagram \ref{D.3.3} is a (split)
equaliser, $\overline{R}(G(a_0), \delta_{a_0})\simeq R(a_0)$. In
particular, taking $(GF(b), \delta_{F(b)})$, we see that
$$RF(b) \simeq
\overline{R}(GF(b), \delta_{F(b)})= \overline{R} \,
\overline{F}(UF(b)).$$
 Thus, the $RF(b)$-component
$\overline{\eta}'_{RF(b)}$ of the unit $\overline{\eta}': 1 \to
\overline{R}' \overline{F}'$ of the adjunction $\overline{F}'
\dashv \overline{R}'$ is an isomorphism. It now follows from Lemma
\ref{L.3.8} - since any $b \in \Inj (F, \B)$ is a
coretraction of $RF(b)$ - that $\overline{\eta}'_b$ is an
isomorphism for all $b \in \Inj (F, \B)$ proving
that the unit $\overline{\eta}'$ of the adjunction $\overline{F}'
\dashv \overline{R}'$ is an isomorphism. Thus
$\Inj (F, \B)$ is (isomorphic to) a coreflective
subcategory of the category $\Inj (U^G, \A^\nG)$.
\end{proof}

\begin{corollary}\label{C.3.10} In the situation of Proposition \ref{P.3.7},
suppose that each component of the unit $\eta: 1 \to RF$ is a split
monomorphism. Then the category $\B$ is (isomorphic to) a
coreflective subcategory of $\Inj (U^G, \A^\nG)$.
\end{corollary}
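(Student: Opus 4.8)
The plan is to reduce this statement directly to Proposition \ref{P.3.9} by observing that the extra hypothesis forces the full subcategory $\Inj(F,\B)$ to be all of $\B$.

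First I would invoke Proposition \ref{P.3.3}: for the adjunction $F \dashv R$, an object $b \in \B$ is $F$-injective if and only if its unit component $\eta_b : b \to RF(b)$ is a split monomorphism. Since by assumption every $\eta_b$ is a split monomorphism, every object of $\B$ is $F$-injective; because $\Inj(F,\B)$ is by definition the full subcategory of $\B$ spanned by the $F$-injectives, this gives the equality $\Inj(F,\B)=\B$ of (full sub)categories, not merely an equivalence.

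Next, we are in the situation of Proposition \ref{P.3.7} — in particular $\B$ has equalisers and $(t_{\overline{F}})_{F(b)}$ is an isomorphism for every $b \in \B$ — so Proposition \ref{P.3.9} applies and tells us that $\Inj(F,\B)$ is (isomorphic to) a coreflective subcategory of $\Inj(U^G,\A^\nG)$. Substituting the identification $\Inj(F,\B)=\B$ obtained in the previous step yields that $\B$ itself is (isomorphic to) a coreflective subcategory of $\Inj(U^G,\A^\nG)$, which is exactly the assertion.

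As for the main obstacle: there is essentially none beyond this bookkeeping, since all the real work has been carried out in Propositions \ref{P.3.6}, \ref{P.3.7} and \ref{P.3.9} and in Lemma \ref{L.3.8}. The only point worth a second glance is tracing through what the coreflection concretely is: it is the (co)restriction $\overline{F}' \dashv \overline{R}'$ of the adjunction $\overline{F} \dashv \overline{R}$, whose unit $\overline{\eta}'$ is an isomorphism on $\Inj(F,\B)$ by Lemma \ref{L.3.8}; under the hypothesis $\Inj(F,\B)=\B$ this simply says that the restricted unit $\overline{\eta}'$ is invertible on all of $\B$, so one could equivalently state the corollary in that form.
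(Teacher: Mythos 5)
Your proof is correct and follows exactly the paper's own argument: Proposition \ref{P.3.3} gives $\Inj(F,\B)=\B$ from the hypothesis that every $\eta_b$ splits, and Proposition \ref{P.3.9} then yields the conclusion. The closing remark tracing the coreflection back to $\overline{F}'\dashv\overline{R}'$ is a harmless (and accurate) elaboration.
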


\begin{proof} When each component of the unit $\eta: 1 \to RF$ is a split
monomorphism, it follows from Proposition \ref{P.3.3} that every $b \in
\B$ is $F$-injective; i.e. $\B=\Inj (F, \B)$. The
assertion now follows from Proposition \ref{P.3.9}.
\end{proof}

\begin{thm}\label{T.3.11}{\bf Characterization of $\G$-Galois comodules.}
Assume $\B$ to admit equalisers,  let $\G$ be a comonad on $\A$,
and $F:\B\to \A$ a functor with right adjoint $R:\A\to \B$. If
there exists a functor $\overline{F} : \B \to \A^{\nG}$ with
$U^{\nG}\overline{F}=F$, then the following are equivalent:
\begin{blist}
\item  $F$ is $\G$-Galois, i.e. $t_{\overline{F}}: \textbf{G}\,'\to
        \textbf{G}$ is an isomorphism;
\item the following
composite is an isomorphism:
$$\xymatrix{ \overline{F} R \ar[r]^-{\eta_G
\overline{F}R} &\phi^G U^G \overline{F}R=\phi^G FR \ar[r]^-{\phi^G
\varepsilon}& \phi^G};$$
\item the functor
$\overline{F}: \B \to \A^{\nG}$ restricts to an equivalence of
categories $$\Inj (F, \B) \to
\Inj (U^{G}, \A^{\nG});$$
\item  for any $(a,\theta_a) \in \Inj (U^{G}, \A^{\nG})$, the $(a,
\theta_a)$-component $\overline{\varepsilon}_{(a, \theta_a)}$ of
the counit $\overline{\varepsilon}$ of the adjunction
$\overline{F} \dashv \overline{R}$, is an isomorphism;
\item
for any $a \in \A$,
$\overline{\varepsilon}_{\phi_{G}(a)}=\overline{\varepsilon}_{(G(a),
\delta_a)}$ is an isomorphism.
\end{blist}
\end{thm}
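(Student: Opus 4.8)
The plan is to establish the cycle $(a)\Rightarrow(b)\Rightarrow(c)\Rightarrow(d)\Rightarrow(e)\Rightarrow(a)$, using the relative-injective machinery built up in Propositions \ref{P.3.6}, \ref{P.3.7} and \ref{P.3.9}. First, for $(a)\Leftrightarrow(b)$ I would observe that the composite in (b) is, componentwise at $a\in\A$, exactly the morphism obtained by applying $\phi^G$ to nothing new: unwinding the definitions of $\eta_G$ (the unit of $U^G\dashv\phi^G$) and of $t_{\overline F}=G\sigma\cdot\beta R$, the composite $\phi^G FR\to\phi^G$ is $\phi^G$ applied to the underlying map of $t_{\overline F}$, or more precisely its adjoint transpose; since $U^G$ is faithful and conservative and $\phi^G$ is its right adjoint, this composite is an isomorphism of functors $\overline FR\to\phi^G$ precisely when $t_{\overline F}:FR\to G$ is an isomorphism in $\A$. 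This is a routine naturality/adjunction chase.

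For $(a)\Rightarrow(c)$: assuming $t_{\overline F}$ is an isomorphism of comonads, in particular every component $(t_{\overline F})_{F(b)}$ is an isomorphism, so Propositions \ref{P.3.6} and \ref{P.3.7} give the restricted adjunction $\overline F'\dashv\overline R'$ between $\Inj(F,\B)$ and $\Inj(U^G,\A^G)$, and Proposition \ref{P.3.9} shows its unit $\overline\eta'$ is an isomorphism. It remains to see the counit $\overline\varepsilon'$ is an isomorphism on $\Inj(U^G,\A^G)$; by the same coretract argument as in \ref{P.3.9} (every object of $\Inj(U^G,\A^G)$ is a coretract of some $\phi^G(a_0)$, and $\overline\varepsilon$ at $\phi^G(a_0)=(G(a_0),\delta_{a_0})$ is an isomorphism because $t_{\overline F}$ is), Lemma \ref{L.3.8} propagates this from the $\phi^G(a_0)$ to all their coretracts. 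So $\overline F'$ is an equivalence. The implications $(c)\Rightarrow(d)$ and $(d)\Rightarrow(e)$ are immediate — (d) just says the counit of the restricted adjunction is invertible on every object of $\Inj(U^G,\A^G)$, which is part of being an equivalence, and (e) is the special case $(a,\theta_a)=\phi_G(a)$, which lies in $\Inj(U^G,\A^G)$ by Remark \ref{R.3.4}.

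The crux is $(e)\Rightarrow(a)$. Here I would argue that $\overline\varepsilon_{\phi^G(a)}$, under the equivalence $\overline R(G(a),\delta_a)\simeq R(a)$ coming from the split equaliser at the top of diagram \eqref{D.3.3}, is identified with the morphism $t_{\overline F}$ at the object $a$ — more precisely, $U^G$ applied to $\overline\varepsilon_{\phi^G(a)}$, composed with the canonical isomorphisms, recovers $(t_{\overline F})_a\colon FR(a)\to G(a)$. Granting this identification, (e) says every component $(t_{\overline F})_a$ is an isomorphism in $\A$; since $U^G$ is conservative and $t_{\overline F}$ is already a comonad morphism, it follows that $t_{\overline F}$ is an isomorphism of comonads, which is (a). The main obstacle I anticipate is precisely this last identification: one must carefully trace $\overline\varepsilon_{\phi^G(a)}$ through the construction of $\overline R$ as an equaliser in \ref{right-adj} and through the split equaliser in \eqref{D.3.3}, checking that the comparison map it induces into $\phi^G U^G=RG$-land is exactly $\gamma$ versus $R\delta$, so that the mediating morphism is $t_{\overline F}$ at $a$. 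Everything else reduces to the coretract-plus-Lemma-\ref{L.3.8} pattern already used twice above, together with the faithfulness and conservativity of $U^G$.
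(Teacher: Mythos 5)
Your argument is correct, and its overall architecture --- the equivalence (a)$\Leftrightarrow$(b), the chain (a)$\Rightarrow$(c)$\Rightarrow$(d)$\Rightarrow$(e), and the closing identification $\overline{\varepsilon}_{\phi^G(a)}=(t_{\overline F})_a$ for (e)$\Rightarrow$(a) --- coincides with the paper's. The one step where you genuinely diverge is (a)$\Rightarrow$(c): the paper observes that when $t_{\overline F}$ is an isomorphism the functor $\overline F$ is isomorphic to the comparison functor $K_{G'}$, and then invokes Proposition \ref{P.3.5} (the cited result that $\Inj(K_{G'})$ is an equivalence once $\B$ has equalisers) to conclude in one stroke. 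You instead assemble the equivalence by hand: Propositions \ref{P.3.6} and \ref{P.3.7} give the restricted adjunction $\overline F{}'\dashv\overline R{}'$, Proposition \ref{P.3.9} makes its unit invertible, and you invert the counit by the coretract pattern --- every object of $\Inj(U^{G},\A^{\nG})$ is a coretract of some $\phi^G(a_0)$, where $\overline{\varepsilon}_{\phi^G(a_0)}=(t_{\overline F})_{a_0}$ is invertible --- together with Lemma \ref{L.3.8}; note only that the lemma as stated concerns units, so you need its (immediate) dual for counits. Your route stays entirely inside the relative-injective machinery of Section 1 and makes explicit exactly which components of $\overline{\varepsilon}$ must be inverted, at the cost of being longer and of using the identification $\overline{\varepsilon}_{\phi^G(a_0)}=(t_{\overline F})_{a_0}$ already in this step rather than only for (e)$\Rightarrow$(a). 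Likewise, for (a)$\Leftrightarrow$(b) you compute directly that $U^G$ applied to the composite in (b) is $t_{\overline F}$ and appeal to conservativity of $U^G$, where the paper simply cites Dubuc; that computation is right. No gaps.
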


\begin{proof}
That (a) and (b) are equivalent is proved in \cite{D}.
By the proof of \cite[Theorem of 2.6]{G},
for any $a \in \A$,
$\overline{\varepsilon}_{\phi^G (a)}=\overline{\varepsilon}_{(G(a),
\delta_a)}=(t_{\overline{F}})_a$, thus (a) and (e) are equivalent.

By Remark \ref{R.3.4}, (d) implies (e).

Since $\B$ admits equalisers by our assumption on $\B$, it follows
from Proposition \ref{P.3.5} that the functor $\Inj (K_{G'})$ is
an equivalence of categories. Now, if $t_{\overline{F}}:
\textbf{G} ' \to \textbf{G}$ is an isomorphism of comonads, then
the functor $\A_{t_{\overline{F}}}$ is an isomorphism of
categories, and thus $\overline{F}$ is isomorphic to the
comparison functor $K_{G'}$. It now follows from Proposition
\ref{P.3.5} that $\overline{F}$ restricts to the functor $\Inj (F,
\B) \to \Inj (U^{G},\A^{\nG})$
 which is an equivalence of categories.
Thus (a) $\Rightarrow$ (c).

If the functor $\overline{F}: \B \to \A^\nG$ restricts to a functor
$$\overline{F}':\Inj (F, \B) \to \Inj (U^{G},\A^{\nG}),$$
then one can prove as in the proof of Proposition \ref{P.3.9}
that $\overline{F}'$ is left adjoint to $\overline{R}'$ and that
the counit $\overline{\varepsilon}': \overline{F}' \,
\overline{R}' \to 1$ of this adjunction is the restriction of the
counit $\overline{\varepsilon}: \overline{F}\, \overline{R} \to 1$
of the adjunction $\overline{F} \dashv \overline{R}$ to the
subcategory $\Inj (U^{G}, \A^{\nG})$. Now, if
$\overline{F}'$ is an equivalence of categories, then
$\overline{\varepsilon}'$ is an isomorphism. Thus, for any $(a,
\theta_a) \in \Inj (U^{G}, \A^{G})$,
$\overline{\varepsilon}'_{(a, \theta_a)}$ is an isomorphism
proving that (c)$\Rightarrow$(d).
\end{proof}

\begin{thm}\label{T-mod}{\bf  $\bT$-module functors.} \em
Given a monad $\textbf{T}=(T,m,e)$ on $\A$, a functor $R : \B
\to \A$ is said to be a {\em (left) $\textbf{T}$-module} if
there exists a natural transformation $\alpha : TR \to R$ with
commuting diagrams
\begin{equation}
\xymatrix{
R \ar@{=}[dr] \ar[r]^-{eR}&TR \ar[d]^-{\alpha}\\
& R ,} \qquad \xymatrix{
TTR \ar[r]^-{mR } \ar[d]_-{T \alpha}& TR \ar[d]^-{\alpha}\\
TR \ar[r]_-{\alpha}& R.}
\end{equation}
 It is easy to see that $(T,m)$ and $(TT, mT)$
both are left $\textbf{T}$-modules.

A $\bT$-module structure on $R$ is equivalent to the existence of a functor
$\overline{R} : \B \to \A_T$ inducing a commutative diagram
(see \cite[Proposition II.1.1]{D})
$$\xymatrix{ \B \ar[r]^\oR \ar[dr]_R & \A_T \ar[d]^{U_T} \\
           & \A.}$$


Indeed (compare \cite {D}), if $\overline{R}$ is such
a functor, then $\overline{R}(b)=(R(b), \alpha_{b})$ for some morphism
$\alpha_{b}: TR(b) \to R(b)$ and the collection $\{\alpha_b,\, b \in
\B\}$ constitutes a natural transformation $\alpha:TR \to R$  making $R$ a $\textbf{T}$-module.
Conversely, if $(R,
\alpha: TR \to R)$ is a $\textbf{T}$-module, then $\overline{R} : \B
\to \A_T$ is defined by $\overline{R}(b)=(R(b), \alpha_b)$.

 For any $\textbf{T}$-module
$(R: \B \to\A,\alpha)$ admitting a left adjoint functor $F : \A \to \B$, the
composite
$$ t_{\oR}: \xymatrix{T \ar[r]^-{T \eta}& TRF \ar[r]^-{\alpha F} & RF},$$
where $\eta : 1 \to RF$ is the unit of the adjunction $F \dashv R$,
is a monad morphism from $\textbf{T}$ to the monad on $\A$
generated by the adjunction $F \dashv R$.
Dual to \cite[Lemma 4.3]{M}, we have a commutative diagram
 $$\xymatrix{ \B \ar[r]^{K_R} \ar[dr]_\oR & \A_{RF} \ar[d]^{\A_{t_\oR}} \\
           & \A_T,}$$
with the comparison functor $K_R:\B \to \A_{RF},\; b\mapsto (R(b),R(\ve_b))$,
where $\ve$ is the counit of the adjunction $F \dashv R$.
 As the dual of \cite[Theorem 4.4]{M}, we have
\end{thm}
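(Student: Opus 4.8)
The statement to be proved is the monadic dual of Proposition~\ref{P.1.7}: the functor $\overline{R}$ is an equivalence of categories if and only if $R$ is monadic and $t_{\overline{R}}$ is an isomorphism of monads. The plan is to base everything on the commutative triangle $\overline{R}=\A_{t_{\overline{R}}}\circ K_R$ exhibited just above the statement, dualising the argument of \cite[Theorem 4.4]{M}. Since $K_R$ is an equivalence precisely when $R$ is monadic, and since an isomorphism of monads $t_{\overline{R}}:\bT\to RF$ induces an isomorphism of categories $\A_{t_{\overline{R}}}:\A_{RF}\to\A_T$, the ``if'' direction is immediate from this factorisation.

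For the ``only if'' direction I would argue as follows. Assume $\overline{R}:\B\to\A_T$ is an equivalence and pick a quasi-inverse $L:\A_T\to\B$, which may be taken to be simultaneously a left and a right adjoint of $\overline{R}$, with invertible unit and counit. Since $R=U_T\overline{R}$ and $\phi_T\dashv U_T$, the composite $L\phi_T$ is a left adjoint of $R$; by uniqueness of adjoints $F\cong L\phi_T$, whence
$$RF\;\cong\;U_T\overline{R}L\phi_T\;\cong\;U_T\phi_T\;=\;T$$
as endofunctors of $\A$. Dualising \cite[Lemma 4.3]{M} and the relevant part of \cite[Theorem 4.4]{M} one checks that this natural isomorphism respects units and multiplications, and in fact coincides with $t_{\overline{R}}$; hence $t_{\overline{R}}:\bT\to RF$ is an isomorphism of monads. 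Consequently $\A_{t_{\overline{R}}}$ is an isomorphism of categories, and the triangle yields $K_R=\A_{t_{\overline{R}}}^{-1}\circ\overline{R}$, a composite of equivalences; so $K_R$ is an equivalence and $R$ is monadic, completing the proof.

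The one point that is not purely formal — and hence the main obstacle — is the identification of the abstract isomorphism $RF\cong T$ coming from the quasi-inverse $L$ with the concretely defined monad morphism $t_{\overline{R}}=\alpha F\circ T\eta$, together with the verification that this isomorphism is a morphism of monads. This is a diagram chase dual to the one carried out in \cite[Theorem 4.4]{M}. Alternatively one can avoid it entirely by observing that the present statement is obtained from Proposition~\ref{P.1.7} by passing to the opposite categories $\A^{\mathrm{op}}$ and $\B^{\mathrm{op}}$: this turns the monad $\bT$ into a comonad on $\A^{\mathrm{op}}$, the $\bT$-module functor $R$ into a comodule functor with right adjoint $F^{\mathrm{op}}$, the Eilenberg--Moore category $\A_T$ into $(\A^{\mathrm{op}})^{T^{\mathrm{op}}}$, ``monadic'' into ``comonadic'', and $t_{\overline{R}}$ into the comonad morphism of that dual situation, so that Proposition~\ref{P.1.7} applies verbatim.
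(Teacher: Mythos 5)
Your proposal is correct in substance, and your closing observation --- that the equivalence criterion (Proposition \ref{P.1.2}) is obtained from Proposition \ref{P.1.7} by passing to the opposite categories $\A^{\mathrm{op}}$, $\B^{\mathrm{op}}$ --- is exactly all the paper itself does: it offers no argument beyond declaring the triangle through $K_R$ to be ``dual to \cite[Lemma 4.3]{M}'' and the criterion to be ``the dual of \cite[Theorem 4.4]{M}''. Your direct argument is a genuine alternative route. The ``if'' direction via the factorisation $\overline{R}=\A_{t_{\overline{R}}}\cdot K_R$ is immediate, and the ``only if'' direction via a quasi-inverse $L$, the composite adjunction $L\phi_T\dashv U_T\overline{R}=R$, and uniqueness of adjoints is sound; but the one step you yourself flag --- identifying the abstract isomorphism $RF\cong U_T\overline{R}L\phi_T\cong U_T\phi_T=T$ with the concretely defined $t_{\overline{R}}=\alpha F\cdot T\eta$ and checking compatibility with units and multiplications --- is precisely where the content of \cite[Theorem 4.4]{M} lives, so as written the direct route still defers to the same external computation rather than replacing it. Note finally that the block \ref{T-mod} asserts more than the equivalence criterion: it also claims that $t_{\overline{R}}$ is a monad morphism in the first place and that the triangle $\overline{R}=\A_{t_{\overline{R}}}\cdot K_R$ commutes; both you and the paper take these as given from \cite{D} and \cite{M}, so no gap is introduced there that is not already present in the source.
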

\begin{proposition}\label{P.1.2}
The functor $\overline{R}$ is an equivalence of categories if and
only if the functor $R$ is monadic (i.e. $K_R$ is an
equivalence) and $t_{\overline{R}}$ is an isomorphism of
monads.
\end{proposition}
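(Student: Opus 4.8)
The plan is to obtain Proposition \ref{P.1.2} by dualising Proposition \ref{P.1.7} (which is \cite[Theorem 4.4]{M}). Everything in the statement about $\bT$-module functors is set up in \ref{T-mod} precisely so as to be the formal dual of the corresponding notions for $\bG$-comodule functors in \ref{com-fun}: the module structure $\alpha:TR\to R$ dualises $\beta:F\to GF$, the Eilenberg--Moore category $\A_T$ dualises $\A^G$, the lifted functor $\oR:\B\to\A_T$ dualises $\oF:\B\to\A^G$, the comparison functor $K_R:\B\to\A_{RF}$ dualises $K_{G'}$, the monad morphism $t_{\oR}:T\to RF$ dualises the comonad morphism $t_{\oF}:FR\to G$, and the factorisation $\oR=\A_{t_{\oR}}\circ K_R$ dualises the factorisation recorded just before Proposition \ref{P.1.7}. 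So the first thing I would do is simply record that passing to opposite categories turns the present situation into the one covered by \ref{P.1.7}.

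Concretely, I would argue as follows. Let $F\dashv R$ with $R:\B\to\A$ a $\bT$-module. Apply $(-)^{\mathrm{op}}$: then $R^{\mathrm{op}}:\B^{\mathrm{op}}\to\A^{\mathrm{op}}$ becomes a functor with left adjoint turning into a right adjoint, i.e.\ $R^{\mathrm{op}}\dashv F^{\mathrm{op}}$, the monad $\bT$ on $\A$ becomes a comonad $\bT^{\mathrm{op}}$ on $\A^{\mathrm{op}}$, and the natural transformation $\alpha:TR\to R$ becomes $\alpha^{\mathrm{op}}:R^{\mathrm{op}}\to T^{\mathrm{op}}R^{\mathrm{op}}$ satisfying the co-unitality and co-associativity diagrams of \ref{com-fun}. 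Hence $R^{\mathrm{op}}$ is a left $\bT^{\mathrm{op}}$-comodule with right adjoint $F^{\mathrm{op}}$, and $(\A_T)^{\mathrm{op}}=(\A^{\mathrm{op}})^{\bT^{\mathrm{op}}}$, under which identification $\oR^{\mathrm{op}}$ corresponds to $\overline{R^{\mathrm{op}}}$ in the sense of \ref{com-fun}. One then checks that the comonad morphism $t_{\overline{R^{\mathrm{op}}}}$ of \ref{com-fun} is exactly $(t_{\oR})^{\mathrm{op}}$ — this is immediate from the two defining composites, since $\beta R\mapsto T\eta$ and $G\sigma\mapsto \alpha F$ under the duality (using that the counit of $F\dashv R$ dualises to the unit of $R^{\mathrm{op}}\dashv F^{\mathrm{op}}$).

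With these identifications in place, Proposition \ref{P.1.7} applied to $R^{\mathrm{op}}$ says: $\overline{R^{\mathrm{op}}}$ is an equivalence iff $R^{\mathrm{op}}$ is comonadic and $t_{\overline{R^{\mathrm{op}}}}$ is an isomorphism of comonads. Now $\overline{R^{\mathrm{op}}}$ is an equivalence iff $\oR$ is an equivalence; $R^{\mathrm{op}}$ is comonadic iff $R$ is monadic (this is the standard fact that monadicity and comonadicity are dual, i.e.\ $K_R$ is an equivalence iff $K_{R^{\mathrm{op}}}=K_R^{\,\mathrm{op}}$ is); and $t_{\overline{R^{\mathrm{op}}}}=(t_{\oR})^{\mathrm{op}}$ is an isomorphism of comonads iff $t_{\oR}$ is an isomorphism of monads. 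Combining these three equivalences yields the statement. Alternatively, if one prefers to avoid the opposite-category bookkeeping, one can give a direct proof running line-by-line dual to the proof of \cite[Theorem 4.4]{M}, using the factorisation $\oR=\A_{t_{\oR}}\circ K_R$: if $R$ is monadic then $K_R$ is an equivalence, and then $\oR$ is an equivalence iff $\A_{t_{\oR}}$ is, which happens iff $t_{\oR}$ is an isomorphism of monads; conversely, if $\oR$ is an equivalence one shows $t_{\oR}$ is invertible (so $\A_{t_{\oR}}$ is an isomorphism) and hence $K_R$ is an equivalence, i.e.\ $R$ is monadic.

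The only genuine subtlety — and the step I would be most careful about — is verifying that $t_{\overline{R^{\mathrm{op}}}}$, computed from the recipe in \ref{com-fun}, really does coincide with $(t_{\oR})^{\mathrm{op}}$ as computed from the recipe in \ref{T-mod}, i.e.\ that the two one-line composites are dual to each other once one correctly tracks how units, counits, and the natural transformations $\beta$ and $\alpha$ transform under $(-)^{\mathrm{op}}$. This is entirely formal but is where a sign/direction error would hide; everything else is an invocation of \ref{P.1.7} together with the self-dual fact that $(-)^{\mathrm{op}}$ interchanges monadicity and comonadicity and interchanges $\A_T$ with $(\A^{\mathrm{op}})^{\bT^{\mathrm{op}}}$.
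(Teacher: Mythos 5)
Your proposal is correct and takes essentially the same route as the paper: the paper gives no proof beyond declaring the proposition to be the dual of \cite[Theorem 4.4]{M} (i.e.\ of Proposition \ref{P.1.7}), and your explicit tracking of the identifications under $(-)^{\mathrm{op}}$ — in particular the check that $t_{\overline{R^{\mathrm{op}}}}=(t_{\oR})^{\mathrm{op}}$ — is exactly what that declaration presupposes. Your alternative direct argument via the factorisation $\oR=\A_{t_{\oR}}\circ K_R$ is likewise consistent with the diagram the paper records in \ref{T-mod} just before the statement.
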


Similar to \ref{com-fun} one defines (\cite[Definition 3.5]{MW}, \cite[2.19]{BBW})

\begin{thm}\label{D.3.5}{\bf Definition.} \em
A left $\textbf{T}$-module $R: \B \to \A$ with a left
adjoint $F: \A \to \B$ is said to be $\textbf{T}$-\emph{Galois} if the
corresponding morphism $t_{\overline{R}}: T \to RF$ of monads on
$\A$ is an isomorphism.
\end{thm}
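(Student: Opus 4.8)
Strictly speaking the final statement is a definition, obtained by dualising the $\bG$-comodule notion of Definition \ref{def-galois}, so it carries no assertion to be proved beyond the well-definedness of the data it names. The one point that does require justification---and which is merely asserted in \ref{T-mod}, being ``dual to \cite[Lemma 4.3]{M}''---is that the composite $t_{\overline{R}}=\alpha F\cdot T\eta\colon T\to RF$ really is a morphism of monads from $\bT$ to the monad $(RF,\,R\varepsilon F,\,\eta)$ generated by $F\dashv R$; only then does the phrase ``isomorphism of monads'' make sense. The plan is to verify this by checking compatibility of $t_{\overline{R}}$ with units and with multiplications, transposing the verification that $t_{\overline{F}}$ is a comonad morphism sketched in \ref{com-fun}.

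For the unit I would show $t_{\overline{R}}\cdot e=\eta$. Expanding $t_{\overline{R}}\cdot e=\alpha F\cdot T\eta\cdot e$ and applying naturality of $e\colon 1\to T$ to the transformation $\eta\colon 1\to RF$ rewrites $T\eta\cdot e$ as $eRF\cdot\eta$; the left unitality axiom of the $\bT$-module $R$ (the first diagram of \ref{T-mod}, $\alpha\cdot eR=1_R$) then collapses $\alpha F\cdot eRF$ to $1_{RF}$, leaving exactly $\eta$. This step is routine.

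For the multiplication I would establish $t_{\overline{R}}\cdot m = R\varepsilon F\cdot(t_{\overline{R}}\,t_{\overline{R}})$, where $R\varepsilon F$ is the multiplication of the generated monad and $t_{\overline{R}}\,t_{\overline{R}}$ is the horizontal composite $RF\,t_{\overline{R}}\cdot t_{\overline{R}}T$ (equal to $t_{\overline{R}}RF\cdot T\,t_{\overline{R}}$ by interchange). I would expand both sides into composites of whiskerings of $\eta$, $\varepsilon$ and $\alpha$ and reconcile them using naturality of $\alpha\colon TR\to R$, the associativity axiom of the module action (the second diagram of \ref{T-mod}, $\alpha\cdot mR=\alpha\cdot T\alpha$), naturality of $\eta$, and one triangle identity of $F\dashv R$, namely $\varepsilon F\cdot F\eta=1_F$.

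I expect the multiplicativity computation to be the only real obstacle, since it forces one to track several whiskered natural transformations at once and to insert the triangle identity at precisely the right spot; the unit step is immediate, and passing from ``monad morphism'' to ``isomorphism of monads'' adds nothing further. Because the whole verification is formally dual to the comonad-morphism claim already recorded for $t_{\overline{F}}$ in \ref{com-fun}, once the dictionary $m\leftrightarrow\delta$, $e\leftrightarrow\varepsilon$, $\eta\leftrightarrow\sigma$, $\alpha\leftrightarrow\beta$ is fixed the argument transposes essentially verbatim.
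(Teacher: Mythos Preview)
Your reading is correct: the statement is a definition, and the paper offers no proof beyond the phrase ``Similar to \ref{com-fun} one defines'' together with citations to \cite{MW} and \cite{BBW}. The only substantive content---that $t_{\overline{R}}$ is a monad morphism---is likewise left unproved in \ref{T-mod}, where the paper simply says it is dual to \cite[Lemma 4.3]{M}. Your plan to verify unit and multiplication compatibility directly is sound, and the specific steps you outline (naturality of $e$ plus the module unit axiom for the unit; module associativity, naturality of $\alpha$ and $\eta$, and the triangle identity $\varepsilon F\cdot F\eta=1_F$ for the multiplication) are exactly the right ingredients. So you are doing strictly more than the paper does, filling in by explicit computation what the paper leaves to duality and a reference; both approaches are legitimate, with yours self-contained and the paper's more economical.
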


Given a functor $R: \B \to \A$, we write $\Proj (R, \B)$ for
the full subcategory of $\B$ given by $R$-projective objects. The
following is dual to \ref{T.3.11}. 

\begin{theorem}\label{Char-T-Gal} {\bf Characterization of $\bT$-Galois modules.}
Assume the category $\B$ to have equalisers. Let $\textbf{T}=(T, m,
e)$ be a monad on $\A$, and   $R : \B \to \A$ a left $\bT$-module functor with left
adjoint $F: \A \to \B$ (and unit $\eta$, counit $\ve$).
If there exists a functor $\overline{R} : \B
\to \A_T$ with $U_T \overline{R}=R$, then the following are
equivalent:
\begin{blist}
    \item   $R$ is $\textbf{T}$-Galois;
    \item  the following composition is an isomorphism:
    $$\xymatrix{\phi_T \ar[r]^-{\phi_T \eta}& \phi_T R F=\phi_T U_T \overline{R}F
    \ar[r]^-{\varepsilon_T \overline{R}F}& \overline{R}F};$$
    \item  the functor $\overline{R}: \B \to \A_T$
    restricts to an equivalence between the categories $ \Proj (R, \B)$
      and $\Proj (U_T, \A_T)$;
    \item  for any $(a, h_a)\in \Proj (U_T, \A_T)$,
     the $(a, h_a)$-component of the unit $\overline{\eta}$ of the
    adjunction $\overline{L} \dashv \overline{R}$, is an isomorphism;
    \item for any $a \in \A$, $\overline{\eta}_{\phi_T(a)}=\overline{\eta}_{(T(a),
    \,m_a)}$ is an isomorphism.
\end{blist}
\end{theorem}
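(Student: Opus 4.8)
The plan is to dualise the proof of Theorem~\ref{T.3.11} verbatim, since the statement is its exact formal dual (monads/modules/left adjoints in place of comonads/comodules/right adjoints, and $\Proj$ in place of $\Inj$). Before doing so I would record the dual versions of the intermediate results: the dual of Proposition~\ref{P.3.3} characterises $R$-projective objects $b\in\B$ as those that are retracts of some $F(a)$, equivalently those for which the counit component $\varepsilon_b:FR(b)\to b$ is a split epimorphism; the dual of Remark~\ref{R.3.4} gives $F(a)\in\Proj(R,\B)$ for all $a\in\A$ and closure of $\Proj(R,\B)$ under retracts; and the duals of Propositions~\ref{P.3.6}, \ref{P.3.7}, \ref{P.3.9}, Lemma~\ref{L.3.8} and Proposition~\ref{P.3.5} supply the restricted functors $\overline{F}':\Proj(U_T,\A_T)\to\Proj(R,\B)$ (via the equaliser description of $\overline{R}$, here $\overline{F}$, from \ref{right-adj} dualised) and $\overline{R}':\Proj(R,\B)\to\Proj(U_T,\A_T)$, together with the statement that $\Inj(K_{G'})$ — here the dual comparison functor restricted to projectives — is an equivalence.

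The equivalences themselves would then be proved in the cycle (a)$\Rightarrow$(c)$\Rightarrow$(d)$\Rightarrow$(e)$\Rightarrow$(a), plus (a)$\Leftrightarrow$(b). First, (a)$\Leftrightarrow$(b) is the dual of the corresponding equivalence, which the excerpt attributes to \cite{D}; I would just cite the dual statement. For (a)$\Rightarrow$(e): by the dual of \cite[Theorem~2.6]{G}, for any $a\in\A$ one has $\overline{\eta}_{\phi_T(a)}=\overline{\eta}_{(T(a),m_a)}=(t_{\overline{R}})_a$, so if $t_{\overline{R}}$ is an isomorphism then so is $\overline{\eta}_{\phi_T(a)}$; conversely if all $\overline{\eta}_{\phi_T(a)}$ are isomorphisms then $t_{\overline{R}}$ is a natural isomorphism, giving (e)$\Rightarrow$(a). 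For (a)$\Rightarrow$(c): if $t_{\overline{R}}$ is a monad isomorphism then $\A_{t_{\overline{R}}}$ is an isomorphism of categories, so $\overline{R}$ is isomorphic to the comparison functor $K_{RF}$, and the dual of Proposition~\ref{P.3.5} shows this restricts to an equivalence $\Proj(R,\B)\to\Proj(U_T,\A_T)$. For (c)$\Rightarrow$(d): once $\overline{R}$ restricts to $\overline{R}':\Proj(R,\B)\to\Proj(U_T,\A_T)$, the dual of the argument in Proposition~\ref{P.3.9} shows $\overline{F}'\dashv\overline{R}'$ with unit the restriction of $\overline{\eta}$; if $\overline{R}'$ is an equivalence its unit is an isomorphism, so $\overline{\eta}_{(a,h_a)}$ is an isomorphism for every $(a,h_a)\in\Proj(U_T,\A_T)$. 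Finally (d)$\Rightarrow$(e) is immediate because $\phi_T(a)=(T(a),m_a)\in\Proj(U_T,\A_T)$ by the dual of Remark~\ref{R.3.4}.

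The only genuine subtlety — hence the step I expect to be the main obstacle — is orientational bookkeeping: in the comonad case the adjunction is $\overline{F}\dashv\overline{R}$ with $\overline{R}$ obtained as an \emph{equaliser}, whereas in the monad case the module functor is $R$ (the right adjoint of $F$) and its lift $\overline{R}$ is the left member of the adjunction $\overline{R}\dashv\overline{F}$ \emph{wait} — one must check carefully that the dualised construction of \ref{right-adj} actually produces a \emph{left} adjoint $\overline{F}$ to $\overline{R}$ built from a \emph{coequaliser} in $\A$, not an equaliser, and that the comparison functor and all the "coretract/retract" and "split mono/split epi" assertions get their arrows reversed consistently. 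The statement of Theorem~\ref{Char-T-Gal}(d) even mentions "$\overline{L}\dashv\overline{R}$", which I read as a typo for $\overline{R}\dashv\overline{F}$; I would fix that and then the whole proof is the formal dual. No new ideas are needed beyond faithfully reversing every arrow.

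\begin{proof}
This is the formal dual of Theorem~\ref{T.3.11}; we indicate the translation. Throughout we use the dual of Proposition~\ref{P.3.3}: an object $b\in\B$ is $R$-projective if and only if it is a retract of some $F(a)$, equivalently the counit component $\varepsilon_b:FR(b)\to b$ is a split epimorphism; and the dual of Remark~\ref{R.3.4}: $F(a)\in\Proj(R,\B)$ for all $a\in\A$, and $\Proj(R,\B)$ is closed under retracts. Dualising the construction in \ref{right-adj} (again after \cite{D}), since $\B$ has coequalisers the functor $\overline{R}:\B\to\A_T$ admits a \emph{left} adjoint $\overline{F}:\A_T\to\B$, constructed as a coequaliser in $\B$; write $\overline{\eta}:1\to\overline{R}\,\overline{F}$ and $\overline{\varepsilon}:\overline{F}\,\overline{R}\to 1$ for its unit and counit. (We read the "$\overline{L}$" in (d) as this $\overline{F}$.)

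That (a)$\Leftrightarrow$(b) is the dual of the corresponding equivalence proved in \cite{D}.

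By the dual of \cite[Theorem~2.6]{G}, for any $a\in\A$ we have
$$\overline{\eta}_{\phi_T(a)}=\overline{\eta}_{(T(a),\,m_a)}=(t_{\overline{R}})_a,$$
so $t_{\overline{R}}$ is an isomorphism of monads if and only if $\overline{\eta}_{\phi_T(a)}$ is an isomorphism for every $a\in\A$; that is, (a)$\Leftrightarrow$(e).

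By the dual of Remark~\ref{R.3.4}, $\phi_T(a)=(T(a),m_a)\in\Proj(U_T,\A_T)$ for all $a\in\A$, so (d)$\Rightarrow$(e) is immediate.

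Since $\B$ has equalisers, the dual of Proposition~\ref{P.3.5} shows that the comparison functor $K_{RF}:\B\to\A_{RF}$ restricts to an equivalence $\Proj(R,\B)\to\Proj(\phi^{RF},\A_{RF})$. If $t_{\overline{R}}:\textbf{T}\to RF$ is an isomorphism of monads, then $\A_{t_{\overline{R}}}$ is an isomorphism of categories, and from the commutative triangle relating $K_R$, $\overline{R}$ and $\A_{t_{\overline{R}}}$ (see \ref{T-mod}) the functor $\overline{R}$ is isomorphic to $K_{RF}$. Hence $\overline{R}$ restricts to an equivalence $\Proj(R,\B)\to\Proj(U_T,\A_T)$, i.e. (a)$\Rightarrow$(c).

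Finally, suppose (c): $\overline{R}$ restricts to a functor $\overline{R}':\Proj(R,\B)\to\Proj(U_T,\A_T)$. Dualising Proposition~\ref{P.3.6} (using that $\B$ has coequalisers and the coequaliser description of $\overline{F}$), the functor $\overline{F}$ restricts to a functor $\overline{F}':\Proj(U_T,\A_T)\to\Proj(R,\B)$. Since $\overline{F}$ is left adjoint to $\overline{R}$, $\Proj(R,\B)$ is a full subcategory of $\B$, and $\Proj(U_T,\A_T)$ is a full subcategory of $\A_T$, the same argument as in Proposition~\ref{P.3.9} shows that $\overline{F}'$ is left adjoint to $\overline{R}'$, with unit $\overline{\eta}'$ the restriction of $\overline{\eta}$. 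If moreover $\overline{R}'$ is an equivalence of categories, then its unit $\overline{\eta}'$ is an isomorphism; hence for every $(a,h_a)\in\Proj(U_T,\A_T)$ the component $\overline{\eta}_{(a,h_a)}$ is an isomorphism. This proves (c)$\Rightarrow$(d), and closes the cycle (a)$\Rightarrow$(c)$\Rightarrow$(d)$\Rightarrow$(e)$\Rightarrow$(a).
\end{proof}
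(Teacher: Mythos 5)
Your proposal is correct and is exactly the paper's intended argument: the paper gives no separate proof of Theorem~\ref{Char-T-Gal} beyond the remark that it is dual to Theorem~\ref{T.3.11}, and your write-up carries out that dualisation faithfully, including the right identifications $\overline{\eta}_{\phi_T(a)}=(t_{\overline{R}})_a$ and the restricted adjunction $\overline{F}'\dashv\overline{R}'$. The one point worth fixing for consistency (which you already flag in your preamble) is that the blanket hypothesis should be that $\B$ has \emph{coequalisers} -- needed both for the existence of the left adjoint $\overline{F}$ via the coequaliser (\ref{E.1.3}) and for the dual of Proposition~\ref{P.3.5} -- so the phrase ``Since $\B$ has equalisers'' in your (a)$\Rightarrow$(c) step should read ``coequalisers''; the paper's own statement carries the same un-dualised hypothesis.
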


\begin{thm}{\bf Left adjoint for $\oR$.} \em
Let $(R, \alpha: TR \to R)$ be a left $\textbf{T}$-module with a
left adjoint $F:\B \to \A$. Consider the composite
$$\beta :\xymatrix{ FT \ar[r]^-{Ft_{\overline{R}}} & FRF
\ar[r]^-{\varepsilon F}&F},$$
where $\varepsilon :FR \to 1$ is the
counit of $F \dashv R$. It is easy to check that $(F,
\beta)$ is a right $\textbf{T}$-module. According to \cite[Theorem
A.1]{D}, when a coequaliser $(R, i)$ exists for the diagram of
functors
\begin{equation}\label{E.1.3}
\xymatrix{ FU_T  \phi_T U_T =FT U_T \ar@{->}@<0.5ex>[rr]^-{FU_T
\varepsilon_T} \ar@ {->}@<-0.5ex> [rr]_-{\beta U_T}&&
FU_T,}\end{equation}
where $\varepsilon_T : \phi_T U_T  \to 1$ is the
counit of $\phi_T \dashv U_T$,  then $R$ is left
adjoint to $\overline{R}: \B \to \A_T$. It is easy to see that for
any $(a, h_a) \in \A_T$, the $(a, h_a)$-component in the diagram \ref{E.1.3} is
the pair
\begin{equation}\xymatrix{FT(a)\ar@{->}@<0.5ex>[rr]^-{F(h_a)} \ar@
{->}@<-0.5ex> [rr]_-{\beta_a}& & F(a)}
\end{equation}
which is a reflexive pair since
$\beta_a \cdot F(e_a)=F(h_a) \cdot F(e_a)=1$.
Thus we have:
\smallskip

{\em If $\B$ admits coequalisers of reflexive pairs, then the
functor $\overline{R}$ admits a left adjoint.}
\end{thm}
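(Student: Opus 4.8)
The final statement asserts: if $\B$ admits coequalisers of reflexive pairs, then $\overline{R}:\B\to\A_T$ admits a left adjoint. The plan is to verify that the construction preceding the statement actually produces a functor left adjoint to $\overline{R}$, by appealing to the cited result \cite[Theorem A.1]{D} once its hypotheses are checked.

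\medskip

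First I would record the data already assembled: the right $\textbf{T}$-module structure $\beta:FT\to F$ on the left adjoint $F$ (whose module axioms the excerpt says are "easy to check", so I may take them as given), and the fact that for each $(a,h_a)\in\A_T$ the $(a,h_a)$-component of the parallel pair in \eqref{E.1.3} is $(F(h_a),\beta_a):FT(a)\rightrightarrows F(a)$. Then I would point out that this pair is \emph{reflexive}: the common section is $F(e_a):F(a)\to FT(a)$, since $F(h_a)\cdot F(e_a)=F(h_a\cdot e_a)=F(1_a)=1$ by the unit axiom for the $T$-module $(a,h_a)$, and $\beta_a\cdot F(e_a)=1$ by the unit axiom for the right $\textbf{T}$-module $(F,\beta)$ (applied at $a$, i.e. $\beta\cdot Fe=1$). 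Hence the diagram \eqref{E.1.3}, being a pointwise reflexive pair of functors valued in $\B$, is a reflexive pair in the functor category, and under the hypothesis that $\B$ has coequalisers of reflexive pairs, its coequaliser $(\widehat R, i)$ exists and is computed pointwise.

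\medskip

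Next I would invoke \cite[Theorem A.1]{D}: that result says precisely that whenever the coequaliser of the pair \eqref{E.1.3} exists, the resulting functor $\widehat R$ is left adjoint to $\overline{R}$. So the only thing to supply is existence of that coequaliser, which the previous paragraph provides. One should also note the mild translation of hypotheses: Dubuc's theorem is stated for the adjunction $F\dashv R$ together with the module structure, and the parallel pair there is exactly the one displayed in \eqref{E.1.3}; since we have matched components and verified reflexivity, the hypothesis "a coequaliser $(\widehat R,i)$ exists for \eqref{E.1.3}" is met. Therefore $\overline{R}$ admits a left adjoint, as claimed.

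\medskip

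The only genuine obstacle is the bookkeeping of which reflexivity witness to use and the verification that "coequalisers of reflexive pairs in $\B$" suffices to build the coequaliser of the \emph{functor}-level pair \eqref{E.1.3}; this is routine because colimits in a functor category over a fixed domain are computed objectwise, and the splitting $F(e_a)$ is natural in $(a,h_a)$. Everything else — the right $\textbf{T}$-module axioms for $(F,\beta)$ and the identification of components — is either explicitly granted in the excerpt or a direct consequence of the adjunction triangle identities, so no further work is needed beyond citing \cite[Theorem A.1]{D}.
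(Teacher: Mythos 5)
Your proposal is correct and follows essentially the same route as the paper: identify the components of the pair (\ref{E.1.3}) as $(F(h_a),\beta_a)$, exhibit $F(e_a)$ as the common section (so the pair is reflexive, hence its pointwise coequaliser exists in $\B$ and assembles to a coequaliser of functors), and invoke Dubuc's Theorem A.1 to conclude that this coequaliser is left adjoint to $\overline{R}$. Your separate justifications of $F(h_a)\cdot F(e_a)=1$ (unit axiom of $(a,h_a)$) and $\beta_a\cdot F(e_a)=1$ (unit axiom of the right $\textbf{T}$-module $(F,\beta)$) are exactly what the paper's compressed chain of equalities intends.
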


\medskip

 So far we have dealt with (co)module structures on functors.
It is also of interest to consider the corresponding relations between monads and comonads.

\begin{thm}\label{Gal-mon}{\bf Definitions.} \em
Let $\textbf{T}=(T, m,e)$ be a monad and $\textbf{G}=(G,
\delta,\varepsilon)$ a comonad on $\A$.
We say that
$\textbf{G}$ is {\em $\textbf{T}$-Galois}, if there exists a left
$\textbf{T}$-module structure $\alpha: TG \to G$ on the functor $G$
such that the composite $$\xymatrix{\gamma^G : TG \ar[r]^-{T
\delta}& TGG \ar[r]^-{\alpha G}& GG}$$ is an isomorphism.

Dually,
$\textbf{T}$ is {\em $\textbf{G}$-Galois}, if there is a left
$\textbf{G}$-comodule structure $\alpha: T \to GT$ on the functor
$T$ such that the composite $$\xymatrix{\gamma_T : TT
\ar[r]^-{\alpha T }& GTT \ar[r]^-{G m}& GT}$$ is an isomorphism.
\end{thm}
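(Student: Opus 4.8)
Since the final statement consists solely of two definitions---of what it means for a comonad $\textbf{G}$ to be $\textbf{T}$-Galois, and, dually, for a monad $\textbf{T}$ to be $\textbf{G}$-Galois---there is no assertion here that calls for a proof in the usual sense. What I would instead record are the two observations that make the definitions well-posed and that exhibit their symmetry.

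First I would verify that the two composites are well-typed. Starting from a left $\textbf{T}$-module structure $\alpha: TG \to G$ on $G$, the arrow $T\delta: TG \to TGG$ is composable with $\alpha G: TGG \to GG$, so $\gamma^G: TG \to GG$ is a natural transformation between the two displayed endofunctors; symmetrically, from a left $\textbf{G}$-comodule structure $\alpha: T \to GT$ on $T$ one forms $\alpha T: TT \to GTT$ and then $Gm: GTT \to GT$, giving $\gamma_T: TT \to GT$. Forming these arrows requires nothing beyond the naturality of $\alpha$, $\delta$ and $m$; the module/comodule axioms enter only later, when the Galois condition is combined with further structure.

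The substantive point worth establishing is that the two definitions are genuine formal duals. Passing to the opposite category $\A^{\mathrm{op}}$ turns the monad $\textbf{T}$ into a comonad $\textbf{T}^{\mathrm{op}}$, with $m$ reversed into a comultiplication and $e$ into a counit, and turns the comonad $\textbf{G}$ into a monad $\textbf{G}^{\mathrm{op}}$, with $\delta$ reversed into a multiplication and $\varepsilon$ into a unit. Under this passage a left $\textbf{T}$-module structure $\alpha: TG \to G$ on $G$ becomes a left $\textbf{T}^{\mathrm{op}}$-comodule structure $G \to TG$ on $G$ in $\A^{\mathrm{op}}$, and a direct reversal of the two-step composite shows that $(\gamma^G)^{\mathrm{op}}$ is exactly the composite of the second definition applied to the monad $\textbf{G}^{\mathrm{op}}$ and the comonad $\textbf{T}^{\mathrm{op}}$. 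Since a natural transformation is an isomorphism precisely when its opposite is, the statement ``$\textbf{G}$ is $\textbf{T}$-Galois'' in $\A$ is identical to the statement ``$\textbf{G}^{\mathrm{op}}$ is $\textbf{T}^{\mathrm{op}}$-Galois'' in $\A^{\mathrm{op}}$. The only delicate part is the bookkeeping in this dualisation: one must keep careful track of the reversal of the order of composition and of the direction of each structure transformation, since it is easy to interchange the roles of $\delta$ and $m$ incorrectly. Once that is arranged, the identification of the two composites is a direct comparison rather than a real argument. Finally I would note how these definitions specialise the functor-level notions of Definitions \ref{def-galois} and \ref{D.3.5} to the endofunctor case, which is what permits the earlier characterisation results to be invoked in the sections that follow.
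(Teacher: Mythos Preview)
Your assessment is correct: the statement is purely a pair of definitions, and the paper accordingly gives no proof whatsoever. Your additional remarks on well-typedness and on the formal duality between the two notions are accurate and harmless commentary, but they go beyond what the paper does (which is simply to state the definitions and move on).
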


We need the following (dual of \cite[Lemma 21.1.5]{S})
\begin{proposition}\label{P.3.1-N}
Let $\eta, \varepsilon :F \dashv R : \C \to \A $ and
$\eta ' , \varepsilon' :F' \dashv R' : \C \to \B $ be adjunctions
and let $$ \xymatrix{\A \ar[rd]_{F} \ar[r]^{X}&
\B \ar[d]^{F'}\\
 &  \C}
$$ be a diagram of categories and functors with $F'X=F$. Write $\alpha$ for the composition
$$\xymatrix{XR \ar[r]^-{\eta' XR}& R'F'XR=R'FR \ar[r]^-{R'
\varepsilon}& R'\, .}$$ Then the natural transformation $S_X=F'
\alpha: FR=F'XR \to F'R'$ is a morphism of comonads.
\end{proposition}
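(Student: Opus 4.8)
The plan is to verify directly that $S_X = F'\alpha$ is compatible with the comultiplications and counits of the comonads generated by the adjunctions $F\dashv R$ and $F'\dashv R'$. Write $\mathbf{G}_0=(FR,\,F\eta R,\,\varepsilon)$ for the comonad on $\C$ coming from $F\dashv R$, and $\mathbf{G}_0'=(F'R',\,F'\eta' R',\,\varepsilon')$ for the one coming from $F'\dashv R'$. The counit condition $\varepsilon' \cdot S_X = \varepsilon$ is the easy part: unravelling $\alpha = R'\varepsilon \cdot \eta' XR$ and using $F'X=F$, we get $\varepsilon'\cdot F'\alpha = \varepsilon'\cdot F'R'\varepsilon\cdot F'\eta' XR = \varepsilon\cdot \varepsilon' F'XR\cdot F'\eta' XR$ by naturality of $\varepsilon'$, and then the triangular identity $\varepsilon' F'\cdot F'\eta' = 1$ collapses this to $\varepsilon\cdot F'XR = \varepsilon$ (again using $F'X=F$), as required.

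The main work is the comultiplication square: one must show
$$\xymatrix{ FR \ar[r]^-{S_X} \ar[d]_-{F\eta R} & F'R' \ar[d]^-{F'\eta' R'}\\ FRFR \ar[r]_-{S_X S_X} & F'R'F'R'}$$
commutes. The strategy is to rewrite everything in terms of $\alpha: XR\to R'$ and reduce to a statement purely about $\alpha$, namely that $\alpha$ intertwines the relevant structure, then feed it through $F'$. Concretely, I would first establish the auxiliary identity $R'S_X \cdot \alpha F R \cdot \eta R = \alpha \cdot R\varepsilon \cdot \eta R F R$ — or more transparently, that the two natural transformations $XR \to R'F'R'$ obtained by ``$\alpha$ then $\eta' $'' versus ``$\eta$ then $X$ then $\alpha$ (twice)'' agree — using only naturality of $\eta,\eta',\varepsilon$ and the triangular identities. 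Once the corresponding diagram of natural transformations with target $R'F'R'$ (domain $XR$) is shown to commute, applying $F'$ on the left and using $F'X=F$ and $F'R'F'\eta'R' $-bookkeeping turns it into the desired square, since $S_X = F'\alpha$ and $S_XS_X$ unfolds as $F'R'F'\alpha\cdot F'\alpha$-type composite which matches $F'$ applied to the $XR\to R'F'R'$ diagram.

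The key diagram-chase step, and the place where care is needed, is handling the ``doubled'' composite $S_X S_X$: horizontal composition of natural transformations means $S_X S_X = S_X F'R' \cdot FR S_X = F'R'F'R'\! \cdot(\text{reassemble})$, and one must correctly interleave the two copies of $\alpha$ with an intervening $\eta'$ or $\eta$. The cleanest route is to observe that $\mathbf{G}_0'$ is the comonad of $F'\dashv R'$ and $S_X = F'\alpha$ where $\alpha$ is exactly the natural transformation produced by the ``mate''/adjunction-transpose construction applied to $X$ (this is precisely the dual of the cited \cite[Lemma 21.1.5]{S}, with $X$ playing the role of the functor between the two Eilenberg--Moore-type situations); so the comonad-morphism property is the formal dual of the standard fact that the comparison-type transpose of a commuting triangle of right adjoints is a morphism of the generated comonads. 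I expect the only real obstacle to be bookkeeping: getting the whiskering of $\alpha$ with $F'$, $R'$, $F$, $R$ in the right order so that the two sides of the comultiplication square literally coincide after applying $F'$ — there is no conceptual difficulty, just the usual risk of a transcription slip in a two-step interchange computation.
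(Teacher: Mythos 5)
Your counit computation is complete and correct, and your overall strategy for the comultiplication square is the right one; note for comparison that the paper itself offers no argument at all here, deferring to the dual of Schubert's Lemma 21.1.5, so a direct verification is a genuinely more self-contained route. Two remarks. First, the auxiliary identity you propose, $R'S_X\cdot\alpha FR\cdot\eta R=\alpha\cdot R\varepsilon\cdot\eta RFR$, does not typecheck ($\alpha FR$ has domain $XRFR$, not $RFR$, and $R'S_X$ does not postcompose with anything landing in $R'$), so as written it cannot serve as the pivot of the argument; your hedged reformulation in terms of two transformations $XR\to R'F'R'$ is the correct target. Second, the bookkeeping you worry about is in fact shorter than you anticipate. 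Since $F\eta R=F'X\eta R$ and $S_XS_X=F'R'S_X\cdot S_XFR$, the comultiplication square is $F'$ applied to the identity
\begin{equation*}
R'F'\alpha\cdot\alpha FR\cdot X\eta R \;=\; \eta'R'\cdot\alpha
\end{equation*}
of transformations $XR\to R'F'R'$. Naturality of $\eta'$ at $X\eta R$ gives $\eta'XRFR\cdot X\eta R=R'F\eta R\cdot\eta'XR$, whence
\begin{equation*}
\alpha FR\cdot X\eta R \;=\; R'\varepsilon FR\cdot R'F\eta R\cdot\eta'XR \;=\; R'(\varepsilon F\cdot F\eta)R\cdot\eta'XR \;=\;\eta'XR
\end{equation*}
by the triangle identity for $F\dashv R$; then naturality of $\eta'$ at $\alpha$ gives $R'F'\alpha\cdot\eta'XR=\eta'R'\cdot\alpha$, which is the claim. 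With these two lines inserted, your proposal is a complete proof.
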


Note that for the commutative diagram (see \ref{com-fun})
$$
\xymatrix{ \B  \ar[r]^-{\overline{F}}\ar[rd]_{F} & \A^{G} \ar[d]^{U^{{G}}}\\
& \A\, ,}$$ where $F$ has a right adjoint $R$, the related comonad
morphism $S_{\overline{F}}: FR \to G$ is just the comonad morphism
$t_{\overline{F}}: FR \to G.$
\smallskip

It is shown in \cite{S} that:

\begin{proposition}\label{P.3.2-N}
Let $ F \dashv R : \D \to \A $,
$ F' \dashv R' : \D \to \B $ and
$ F'' \dashv R'' : \D \to \C $ be adjunctions and let
$$ \xymatrix{\A \ar[rd]_{F} \ar[r]^{X}&
\B \ar[d]^{F'}\ar[r]^{Y}&
\C  \ar[ld]^{F''}\\
 &  \D&}
$$ be a diagram of categories and functors with $F'X=F$ and $F''Y=F'$.
Write $S_X$ for the
comonad morphism $FR \to F'R'$, $S_Y $ for the comonad morphism
$F'R' \to F'' R''$ and $S_{YX}$ for the comonad morphism $FR \to
F''R''$ that exist according to the previous proposition. Then
$S_{YX}=S_Y S_X.$
\end{proposition}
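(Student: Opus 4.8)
The plan is to strip the functor $F''$ off the identity $S_{YX}=S_YS_X$, reducing it to an equality of natural transformations with codomain $R''$, and then to deduce that equality from naturality of the units together with the counit compatibility already recorded in Proposition~\ref{P.3.1-N}. Write $\eta,\varepsilon$, $\eta',\varepsilon'$, $\eta'',\varepsilon''$ for the units and counits of $F\dashv R$, $F'\dashv R'$, $F''\dashv R''$, and let $\alpha_X\colon XR\to R'$, $\alpha_Y\colon YR'\to R''$ and $\alpha_{YX}\colon YXR\to R''$ be the natural transformations denoted $\alpha$ in Proposition~\ref{P.3.1-N}, applied to $X$, to $Y$, and to $YX$ respectively (the last using $F''(YX)=F'X=F$), so that $S_X=F'\alpha_X$, $S_Y=F''\alpha_Y$ and $S_{YX}=F''\alpha_{YX}$. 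Since $F''Y=F'$ we have $F'\alpha_X=F''(Y\alpha_X)$, hence $S_YS_X=F''\alpha_Y\cdot F''(Y\alpha_X)=F''(\alpha_Y\cdot Y\alpha_X)$. It therefore suffices to prove the identity $\alpha_{YX}=\alpha_Y\cdot Y\alpha_X$ of natural transformations $YXR\to R''$.

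To check this I would expand the composite on the right. Applying naturality of $\eta''\colon 1\to R''F''$ to the natural transformation $Y\alpha_X\colon YXR\to YR'$ gives $\eta''YR'\cdot Y\alpha_X=R''F''(Y\alpha_X)\cdot\eta''YXR$; combining this with $\alpha_Y=R''\varepsilon'\cdot\eta''YR'$ and functoriality of $R''$ yields
$$\alpha_Y\cdot Y\alpha_X\;=\;R''\bigl(\varepsilon'\cdot F''(Y\alpha_X)\bigr)\cdot\eta''YXR .$$
Now $F''(Y\alpha_X)=F'\alpha_X=S_X$, and by Proposition~\ref{P.3.1-N} the natural transformation $S_X$ is a morphism of comonads from $(FR,F\eta R,\varepsilon)$ to $(F'R',F'\eta'R',\varepsilon')$; in particular it respects the counits, i.e.\ $\varepsilon'\cdot S_X=\varepsilon$. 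Substituting and recalling $F''YXR=FR$ gives $\alpha_Y\cdot Y\alpha_X=R''\varepsilon\cdot\eta''YXR=\alpha_{YX}$, as wanted, and therefore $S_{YX}=S_YS_X$.

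There is no genuine difficulty in the calculation; the only point requiring attention is the bookkeeping of the canonical identifications $F'X=F$ and $F''Y=F'$ (and their consequence $F''YX=F$) when whiskering, so that each naturality square is applied to a well-typed natural transformation. Alternatively one may present the whole argument as a short string-diagram computation, in which the statement becomes the evident fact that pasting the two ``adjoint-square'' cells attached to $X$ and to $Y$ produces the cell attached to $YX$; or one may simply invoke \cite{S}.
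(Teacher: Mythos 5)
Your proof is correct. Note that the paper itself offers no argument for this proposition at all --- it simply attributes the statement to Schubert's book \cite{S} --- so there is nothing internal to compare against; your reduction to the identity $\alpha_{YX}=\alpha_Y\cdot Y\alpha_X$ via naturality of $\eta''$ and the counit compatibility $\varepsilon'\cdot S_X=\varepsilon$ from Proposition \ref{P.3.1-N} is exactly the standard verification, and all the whiskering identifications ($F'\alpha_X=F''(Y\alpha_X)$, $F''YXR=FR$) are used correctly.
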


\section{Entwinings}

We fix a mixed distributive law (entwining) $\lambda : TG \to GT$ from the
monad $\textbf{T}=(T,m,e)$ to the comonad $\textbf{G}=(G,\delta,\ve)$,
and write
$\widehat{\textbf{T}}=(\widehat{T}, \widehat{m},\widehat{e})$
for the monad on $\A^G$ lifting $\textbf{T}$, and
$\widehat{\textbf{G}}=(\widehat{G},\widehat{\delta},\widehat{\ve})$
for the comonad on $\A_T$
lifting $\textbf{G}$ (e.g. \cite[Section 5]{W}).

  It is well-known that for any object $(a,h_a)$ of $\A_T$,
$$\bullet\; \widehat{G}(a, h_a)=(G(a), G(h_a) \cdot \lambda_a) ,
\quad \bullet \;
(\widehat{\delta})_{(a, h_a)}=\delta_a,
\quad \bullet \;
(\widehat{\varepsilon})_{(a, h_a)}=\varepsilon_a ,$$
while for any object $(a, \theta_a)$ of the category $\A^{G}$,

$$\bullet\; \widehat{T}(a, \theta_a)=(T(a), \lambda_a \cdot
T(\theta_a));
\quad \bullet\; (\widehat{m})_{(a, \theta_a)}=m_a,
\quad \bullet \;(\widehat{e})_{(a, \theta_a)}=e_a,$$
and that one has an isomorphism of categories
$$(\A^{G})_{\widehat{T}} \simeq (\A _{T})^{\widehat{G}}.$$ We
write $\A^{G}_{T}(\lambda)$ (or just $\A^{G}_{T},$ when the mixed
distributive law $\lambda$ is understood) for the category whose
object are triple $(a, h_a, \theta_a)$, where $(a, h_a) \in
\A_{T}$ and $(a,\theta_a) \in \A^{G}$ with commuting diagram

\begin{equation}\label{D.1.1}
\xymatrix{
T(a) \ar[r]^-{h_a} \ar[d]_-{T(\theta_a)}& a \ar[r]^-{\theta_a}& G(a) \\
TG(a) \ar[rr]_-{\lambda_a}&& GT(a). \ar[u]_-{G(h_a)}}
\end{equation}

Let $K: \A \to (\A^G)_{\widehat{T}}$ be a functor inducing a commutative
diagram
\begin{equation}\label{E.2.1}
\xymatrix{ \A  \ar[r]^-{K}\ar[rd]_{\phi^G} & (\A^G)_{\widehat{T}} \ar[d]^{U_{\widehat{T}}}\\
& \A^G .}\end{equation}
 Write $\alpha_K : \widehat{T}\phi^G \to
\phi^G$ for the corresponding  $\widehat{\textbf{T}}$-module structure
on $\phi^G$ (see \ref{T-mod}). Since $\widehat{\textbf{T}}$ is the lifting of
$\textbf{T}$ corresponding to $\lambda$, $U^G \widehat{{T}}=T U^G $
and one has the natural transformation
$$\alpha= U^G (\alpha_K): U^G
\widehat{{T}} \phi^G =TU^G \phi^G=TG \lra U^G\phi^G =G.$$
It is easy to see that $\alpha$ provides a left
$\textbf{T}$-module structure on $G$ with commutative diagram
\begin{equation}\label{E.2.2}
 \xymatrix{ TG \ar[r]^-{\alpha } \ar[d]_{T\delta}
& G
\ar[r]^-{\delta} &GG \\
TGG \ar[rr]_-{\lambda G} && GTG \ar[u]_{G \alpha }.}
\end{equation}

 Conversely, a natural transformation
$$\alpha: U^G
\widehat{{T}} \phi^G =TU^G \phi^G=TG \lra U^G\phi^G =G$$
 making $G$ a left $\textbf{T}$-module,  can be lifted to a left
$\widehat{\textbf{T}}$-module structure on $\phi^G$ if and only if  for every $a
\in \A$, $\alpha_a : TG(a) \to G(a)$ is a morphism in $\A^G$ from
the $\textbf{G}$-coalgebra $(GT(a), \lambda_{G(a)}\cdot
T(\delta_a))$ to the $\textbf{G}$-coalgebra $(G(a), \delta_a)$,
which is just to say that the $a$-component of the diagram (\ref{E.2.2}) commutes.
Thus we have proved:

\begin{proposition} The assignment $$(K: \A \to (\A^G)_{\widehat{T}})
\longmapsto (U^G (\alpha_K): TG \to G)$$ yields a bijection
between functors $K$ making the diagram (\ref{E.2.1}) commute and left
$\textbf{T}$-module structures $\alpha: TG \to G$ on $G$ for which
the diagram (\ref{E.2.2}) commutes.
\end{proposition}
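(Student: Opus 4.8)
The plan is to factor the asserted correspondence through left $\widehat{\textbf{T}}$-module structures on the functor $\phi^G : \A \to \A^G$. Applying the functor--module dictionary recalled in \ref{T-mod} --- with source category $\A$, target category $\A^G$, and the lifted monad $\widehat{\textbf{T}}$ on $\A^G$ playing the role of $\textbf{T}$ on $\A$ --- giving a functor $K : \A \to (\A^G)_{\widehat{T}}$ with $U_{\widehat{T}}K = \phi^G$, i.e. making $(\ref{E.2.1})$ commute, amounts to equipping $\phi^G$ with a left $\widehat{\textbf{T}}$-module structure $\alpha_K : \widehat{T}\phi^G \to \phi^G$, and this is a bijection. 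It therefore suffices to show that $\alpha_K \mapsto U^G(\alpha_K)$ is a bijection from the left $\widehat{\textbf{T}}$-module structures on $\phi^G$ onto the left $\textbf{T}$-module structures $\alpha : TG \to G$ on $G = U^G\phi^G$ for which $(\ref{E.2.2})$ commutes.

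First I would verify that $\alpha := U^G(\alpha_K)$ lies in that set. Since $\widehat{\textbf{T}}$ is the lifting of $\textbf{T}$ along $U^G$ we have $U^G\widehat{T} = TU^G$, and $U^G$ sends $\widehat{m},\widehat{e}$ componentwise to $m,e$; applying the functor $U^G$ to the commuting $\widehat{\textbf{T}}$-module diagrams for $\alpha_K$ thus yields the commuting $\textbf{T}$-module diagrams for $\alpha$, so $\alpha$ is a left $\textbf{T}$-module structure on $G$, and faithfulness of $U^G$ makes $\alpha_K \mapsto \alpha$ injective. That $(\ref{E.2.2})$ commutes is the one point of substance: by the explicit formula for the lifted monad, $\widehat{T}\phi^G(a) = \widehat{T}(G(a),\delta_a) = (TG(a), \lambda_{G(a)}\cdot T(\delta_a))$ whereas $\phi^G(a) = (G(a),\delta_a)$, and the condition that $\alpha_a : TG(a) \to G(a)$ underlie a morphism $\widehat{T}\phi^G(a) \to \phi^G(a)$ in $\A^G$, namely $\delta_a\cdot\alpha_a = G(\alpha_a)\cdot\lambda_{G(a)}\cdot T(\delta_a)$, is exactly the commutativity of the $a$-component of $(\ref{E.2.2})$.

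Conversely, given a left $\textbf{T}$-module structure $\alpha : TG \to G$ for which $(\ref{E.2.2})$ commutes, I would read that computation backwards: for each $a \in \A$ the arrow $\alpha_a$ is a morphism of $\textbf{G}$-comodules $(TG(a), \lambda_{G(a)}\cdot T(\delta_a)) \to (G(a),\delta_a)$, so, $U^G$ being faithful, the family $\{\alpha_a\}_{a\in\A}$ lifts uniquely to a natural transformation $\alpha_K : \widehat{T}\phi^G \to \phi^G$ of functors $\A \to \A^G$ with $U^G(\alpha_K) = \alpha$; naturality of $\alpha_K$ is checked after applying $U^G$, where it becomes naturality of $\alpha : TG \to G$. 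The $\widehat{\textbf{T}}$-module identities for $\alpha_K$ likewise hold, since $U^G$ is faithful and carries them --- via the lifting identities above --- to the $\textbf{T}$-module identities for $\alpha$, which hold by hypothesis. Hence $\alpha_K \mapsto \alpha$ is onto the stated set, and composing with the bijection of the first paragraph proves the proposition.

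The main --- essentially the only --- point of substance is the translation in the middle paragraph between ``$\alpha_a$ is a $\textbf{G}$-comodule morphism $\widehat{T}\phi^G(a) \to \phi^G(a)$'' and ``the $a$-component of $(\ref{E.2.2})$ commutes''; everything else is formal bookkeeping with the faithful functor $U^G$, the lifting identities $U^G\widehat{T} = TU^G$ and so on, and the functor--module correspondence of \ref{T-mod}. As that translation follows at once from the explicit descriptions $\widehat{T}(a,\theta_a) = (T(a), \lambda_a\cdot T(\theta_a))$ and $\phi^G(a) = (G(a),\delta_a)$, I do not anticipate any real obstacle.
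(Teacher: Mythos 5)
Your proposal is correct and follows essentially the same route as the paper: both factor the correspondence through the left $\widehat{\textbf{T}}$-module structures $\alpha_K$ on $\phi^G$ via the dictionary of \ref{T-mod}, and both identify the commutativity of the $a$-component of (\ref{E.2.2}) with the condition that $\alpha_a$ be a morphism in $\A^G$ from $\widehat{T}\phi^G(a)=(TG(a),\lambda_{G(a)}\cdot T(\delta_a))$ to $(G(a),\delta_a)$. Your write-up merely makes the faithfulness-of-$U^G$ bookkeeping more explicit than the paper does (and, incidentally, corrects a typo: the paper writes $GT(a)$ where $TG(a)$ is meant).
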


Now let $K': \A \to (\A_T)^{\widehat{G}}$ be a functor inducing a commutative
diagram
\begin{equation}\label{E.2.1b}
\xymatrix{ \A  \ar[r]^-{K'}\ar[rd]_{\phi_T} & (\A_T)^{\widehat{G}} \ar[d]^{U^{\widehat{G}}.}\\
& A_T}\end{equation}
 Write $\beta_{K'} :  \phi_T \to \widehat{G}\phi_T$
for the corresponding  $\widehat{\textbf{G}}$-comodule structure
on $\phi_T$ (see \ref{com-fun}).
One has the natural transformation
$$\beta=(U_T (\beta_{K'}): U_T \phi_T=T \to U_T \widehat{ G}
\phi_T=G U_T \phi_T=GT $$
which induces a $\bG$-comodule structure on $T$ with commutative diagram
\begin{equation}\label{D.beta}
\xymatrix{ TT \ar[r]^-{m } \ar[d]_{T \beta} & T
\ar[r]^-{\beta} &GT \\
TGT \ar[rr]_-{\lambda T} && GTT \ar[u]_{G m}}
\end{equation}

 {}From this we obtain:

\begin{proposition}\label{P.2.2} The assignment
$$(K': A \to (\A_{T})^{\widehat{G}})
\longmapsto (U_T (\beta_{K'}):T \to GT)$$
yields a bijection between functors $K'$ making the diagram (\ref{E.2.1b})
commute
and left $\textbf{G}$-comodule structures
$\beta: T \to GT$ on the functor $T$
for which the diagram (\ref{D.beta}) commutes.
\end{proposition}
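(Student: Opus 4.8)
The plan is to establish the bijection by dualising (reversing arrows in $\A$) the proof of the preceding proposition concerning functors $K:\A\to(\A^G)_{\widehat T}$ and left $\textbf{T}$-module structures on $G$; the present statement is the same result with the roles of the monad $\textbf{T}$ and comonad $\textbf{G}$ interchanged, and with the entwining $\lambda$ read in the appropriate direction. Concretely, I would first recall from Theorem \ref{com-fun} that a functor $K':\A\to(\A_T)^{\widehat G}$ with $U^{\widehat G}K'=\phi_T$ is the same thing as a $\widehat{\textbf G}$-comodule structure $\beta_{K'}:\phi_T\to\widehat G\phi_T$ on the free functor $\phi_T$. Applying the forgetful functor $U_T$ and using $U_T\widehat G=GU_T$ together with $U_T\phi_T=T$, this yields the natural transformation $\beta=U_T(\beta_{K'}):T\to GT$. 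The $\widehat{\textbf G}$-comodule axioms (the counit and coassociativity squares in the first diagram of \ref{com-fun}) become, after applying $U_T$, exactly the $\textbf{G}$-comodule axioms making $T$ a left $\textbf{G}$-comodule: this is immediate since $U_T$ is faithful, $\widehat\varepsilon$ and $\widehat\delta$ restrict to $\varepsilon$ and $\delta$ on underlying objects, and $U_T$ sends the relevant whiskered transformations to their $\textbf{G}$-counterparts.

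Next I would identify precisely which $\textbf{G}$-comodule structures $\beta:T\to GT$ arise this way. A natural transformation $\beta:T\to GT$ lifts to a transformation $\beta_{K'}:\phi_T\to\widehat G\phi_T$ in the category $\A_T$ if and only if, for every $a\in\A$, the component $\beta_{T(a)}$ — more precisely the relevant component at the free object — is a morphism of $\textbf{T}$-modules, where the target $\widehat G\phi_T(a)=\widehat G(T(a),m_a)$ carries the module structure $G(m_a)\cdot\lambda_{T(a)}$ dictated by the lifting formula for $\widehat G$ recorded just before \eqref{D.1.1}. Spelling out that this square commutes for all $a$ is, by naturality and the triangle identities for $\phi_T\dashv U_T$, equivalent to the commutativity of the single diagram \eqref{D.beta}: the top-left triangle $\beta\cdot eT=\ldots$ encodes unitality, and the outer square $G(m)\cdot\lambda T\cdot T\beta=\beta\cdot m$ is exactly the compatibility of $\beta$ with the $\textbf{T}$-action through the entwining $\lambda$. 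Here I would appeal to the dual of the argument given for diagram \eqref{E.2.2} in the previous proposition, noting only that one must reverse the $G$-arrows (comodule rather than module on the $\textbf{G}$-side) while keeping the $\textbf{T}$-side unchanged.

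Finally, to get a genuine \emph{bijection} I would check that the two assignments are mutually inverse. Given $K'$, passing to $\beta=U_T(\beta_{K'})$ and then back produces a $\widehat{\textbf G}$-comodule structure on $\phi_T$ whose image under the faithful $U_T$ is $\beta_{K'}$ again, hence equals $\beta_{K'}$; conversely, starting from $\beta$ satisfying \eqref{D.beta}, the lifting $\beta_{K'}$ is uniquely determined (since $U^{\widehat G}$ is faithful) and its underlying transformation is $\beta$ by construction. Thus the map is injective and surjective onto the stated set. The only step requiring genuine care — the main obstacle — is verifying that the abstract "$\beta_{K'}$ exists as a $\widehat{\textbf G}$-comodule iff each component is a $\textbf{T}$-module morphism" reduces exactly to diagram \eqref{D.beta} and not to a weaker or stronger condition; this is a diagram chase using the explicit formulas for $\widehat G$, $\widehat\delta$, $\widehat\varepsilon$ on free $\textbf{T}$-modules and the naturality of $\lambda$, entirely parallel to (and dual to) the chase already carried out for \eqref{E.2.2}, so I would present it by reference to that symmetry rather than repeating it in full.
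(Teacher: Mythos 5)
Your proof is correct and takes essentially the same route as the paper: the paper likewise invokes \ref{com-fun} to identify functors $K'$ over $\phi_T$ with $\widehat{\mathbf{G}}$-comodule structures $\beta_{K'}$ on $\phi_T$, passes to $\beta=U_T(\beta_{K'})$ using $U_T\widehat{G}=GU_T$, and observes that such a $\beta$ lifts back precisely when each component $\beta_a$ is a morphism of $\mathbf{T}$-modules into $\widehat{G}(T(a),m_a)=(GT(a),G(m_a)\cdot\lambda_{T(a)})$, which is exactly the $a$-component of (\ref{D.beta}). One cosmetic remark: (\ref{D.beta}) consists only of the single compatibility square, so the ``unitality triangle'' you mention is not part of that diagram (counitality belongs to the comodule axioms you already handled), but this does not affect the argument.
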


We know from \cite{G} that to give a functor  $K': \A \to
(\A_T)^{\widehat{G}}$ making the diagram (\ref{E.2.1b})
commute is to give a natural transformation $\alpha: U_T
\to U_T \widehat{G}$ making $U_T$  a right
$\widehat{G}$-comodule. For any $(a, h_a) \in \A_T$,
$\widehat{G}(a, h_a)=(G(a), G(h_a) \cdot \lambda_a)$, the $(a,
h_a)$-component $\alpha_{(a, h_a)}$ is a morphism $a \to G(a)$ in
$\A$ with commutative diagrams
$$
\xymatrix{ a \ar@{=}[dr] \ar[r]^-{\alpha_{(a, h_a)}}& G(a)
\ar[d]^{\varepsilon_a} &  & a\ar[d]_{\alpha_{(a, h_a)}}
\ar[rr]^-{\alpha_{(a, h_a)}}&&
G(a) \ar[d]^{\delta_{G(a)}}\\
& a, & & G(a) \ar[rr]_{G(\alpha_{(a, h_a)})}&& GG(a),}
$$
and the corresponding comonad morphism
$t_K : \phi_T U_T \to \widehat{G} $ is the composite
$$
\xymatrix{\phi_T U_T  \ar[r]^-{\phi_T \alpha}& \phi_T U_T
\widehat{G} \ar[r]^-{\varepsilon_T \widehat{G}}&\widehat{G}\, .}$$
Then, since for any $(a, h_a) \in \A_T$,
$(\varepsilon_T)_{(a,h_a)}=h_a$, the component $(t_K)_{(a,h_a)}$
is the composite
$$
\xymatrix{T(a) \ar[rr]^-{T(\alpha_{(a, h_a)})}&& TG(a)
\ar[r]^-{\lambda_a}& GT(a) \ar[r]^-{G(h_a)}& G(a)\, .}$$

Now it follows from Proposition \ref{P.1.7}:

\begin{theorem}\label{T.2.1-ent}
In the situation described above, the functor $K'$ is
an equivalence of categories if and only if for any $(a, h_a) \in \A_T$, the
composite $G(h_a) \cdot \lambda_a \cdot T(\alpha_{(a, h_a)})$ is an
isomorphism and the functor $\phi_T$ is comonadic.
\end{theorem}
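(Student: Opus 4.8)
The plan is to apply Proposition~\ref{P.1.7} directly to the adjunction $\phi_T \dashv U_T : \A \to \A_T$ together with the comonad $\widehat{\textbf{G}}$ on $\A_T$ and the functor $K' : \A \to (\A_T)^{\widehat{G}}$. Proposition~\ref{P.1.7} says that the comparison-type functor $\overline{F}$ (here $K'$) is an equivalence of categories if and only if the underlying functor (here $\phi_T$) is comonadic \emph{and} the induced comonad morphism $t_{K'}$ from the comonad generated by the adjunction $\phi_T \dashv U_T$ to the comonad $\widehat{\textbf{G}}$ is an isomorphism. So the whole statement will follow once I identify these two conditions explicitly in the present situation.

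The first half is immediate: the comonad generated by $\phi_T \dashv U_T$ lives on $\A_T$, and "$\phi_T$ is comonadic" is exactly the literal hypothesis appearing in the theorem, so there is nothing to massage there. For the second half I would use the computation carried out just before the statement: the comonad morphism in question is the $t_K$ (written $t_{K'}$) described above, and its $(a,h_a)$-component was shown to be the composite
$$\xymatrix{T(a) \ar[rr]^-{T(\alpha_{(a, h_a)})}&& TG(a) \ar[r]^-{\lambda_a}& GT(a) \ar[r]^-{G(h_a)}& G(a).}$$
A morphism of comonads is an isomorphism precisely when each of its components is an isomorphism in the underlying category (the inverse natural transformation is then automatically a comonad morphism, this being a routine diagram chase on counit/comultiplication which I would not spell out), so "$t_{K'}$ is an isomorphism of comonads" translates verbatim into "for every $(a,h_a)\in\A_T$, the composite $G(h_a)\cdot\lambda_a\cdot T(\alpha_{(a,h_a)})$ is an isomorphism." Combining the two translated conditions gives exactly the biconditional asserted.

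I do not expect a genuine obstacle here; the proof is essentially a transcription of Proposition~\ref{P.1.7} through the dictionary set up in the paragraphs preceding the theorem. The only point requiring a word of care is the identification of the comonad generated by $\phi_T \dashv U_T$ with the target of the relevant structure data so that $t_{K'}$ really is the comonad morphism to $\widehat{\textbf{G}}$ named in Proposition~\ref{P.1.7}; but this is precisely what the displayed formula for $(t_K)_{(a,h_a)}$ records, together with the fact noted earlier that $(\varepsilon_T)_{(a,h_a)} = h_a$. Hence the argument is: invoke Proposition~\ref{P.1.7}, read off that comonadicity of $\phi_T$ is one condition, and that invertibility of the explicitly computed component maps $G(h_a)\cdot\lambda_a\cdot T(\alpha_{(a,h_a)})$ is the other, and conclude. $\hfill\square$
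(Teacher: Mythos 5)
Your proposal is correct and is essentially identical to the paper's own argument: the authors compute the component $(t_K)_{(a,h_a)} = G(h_a)\cdot\lambda_a\cdot T(\alpha_{(a,h_a)})$ in the paragraphs preceding the theorem and then state that the result "follows from Proposition \ref{P.1.7}", which is exactly your invocation of that proposition for the adjunction $\phi_T\dashv U_T$ and the comonad $\widehat{\textbf{G}}$. The only detail you make explicit that the paper leaves tacit is that a comonad morphism is invertible iff it is componentwise invertible, which is a harmless and correct addition.
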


For the dual situation, let $K: \A \to (\A^G)_{\widehat{T}}$
be a functor inducing commutativity of the diagram (\ref{E.2.1}).
Since the functor $\phi^G$ has a left adjoint
 $U^G : \A^G \to \A$, it follows from \cite{G} that to give
such a functor is to give a right $\widehat{T}$-module structure
$\alpha : U^G \widehat{T} \to U^G$ on $U^G$.

For any $(a, \theta_a) \in \A^G$, $\widehat{T}(a,
\theta_a)=(T(a),  \lambda_a \cdot T(\theta_a))$, the $(a,
\theta_a)$-component $\alpha_{(a, \theta_a)}$ is a morphism $T(a)
\to a$ in $\A$ with commutative diagrams
$$
\xymatrix{ a \ar@{=}[dr] \ar[r]^-{e_a}& T(a) \ar[d]^{\alpha_{(a,
\theta_a)}} &\quad& TT(a)\ar[d]_{m_a} \ar[rr]^-{T(\alpha_{(a,
\theta_a))}}&&
T(a) \ar[d]^{\alpha_{(a, \theta_a)}}\\
& a, && T(a) \ar[rr]_{\alpha_{(a, h_a)}}&& a,}
$$
 and the corresponding monad morphism
$t_K : \phi^G U^G \to \widehat{T} $ is the composite
$$
\xymatrix{\widehat{T}  \ar[r]^-{\eta^G \widehat{T}}& \phi^G U^G
\widehat{T} \ar[r]^-{\phi^G \alpha}& \phi^GU^G \, .}$$
Now, since for any $(a, \theta_a) \in \A^G$,
$(\eta^G)_{(a,\theta_a)}=\theta_a$, the component $(t_K)_{(a,\theta_a)}$ is the
composite
$$
\xymatrix{T(a) \ar[rr]^-{T(\theta_a)}&& TG(a) \ar[r]^-{\lambda_a}&
GT(a) \ar[rr]^-{G(\alpha_{(a, h_a)})}& & G(a)\, .}$$

As a consequence we get from Proposition \ref{P.1.2}:

\begin{theorem}\label{T.2.2-ent}
In the situation described above, the functor $K$ is
an equivalence of categories if and only if for any $(a, \theta_a) \in \A^G$,
the composite $G(\alpha_{(a, h_a)}) \cdot \lambda_a \cdot
T(\theta_a)$ is an isomorphism and the functor $\phi^G$ is monadic.
\end{theorem}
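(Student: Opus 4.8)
The plan is to deduce this from Proposition \ref{P.1.2} in exactly the same way that Theorem \ref{T.2.1-ent} was deduced from Proposition \ref{P.1.7}. The statement concerns the functor $K:\A\to(\A^G)_{\widehat T}$ fitting into the commutative triangle (\ref{E.2.1}) with $U_{\widehat T}K=\phi^G$. By Proposition \ref{P.1.2} (the dual of \cite[Theorem 4.4]{M}), a functor $\overline R$ of this shape over a base functor is an equivalence precisely when the base functor is monadic and the induced comparison monad morphism is an isomorphism. Here the role of ``$R$'' over the base is played by $\phi^G:\A\to\A^G$, which is equipped with the $\widehat{\textbf T}$-module structure $\alpha_K$, and the role of the monad on the base category $\A^G$ is played by $\widehat{\textbf T}$. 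So I would first record that Proposition \ref{P.1.2}, applied to the adjunction $U^G\dashv\phi^G:\A\to\A^G$ and the lifted monad $\widehat{\textbf T}$ on $\A^G$, says: $K$ is an equivalence iff $\phi^G$ is monadic and the monad morphism $t_K:\phi^GU^G\to\widehat T$ is an isomorphism.

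The second step is to translate ``$t_K$ is an isomorphism'' into the concrete condition in the statement. This has already been done in the text immediately preceding the theorem: for each $(a,\theta_a)\in\A^G$ the component $(t_K)_{(a,\theta_a)}$ was computed to be the composite
$$\xymatrix{T(a)\ar[rr]^-{T(\theta_a)}&&TG(a)\ar[r]^-{\lambda_a}&GT(a)\ar[rr]^-{G(\alpha_{(a,h_a)})}&&G(a),}$$
i.e. $G(\alpha_{(a,h_a)})\cdot\lambda_a\cdot T(\theta_a)$. Since $t_K$ is a natural transformation between functors on $\A^G$, it is an isomorphism of monads (equivalently, an isomorphism in the functor category, since monad morphisms that are pointwise iso have inverses which are again monad morphisms) if and only if every component $(t_K)_{(a,\theta_a)}$ is an isomorphism in $\A$. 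Combining this with the first step gives exactly the asserted equivalence: $K$ is an equivalence of categories iff $\phi^G$ is monadic and, for all $(a,\theta_a)\in\A^G$, the composite $G(\alpha_{(a,h_a)})\cdot\lambda_a\cdot T(\theta_a)$ is an isomorphism.

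The only genuinely non-routine point is making sure Proposition \ref{P.1.2} is being applied in the right instance, namely with the ambient categories $\A_T$ there replaced by $(\A^G)_{\widehat T}$ here, the base category replaced by $\A^G$, and the monad replaced by $\widehat{\textbf T}$; once one checks that $K$ over $\phi^G$ is precisely an ``$\overline R$'' in the sense of \ref{T-mod} (which is how $\alpha_K$ was introduced in the first place) and that $t_K$ as defined in the text is the monad morphism ``$t_{\overline R}$'' of Proposition \ref{P.1.2}, the rest is bookkeeping. I do not anticipate a real obstacle here, since the preceding discussion already did the identification of $\alpha_K$ and computed $t_K$; the proof is essentially a one-line invocation of Proposition \ref{P.1.2} plus the observation that an isomorphism of monads is the same as a pointwise isomorphism.

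\begin{proof}
This is the dual of Theorem \ref{T.2.1-ent}. Applying Proposition \ref{P.1.2} to the adjunction $U^G\dashv\phi^G:\A\to\A^G$ and the lifted monad $\widehat{\textbf T}$ on $\A^G$, the functor $K$ (viewed, via the diagram (\ref{E.2.1}), as the functor $\overline{\phi^G}:\A\to(\A^G)_{\widehat T}$ determined by the $\widehat{\textbf T}$-module structure $\alpha_K$ on $\phi^G$) is an equivalence of categories if and only if $\phi^G$ is monadic and the monad morphism $t_K:\phi^GU^G\to\widehat T$ is an isomorphism. Now $t_K$ is a monad morphism between monads on $\A^G$, so it is an isomorphism of monads precisely when it is a pointwise isomorphism. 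By the computation preceding the theorem, for any $(a,\theta_a)\in\A^G$ the component $(t_K)_{(a,\theta_a)}$ equals the composite $G(\alpha_{(a, h_a)})\cdot\lambda_a\cdot T(\theta_a)$. Hence $t_K$ is an isomorphism if and only if $G(\alpha_{(a, h_a)})\cdot\lambda_a\cdot T(\theta_a)$ is an isomorphism for every $(a,\theta_a)\in\A^G$, and the claim follows.
\end{proof}
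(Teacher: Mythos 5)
Your proposal is correct and follows exactly the paper's route: the paper itself proves the theorem only by the phrase ``As a consequence we get from Proposition \ref{P.1.2}'', relying on the immediately preceding identification of $\alpha_K$ as a $\widehat{\textbf{T}}$-module structure on $\phi^G$ and the computation of the components of $t_K$, which is precisely the bookkeeping you spell out. (The direction $t_K:\phi^G U^G\to\widehat T$ you quote is a typo inherited from the paper -- the displayed composite there actually goes $\widehat T\to\phi^G U^G$, matching $t_{\overline R}:T\to RF$ of \ref{T-mod} -- but this is immaterial to the argument.)
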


The following observation is probably known but we
are not aware of a suitable reference.
Recall that a functor $i: \C \to \A$
with $\C$ a small category is {\em dense}, if the functor
$$\widetilde{i}:
A^{op} \to [\C, \Set],\, a \to \Mor_\A(i(-), a),$$
is full and faithful.

\begin{lemma}\label{L.2.3}
Let $i: \C \to \A$ be a dense functor. Given two adjunctions $$F
\dashv U ,\, F' \dashv U': \A \to \B,$$ and a natural
transformation $\tau: F \to F'$, then $\tau$ is an isomorphism of
functors if and only if $\tau i : F i \to F'i$ is so.
\end{lemma}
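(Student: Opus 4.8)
The claim is one of those ``extend along a dense subcategory'' statements, and the natural strategy is to reduce the question about the natural transformation $\tau : F \to F'$ to the corresponding question about the mates $\tau^{\flat}: U' \to U$ (or rather a natural transformation built from $\tau$ and the two adjunctions), which are contravariantly controlled by $\C$ via density. First I would recall that density of $i$ is equivalent to the statement that every object $a \in \A$ is the (canonical) colimit of the diagram $i/a \to \C \xrightarrow{i} \A$; equivalently $\widetilde{i}: \A \to [\C^{op},\Set]$ (the restricted Yoneda embedding, written in the excerpt as $\A^{op} \to [\C,\Set]$ up to variance conventions) is full and faithful. The ``only if'' direction is trivial: whiskering an isomorphism $\tau$ with $i$ yields an isomorphism $\tau i$. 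So the whole content is the ``if'' direction.

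For the ``if'' direction, the plan is: assume $\tau i : Fi \to F'i$ is an isomorphism, and show each component $\tau_a : F(a) \to F'(a)$ is an isomorphism. Since $F$ and $F'$ are left adjoints, they preserve all colimits; in particular each preserves the canonical colimit expressing $a$ as a colimit of objects $i(c)$. Thus $F(a) = \operatorname{colim}_{i/a} F i$ and $F'(a) = \operatorname{colim}_{i/a} F' i$, and under these identifications $\tau_a$ is the colimit of the diagram map $\tau i$ restricted to $i/a$. A colimit of a diagram of isomorphisms (over a fixed index category, with the isomorphisms forming a natural transformation of diagrams) is an isomorphism — this is the one genuinely load-bearing categorical fact, and it follows because $\operatorname{colim}$ is a functor $[\text{(index)}, \B] \to \B$ and functors preserve isomorphisms. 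Hence $\tau_a$ is an isomorphism for every $a$, and $\tau$ is an isomorphism of functors. An essentially equivalent packaging, if one prefers to avoid invoking ``colimit of a natural iso is an iso'' directly: pass to mates. The transformation $\tau: F \to F'$ has a mate $\bar\tau : U' \to U$ defined as the composite $U' \xrightarrow{\eta U'} U F U' \xrightarrow{U\tau U'} U F' U' \xrightarrow{U \varepsilon'} U$; then $\tau$ is an isomorphism iff $\bar\tau$ is, by the standard ``mates are iso together'' lemma (using the triangle identities on both sides). Now $\bar\tau$ is an iso iff each component $\bar\tau_b : U'(b) \to U(b)$ is an iso, and since $i$ is dense, a morphism in $\A$ is an isomorphism iff it is sent to an isomorphism by $\widetilde{i}$, i.e. iff $\Mor_\A(i(c), \bar\tau_b)$ is a bijection for all $c \in \C$. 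By adjunction $\Mor_\A(i(c), U'(b)) \cong \Mor_\B(F'i(c), b)$ and $\Mor_\A(i(c), U(b)) \cong \Mor_\B(Fi(c), b)$, and under these bijections $\Mor_\A(i(c),\bar\tau_b)$ corresponds to precomposition with $\tau_{i(c)} = (\tau i)_c$, which is a bijection precisely because $(\tau i)_c$ is an isomorphism. Hence $\bar\tau_b$ is a bijection on all hom-sets out of the dense subcategory, so an isomorphism, so $\bar\tau$ — and therefore $\tau$ — is an isomorphism.

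I would present the mate version as the main line, since it makes the use of density completely transparent (density is exactly the statement that isomorphisms are detected on hom-functors out of $\C$) and sidesteps any need to name the canonical colimit presentation explicitly. The only real obstacle is bookkeeping: one must check that the bijection $\Mor_\A(i(c),\bar\tau_b) = \Mor_\B((\tau i)_c, b)$ really holds, i.e. that the adjunction isomorphisms carry the mate $\bar\tau$ to $\tau$ — this is the standard compatibility of mates with the hom-set bijections and is a diagram chase using the triangle identities, which I would state and leave to the reader. A secondary point worth a sentence: the lemma is stated for $\C$ small (so that $[\C,\Set]$ is legitimate), and we use smallness nowhere beyond that, so no size issue arises. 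Everything else is formal.
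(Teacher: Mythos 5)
Your main line of argument is exactly the paper's proof: pass to the mate $\tau'\colon U'\to U$, use that $\tau$ is invertible iff $\tau'$ is, transport $\Mor_\A(i(c),\tau'_b)$ across the adjunction bijections to precomposition with $\tau_{i(c)}$, and conclude from density that $\tau'$ is an isomorphism. The auxiliary colimit-preservation argument you sketch is a valid alternative but is not what the paper does; since you designate the mate version as the main proof, the two proofs coincide.
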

\begin{proof} Write $\tau' : U' \to U$ for the natural
transformation corresponding to $\tau$, that is $\tau$ and $\tau'$ are mates,
denoted by $\tau \dashv \tau'$
(e.g. \cite[7.1]{MW}, \cite[2.2]{BBW}).
 Then $\tau$ is an
isomorphism if and only if  $\tau'$ is so. So it is enough to show that $\tau'$
is an isomorphism. Since $\tau \dashv \tau'$, the diagram
$$
\xymatrix{ \Mor_\B(F'i(a), b) \ar[rr]^-{\alpha'_{i(a), \,b}}
\ar[d]_{\Mor_\B(\tau_{i(a)}, \,b)} && \Mor_\A(i(a),U'( b)) \ar[d]^{\Mor_\A(i(a),
\tau'_{b})}\\
\Mor_\B(Fi(a), b) \ar[rr]_{\alpha_{i(a), \,b}}&& \Mor_\A(i(a), U(b)),}
$$where $\alpha$ (resp. $\alpha'$) is the bijection corresponding to
the adjunction $F \dashv U$ (resp. $F' \dashv U'$), commutes for all
$a, b \in \A$. Since $\tau_{i(a)}$ is an isomorphism by our
assumption on $\tau$, it follows that the natural transformation
$\Mor_\A(i(a), \tau'_{b})$ is an isomorphism, implying - since $i$ is
dense - that $\tau' : U' \to U$ is an isomorphism.
\end{proof}

\begin{proposition}\label{P.2.4}
Suppose $K': \A \to (\A_T)^{\widehat{G}}$ to be a functor
with $ U^{\widehat{G}}K'=\phi_T$ and
let $\beta_K : \phi_T \to \widehat{G}\phi_T$
be the corresponding  $\widehat{\textbf{G}}$-comodule
structure on $\phi_T$ (see \ref{com-fun}).
Suppose that
\begin{rlist}
\item $\A$ admits equalisers of coreflexive pairs and
both $T$ and $G$ have right adjoints, or
\item $\A$ admits small colimits and both $T$ and $G$ preserve them.
\end{rlist}
 Then $(\phi_T,\beta_K)$ is $\widehat{\textbf{G}}$-Galois if and only if
$(T, U_T(\beta_K))$ is $\textbf{G}$-Galois.
\end{proposition}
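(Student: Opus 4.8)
The plan is to transport the Galois condition for the lifted comonad $\widehat{\mathbf G}$ along the isomorphism of categories $(\A_T)^{\widehat G}\cong(\A^G)_{\widehat T}$ and the relevant comparison data, reducing it to the already-established characterisations. First I would unwind the two definitions. By Definition~\ref{D.3.5}, $(\phi_T,\beta_K)$ is $\widehat{\mathbf G}$-Galois precisely when the comonad morphism $t_{\overline{\phi_T}}=t_K:\phi_T U_T\to\widehat G$ (computed just above Theorem~\ref{T.2.1-ent}, with $(a,h_a)$-component $G(h_a)\cdot\lambda_a\cdot T(\alpha_{(a,h_a)})$) is an isomorphism. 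On the other side, $(T,U_T(\beta_K))$ is $\mathbf G$-Galois (Definition~\ref{def-galois}) when the comonad morphism $t_{\overline T}:TF'\to G$ is an isomorphism, where $F'$ is a right adjoint of $T$ — which exists under hypothesis~(i), and under hypothesis~(ii) one instead uses the colimit description; in either case the comonad generated by $T\dashv F'$ is the one whose Galois comparison we must test. So the statement is an equivalence between ``$t_K$ is an iso of comonads on $\A_T$'' and ``$t_{\overline T}$ is an iso of comonads on $\A$''.

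The key step is to connect these two comparison morphisms. Here I would invoke Proposition~\ref{P.3.2-N} (composition of the $S_X$ comonad morphisms) applied to the factorisation
$$\A \xrightarrow{\;K'\;} (\A_T)^{\widehat G}\xrightarrow{\;U^{\widehat G}\;}\A_T,$$
together with the forgetful $U_T:\A_T\to\A$: the point is that $U_T$ is comonadic with an isomorphism of comonads identifying $U_T\phi_T$ (on $\A$) with the comonad on $\A_T$ generated by $\phi_T\dashv U_T$, and the comparison morphism $t_K$ on $\A_T$ ``is'' the comparison morphism $t_{\overline T}$ on $\A$ viewed through this identification. Concretely, since $U_T$ is faithful and reflects isomorphisms (indeed $U_T$ is comonadic, so an arrow of comonads on $\A_T$ is invertible iff its image under $U_T$ is), and since $U_T$ applied to the $(a,h_a)$-component $G(h_a)\cdot\lambda_a\cdot T(\alpha_{(a,h_a)})$ of $t_K$ recovers exactly the $a$-component of $t_{\overline T}$ after noting $\alpha_{(a,h_a)}$ restricts to $U_T(\beta_K)$ via the formula just before Proposition~\ref{P.2.2}, we get that $t_K$ is an iso iff $t_{\overline T}$ is. The density argument of Lemma~\ref{L.2.3}, with the dense inclusion of free $\mathbf T$-modules $\phi_T:\A\to\A_T$, is precisely the tool for the colimit case~(ii), reducing ``$t_K$ iso'' to ``$t_K\phi_T$ iso'', the latter being $t_{\overline T}$ up to the identifications above; under~(i) the right adjoints exist and Theorem~\ref{T.2.1-ent} / Proposition~\ref{P.1.7} give the equivalence more directly.

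I would therefore organise the proof as: (1) recall the explicit component formulas for $t_K$ and for $t_{\overline T}$ and observe that $U_T$ carries one to the other (a naturality/compatibility check using the bijection of Proposition~\ref{P.2.2} between $K'$ and $\beta=U_T(\beta_K)$); (2) argue $U_T$ reflects isomorphisms of comonads — either because $\phi_T:\A\to\A_T$ is dense and Lemma~\ref{L.2.3} applies (case~(ii), using that $T$ preserves the small colimits), or because in case~(i) the right adjoints allow the equaliser construction of a right adjoint to $\overline{\phi_T}$ and one cites Theorem~\ref{T.2.1-ent}; (3) conclude that $t_K$ is an isomorphism iff $t_{\overline T}$ is, which is exactly the asserted equivalence. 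The main obstacle is step~(2): making rigorous that the Galois comparison on the lifted level is detected on the base, in a way uniform across the two alternative hypotheses (i) and (ii). Under~(i) one must check $\widehat G$ (equivalently $G$, since $U_T$ is comonadic and strictly commutes) has a right adjoint so that the machinery of Theorem~\ref{T.2.1-ent} is available on $\A_T$; under~(ii) one must verify that $\phi_T$ lands among objects on which the relevant colimits are computed pointwise, so that Lemma~\ref{L.2.3}'s density hypothesis genuinely yields the reflection of isomorphisms. Once that bridge is in place the rest is the bookkeeping of matching the two explicit composites, which I would not spell out in full.
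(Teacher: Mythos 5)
Your proposal has the right overall shape --- reduce everything to the components of $t_K$ at free $\mathbf{T}$-modules and then use density of the free modules --- but it contains a genuine misreading and leaves the actual technical content undone. First, the misreading: ``$(T,U_T(\beta_K))$ is $\mathbf{G}$-Galois'' refers to Definition \ref{Gal-mon}, not to Definition \ref{def-galois}. It means that the composite $\gamma_T: TT \xrightarrow{\beta T} GTT \xrightarrow{Gm} GT$ is an isomorphism; no right adjoint of $T$ is involved. Your reading via a comonad morphism $TF'\to G$ for a right adjoint $F'$ of $T$ is a different condition (a different natural transformation altogether), and under hypothesis (ii) the functor $T$ need not have a right adjoint at all, so your formulation does not even parse there. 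The correct first step, which the paper takes, is the observation that the $\phi_T(a)$-component of $t_K$ is exactly $(\gamma_T)_a$; the proposition is then precisely the assertion that $t_K$ is invertible if and only if its restriction to the free modules is.

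Second, the bridge you propose --- ``apply $U_T$ and use that $U_T$ reflects isomorphisms'' --- is vacuous for this purpose: a natural transformation between endofunctors of $\A_T$ is invertible iff every component is invertible in $\A$, so reflection of isomorphisms by $U_T$ gives nothing beyond checking all components. The entire content is that checking the components at \emph{free} modules suffices, and this is where hypotheses (i) and (ii) enter. You have also assigned the available tools to the wrong cases: Lemma \ref{L.2.3} is a statement about mates and requires both $\phi_T U_T$ and $\widehat{G}$ to admit right adjoints, so it is the tool for case (i) --- and using it there forces you to prove that $\widehat{G}$ has a right adjoint, which the paper does via the dual of Dubuc's adjoint triangle theorem (using that $U_T$ is comonadic when $T$ has a right adjoint and that $\A_T$ inherits coreflexive equalisers). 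You flag this as an ``obstacle'' but do not resolve it. Case (ii) is handled not by Lemma \ref{L.2.3} but by the separate fact (\cite[Theorem 17.2.7]{S}) that a natural transformation between colimit-preserving functors is invertible iff its restriction to a dense subcategory is, after checking that $\phi_T U_T$ and $\widehat{G}$ preserve colimits because $U_T$ creates them. Neither Theorem \ref{T.2.1-ent} nor Proposition \ref{P.1.7}, which concern $K'$ being an equivalence, is relevant here. As written, the proposal identifies the landscape but does not constitute a proof.
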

\begin{proof} For any $a \in \A$, the $\phi_T(a)=(T(a),m_a)$-component
 of $t_K : \phi_T U_T \to \widehat{\textbf{G}}$ is just
$(\gamma_T)_a$ (see \ref{Gal-mon}). Thus it is enough to show that $t_K$ is an
isomorphism if and only if its restriction to free $\textbf{T}$-modules is.

(i)  If $T$ has a right adjoint, there
exists a comonad $\textbf{H}$ inducing an
isomorphism of categories $\A_T \simeq \A^H$; this implies that the
functor $U_T$ is comonadic and hence has a right adjoint. It
follows that the composite $G U_T$ also has a right adjoint. Next,
since $\widehat{\textbf{G}}$ is the lifting of $\textbf{G}$, we
have the commutative diagram
$$\xymatrix{ \A_T \ar[r]^-{\widehat{G}} \ar[d]_{U_T} & \A_T \ar[d]^{U_T}\\
\A \ar[r]_{G}& \A.}$$
 Since
\begin{itemize}
    \item $G U_T$ has a right adjoint,
    \item the functor $U_T$ is
comonadic, and
    \item $\A_T$ admits equalisers of coreflexive pairs since $\A$
    does so,
\end{itemize} it follows from the dual of \cite[Theorem A.1]{D} that the functor $\widehat{G}$
has a right adjoint.

Now, since the full subcategory of $\A_T$ given by free
$\textbf{T}$-modules is dense in $\A_T$, it follows from Lemma \ref{L.2.3}
that $t_K : \phi_T U_T \to \widehat{G}$ is an isomorphism if and only if its
restriction to free $\textbf{T}$-modules is.
\smallskip

(ii)  Since $T$ preserves colimits,  the category $\A_{T}$ admits
colimits and the functor $U_{T} : \A_{T} \to \A$ creates them. Thus
\begin{itemize}
\item the functor $\phi_T U_T$ preserves colimits;
\item any functor $L: \B \to \A_{T}$ preserves colimits if
and only if the composite $U_{T}L $ does; so, in particular, the
functor $\widehat{T}$ preserves colimits, since $U_{T} \widehat{T}=T
U_{T}$ and $T U_{T}$ is the composite of two
colimit-preserving functors.
\end{itemize}
The full subcategory of $\A_{T}$ given by the free
$\textbf{T}$-modules is dense and since the functors $\phi_T U_{T}$
and $\widehat{T}$ both preserve colimits, it follows from
\cite[Theorem 17.2.7]{S} that the natural transformation
$$t_{K}:\phi_T U_{T}\to \widehat{T}$$
is an isomorphism if and only if its restriction to the free
$\textbf{T}$-modules is so; i.e. if $(t_{K})_{\phi_T(a)}$ is an
isomorphism for all $a \in \A$.
This completes the proof.
\end{proof}

Dually, one has

\begin{proposition}\label{P.2.5}
Suppose that $K : \A \to (\A^G)_{\widehat{T}}$ is a
functor with $U_{\widehat{T}} K=\phi^G$ and let $\alpha_K :
\widehat{T}\phi^G \to \phi^G$ be the corresponding
$\widehat{\textbf{T}}$-module structure on $\phi^G$. Suppose that
\begin{rlist}
\item $\A$ admits coequalisers of reflexive pairs and both $T$ and $G$
have left adjoints, or
\item
$\A$ admits all small limits
and both $T$ and $G$ preserve them.
\end{rlist}
Then $(\phi^G, \alpha_K)$ is
$\widehat{\textbf{T}}$-Galois if and only if $(G, U^G(\alpha_K))$ is
$\textbf{T}$-Galois.
\end{proposition}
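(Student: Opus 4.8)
The plan is to dualise the proof of Proposition \ref{P.2.4} line by line, since \ref{P.2.5} is its formal dual: monads and comonads swap roles, left adjoints and right adjoints swap, limits and colimits swap, and equalisers of coreflexive pairs become coequalisers of reflexive pairs. Concretely, for any $a \in \A$ the $\phi^G(a)=(G(a),\delta_a)$-component of the monad morphism $t_K : \phi^G U^G \to \widehat{\textbf{T}}$ is exactly $(\gamma^G)_a$ in the notation of \ref{Gal-mon} (this is the dual of the computation recorded just before \ref{T.2.2-ent}, specialised to the cofree coalgebras, together with the definition of $\alpha = U^G(\alpha_K)$ and diagram (\ref{E.2.2})). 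Hence $(\phi^G,\alpha_K)$ is $\widehat{\textbf{T}}$-Galois precisely when $t_K$ is an isomorphism, while $(G,U^G(\alpha_K))$ is $\textbf{T}$-Galois precisely when the restriction of $t_K$ to the \emph{free} $\widehat{\textbf{G}}$-coalgebras (equivalently, the cofree $\bG$-comodules $\phi^G(a)$) is an isomorphism. So the whole statement reduces to: $t_K$ is an isomorphism iff its restriction along the cofree comodules is.

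For this I invoke Lemma \ref{L.2.3} with the dense subcategory being the full subcategory of $\A^G$ spanned by the cofree comodules $\phi^G(a)$, $a \in \A$ (its density is the dual of the density of free algebras used in \ref{P.2.4}, i.e. a standard fact about Eilenberg--Moore categories). To apply \ref{L.2.3} I need the two functors $\phi^G U^G$ and $\widehat{T}$ to be the left adjoints of an adjunction — which $\phi^G U^G$ automatically is — and I need $\widehat{T}$ to possess a \emph{left} adjoint so that \ref{L.2.3} (phrased for left adjoints $F,F'$) applies. Under hypothesis (i): if $G$ has a left adjoint then there is a monad $\textbf{H}$ with $\A^G \simeq \A_H$, so $U^G$ is monadic and in particular has a left adjoint; therefore $T U^G = U^G\widehat{T}$ has a left adjoint; since $U^G$ is monadic and $\A^G$ has coequalisers of reflexive pairs (inherited from $\A$), the dual of \cite[Theorem A.1]{D} produces a left adjoint for $\widehat{T}$. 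Under hypothesis (ii): $G$ preserves limits, hence $\A^G$ has all small limits and $U^G$ creates them, so $\phi^G U^G$ preserves limits; likewise $\widehat{T}$ preserves limits because $U^G\widehat{T} = T U^G$ is a composite of limit-preservers and $U^G$ creates limits; then \cite[Theorem 17.2.7]{S} (in its form for limit-preserving functors and a dense subcategory) directly gives that $t_K$ is an isomorphism iff its restriction to the cofree comodules is, without needing an explicit adjoint.

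The routine part is checking that the component identification $(t_K)_{\phi^G(a)} = (\gamma^G)_a$ holds — this is just unwinding the formulas displayed before Theorem \ref{T.2.2-ent} at the object $(G(a),\delta_a)$, using $(\eta^G)_{(a,\theta_a)} = \theta_a$ and the definition of $\alpha$, and comparing with the composite $\alpha G \cdot T\delta$ of \ref{Gal-mon}. The step I expect to be the genuine obstacle is the existence of the left adjoint to $\widehat{T}$ in case (i): one must verify that all hypotheses of the dual of \cite[Theorem A.1]{D} are in place, namely that $T U^G$ has a left adjoint, that $U^G$ is comonad-of-descent-type — here, monadic — and that the requisite colimits (coequalisers of reflexive pairs) exist in $\A^G$. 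Each ingredient is available, but assembling them correctly in the dual direction is where care is needed; everything else transfers formally from the proof of Proposition \ref{P.2.4}.
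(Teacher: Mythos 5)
Your overall strategy --- formal dualisation of the proof of Proposition \ref{P.2.4} --- is exactly what the paper intends (it offers no separate argument, only ``Dually, one has''), and you have correctly identified the crux, namely producing a left adjoint for $\widehat{T}$ in case (i) via the dual of Dubuc's Theorem A.1, and correctly dualised the preservation argument in case (ii). The dualisation goes wrong, however, at the key reduction. You invoke Lemma \ref{L.2.3} with ``the dense subcategory of cofree comodules'' and with $\phi^G U^G$ and $\widehat{T}$ as left adjoints, and neither claim is right. The dual of the fact that free $\bT$-modules are dense in $\A_T$ is that the cofree $\bG$-comodules are \emph{codense} in $\A^G$ (every $(a,\theta_a)$ is canonically a split equaliser of cofree comodules); they are not dense in general. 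Correspondingly, the lemma you need is the \emph{dual} of Lemma \ref{L.2.3}: a natural transformation between two \emph{right} adjoints is invertible iff its restriction along a \emph{codense} functor is. Your parenthetical that $\phi^G U^G$ ``automatically is'' a left adjoint is false --- $\phi^G$ has no right adjoint in general, so the composite need not be a left adjoint; what is true, and what the dual lemma requires, is that under hypothesis (i) it is a \emph{right} adjoint, because $U^G$ is then monadic and hence has a left adjoint, while $\phi^G$ is right adjoint to $U^G$. Likewise the sentence ``I need $\widehat{T}$ to possess a left adjoint so that \ref{L.2.3} (phrased for left adjoints) applies'' is internally inconsistent: possessing a left adjoint makes $\widehat{T}$ a right adjoint, not a left one. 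The same half-dualisation recurs in case (ii), where you cite \cite[Theorem 17.2.7]{S} ``for limit-preserving functors and a dense subcategory''; the correct dual statement pairs limit-preserving functors with a \emph{codense} subcategory.

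The repair is mechanical: replace ``dense'' by ``codense'' and ``left adjoint'' by ``right adjoint'' throughout, and invoke the duals of Lemma \ref{L.2.3} and of \cite[Theorem 17.2.7]{S}; your construction of the left adjoint of $\widehat{T}$ then slots in exactly where it is needed, since it is precisely what makes $\widehat{T}$ a right adjoint. But as written, the reduction of ``$t_K$ is an isomorphism'' to ``its restriction to the cofree comodules is an isomorphism'' rests on two false assertions, so the proof does not go through literally.
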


The results of the preceding two propositions may be compared with
B\"ohm and Menini's \cite[Theorem 3.3]{BoMe}.

\section{Grouplike morphisms}

In  this section we extend the theory of Galois corings $\mathcal C$
over a ring $A$ to entwinings of a monad $F$ and a comonad $G$ on general categories.
For this we extend the notion of a grouplike element in $\mathcal C$
(e.g. \cite[28.1]{BW})
to the notion of a grouplike natural transformation $I\to G$.

\begin{thm}\label{grouplike}{\bf Definition.} \em
Let $\bG=(G,\delta,\ve)$ be a comonad on a
category $\A$.
A natural transformation $g:I\to G$ is called a {\em grouplike
morphism} provided it induces commutative diagrams
$$\xymatrix{ I \ar[r]^g \ar[dr]_= & G \ar[d]^\ve \\
                & I ,}  \quad
  \xymatrix{ I \ar[r]^g \ar[dr]_{gg} & G \ar[d]^\delta \\
                & GG .} $$
\end{thm}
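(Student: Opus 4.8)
The plan is to recognise the data in Definition~\ref{grouplike} as a special case of the comodule functors of \ref{com-fun}, and then to read off the consequences for the Galois property.

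First I would specialise \ref{com-fun} to $\B=\A$ and $F=I$ (the identity functor), for which $GF=G$. A natural transformation $\beta=g:I\to G$ then makes $I$ a left $\bG$-comodule exactly when $\ve\cdot g=\mathrm{id}_I$ and $\delta\cdot g=Gg\cdot g$. The first equation is precisely the left-hand triangle of Definition~\ref{grouplike}. For the second, I would use naturality of $g$ to identify $Gg\cdot g$ with $gG\cdot g$, and note that this common composite $I\to G\to GG$ is exactly the arrow labelled $gg$ in the right-hand triangle; so coassociativity of the comodule is nothing but commutativity of that triangle. Hence a grouplike morphism is the same thing as a left $\bG$-comodule structure on $I$.

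Next I would invoke the correspondence stated in \ref{com-fun} (dual to \cite[Proposition~II.1.1]{D}): such a $\bG$-comodule structure is equivalent to a functor $\overline{I}:\A\to\A^G$ with $U^G\overline{I}=I$, given on objects by $\overline{I}(a)=(a,g_a)$, i.e. to a section of the forgetful functor $U^G$. Since $I$ is its own right adjoint with identity unit and counit, the comonad morphism $t_{\overline{I}}:I\cdot I=I\to G$ attached to this comodule in \ref{com-fun} is $g$ itself; thus $I$ is $\bG$-Galois in the sense of Definition~\ref{def-galois} precisely when $g$ is a natural isomorphism. As a consistency check with the classical situation I would take an $A$-coring $\cC$ with $G=-\ot_A\cC$, evaluate a grouplike morphism $I\to-\ot_A\cC$ at $A$ to obtain an element of $\cC$, verify that the two triangles become the counit and comultiplication conditions defining a grouplike element, and that evaluation at $A$ is a bijection, recovering \cite[28.1]{BW}.

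The only delicate point — the ``main obstacle,'' such as it is — is the bookkeeping in the coassociativity step: matching the diagonal arrow $gg$ of the right-hand triangle with $Gg\cdot g=gG\cdot g$, which rests on the interchange identity for the natural transformation $g$. Everything else is a direct translation between the comodule axioms of \ref{com-fun} and the two triangles of the definition, together with the already-established equivalence between comodule functors and lifts through $U^G$.
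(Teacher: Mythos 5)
This item is a \emph{definition}, not a theorem: the paper offers no proof of \ref{grouplike}, and there is nothing to prove. What you have written is therefore not a proof but a (correct) reinterpretation of the definition: your identification of a grouplike morphism $g:I\to G$ with a left $\bG$-comodule structure on the identity functor is accurate --- the counit triangle is literally $\ve\cdot g=1$, and the coassociativity axiom $\delta\cdot g=Gg\cdot g$ matches the right-hand triangle because the Godement product satisfies $gg=Gg\cdot g=gG\cdot g$ by naturality, exactly as you say. Your further remarks (that $t_{\overline{I}}=g$ under the trivial self-adjunction of $I$, and that evaluating at $A$ for $G=-\ot_A\cC$ recovers the grouplike elements of \cite[28.1]{BW}) are also correct and consistent with how the paper actually uses the notion later, e.g.\ in \ref{group.com} and in the observation that the unit $e:I\to H$ of a bimonad is grouplike. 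So there is no gap, but be aware that you have annotated a definition rather than supplied a missing argument; the only substantive verification involved is the interchange identity $Gg\cdot g=gG\cdot g$, which you correctly flag.
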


Comonads with grouplike morphisms are called {\em computational}
in \cite{BroGev} (see also \cite{Mul}).
The next result transfers Proposition 5.1 in \cite{M}.

\begin{thm}\label{group.com}{\bf Grouplike morphisms and comodule structure.}
Let $\bF=(F,m,e)$ be a monad and $\bG=(G,\delta,\ve)$ a comonad on a
category $\A$ with an entwining  $\lambda:FG\to GF$.
If $G$ has a grouplike morphism $g:I\to G$, then $F$ has two left
$G$-comodule structures (see \ref{com-fun}) given by
$$\xymatrix{ (1)\quad\tilde g: F \ar[r]^{Fg}& FG \ar[r]^\lambda & GF }
\; \mbox{ and } \;(2)\quad  gF:F\to GF  .$$
\end{thm}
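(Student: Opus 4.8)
The plan is to verify directly that each of the two candidate natural transformations satisfies the counitality and coassociativity axioms for a left $\bG$-comodule structure displayed in \ref{com-fun}, using only the comonad laws, the monad laws (only the unit $e$ is needed here), the grouplike axioms for $g$, and the entwining (mixed distributive law) compatibility conditions for $\lambda:FG\to GF$. For the second structure $gF:F\to GF$ almost nothing is required: since $g:I\to G$ is a natural transformation, the counit triangle $\ve F\cdot gF = (\ve\cdot g)F = \mathrm{id}_F$ follows at once from the first grouplike diagram, and the coassociativity square $\delta F\cdot gF = Gg F\cdot gF$ follows from the second grouplike diagram whiskered on the right by $F$. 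Thus $(2)$ is essentially immediate and can be dispatched in a line or two.

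The substance is in $(1)$, i.e. $\tilde g = \lambda\cdot Fg:F\to GF$. First I would recall the two entwining axioms relating $\lambda$ to $\delta$ and $\ve$: namely $\ve F\cdot\lambda = F\ve$ (as maps $FG\to F$) and $\delta F\cdot\lambda = G\lambda\cdot\lambda G\cdot F\delta$ (as maps $FG\to GGF$); these are the "coalgebra side" conditions of a mixed distributive law (the monad-side conditions involving $m$ and $e$ will not be needed, except possibly the unit normalization $\lambda\cdot eG = Ge$). For counitality I would paste together the naturality square for $\ve$ against $\lambda$ and the first grouplike axiom: $\ve F\cdot\tilde g = \ve F\cdot\lambda\cdot Fg = F\ve\cdot Fg = F(\ve\cdot g) = F(\mathrm{id}) = \mathrm{id}_F$. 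For coassociativity I must show $\delta F\cdot\tilde g = G\tilde g\cdot\tilde g$, i.e. $\delta F\cdot\lambda\cdot Fg = G\lambda\cdot GFg\cdot\lambda\cdot Fg$. Starting from the left side, apply the entwining axiom $\delta F\cdot\lambda = G\lambda\cdot\lambda G\cdot F\delta$ to rewrite it as $G\lambda\cdot\lambda G\cdot F\delta\cdot Fg = G\lambda\cdot\lambda G\cdot F(\delta\cdot g)$, then use the second grouplike axiom $\delta\cdot g = gg = Gg\cdot g$ to get $G\lambda\cdot\lambda G\cdot F(Gg\cdot g) = G\lambda\cdot\lambda G\cdot FGg\cdot Fg$. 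It then remains to check $\lambda G\cdot FGg = GFg\cdot\lambda$, which is exactly the naturality of $\lambda:FG\to GF$ applied to the morphism $g:I\to G$ in the "inner" variable (i.e. naturality of the family $\{\lambda_a\}$ against $g$); substituting this identity finishes the computation, yielding $G\lambda\cdot GFg\cdot\lambda\cdot Fg = G\tilde g\cdot\tilde g$ as required.

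The main obstacle, such as it is, is purely bookkeeping: getting the whiskering/interchange of the various natural transformations in the right order so that the entwining axiom, the grouplike axiom, and the naturality of $\lambda$ each apply at the correct spot in the pasting diagram. There is no conceptual difficulty — everything is a diagram chase — but I would draw the coassociativity pentagon explicitly, splitting it into three subregions (one commuting by the $\delta$-vs-$\lambda$ entwining axiom, one by the grouplike identity for $\delta\cdot g$ whiskered by $F$, and one by naturality of $\lambda$), and cite \ref{com-fun} for what "left $\bG$-comodule structure" means. I would also note in passing that $(1)$ and $(2)$ genuinely differ in general (they coincide precisely when $g$ is "$\lambda$-compatible", e.g. when $\lambda\cdot Fg = gF$), which is why the statement asserts \emph{two} structures; but proving they are distinct is not required here.
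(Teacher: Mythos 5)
Your proposal is correct and follows essentially the same route as the paper's own proof: counitality of $\tilde g$ via the $\ve$-compatibility of $\lambda$ plus the first grouplike axiom, coassociativity via the pasting of the $\delta$-compatibility of $\lambda$, the identity $F\delta\cdot Fg=Fgg$, and naturality of $\lambda$ against $g$ (the paper's ``pentagon''), with part (2) dispatched by whiskering the grouplike diagrams with $F$. No gaps.
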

\begin{proof} (1) In the diagram
$$\xymatrix{F \ar[r]^{Fg} \ar[rd]_= & FG \ar[d]^{F\ve}  \ar[r]^\lambda &
                                                 GF \ar[d]^{\ve F} \\
    & F \ar[r]_= & F ,} $$
the triangle is commutative by the grouplike properties of $g$ and
the square is commutative by the properties of the entwining
$\lambda$.
In the diagram
$$\xymatrix{  F \ar[r]^{Fg}\ar[d]_{Fg}\ar[rd]^{Fgg}
 & FG \ar[r]^\lambda \ar[d]^{F\delta}  & GF \ar[dd]^{ \delta F} \\
    FG \ar[d]_\lambda & FGG \ar[d]^{\lambda G}   &   \\
    GF \ar[r]^{GFg} & GFG \ar[r]^{G\lambda} & GGF, }
$$
the right rectangle is commutative by properties of entwinings, the
triangle is commutative by properties of the grouplike morphism $g$, and the
pentagon is commutative by naturality of composition.

This shows that $\tilde g$ makes $F$  a left $G$-comodule.
\smallskip

(2) To say that $(F, gF:F\to GF )$ is a left $\bG$-comodule
is to say that the  diagrams
$$\xymatrix{ F \ar[r]^{gF} \ar[dr]_= & GF \ar[d]^{\ve F} \\
                & F ,}  \quad
  \xymatrix{ F \ar[r]^{gF} \ar[d]_{gF} & GF \ar[d]^{\delta F}\\
       GF \ar[r]_{GgF}         & GGF .} $$
are commutative. Using the fact that $$GgF \cdot gF = ggF,$$
the commutativity of these diagrams follows
from the definition of a grouplike morphism.
\end{proof}

The pattern of the proof of \cite[Proposition 5.3]{M} also yields:

\begin{thm}\label{F.mixed} {\bf $F$ as mixed bimodule.}
With the data given in \ref{group.com},
$(F,m, \tilde g)$ is a mixed $(F,G)$-bimodule.
\end{thm}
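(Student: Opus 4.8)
The plan is to verify directly that the triple $(F, m, \tilde g)$ satisfies the three defining conditions of a mixed $(F,G)$-bimodule relative to the entwining $\lambda : FG \to GF$: namely that $(F,m)$ is a left $\textbf{F}$-module (here the free one, so this is just associativity of $m$), that $(F,\tilde g)$ is a left $\textbf{G}$-comodule, and that the mixed compatibility square relating $m$, $\tilde g$ and $\lambda$ commutes — concretely, that $\tilde g \cdot m = G m \cdot \lambda F \cdot F \tilde g$ (the functor analogue of diagram (\ref{D.1.1}), read with $F$ in place of $a$, the multiplication $m$ as the module structure, and $\tilde g$ as the comodule structure). The first two conditions are already in hand: associativity of $m$ is part of the monad axioms, and $(F,\tilde g)$ being a left $\textbf{G}$-comodule is exactly part (1) of \ref{group.com}. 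So the entire content of the proposition is the mixed compatibility condition, and that is where the work — and the only real obstacle — lies.

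For the compatibility square I would paste the definition $\tilde g = \lambda \cdot Fg$ into both sides and chase a single commutative diagram built from three kinds of cells: (a) naturality squares for $g : I \to G$ applied to the natural transformation $m : FF \to F$ (giving $Gm \cdot gFF = gF \cdot m$ and its relatives), (b) naturality of the entwining $\lambda$, and (c) the entwining's own compatibility with the monad multiplication, i.e. the pentagon $\lambda \cdot mG = Gm \cdot \lambda F \cdot F\lambda$ which is one of the four mixed-distributive-law axioms for $\lambda : FG \to GF$. The claim $\tilde g m = Gm \cdot \lambda F \cdot F\tilde g$ expands on the right to $Gm \cdot \lambda F \cdot F\lambda \cdot FFg$, and on the left to $\lambda \cdot Fg \cdot m$. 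Using naturality of $g$ with respect to $m$, $Fg \cdot m = FmG \cdot FFg$ — wait, more carefully: $g$ is natural, so for the morphism $m$ we get $Gm \cdot g_{FF} = g_F \cdot m$; combined with the interchange law and the entwining pentagon the two composites are forced to agree. I would lay this out as one large xymatrix with the pentagon axiom for $\lambda$ occupying the central cell and naturality squares around the boundary.

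The main obstacle is purely bookkeeping: making sure the correct one of the four entwining axioms is invoked (the one coupling $\lambda$ with $m$, not the one coupling it with $e$, $\delta$, or $\varepsilon$) and that all whiskering is on the correct side, since $\tilde g$ mixes a right-whiskered $Fg$ with $\lambda$ whereas the module structure $m$ acts on the left. I expect no genuine difficulty beyond this; the proof is a diagram chase of the same flavour as the proof of \ref{group.com}, and indeed the paper flags it as following "the pattern of the proof of \cite[Proposition 5.3]{M}." I would therefore present it compactly: state the three conditions, dispatch the first two by citing the monad axioms and \ref{group.com}(1), and give the single compatibility diagram for the third, annotating its cells as "naturality of $g$", "naturality of $\lambda$", and "entwining axiom", then conclude.
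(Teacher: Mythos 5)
Your proposal is correct and follows the paper's own argument: the paper likewise reduces everything to the single compatibility square $\tilde g\cdot m = Gm\cdot \lambda F\cdot F\tilde g$, expands $\tilde g=\lambda\cdot Fg$, and closes the diagram with exactly the two cells you name --- the interchange identity $Fg\cdot m=mG\cdot FFg$ (the paper's ``naturality of composition'', which is the relevant form of your naturality step) and the entwining pentagon $\lambda\cdot mG=Gm\cdot\lambda F\cdot F\lambda$. No gap; your hesitation over which naturality square to use resolves to the interchange law you already cite.
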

\begin{proof}
We need to show commutativity of the diagram
$$\xymatrix{FF\ar[r]^m \ar[d]_{F\tilde g}  & F \ar[r]^{\tilde g}& GF \\
   FGF \ar[rr]^{\lambda F} & & GFF \ar[u]_{Gm} .} $$
However, by the definition of $\tilde g$, we get the diagram
$$\xymatrix{FF\ar[r]^m \ar[d]_{FFg}  & F \ar[r]^{Fg}& FG \ar[r]^{\lambda}& GF \\
  FFG \ar[r]_{F\lambda} \ar[rru]^{mG}
& FGF \ar[rr]^{\lambda F} & & GFF \ar[u]_{Gm} ,} $$ in which the
right pentagon is commutative since $\lambda$ is an entwining and
the triangle is commutative by naturality of composition.
This proves our claim.
\end{proof}

  Combining  \ref{P.2.2}, \ref{group.com} and \ref{F.mixed}
yields the existence of a functor
$K_g : \A \to (\A_F)^{\widehat{G}}$ making the diagram
$$\xymatrix{
\A \ar[r]^-{K_g} \ar[dr]_{\phi_F}& (\A_F)^{\widehat{G}}
\ar[d]^{U^{\widehat{G}}}\\
&\A_F }$$ commute. Note that $K_g(a)=((F(a), m_a), \tilde{g}_a)$.

Now assume that $\A$ admits equalisers. Then the category of
endofunctors of $\A$ also has equalisers and we have the

\begin{thm}\label{equal-func}{\bf Equaliser functor.}
With the data given in \ref{group.com},
define a functor $F^g$ as an equaliser of functors

$$\xymatrix{
F^g\ar[r]^{i_F} & F \ar[rr]^{gF} \ar[dr]_{Fg}& & GF \\
           & & FG \ar[ru]_\lambda . } $$
Then $F^g$ is a monad on $\A$ and $i_F:F^g\to F$ is a monad
morphism.
\end{thm}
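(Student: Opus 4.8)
The plan is to build the unit and the multiplication of $F^g$ as factorisations through the equaliser $i_F$, and then to read off the monad axioms and the monad-morphism property from those of $\bF$. Throughout I write $p:=i_F$, and I use that equalisers in the functor category $[\A,\A]$ are computed pointwise; in particular $p$ is a (regular) monomorphism, and so is $pX$ for every endofunctor $X$ of $\A$, and a natural transformation $\psi:X\to F$ factors (uniquely) through $p$ as soon as $gF\cdot\psi=\widetilde g\cdot\psi$, where $\widetilde g=\lambda\cdot Fg$. I also abbreviate by $pp$ the horizontal composite $F^gF^g\to FF$, noting $pp=Fp\cdot pF^g=pF\cdot F^gp$, and I set $q:=\widetilde g\cdot p=gF\cdot p:F^g\to GF$ (the two agree by the very definition of the equaliser). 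Once we produce $e^g:I\to F^g$ with $p\cdot e^g=e$ and $m^g:F^gF^g\to F^g$ with $p\cdot m^g=m\cdot(pp)$, these two identities are precisely the statement that $i_F:\bF^g\to\bF$ is a morphism of monads, so the last assertion will come for free.

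First I would check that the unit $e:I\to F$ equalises $gF$ and $\widetilde g$. Naturality of $e$ at $g$ gives $Fg\cdot e=eG\cdot g$; one of the defining identities of the entwining $\lambda$, namely $\lambda\cdot eG=Ge$, then gives $\widetilde g\cdot e=\lambda\cdot eG\cdot g=Ge\cdot g$; and naturality of $g$ at $e$ gives $Ge\cdot g=gF\cdot e$. Hence $\widetilde g\cdot e=gF\cdot e$, and $e$ factors uniquely as $e=p\cdot e^g$.

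The main step is to show that $m\cdot(pp):F^gF^g\to F$ equalises $gF$ and $\widetilde g$. For the $\widetilde g$-side I would use the mixed-bimodule identity $\widetilde g\cdot m=Gm\cdot\lambda F\cdot F\widetilde g$ of \ref{F.mixed}, together with $\widetilde g F=\lambda F\cdot FgF$ (immediate from $\widetilde g=\lambda\cdot Fg$), the relation $Fq=FgF\cdot Fp$, and naturality of $\widetilde g$ at $p$, to compute
$$\widetilde g\cdot m\cdot(pp)\;=\;Gm\cdot\lambda F\cdot Fq\cdot pF^g\;=\;Gm\cdot\widetilde g F\cdot Fp\cdot pF^g\;=\;G(m\cdot Fp)\cdot qF^g.$$
For the $gF$-side, naturality of $g:I\to G$ at the morphism $m\cdot Fp:FF^g\to F$ gives $gF\cdot m\cdot(pp)=G(m\cdot Fp)\cdot gFF^g\cdot pF^g=G(m\cdot Fp)\cdot qF^g$ as well (here $gFF^g\cdot pF^g=(gF\cdot p)F^g=qF^g$). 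The two composites agree, so $m\cdot(pp)$ factors uniquely as $m\cdot(pp)=p\cdot m^g$; naturality of $m^g$ follows from the uniqueness of the factorisation together with naturality of $m$ and of $p$.

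Finally, each monad axiom for $(F^g,m^g,e^g)$ is an equality of natural transformations out of $F^gF^g$, resp. $F^gF^gF^g$, into $F^g$; post-composing with the monomorphism $p$ and using $p\cdot e^g=e$, $p\cdot m^g=m\cdot(pp)$, the relations $pF\cdot F^gp=Fp\cdot pF^g$, and naturality, turns each such equality into the corresponding axiom for $\bF$ (for the unit laws one also uses $Fp\cdot eF^g=eF\cdot p$), which holds; since $p$ is monic these equalities descend to $F^g$. Thus $\bF^g=(F^g,m^g,e^g)$ is a monad and $i_F$ is a monad morphism. The one place that needs care — the ``main step'' above — is purely bookkeeping about which of the two $\bG$-comodule structures on $F$ from \ref{group.com} is used where; the point is that $i_F$ identifies them on $F^g$, so after pre-composing with $i_F$ one may pass freely between $gF$ and $\widetilde g$.
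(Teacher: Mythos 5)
Your proposal is correct and follows the same strategy as the paper's proof: show that $e$ and $m\cdot i_Fi_F$ equalise the pair $(gF,\lambda\cdot Fg)$, induce $e'$ and $m'$ by the universal property of the equaliser, and read off the monad axioms and the monad-morphism property from the monicity of $i_F$. The only differences are cosmetic: you reuse the mixed-bimodule identity of \ref{F.mixed} to compress the central computation, where the paper re-derives the same chain from the entwining pentagon via its steps $(\alpha)$--$(\delta)$, and you verify the monad axioms by hand where the paper cites \cite[Lemma 3.2]{B}.
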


\begin{proof} We adapt the proof of \cite[5.2]{M}.
The following two diagrams are commutative by naturality of
composition,
$$\xymatrix{ I \ar[r]^e \ar[d]_g & F \ar[d]^{gF} \\
              G \ar[r]_{Ge}  & GF,} \quad
  \xymatrix{ I \ar[r]^e \ar[d]_{g} & F \ar[d]^{Fg} \\
                G \ar[r]_{eG}& FG .} $$
 Since $\lambda \cdot eG=Ge$, it follows that
$$\lambda \cdot
Fg \cdot e=\lambda \cdot eG \cdot g=Ge \cdot g=gF \cdot e.$$ Thus
there exists a unique morphism $e': I \to F^g$ yielding a
commutative diagram
$$
\xymatrix{F^g \ar[r]^{i_F} &F\\
I  \ar[u]^{e'} \ar[ru]_{e}&}$$

Observe that
\begin{itemize}
\item [($\alpha$)] the   diagrams
$\xymatrix{FF \ar[r]^{FFg} \ar[d]_{m}& FFG \ar[d]^{mG}\\
F \ar[r]_{Fg}& FG,} \qquad
\xymatrix{ FF \ar[r]^{gFF} \ar[d]_{m}& GFF \ar[d]^{Gm}\\
F \ar[r]_{gF}& GF}$

commute by naturality of composition,

\item [($\beta$)]
$\lambda \cdot mG=Gm \cdot \lambda F \cdot F \lambda$,
    since   $\lambda$  is an entwining;

\item [($\gamma$)]
$\lambda \cdot Fg \cdot i_F=gF \cdot i_F$, since  $i_F$ is an
equaliser of $gF$ and $\lambda \cdot Fg$,

\item [($\delta$)]
$i_F i_F=i_F F \cdot F^g i_F=F i_F \cdot i_F F^g$,  by naturality of
composition.
\end{itemize}
 Hence we have
$$\begin{array}{rcl}
\lambda \cdot Fg \cdot m \cdot i_Fi_F&  =_{(\alpha)} &
  \lambda \cdot mG \cdot FFg \cdot i_Fi_F \\
  &=_{(\beta)}  &
   Gm \cdot \lambda F \cdot F \lambda \cdot FFg \cdot i_Fi_F\\
 &=_{(\delta)}
  & Gm \cdot \lambda F \cdot F \lambda \cdot FFg \cdot F i_F \cdot i_F
F^g \\
  &=_{(\gamma) } &
 Gm \cdot \lambda F \cdot  FgF \cdot F i_F \cdot i_F F^g \\
  &=_{(\delta) }&
Gm \cdot \lambda F \cdot  FgF \cdot i_F F \cdot F^g i_F \\
  &=_{(\gamma) }&
 Gm \cdot   gFF \cdot i_F F \cdot F^g i_F \\
  &=_{(\delta)} &
 Gm \cdot   gFF \cdot i_Fi_F \\
  &=_{(\alpha)} & gF \cdot m \cdot i_Fi_F.
\end{array}$$
Considering now the diagram
$$
\xymatrix{F^g F^g \ar@{-->}[dd]_{m'}\ar[r]^{i_{_F}i_{_F}} &FF
\ar[dd]_{m} \ar@/^2pc/@{->}[rrrr]^{gFF } \ar[rr]^{FgF} && FGF
\ar[rr]^{\lambda
F} && GFF \ar[dd]^{Gm}\\\\
F^g \ar[r]_{i_F}&F \ar@/^2pc/@{->}[rrrr]^{gF } \ar[rr]^{Fg} && FG
\ar[rr]^{\lambda } && GF \, ,}
 $$
one sees that there exists a unique
morphism $m': F^gF^g \to F^g$ making the left square of the diagram
commute. The result now follows from \cite[Lemma 3.2]{B}.
\end{proof}

  As we have seen, the morphism $\beta=\tilde{g}: F \to GF$
makes $F$ a left $G$-comodule. Consider the related
functor $K_g : \A \to (\A_F)^{\widehat{G}}$ and write $t: \phi_F U_F
\to \widehat{G}$ for the corresponding morphism of comonads on
$\A_F$. It is easy to see that for any $(a, h_a) \in \A_F$, $t_{(a,
h_a)}$ is the composite
\begin{equation}\label{t-comp}
\xymatrix{F(a) \ar[r]^-{F(g_a)}& FG(a) \ar[r]^-{\lambda_a}& GF(a)
\ar[r]^-{G(h_a)}& G(a).}
\end{equation}
Since $Fg \cdot e=eG \cdot g$ by
naturality of composition and $\lambda \cdot eG =Ge$, the
$(a, h_a)\in \A_F$-component of the morphism
$$
\xymatrix{\gamma : U_F \ar[r]^-{\eta^F U_F}& U_F\phi_FU_F
\ar[r]^-{U_F t}& U_F \widehat{G}}$$ is just the morphism $g_a : a
\to G(a).$ It follows that the monad generated by the functor $K_g$
and its right adjoint $R_g$ is given by the equaliser of the diagram
$$
\xymatrix{F \ar@{->}@<0.5ex>[r]^-{gF} \ar@ {->}@<-0.5ex>
[r]_{\tilde{g}=\lambda\cdot Fg}& GF.}$$
Thus $F^g$ is just the monad on $\A$
generated by the adjunction $K_g \dashv R_g$.

Since any functor with a right adjoint is full and faithful  if
and only if the
unit of the adjunction is an isomorphism, we have the

\begin{proposition}\label{P.3.6.x} Let $g: 1 \to G$ be a grouplike morphism. Then
the corresponding functor $K_g : \A \to (\A_F)^{\widehat{G}}$ is
full and faithful if and only if the functor $F^g$ is (isomorphic to) the
identity monad on $\A$.
\end{proposition}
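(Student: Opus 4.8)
The plan is to combine the standing observation that $F^{g}$ is the monad on $\A$ generated by the adjunction $K_{g}\dashv R_{g}$ with the cited fact that a functor admitting a right adjoint is full and faithful precisely when the unit of that adjunction is an isomorphism. First I would record that, since $\A$ admits equalisers, the functor $K_{g}:\A\to(\A_{F})^{\widehat G}$ has a right adjoint $R_{g}$ — this is exactly the construction of \ref{right-adj} applied to the $G$-comodule $(F,\widetilde g)$ together with its right adjoint $U_{F}$, and the discussion preceding \ref{P.3.6.x} identifies the resulting monad on $\A$ with $F^{g}$ via the equaliser diagram $\xymatrix{F \ar@<0.5ex>[r]^-{gF} \ar@<-0.5ex>[r]_-{\widetilde g} & GF}$. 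In other words, the composite $U_{R_{g}}\circ\phi_{R_{g}}$ (the monad of the adjunction $K_{g}\dashv R_{g}$) is $F^{g}$, and its unit $e':\, 1\to F^{g}$ is the morphism produced in the proof of \ref{equal-func}.

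Next I would invoke the general principle: for an adjunction $K_{g}\dashv R_{g}$ with unit $\eta^{g}:1\to R_{g}K_{g}$, the functor $K_{g}$ is full and faithful if and only if $\eta^{g}$ is an isomorphism. Since the forgetful functor $U^{\widehat G}:(\A_{F})^{\widehat G}\to\A_{F}$ and the various comparison data do not affect this, and since $U^{\widehat G}K_{g}=\phi_{F}$, one checks that the unit $\eta^{g}$ of $K_{g}\dashv R_{g}$ is carried, under the identification of the generated monad with $F^{g}$, to the monad unit $e':1\to F^{g}$ together with the comparison isomorphism; more precisely, the underlying endofunctor of $R_{g}K_{g}$ is $F^{g}$ and $\eta^{g}$ corresponds to $e'$ composed with this identification. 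Hence $\eta^{g}$ is an isomorphism of functors if and only if $e':1\to F^{g}$ is an isomorphism of functors, i.e. if and only if $i_{F}:F^{g}\to F$ exhibits $F^{g}$ as (isomorphic to) the identity monad $1=(1,\mathrm{id},\mathrm{id})$ on $\A$. Finally, an isomorphism of underlying functors $1\cong F^{g}$ compatible with the units is automatically an isomorphism of monads, since the multiplication and unit on $F^{g}$ are then forced; so ``$F^{g}$ isomorphic to the identity monad'' may be read either at the level of functors or of monads without ambiguity.

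The one point requiring care — and the step I expect to be the main obstacle — is the precise bookkeeping identifying the unit $\eta^{g}$ of the adjunction $K_{g}\dashv R_{g}$ with the monad unit $e'$ of $F^{g}$ under the equaliser description of $R_{g}K_{g}$. One must trace through the construction of \ref{right-adj}: the right adjoint $R_{g}$ is the equaliser of $R U^{\widehat G}\,\eta^{\widehat G}$ and $\gamma U^{\widehat G}$, and the $a$-component of $R_{g}K_{g}(a)$ is the equaliser of $F(g_{a})$-type maps whose common ``section'' is $e_{a}$; the discussion just before \ref{P.3.6.x} already computes that the relevant $\gamma$-component is $g_{a}$ itself, so that $R_{g}K_{g}(a)=F^{g}(a)$ and the unit component $(\eta^{g})_{a}:a\to F^{g}(a)$ is the unique map through the equaliser induced by $e_{a}:a\to F(a)$ — which is exactly $e'_{a}$. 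Granting this identification (which is the content of \ref{equal-func} and the paragraph following \ref{t-comp}), the equivalence ``$K_{g}$ full and faithful $\iff$ $e'$ iso $\iff$ $F^{g}\cong 1$'' drops out immediately, and the proof is complete.
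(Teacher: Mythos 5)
Your argument is correct and is essentially the paper's own: the text preceding Proposition \ref{P.3.6.x} identifies $F^g$ with the monad generated by the adjunction $K_g\dashv R_g$ via the equaliser description, and the proposition is then read off from the standard fact that a left adjoint is full and faithful if and only if the unit of the adjunction is an isomorphism. Your additional bookkeeping identifying the unit $\eta^g$ with $e'$ under that equaliser identification is exactly the detail the paper leaves implicit, and it is carried out correctly.
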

\smallskip

For an entwining $\lambda : TG \to GT$ and a grouplike morphism $g:
1 \to G$, for any $(a, h_a) \in \A_F$, the ${(a, h_a)}$-component $t_{(a,h_a)}$
 of the comonad morphism $t: \phi_F U_F \to \widehat{G}$,
corresponding to the functor $K_g : \A \to (\A_F)^{\widehat{G}}$, is
given in (\ref{t-comp}).
Consider the diagram

$$\xymatrix{F(a) \ar@{}[rrdd]^{(1)} \ar[rr]^-{F(e_a)}\ar[dd]_-{F(g_a)}&&
FF(a) \ar@{}[rrdd]^{(2)} \ar[dd]^-{F(g_{F(a)})} && F(a)
\ar[ll]_-{e_{F(a)}}
\ar[dd]^-{g_{F(a)}}\\\\
FG(a) \ar@{}[rrdd]^{(3)} \ar[rr]^-{FG(e_a)} \ar[dd]_-{\lambda_a} &&
FGF(a) \ar@{}[rdd]^{(4)}\ar[dd]_-{\lambda_{F(a)}}&& GF(a)
\ar[ll]_-{e_{GF(a)}}
\ar[lldd]^-{G(e_{F(a)})}\ar@{=}[dddd]\\\\
GF(a) \ar@{=}[rrdd]\ar[rr]^-{GF(e_a)} && GFF(a)\ar@{}[rrdd]^{(6)} \ar@{}[lld]^{(5)}\ar[dd]^{G(m_a)}&& \\\\
&& GF(a) \ar@{=}[rr]&& GF(a)\,,}$$ in which

\begin{itemize}
    \item diagram (1) is commutative by naturality of $g: 1 \to G$;
    \item diagram (2) is commutative by naturality of composition;
    \item diagram (3) is commutative by naturality of $\lambda:FG\to GF$;
    \item diagram (4) is commutative since $\lambda$ is an entwining, and
    \item diagrams (5) and (6) are commutative since $F$ is a monad.
\end{itemize}
It follows from the commutativity of this diagram that
the diagram
\begin{equation}\label{D.3.1-ent}
\xymatrix{F(a)\ar@{=}[ddd]
\ar@{->}@<0.5ex>[rr]^-{e_{F(a)}}
\ar@ {->}@<-0.5ex> [rr]_{F(e_a)}&&FF(a) \ar[d]^{F(g_{F(a)})}\\
&&FGF(a) \ar[d]^{\lambda_{F(a)}}\\
&& GFF(a) \ar[d]^{G(m_a)}\\
F(a) \ar@{->}@<0.5ex>[rr]^-{g_{F(a)}} \ar@ {->}@<-0.5ex>
[rr]_{\lambda_a \cdot F(g_a)}&&GF(a)}
\end{equation}
is serially commutative.

\begin{proposition}\label{P.3.5-ent}
Let $\lambda : TG \to GT$ be an entwining and  $g:
1 \to G$ be a grouplike morphism. If the monad $F$ is of descent
type (that is, the free $F$-algebra functor $\phi_F : \A \to
\A_F$ is precomonadic) and if the monad $F$ is $G$-Galois w.r.t. the
$G$-coaction $\widetilde{g}: F \to GF$ (see \ref{group.com}),
then the monad $F^g$ is (isomorphic to) the identity monad.
\end{proposition}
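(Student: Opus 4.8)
The plan is to reduce the claim, via Proposition~\ref{P.3.6.x}, to proving that the functor $K_g:\A\to(\A_F)^{\widehat{G}}$ is full and faithful, and to obtain this by factoring $K_g$ through the Eilenberg--Moore category of the comonad $\phi_F U_F$ on $\A_F$ generated by the adjunction $\phi_F\dashv U_F$. Let $\Phi:\A\to(\A_F)^{\phi_F U_F}$ be the corresponding comparison functor; explicitly $\Phi(a)=(\phi_F(a),\phi_F(e_a))$ and $\Phi(k)=\phi_F(k)$. By hypothesis $F$ is of descent type, i.e. $\phi_F$ is precomonadic, which is exactly the statement that $\Phi$ is full and faithful. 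On the other side, the comonad morphism $t:\phi_F U_F\to\widehat{G}$ attached to $K_g$ --- the one whose $(a,h_a)$-component is the composite recorded in~(\ref{t-comp}) --- induces a functor $\Theta:(\A_F)^{\phi_F U_F}\to(\A_F)^{\widehat{G}}$ which sends $(c,\rho)$ to $(c,t_c\cdot\rho)$ and is the identity on underlying morphisms. The first step is to check that
$$K_g=\Theta\circ\Phi.$$
On morphisms this is immediate, and on objects it is precisely the serial commutativity of the diagram~(\ref{D.3.1-ent}): the lower branch of that diagram reads $t_{\phi_F(a)}\cdot F(e_a)=\lambda_a\cdot F(g_a)=\widetilde{g}_a$, so that $\Theta(\Phi(a))=(\phi_F(a),\widetilde{g}_a)=K_g(a)$.

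The next step invokes the Galois hypothesis. By Definition~\ref{Gal-mon}, saying that $F$ is $G$-Galois with respect to $\widetilde{g}$ means that $\gamma_F:FF\xrightarrow{\widetilde{g}F}GFF\xrightarrow{Gm}GF$ is an isomorphism; comparing with~(\ref{t-comp}) one sees that for every $a\in\A$ the $a$-component of $\gamma_F$ coincides with $t_{\phi_F(a)}=G(m_a)\cdot\lambda_{F(a)}\cdot F(g_{F(a)})$, the component of $t$ at the free $F$-module $\phi_F(a)$. Hence $t$ is an isomorphism on every free $F$-module --- in particular on the underlying $F$-modules of all objects in the image of $\Phi$. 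Now $\Theta$ is faithful, since it leaves underlying morphisms unchanged; and it is full on the image of $\Phi$: given a $\widehat{G}$-comodule morphism $f:\Theta(\Phi(a))\to\Theta(\Phi(a'))$, naturality of $t$ turns its defining identity into $t_{\phi_F(a')}\cdot\big(\phi_F U_F(f)\cdot\phi_F(e_a)\big)=t_{\phi_F(a')}\cdot\big(\phi_F(e_{a'})\cdot f\big)$, and cancelling the isomorphism $t_{\phi_F(a')}$ shows that $f$ is already a morphism $\Phi(a)\to\Phi(a')$ in $(\A_F)^{\phi_F U_F}$. Therefore $\Theta$ restricts to a full and faithful functor on the image of $\Phi$, so $K_g=\Theta\circ\Phi$ is full and faithful, and Proposition~\ref{P.3.6.x} yields $F^g\simeq I$. (Throughout one uses that $\A$ has equalisers, so that $F^g$, the right adjoint $R_g$ of $K_g$, and Proposition~\ref{P.3.6.x} are all at hand.)

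The computation behind $K_g=\Theta\circ\Phi$ is essentially already in~(\ref{D.3.1-ent}), and the faithful/full verification for $\Theta$ on the free modules is routine naturality bookkeeping. The one point that really matters --- and which I expect to be the crux --- is the observation that the Galois hypothesis only provides invertibility of $t$ \emph{restricted to the free $F$-modules} (invertibility of $t$ itself would require additional (co)completeness or adjoint-existence assumptions on $\A,F,G$, as in Proposition~\ref{P.2.4}), combined with the fact that this restricted invertibility already suffices here because the comparison functor $\Phi$ takes values among objects whose underlying $F$-module is free.
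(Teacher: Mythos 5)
Your proof is correct, but it follows a genuinely different route from the paper's. The paper's own argument stays entirely at the level of equalisers: it reads the descent-type hypothesis as saying that $a\xrightarrow{e_a}F(a)\rightrightarrows FF(a)$ is an equaliser fork (the proof says ``coequaliser'', a slip), reads the Galois hypothesis as saying that the right-hand vertical composite $G(m_a)\cdot\lambda_{F(a)}\cdot F(g_{F(a)})$ of diagram (\ref{D.3.1-ent}) is an isomorphism, and then concludes at once: since the left vertical of (\ref{D.3.1-ent}) is the identity and the right vertical is invertible, the equaliser of the bottom pair --- which is $F^g(a)\to F(a)$ by the very definition of $F^g$ in \ref{equal-func} --- coincides with the equaliser of the top pair, namely $a\xrightarrow{e_a}F(a)$. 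You package the same two ingredients functorially: precomonadicity becomes full faithfulness of the comparison functor $\Phi$ into $(\A_F)^{\phi_FU_F}$, the Galois condition becomes invertibility of $t$ at the free $F$-modules and hence full faithfulness of $\Theta$ on the image of $\Phi$, and the conclusion is extracted via Proposition \ref{P.3.6.x}. Both proofs hinge on the same diagram (\ref{D.3.1-ent}) and on the identification $(\gamma_F)_a=t_{\phi_F(a)}$, which you verify correctly; what your version buys is a transparent explanation of why invertibility of $t$ only on free modules suffices (the image of $\Phi$ consists of free modules), at the cost of routing through the adjunction $K_g\dashv R_g$ and Proposition \ref{P.3.6.x} --- legitimate here since the section assumes $\A$ has equalisers, but machinery the paper's direct equaliser comparison does not need.
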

\begin{proof} To say that $F$ is of descent type is to say that the
diagram
$$\xymatrix{a \ar[r]^-{e_a}&F(a) \ar@{->}@<0.5ex>[rr]^-{e_{F(a)}}
\ar@ {->}@<-0.5ex> [rr]_{F(e_a)}&&FF(a)}$$ is a coequaliser diagram
for all $a \in \A$ (see \cite{BW}), while to say that the monad $F$
is $G$-Galois w.r.t $G$-coaction $\widetilde{g}: F \to GF$ is to say
that, for any $a \in \A$, the composite $G(m_a) \cdot \lambda_{F(a)}
\cdot F(g_{F(a)})$ is an isomorphism. The result now follows from
the commutativity of the diagram (\ref{D.3.1-ent}).
\end{proof}

\begin{thm}\label{adj.iF}{\bf Left adjoint of $(i_F)^*$.} \em
Since $i_F : F^g \to F$ is a morphism of monads, it induces a
functor
$$(i_F)^*: \A_F \to \A_{F^g},\quad (a, h_a) \mapsto (a, h_a \cdot (i_F)_a).$$ Moreover, when the
category $\A_F$ has coequalisers   of reflexive pairs  (which is
certainly the case if   $\A$ has coequalisers of reflexive pairs
and  $F$ preserves them), $(i_F)^*$ has a left adjoint
$(i_F)_! :\A_{F^g} \to \A_F$ which is defined as follows:
For notational reasons, write
\begin{center}
$\eta, \sigma :V \dashv U :\A_F \to \A$ (resp. $\eta', \sigma' : V'
\dashv U' : \A_{F^g} \to \A$)
\end{center}
for the forgetful-free adjunction
$(\phi_F,U_F)$ (resp. $(\phi_{F^g},U_{F^g})$.
Then $(i_F)_!$ is the
coequaliser of the diagram of functors and natural transformations
\begin{equation}\label{D.1.6}
\xymatrix{ VU'V'U' \ar@{->}@<0.5ex>[rr]^-{VU' \sigma'} \ar@
{->}@<-0.5ex> [rr]_{\beta}&& VU' \ar[rr]^-{q} && (i_F)_! \, ,}
\end{equation}
where $\beta$ is the   composite
$$\begin{array}{ll}
\xymatrix{ VU'V'U' \ar[rr]^-{VU'V'\eta U'}&& VU'V'UVU'=VU'V'U'(i_F)^*VU'} \\
\phantom{VU'V'U' VU'V'U'U'V'U'U'V'U'U'V'}\xymatrix{\ar[rr]^-{VU'\sigma'(i_F)^* VU'}&&VU'(i_F)^* VU'=VU VU'
 \ar[rr]^-{\sigma VU'}&& VU'\,.}
\end{array} $$
It is not hard to see that for any
$(a, h_a) \in \A_{F^g}$, the $(a, h_a)$-component of the diagram
(\ref{D.1.6}) is the diagram
$$
\xymatrix{FF^g(a) \ar@/^2pc/@{->}[rrrr]^{F(h_a) }
\ar[rr]_{F((i_F)_a)} && FF(a) \ar[rr]_{m_a} && F(a) \ar[r]^{q_a\quad\;}&
(i_F)_!(a,  h_a)}.$$
\end{thm}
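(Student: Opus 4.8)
The plan is to exhibit the statement as a direct instance of the general description of left adjoints to functors between Eilenberg--Moore categories recorded in \cite[Theorem A.1]{D} and already used in the construction of a left adjoint for $\overline R$ (cf.\ (\ref{E.1.3})). Since $i_F:F^g\to F$ is a morphism of monads, the restriction functor $(i_F)^*:\A_F\to\A_{F^g}$ satisfies $U'\,(i_F)^*=U$, where $U=U_F:\A_F\to\A$ has left adjoint $V=\phi_F$. This is exactly the shape of data to which (the dual of) \cite[Theorem A.1]{D} applies, with $(i_F)^*$ in the role of ``$\overline R$'', $U$ in the role of ``$R$'', and $V$ in the role of ``$F$''. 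That result then yields: $(i_F)^*$ admits a left adjoint precisely when the coequaliser of the pair consisting of $VU'\sigma'$ and $\beta$, i.e.\ $VU'V'U'\rightrightarrows VU'$, exists in $\A_F$ (computed pointwise), and in that case the coequaliser \emph{is} the left adjoint, $\beta$ being the composite written out in (\ref{D.1.6}). So it remains only to (a) guarantee existence of this coequaliser under the stated hypothesis, and (b) identify its $(a,h_a)$-components.

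For (a), I would evaluate the pair at an arbitrary $(a,h_a)\in\A_{F^g}$; unwinding $\sigma'$ and $\beta$ (as in step (b) below) shows the $(a,h_a)$-component is the pair of $\A_F$-morphisms $(FF^g(a),m_{F^g(a)})\rightrightarrows(F(a),m_a)$ with underlying arrows $F(h_a)$ and $m_a\cdot F((i_F)_a)$. This pair is reflexive, with common section the ($F$-linear, by naturality of $m$) morphism $F(e'_a):(F(a),m_a)\to(FF^g(a),m_{F^g(a)})$, where $e':I\to F^g$ is the unit of the monad $F^g$ from \ref{equal-func}: indeed $F(h_a)\cdot F(e'_a)=F(h_a\cdot e'_a)=1$ by unitality of the $F^g$-action, while $\bigl(m_a\cdot F((i_F)_a)\bigr)\cdot F(e'_a)=m_a\cdot F((i_F)_a\cdot e'_a)=m_a\cdot F(e_a)=1$ using $i_F\cdot e'=e$ (see \ref{equal-func}) and the unit law for the monad $F$. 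Hence, since $\A_F$ has coequalisers of reflexive pairs, the coequaliser exists for every $(a,h_a)$, so the coequaliser (\ref{D.1.6}) exists and defines $(i_F)_!:\A_{F^g}\to\A_F$; the parenthetical reduction is the standard fact that if $\A$ has coequalisers of reflexive pairs and $F$ preserves them, then $U_F$ creates such coequalisers.

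For (b), the leg $VU'\sigma'$ at $(a,h_a)$ has underlying arrow $F(h_a)$, since on underlying objects $\sigma'_{(a,h_a)}$ is the $F^g$-action $h_a:F^g(a)\to a$. The leg $\beta$ I would trace through its three constituents in (\ref{D.1.6}) --- $VU'V'\eta U'$, then $VU'\sigma'(i_F)^*VU'$, then $\sigma VU'$ --- using that $\eta_{F^g(a)}=e_{F^g(a)}$, that $(i_F)^*$ acts by restriction along $i_F$, and that the final $\sigma$-whiskering contributes $m_a$; this produces underlying arrow $m_a\cdot F((i_F)_a)$, hence the displayed $(a,h_a)$-component diagram, and by \cite[Theorem A.1]{D} the resulting functor $(i_F)_!$ is left adjoint to $(i_F)^*$. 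I expect the only mildly laborious point to be this last identification of the abstract natural transformation $\beta$ with $m_a\cdot F((i_F)_a)$: it is a routine diagram chase through the definition of $\beta$ using the triangle identities for $V\dashv U$ and $V'\dashv U'$ together with naturality of $\eta,\eta',\sigma,\sigma'$, and presents no conceptual difficulty.
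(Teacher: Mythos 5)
Your proposal is correct and follows essentially the same route the paper takes: it realises $(i_F)_!$ as the pointwise coequaliser prescribed by (the dual of) \cite[Theorem A.1]{D}, exactly as the paper does for the left adjoint of $\overline{R}$ in (\ref{E.1.3}), with existence secured by exhibiting the common section $F(e'_a)$ (using $i_F\cdot e'=e$ and the unit laws) so that the pair is reflexive. Your identification of the two legs as $F(h_a)$ and $m_a\cdot F((i_F)_a)$ is the computation the paper leaves as ``not hard to see,'' and it checks out.
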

\medskip

Let $\widehat{G}$ be the comonad on $\A_F$
that is the lifting of the comonad $\bG$
 corresponding to the entwining $\lambda$.
Then for any $(a, h_a)\in \A_F$,
$\widehat{G}(a, h_a)=(G(a), G(h_a) \cdot \lambda_a ).$

\begin{lemma}\label{L.3.8.x}
 For any $(a, h_a) \in \A_{F}$, the morphism $\tilde g_a: F(a) \to
GF(a)$ can be seen as a morphism in $\A_F$ from the free $F$-module
$V(a)=(F(a), m_a)$ to the $F$-module
$$\widehat{G}(V(a))=\widehat{G}(F(a), m_a)=(GF(a), G(m_a) \cdot
\lambda_{F(a)}).$$
\end{lemma}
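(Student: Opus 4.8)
The plan is to observe that the assertion is nothing more than the $a$-component of the mixed $(F,G)$-bimodule compatibility already established in \ref{F.mixed}, together with the explicit formula for the lifting $\widehat{G}$ on free $F$-modules. First I would spell out what it means for $\tilde g_a$ to be a morphism in $\A_F$ between the two indicated modules: since a morphism $(x,h_x)\to(y,h_y)$ in $\A_F$ is a morphism $f\colon x\to y$ of $\A$ with $f\cdot h_x = h_y\cdot F(f)$, and since the $F$-module structure of $\widehat{G}(V(a))=\widehat{G}(F(a),m_a)$ is by definition $G(m_a)\cdot\lambda_{F(a)}$, the claim amounts precisely to the commutativity of
$$\xymatrix{ FF(a) \ar[r]^-{F(\tilde g_a)} \ar[d]_-{m_a} & FGF(a) \ar[d]^-{G(m_a)\cdot\lambda_{F(a)}} \\ F(a) \ar[r]_-{\tilde g_a} & GF(a),}$$
that is, to the identity $\tilde g_a\cdot m_a = G(m_a)\cdot\lambda_{F(a)}\cdot F(\tilde g_a)$ in $\A$.

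Next I would identify this identity with the $a$-component of the diagram proved commutative in \ref{F.mixed}, namely $\tilde g\cdot m = Gm\cdot\lambda F\cdot F\tilde g\colon FF\to GF$. Evaluating at $a$ and using $(\lambda F)_a=\lambda_{F(a)}$, $(F\tilde g)_a=F(\tilde g_a)$ and $(Gm)_a=G(m_a)$ yields exactly the displayed equality. It then only remains to recall, from the description of the lifting $\widehat{G}$ given just before the statement, that $G(m_a)\cdot\lambda_{F(a)}$ is indeed the structure morphism of $\widehat{G}(V(a))$; hence $\tilde g_a\colon V(a)\to\widehat{G}(V(a))$ is a morphism of $\A_F$, as claimed.

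If instead a self-contained argument is preferred, one can unwind $\tilde g=\lambda\cdot Fg$ and build the required square directly from the naturality of $g\colon I\to G$ and of $\lambda\colon FG\to GF$, the entwining axioms for $\lambda$, and the associativity of $m$ — this is precisely the computation carried out in the proof of \ref{F.mixed}. In either case there is no genuine obstacle; the only point needing a little care is the bookkeeping that translates the "bimodule of functors" formulation of \ref{F.mixed} into the "module morphism in $\A_F$" formulation of the present lemma, and this is settled once and for all by the formula $\widehat{G}(F(a),m_a)=(GF(a),G(m_a)\cdot\lambda_{F(a)})$.
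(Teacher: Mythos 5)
Your proposal is correct, and the underlying argument is the same as the paper's: the required identity $\tilde g_a\cdot m_a = G(m_a)\cdot\lambda_{F(a)}\cdot F(\tilde g_a)$ follows from naturality of $m$ together with the entwining axiom for $\lambda$, which is exactly the two-cell diagram the paper draws at the component level (and which is also the content of \ref{F.mixed}). The only difference is presentational: you deduce the lemma by evaluating the already-proved bimodule diagram of \ref{F.mixed} at $a$, whereas the paper re-derives that same rectangle directly for $(F(a),m_a)$; both routes are sound.
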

\begin{proof} Consider the diagram
$$
\xymatrix{FF(a) \ar[rr]^{FF(g_a)} \ar[dd]_{m_a}&& FFG(a)
\ar[rr]^{F(\lambda_a)}\ar[dd]_{m_{G(a)}}&& FGF(a)
\ar[d]^{\lambda_{F(a)}}\\
&&&& GFF(a)\ar[d]^{G(m_a)}\\
F(a)\ar@{}[rruu]^{(1)} \ar[rr]_{F(g_a)} && FG(a)\ar@{}[rruu]^{(2)}
\ar[rr]_{\lambda_a}&&GF(a)\,,}$$
in which part (1) commutes by
naturality of $m$, while part (2) commutes since $\lambda$ is an
entwining. Thus the outer rectangle is commutative, which just means
that $\tilde g: F(a) \to GF(a)$ is a morphism in $\A_F$
from the free $F$-module $V(a)=(F(a), m_a)$ to the $F$-module
$(GF(a), G(m_a) \cdot \lambda_{F(a)}).$
\end{proof}

\begin{corollary} The collection $((\tilde g)_a)_{a \in \A}$ (see above) can be
seen as a natural transformation $\alpha_V : V \to \widehat{G}V$
making $V$ a left $\widehat{G}$-comodule.
\end{corollary}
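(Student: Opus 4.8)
The plan is to check that the family $(\tilde g_a)_{a\in\A}$ is the component family of a natural transformation $\alpha_V:V\to\widehat{G}V$ of functors $\A\to\A_F$, and that this $\alpha_V$ satisfies the counit and coassociativity diagrams of \ref{com-fun}, i.e. makes $V$ a left $\widehat{G}$-comodule in the sense used there. The first half of the ``well-defined'' part is already done by Lemma \ref{L.3.8.x}: for each $a\in\A$ the morphism $\tilde g_a:F(a)\to GF(a)$ lives in $\A_F$, as an arrow from $V(a)=(F(a),m_a)$ to $\widehat{G}(V(a))=(GF(a),G(m_a)\cdot\lambda_{F(a)})$. So setting $(\alpha_V)_a:=\tilde g_a$ gives legitimate arrows of $\A_F$, and it remains to verify naturality and the two comodule axioms.

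For naturality I would use that $U_F:\A_F\to\A$ is faithful together with $U_FV=F$ and, since $\widehat{G}$ is the lifting of $\bG$ along $U_F$ (so $U_F\widehat{G}=GU_F$, and $U_F\widehat{G}(h)=G(h)$ on morphisms), $U_F\widehat{G}V=GU_FV=GF$. Hence, for a morphism $f:a\to a'$ of $\A$, the candidate naturality square $\widehat{G}V(f)\cdot(\alpha_V)_a=(\alpha_V)_{a'}\cdot V(f)$ is carried by $U_F$ to $GF(f)\cdot\tilde g_a=\tilde g_{a'}\cdot F(f)$, which holds because $\tilde g=\lambda\cdot Fg:F\to GF$ is a natural transformation of endofunctors of $\A$. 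Faithfulness of $U_F$ then lifts the square back to $\A_F$, so $\alpha_V$ is natural.

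The comodule identities I would establish by the same reduction. By the description of the lifted comonad $\widehat{G}$ on free objects (its counit and comultiplication on an object $(a,h_a)\in\A_F$ being $\varepsilon_a$ and $\delta_a$), one has $(\widehat{\varepsilon})_{V(a)}=\varepsilon_{F(a)}$, $(\widehat{\delta})_{V(a)}=\delta_{F(a)}$, and $U_F(\widehat{G}((\alpha_V)_a))=G(\tilde g_a)$. Applying the faithful functor $U_F$, the counit law $(\widehat{\varepsilon}V)\cdot\alpha_V=1_V$ becomes $\varepsilon_{F(a)}\cdot\tilde g_a=1_{F(a)}$ and the coassociativity law $(\widehat{\delta}V)\cdot\alpha_V=(\widehat{G}\alpha_V)\cdot\alpha_V$ becomes $\delta_{F(a)}\cdot\tilde g_a=G(\tilde g_a)\cdot\tilde g_a$; but these are exactly the two commutative diagrams establishing that $(F,\tilde g)$ is a left $\bG$-comodule, proved in \ref{group.com}(1). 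Hence $(V,\alpha_V)$ is a left $\widehat{G}$-comodule.

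I do not expect a genuine obstacle here: the whole argument is a transport, along the faithful forgetful functor $U_F:\A_F\to\A$, of facts already proved in $\A$ (the naturality of $\tilde g$ and the $\bG$-comodule axioms of \ref{group.com}). The only point that needs a moment's care is the compatibility of the lifted comonad $\widehat{G}$ with the free functor $V=\phi_F$ — namely that $\widehat{G}$, its counit and its comultiplication restrict on free $F$-modules precisely to $G$, $\varepsilon$, $\delta$ — and this is exactly what the standard formulas for liftings provide.
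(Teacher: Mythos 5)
Your proof is correct and follows essentially the same route as the paper's: the paper's (very brief) argument rests on exactly the three facts you use, namely that $(\varepsilon_{\widehat{G}})_{(a,h_a)}=\varepsilon_a$, $(\delta_{\widehat{G}})_{(a,h_a)}=\delta_a$, and that $(F,\tilde g)$ is a left $\bG$-comodule, with Lemma \ref{L.3.8.x} supplying the well-definedness of the components in $\A_F$. You merely make explicit the transport along the faithful forgetful functor $U_F$ that the paper leaves as "not hard to prove."
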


\begin{proof} Using that for any $(a, h_a)\in \A_F$,
\begin{center}
 $\bullet\;$ $(\varepsilon_{\widehat{G}})_{(a,h_a)}=(\varepsilon_G)_a$,
 \quad $\bullet\;$ $(\delta_{\widehat{G}})_{(a,h_a)}=(\delta_G)_a$,
 \quad $\bullet\;$ $(F, \tilde g)$ is a left $G$-comodule,
\end{center}
it is not hard to prove that the pair $(V, \alpha_V)$ is a left
$\widehat{G}$-comodule.
\end{proof}
\begin{lemma}\label{coequal} With the notation above,
\begin{zlist}
\item the left rectangle in the diagram
$$
\xymatrix{ VU'V'U'\ar[d]_{\alpha_V U'V'U'}
\ar@{->}@<0.5ex>[rr]^-{VU' \sigma'} \ar@
{->}@<-0.5ex> [rr]_{\beta}&& VU' \ar[d]^{\alpha_V U'}
\ar[rr]^-{q} && (i_F)_! \ar@{-->}[d]^{\alpha_{(i_F)_!}}\\
\widehat{G}VU'V'U' \ar@{->}@<0.5ex>[rr]^-{\widehat{G}VU' \sigma'}
\ar@ {->}@<-0.5ex> [rr]_{\widehat{G}\beta}&&
\widehat{G}VU'\ar[rr]^-{\widehat{G}q} && \widehat{G}(i_F)_!}$$
 is serially commutative;
\item there exist a unique natural
transformation $\alpha_{(i_F)_!}: (i_F)_! \to \widehat{G}(i_F)_!$
making the right square of the diagram commute.
\end{zlist}
\end{lemma}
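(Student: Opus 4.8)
The plan is to reduce both parts to a computation with explicit components, using the descriptions already assembled. By the corollary to Lemma~\ref{L.3.8.x} the comodule structure $\alpha_V\colon V\to\widehat{G}V$ has $a$-component $\widetilde{g}_a\colon F(a)\to GF(a)$ (regarded as a morphism $V(a)=(F(a),m_a)\to\widehat{G}V(a)=(GF(a),G(m_a)\cdot\lambda_{F(a)})$ in $\A_F$); by \ref{adj.iF} the $(a,h_a)$-component of the parallel pair $VU'\sigma',\beta\colon VU'V'U'\rightrightarrows VU'$ is $\bigl(F(h_a),\ m_a\cdot F((i_F)_a)\bigr)\colon FF^g(a)\rightrightarrows F(a)$; and $\widehat{G}$ acts on morphisms of $\A_F$ by $G$ on underlying morphisms.

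For part (1) I would evaluate the left rectangle at an arbitrary $(a,h_a)\in\A_{F^g}$. The $(a,h_a)$-components of $\alpha_VU'$ and $\alpha_VU'V'U'$ are then $\widetilde{g}_a$ and $\widetilde{g}_{F^g(a)}$, so the square attached to $VU'\sigma'$ is exactly the naturality square of $\widetilde{g}\colon F\to GF$ at $h_a\colon F^g(a)\to a$, which commutes, while the square attached to $\beta$ becomes the identity
$$\widetilde{g}_a\cdot m_a\cdot F((i_F)_a)=G(m_a)\cdot GF((i_F)_a)\cdot\widetilde{g}_{F^g(a)}\colon FF^g(a)\to GF(a).$$
I would prove this in four steps: naturality of $\widetilde{g}$ at $(i_F)_a$ rewrites the right-hand side as $G(m_a)\cdot\widetilde{g}_{F(a)}\cdot F((i_F)_a)$; the mixed $(F,G)$-bimodule identity of \ref{F.mixed}, $\widetilde{g}\cdot m=Gm\cdot\lambda F\cdot F\widetilde{g}$, rewrites the left-hand side as $G(m_a)\cdot\lambda_{F(a)}\cdot F(\widetilde{g}_a\cdot(i_F)_a)$; the fact from \ref{equal-func} that $i_F$ equalises $gF$ and $\widetilde{g}=\lambda\cdot Fg$ gives $\widetilde{g}_a\cdot(i_F)_a=g_{F(a)}\cdot(i_F)_a$, so this becomes $G(m_a)\cdot\lambda_{F(a)}\cdot F(g_{F(a)})\cdot F((i_F)_a)$; and finally $\lambda_{F(a)}\cdot F(g_{F(a)})=\widetilde{g}_{F(a)}$ by the definition of $\widetilde{g}$, so both sides agree. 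This identity carries essentially all the content of the lemma, and I expect the only non-mechanical point to be spotting that the equaliser property of $i_F$ is precisely what lets one replace $\widetilde{g}_a$ by $g_{F(a)}$.

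For part (2) I would invoke the universal property of the coequaliser (\ref{D.1.6}). By part (1) the natural transformation $\widehat{G}q\cdot\alpha_VU'\colon VU'\to\widehat{G}(i_F)_!$ coequalises the pair $(VU'\sigma',\beta)$, since
$$\widehat{G}q\cdot\alpha_VU'\cdot VU'\sigma'=\widehat{G}(q\cdot VU'\sigma')\cdot\alpha_VU'V'U'=\widehat{G}(q\cdot\beta)\cdot\alpha_VU'V'U'=\widehat{G}q\cdot\alpha_VU'\cdot\beta,$$
the middle equality being $q\cdot VU'\sigma'=q\cdot\beta$. As $(i_F)_!$ is by construction the coequaliser of this pair (formed pointwise in $\A_F$, which has the relevant coequalisers of reflexive pairs by the hypothesis of \ref{adj.iF}), the universal property yields a unique natural transformation $\alpha_{(i_F)_!}\colon(i_F)_!\to\widehat{G}(i_F)_!$ with $\alpha_{(i_F)_!}\cdot q=\widehat{G}q\cdot\alpha_VU'$, which is exactly the commutativity of the right square together with its uniqueness.
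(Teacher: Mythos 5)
Your proof is correct and follows essentially the same route as the paper's: you verify the left rectangle componentwise, reducing the $VU'\sigma'$-square to naturality of $\tilde g$ and the $\beta$-square to the identity $\tilde g_a\cdot m_a\cdot F((i_F)_a)=G(m_a)\cdot GF((i_F)_a)\cdot\tilde g_{F^g(a)}$, proved from the equaliser property of $i_F$ together with the entwining/naturality data (which you package via the bimodule identity of \ref{F.mixed}, where the paper chases the corresponding diagram directly), and then part (2) follows from the universal property of the pointwise coequaliser exactly as in the paper.
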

\begin{proof} (2) follows from the fact that  $q$ is a coequaliser of
$VU'\sigma'$ and $\beta$.
\smallskip

(1) To show that the left square is serially commutative, we
have to show that for any $(a, h_a)\in \A_F$, the diagram
$$
\xymatrix{FF^g(a)\ar[dd]_{(\tilde g)_{F^g(a)}}
\ar@/^2pc/@{->}[rrrr]^{F(h_a) } \ar[rr]_{F((i_F)_a)} && FF(a)
\ar[rr]_{m_a} && F(a) \ar[dd]^{(\tilde g)_a} \\\\
GFF^g(a) \ar@/^2pc/@{->}[rrrr]^{GF(h_a) } \ar[rr]_{GF((i_F)_a)} &&
GFF(a) \ar[rr]_{G(m_a)} && GF(a) }$$ is so. The left diagram below
is commutative by naturality of $g: I \to G$,
$$
\xymatrix{F^g(a) \ar[rr]^{h_a} \ar[d]_{g_{F^g\!(a)}}&&
a\ar[d]^{g_a}\\
GF^g(a) \ar[rr]_{G(h_a)}&& G(a),}\qquad
\xymatrix{F GF^g(a)
\ar[rr]^{FG(h_a)} \ar[d]_{\lambda_{F^g\!(a)}}&&
FG(a) \ar[d]^{\lambda_a}\\
GFF^g(a) \ar[rr]_{GF(h_a)}&& GF(a)}$$ while the right square is
commutative by naturality of $\lambda$. From this we obtain the
commutative diagram
$$
\xymatrix{FF^g(a) \ar[rr]^{F(h_a)} \ar[d]_{F(g_{F^g\!(a))}}&&
F(a)\ar[d]^{F(g_a)}\\
FGF^g(a)\ar[d]_{\lambda_{F^g\!(a)}} \ar@{-->}[rr]_{FG(h_a)}&& FG(a)\ar[d]^{\lambda_a}\\
GFF^g(a) \ar[rr]_{GF(h_a)}&& GF(a).}$$

Next, consider the diagram
$$
\xymatrix{FF^g(a) \ar[dddd]_{F(g_{F^g(a)})}
\ar[ddrr]^{F((i_F)_a)}\ar[rrrr]^{F((i_F)_a)}&&&&FF(a)\ar@{}[rrdd]^{(3)}\ar[dd]^{FF(g_a)}
\ar[rr]^{m_a}&&F(a)\ar[dd]^{F(g_a)}\\\\
&&FF(a)\ar@{}[rruu]^{(2)} \ar[rrdd]^{F(g_{F(a)})}&&
FFG(a)\ar[dd]^{F(\lambda_a)}
\ar[rr]_{m_{G(a)}}&& FG(a)\ar[dd]^{\lambda_{F(a)}}\\\\
FGF^g(a)\ar@{}[rrrrdd]_{(5)}\ar@{}[rruu]^{(1)}\ar[dd]_{\lambda_{F^g(a)}}
\ar[rrrr]_{FG((i_F)_a)}&&&&
FGF(a)\ar@{}[rr]^{(4)}\ar[dd]_{\lambda_{F(a)}} && GF(a)\\\\
GFF^g(a) \ar[rrrr]_{GF((i_F)_a)}&&&&GFF(a)\ar[rruu]_{G(m_a)}&&}$$ in
which
\begin{itemize}
    \item diagram (1) commutes by naturality of composition;
    \item diagram (2) commutes by ($\gamma$) in proof of \ref{equal-func};
    \item diagram (3) commutes by naturality of $m$;
    \item diagram (4) commutes since $\lambda$ is an entwining, and
    \item diagram (5) commutes by naturality of $\lambda$.
\end{itemize}
Thus the outer diagram is commutative and this
completes the proof of the lemma.
\end{proof}

\begin{thm}\label{nat-trans-S}{\bf Natural transformation $S_{\phi_{F^g}}$.}
\em
The pair $(V, \alpha_V)$ is a left $\widehat{G}$-comodule and by
commutativity of the diagram in \ref{coequal}, the pair $((i_F)_!,
\alpha_{(i_F)_!})$ is also a left $\widehat{G}$-comodule. Thus, as
noted in  \ref{com-fun}, there exists a unique functor
$\overline{i_F}: \A_{F^g} \to (\A_F)^{\widehat{G}}$ yielding
commutativity in the right triangle of the diagram

\begin{equation}\label{D.3.3-N}
\xymatrix{\A \ar[rrdd]_{\phi_F} \ar[rr]^{\phi_{F^g}}& &
\A_{F^g}\ar[dd]^{(i_F)_!} \ar[rr]^{\overline{i_F}} & &
(\A_F)^{\widehat{G}}  \ar[lldd]^{U^{\widehat{G}}}\\ \\
 & &  \A_F & &}
\end{equation}
where $U^{\widehat{G}} : (\A_F)^{\widehat{G}} \to \A_F$ is the
evident forgetful functor.

 A direct inspection shows that the diagram
$$\xymatrix{FF^g F^g  \ar@/^/@<+2.5ex>[rr]^{Fm'} \ar[r]^-{Fi_F F^g} & FFF^g \ar[r]^-{mF^g}& FF^g
\ar@/^/@<+1.5ex>[ll]^{FF^g e'} \ar[r]^-{Fi_F}& FF \ar[r]^-{m}& F
\ar@/^/@<+0.5ex>[ll]^{Fe'}}$$ is a split coequaliser diagram. This means
in particular that for any $a \in \A$,
$$(i_F)_{!}(\phi_{F^g}(a))=(i_F)_{!}(F^g(a), m'_a)=(F(a),
m_a)=\phi_F (a).$$
Thus the left triangle in the diagram
is also commutative.

 Consider the related comonad morphisms

\begin{itemize}
    \item $S_{\phi_{F^g}}: \phi_F U_F \to (i_F)_! (i_F)^*$
    corresponding to the left triangle in (\ref{D.3.3-N}),

\item $S_{\overline{i_F}}:(i_F)_! (i_F)^* \to \widehat{G}$
      corresponding to the right triangle in (\ref{D.3.3-N}),


\item and $S_{\overline{i_F}\cdot \phi_{F^g}}=t:\phi_F U_F \to
    \widehat{G}$ corresponding to the outer diagram in (\ref{D.3.3-N})
\end{itemize}

Then it follows from Proposition \ref{P.3.2-N} that
$t=S_{\overline{i_F}} \cdot S_{\phi_{F^g}}.$
\end{thm}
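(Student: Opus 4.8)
The plan is to verify the two hypotheses of Proposition~\ref{P.3.2-N} for the three adjunctions and two factorisations appearing in diagram~(\ref{D.3.3-N}), and then simply invoke that proposition. First I would identify the three adjunctions over the common base $\A_F$: the forgetful–free adjunction $\phi_F \dashv U_F : \A_F \to \A$, the adjunction $(i_F)_! \dashv (i_F)^* : \A_F \to \A_{F^g}$ established in \ref{adj.iF} (here one uses that $\A_F$ has coequalisers of reflexive pairs, which is part of the standing hypotheses once $\A$ has equalisers and, for $(i_F)_!$ to exist, also the coequaliser condition of \ref{adj.iF}), and the forgetful–cofree adjunction $U^{\widehat G} \dashv \phi^{\widehat G} : (\A_F)^{\widehat G} \to \A_F$. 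The comonads these generate on $\A_F$ are, respectively, $\phi_F U_F$, $(i_F)_!(i_F)^*$, and $\widehat G$ itself.

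Next I would check the two commutativity constraints required by Proposition~\ref{P.3.2-N}. Taking $X = \phi_{F^g} : \A \to \A_{F^g}$ and $Y = \overline{i_F} : \A_{F^g} \to (\A_F)^{\widehat G}$, I need $(i_F)_! \, \phi_{F^g} = \phi_F$ and $U^{\widehat G}\, \overline{i_F} = (i_F)_!$. The second identity is exactly the defining property of $\overline{i_F}$ recorded in \ref{nat-trans-S} (the right triangle of (\ref{D.3.3-N}) commutes by construction of $\overline{i_F}$ from the $\widehat G$-comodule structure $\alpha_{(i_F)_!}$ on $(i_F)_!$, via the correspondence in \ref{com-fun}). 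The first identity is the content of the split-coequaliser computation in \ref{nat-trans-S}: the displayed split coequaliser $FF^gF^g \rightrightarrows \cdots \to F$ shows $(i_F)_!(\phi_{F^g}(a)) = (i_F)_!(F^g(a), m'_a) = (F(a), m_a) = \phi_F(a)$ for every $a$, and naturality in $a$ gives the functor equality. So both constraints $F'X=F$ and $F''Y=F'$ of Proposition~\ref{P.3.2-N} (in its notation) hold.

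With these in place, Proposition~\ref{P.3.2-N} yields comonad morphisms $S_{\phi_{F^g}} : \phi_F U_F \to (i_F)_!(i_F)^*$ from the left triangle, $S_{\overline{i_F}} : (i_F)_!(i_F)^* \to \widehat G$ from the right triangle, and $S_{\overline{i_F}\cdot\phi_{F^g}} : \phi_F U_F \to \widehat G$ from the outer diagram, together with the composition law $S_{\overline{i_F}\cdot\phi_{F^g}} = S_{\overline{i_F}} \cdot S_{\phi_{F^g}}$. It only remains to identify $S_{\overline{i_F}\cdot\phi_{F^g}}$ with $t$. But $\overline{i_F}\cdot\phi_{F^g} = K_g$ by the uniqueness clause of \ref{com-fun} (both are functors $\A \to (\A_F)^{\widehat G}$ over $\phi_F$ inducing the same $\widehat G$-comodule structure $\tilde g$ on $\phi_F$, since $U^{\widehat G}\,\overline{i_F}\,\phi_{F^g} = (i_F)_!\,\phi_{F^g} = \phi_F$ and the comodule structure transported along $\overline{i_F}$ restricts to $\alpha_V$, i.e. to $\tilde g$), and the comonad morphism attached to $K_g$ is precisely $t : \phi_F U_F \to \widehat G$ as recalled after \ref{equal-func}. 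Hence $t = S_{\overline{i_F}} \cdot S_{\phi_{F^g}}$, as claimed.

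The main obstacle I anticipate is the bookkeeping in the identification $\overline{i_F}\cdot\phi_{F^g} = K_g$ and, relatedly, making sure that the comodule structure on $(i_F)_!$ produced in \ref{coequal}–\ref{nat-trans-S} really does restrict along $\phi_{F^g}$ to the structure $\tilde g$ that defines $K_g$; this is where one has to be careful that the various uses of the universal property of \ref{com-fun} (comodule structures $\leftrightarrow$ lifts to the Eilenberg–Moore category) are applied to the same data. Everything else — verifying that the relevant triangles in (\ref{D.3.3-N}) commute, and that the three pairs of functors are honest adjunctions over $\A_F$ — is routine given the results already proved, so the argument is essentially an assembly of \ref{adj.iF}, \ref{coequal}, \ref{nat-trans-S} and Proposition~\ref{P.3.2-N}.
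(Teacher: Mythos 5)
Your argument is correct and follows the paper's own route: the paper establishes the same two commuting triangles (the right one by the universal property of \ref{com-fun} applied to $\alpha_{(i_F)_!}$, the left one via the split coequaliser identifying $(i_F)_!\phi_{F^g}$ with $\phi_F$) and then invokes Proposition \ref{P.3.2-N} exactly as you do. The one point you spell out that the paper leaves implicit is the identification $\overline{i_F}\cdot\phi_{F^g}=K_g$, needed to recognise the outer comonad morphism as $t$; your justification of it (the transported comodule structure on $\phi_F$ is $\tilde g$, which follows from the commutativity established in \ref{coequal}) is sound.
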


\begin{lemma}\label{L.3.3-N}
For any $(a,h_a) \in \A_F,$ the $(a,h_a)$-component of
the natural transformation $S_{\phi_{F^g}}$ is the morphism
$$q_a:F(a) \to (i_F)_! ((i_F)^*(a,h_a))=(i_F)_!(a,h_a\cdot (i_F)_a).$$
\end{lemma}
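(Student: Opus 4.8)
The plan is to unwind the definition of the comonad morphism $S_{\phi_{F^g}}$ given by Proposition \ref{P.3.1-N} applied to the left triangle of (\ref{D.3.3-N}), and then evaluate it at a general object $(a,h_a)\in\A_F$. Recall that in Proposition \ref{P.3.1-N} the comonad morphism attached to a commutative triangle $V'X=V$ (here $X=(i_F)_!$, $V=\phi_F$, $V'=\phi_{F^g}$ in the ``$F$''-direction, but read for monads/left adjoints) is built from the composite $XR\to R'$ coming from the unit of the other adjunction followed by the counit; dualised appropriately, $S_{\phi_{F^g}}=V'\alpha$ where $\alpha$ is the evident mate. So the first step is to write out explicitly what this mate $\alpha$ is in the present situation, using that $\phi_F\dashv U_F$ with unit $\eta^F$ and counit $\varepsilon_F=\varepsilon_T$, and that $(i_F)_!$ is left adjoint to $(i_F)^*$ with the explicit coequaliser description of $(i_F)_!$ from \ref{adj.iF} and the unit $q$ from the diagram (\ref{D.1.6}).

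Next I would reduce to a componentwise statement: the whole point of \ref{adj.iF} is that for $(a,h_a)\in\A_F$ the $(a,h_a)$-component of the defining coequaliser (\ref{D.1.6}) is the explicit diagram
$$\xymatrix{FF^g(a) \ar@/^2pc/@{->}[rrrr]^{F(h_a)} \ar[rr]_{F((i_F)_a)} && FF(a) \ar[rr]_{m_a} && F(a) \ar[r]^{q_a\quad} & (i_F)_!(a,h_a)},$$
so that $q_a:F(a)\to (i_F)_!(a,h_a\cdot(i_F)_a)$ is precisely the coequaliser map. Then I would trace the construction of $S_{\phi_{F^g}}$ through this: by definition $S_{\phi_{F^g}}$ at $(a,h_a)$ factors as the $\phi_F U_F$-component $\phi_F U_F(a,h_a)=\phi_F(a)=(F(a),m_a)$ mapping, via the unit $q$ of $(i_F)_!\dashv(i_F)^*$ precomposed with nothing further (since $U_F(a,h_a)=a$ and $(i_F)^*\phi_F(a)$ recovers the needed object), into $(i_F)_!(i_F)^*(a,h_a)$. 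The key identification is that the ``$X R\to R'$'' mate, evaluated here, collapses to exactly the coequaliser map $q_a$, because the counit $\varepsilon_T$ of $\phi_F\dashv U_F$ at $(a,h_a)$ is $h_a$ and this is exactly the leg of the split/coequaliser fork that $q_a$ coequalises against $m_a\cdot F((i_F)_a)$ — i.e. $q_a\cdot F(h_a)=q_a\cdot m_a\cdot F((i_F)_a)$ is the defining relation, and chasing the mate through the triangular identities leaves precisely $q_a$.

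Concretely the steps are: (1) state the mate $\alpha: (i_F)_! U_{F^g}'\!\to U_F$... more precisely write $S_{\phi_{F^g}}$ as $\phi_{F^g}$ applied to the composite $U_F\xrightarrow{\eta' U_F} (i_F)^*(i_F)_! U_F=(i_F)^*\phi_F U_F\xrightarrow{(i_F)^*\varepsilon_F}(i_F)^*$... then apply $\phi_{F^g}$, wait — I should instead recall that $S_X=V'\alpha$ with $\alpha: XR\to R'$, $R=U_F$, $R'=U_{F^g}$, $V'=\phi_{F^g}$; (2) evaluate $\alpha$ at an object of $\A_F$ using that the unit of $(i_F)_!\dashv(i_F)^*$ is $q$ and $(i_F)^*$ is on the nose a ``restriction of scalars''; (3) apply $V'=\phi_{F^g}$ and then use the commutative diagram (\ref{D.3.3-N}) left triangle identity $(i_F)_!\phi_{F^g}=\phi_F$ to land back where we want; (4) verify the resulting morphism is $q_a$ by comparing with the explicit fork in \ref{adj.iF}. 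The main obstacle will be step (2)–(3): correctly bookkeeping the several adjunctions and the fact that $S_{\phi_{F^g}}$ is phrased for comonads on $\A_F$ (so everything is "upside down" relative to the monad pictures), and making sure the mate genuinely simplifies to $q_a$ rather than to some composite involving $(i_F)$; this is where a careful diagram chase using the triangular identities for $\phi_F\dashv U_F$ and the universal property of the coequaliser defining $(i_F)_!$ is needed. Once the bookkeeping is pinned down, the identification with $q_a$ is forced by the defining coequaliser relation and uniqueness.
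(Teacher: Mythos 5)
Your overall strategy is the paper's: apply Proposition \ref{P.3.1-N} to the left triangle of (\ref{D.3.3-N}), evaluate the resulting mate componentwise, and conclude via the universal property of the coequaliser defining $(i_F)_!$. However, the bookkeeping you flag as ``the main obstacle'' is precisely where the proposal, as written, goes wrong. In Proposition \ref{P.3.1-N} the triangle is $F'X=F$ with $X$ the functor between the two domain categories; for the left triangle of (\ref{D.3.3-N}) this forces $X=\phi_{F^g}$, $F=\phi_F$, $F'=(i_F)_!$, hence $R=U_F$, $R'=(i_F)^*$, and $S_{\phi_{F^g}}=(i_F)_!\,\alpha$, where $\alpha:\phi_{F^g}U_F\to (i_F)^*$ is the composite $(i_F)^*\varepsilon_F\cdot\overline{\eta}\,\phi_{F^g}U_F$ with $\overline{\eta}$ the unit of $(i_F)_!\dashv(i_F)^*$. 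Your assignments ($X=(i_F)_!$, $V'=\phi_{F^g}$, $R'=U_{F^g}$, ``then apply $\phi_{F^g}$'') do not typecheck --- $\phi_{F^g}\circ(i_F)_!$ is not even composable, and the relevant right adjoint is $(i_F)^*$, not $U_{F^g}$. Likewise the unit of $(i_F)_!\dashv(i_F)^*$ is not the coequaliser map $q$; conflating the two hides exactly the computation that must be done.

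The step you defer to ``the triangular identities'' is the actual content of the lemma. One must first compute that $\alpha_{(a,h_a)}=h_a\cdot(i_F)_a:(F^g(a),m'_a)\to(a,h_a\cdot(i_F)_a)$ in $\A_{F^g}$ (using $\overline{\eta}_{\phi_{F^g}(a)}=(i_F)_a$ and $(\varepsilon_F)_{(a,h_a)}=h_a$), so that $(S_{\phi_{F^g}})_{(a,h_a)}=(i_F)_!(h_a\cdot(i_F)_a)$. This is \emph{a priori} not $q_a$; identifying the two requires two further inputs. First, the split coequaliser of \ref{nat-trans-S} identifies $(i_F)_!\phi_{F^g}(a)$ with $(F(a),m_a)$ and the coequaliser map $q_{F^g(a)}$ with $m_a\cdot F((i_F)_a)$, so that $u=(i_F)_!(h_a\cdot(i_F)_a)$ is characterised by $u\cdot m_a\cdot F((i_F)_a)=q_a\cdot F(h_a)\cdot F((i_F)_a)$. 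Second, since $q_a$ is by definition the coequaliser of the pair $\bigl(F(h_a)\cdot F((i_F)_a),\, m_a\cdot F((i_F)_a)\bigr)$, the right-hand side equals $q_a\cdot m_a\cdot F((i_F)_a)$, and since $m_a\cdot F((i_F)_a)$ is a (split) epimorphism, $u=q_a$. So your route is the right one, but the proof is incomplete until the mate is actually evaluated and this comparison is carried out; ``forced by uniqueness'' only applies after one knows which two morphisms are being compared along which epimorphism.
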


\begin{proof}
Consider the natural transformation $\alpha:\phi_{F^g}U_F \to
(i_F)^*$ corresponding to the left triangle in (\ref{D.3.3-N})
which is the composite
$$
\xymatrix{\phi_{F^g}U_F
\ar[rr]^-{\overline{\eta}\phi_{F^g}U_F}&&(i_F)^*
(i_F)_!\phi_{F^g}U_F=(i_F)^*\phi_{F}U_F
\ar[rr]^-{(i_F)^*\varepsilon_F} && (i_F)^*\,,}$$ where
$\overline{\eta}:1 \to (i_F)^*(i_F)_!$ is the unit of the adjunction
$(i_F)_! \dashv (i_F)^*.$ A simple calculation shows that, for any
$(a,h_a) \in \A_F$,  $\alpha_{(a,h_a)}$ is the composite
$$
\xymatrix{ F^g(a) \ar[r]^-{(i_F)_a} & F(a) \ar[r]^-{h_a}& a.}
$$
Thus, the $(a,h_a)$-component of $S_{\phi_{F^g}}$ is the morphism
$$(i_F)_!(h_a \cdot (i_F)_a): (i_F)_!(\phi_{F^g}U_F(a,h_a)) \to
(i_F)_!((i_F)^*(a,h_a)).$$ Since
$\phi_{F^g}U_F(a,h_a)=\phi_{F^g}(a)=(F^g(a), m'_a)$ and
$(i_F)^*(a,h_a)=(a, h_a \cdot (i_F)_a)$, it follows from the
definition of  $(i_F)_!$ that the diagram
$$
\xymatrix{FF^g F^g(a) \ar[dd]_{FF^g(h_a \cdot (i_F)_a)}
\ar@/^/@<+2.5ex>[rrr]^{F(m'_a)} \ar[rr]_-{F((i_F)_{ F^g(a)})} &&
FFF^g(a) \ar[ddl]_{FF(h_a \cdot (i_F)_a)}\ar[r]_-{m_{F^g(a)}}&
FF^g(a) \ar[dd]_{F(h_a \cdot (i_F)_a)}
\ar[r]^-{q_{F^g(a)}}& (i_F)_!(\phi_{F^g}U_F(a,h_a))\ar[dd]^{(S_{\phi_{F^g}})_{(a,h_a)}}\\\\
FF^g(a) \ar[r]^{F((i_F)_a)}& FF(a) \ar@{->}@<0.5ex>[rr]^-{F(h_a)}
\ar@ {->}@<-0.5ex> [rr]_{m_a}&&F(a) \ar[r]^-{q_a} & (i_F)_!
((i_F)^*(a, h_a))\,,}$$ whose rows are coequaliser diagrams, is
commutative. Note now that the diagram
$$\xymatrix{FF^g F^g  \ar@/^/@<+2.5ex>[rr]^{Fm'} \ar[r]^-{Fi_F F^g} & FFF^g \ar[r]^-{mF^g}& FF^g
\ar@/^/@<+1.5ex>[ll]^{FF^g e'} \ar[r]^-{Fi_F}& FF \ar[r]^-{m}& F
\ar@/^/@<+0.5ex>[ll]^{Fe'}}$$ is a split coequaliser diagram. It
follows that the diagram
$$
\xymatrix{FF^g(a) \ar[rr]^-{m_a \cdot F((i_F)_a)}
\ar[dd]_{F((i_F)_a)} && F(a) \ar[dd]^{(S_{\phi_{F^g}})_{(a,h_a)}}\\\\
F(a) \ar[rr]_{q_a\quad}&&  (i_F)_! ((i_F)^*(a, h_a))}$$
is commutative.
Now, since $q_a \cdot F(h_a) \cdot F((i_F)_a)=q_a \cdot m_a \cdot
F((i_F)_a)$ and since $(S_{\phi_{F^g}})_{(a,h_a)}$ is the unique
morphism making the square commute, we see that
$(S_{\phi_{F^g}})_{(a,h_a)}=q_a.$
\end{proof}

\begin{proposition}\label{P.3.4-N}
Suppose the natural transformation $t:\phi_F U_F \to
\widehat{G}$ to be componentwise a monomorphism. Then $S_{\phi_{F^g}}:
\phi_F U_F \to (i_F)_! (i_F)^*$ is an isomorphism. Thus,
$S_{\overline{i_F}}:(i_F)_! (i_F)^* \to \widehat{G}$ is an
isomorphism if and only if $t$ is so.
\end{proposition}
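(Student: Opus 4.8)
The plan is to read off the components of $S_{\phi_{F^g}}$ from Lemma~\ref{L.3.3-N} and play them against the factorisation $t=S_{\overline{i_F}}\cdot S_{\phi_{F^g}}$ recorded in \ref{nat-trans-S}. By Lemma~\ref{L.3.3-N}, for each $(a,h_a)\in\A_F$ the component $(S_{\phi_{F^g}})_{(a,h_a)}$ is exactly the morphism $q_a\colon \phi_F(a)\to (i_F)_!((i_F)^*(a,h_a))$ coming from the construction of $(i_F)_!$ in \ref{adj.iF}. But that construction presents $q_a$ as a coequaliser, in $\A_F$, of the pair $FF^g(a)\rightrightarrows F(a)$ displayed there, evaluated at the $F^g$-algebra $(i_F)^*(a,h_a)=(a,h_a\cdot(i_F)_a)$; this coequaliser exists because $\A_F$ has coequalisers of reflexive pairs (the standing hypothesis in \ref{adj.iF}), the pair being reflexive via the unit $e'$ of $F^g$. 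Hence every component of $S_{\phi_{F^g}}$ is a regular epimorphism, in particular an epimorphism, in $\A_F$.

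Next I would bring in the hypothesis. Taking $(a,h_a)$-components of $t=S_{\overline{i_F}}\cdot S_{\phi_{F^g}}$ in $\A_F$ gives $t_{(a,h_a)}=(S_{\overline{i_F}})_{(a,h_a)}\cdot q_a$; since $t_{(a,h_a)}$ is a monomorphism by assumption, the cancellation property of monomorphisms forces $q_a$ to be a monomorphism as well. A morphism that is at once a coequaliser and a monomorphism is an isomorphism: if $q$ coequalises $f,g$ and is monic, then $qf=qg$ yields $f=g$, so the identity also coequalises $f,g$, and the resulting comparison map is a two-sided inverse of $q$ (using that coequalisers are epic). Therefore each $q_a=(S_{\phi_{F^g}})_{(a,h_a)}$ is an isomorphism, so $S_{\phi_{F^g}}\colon \phi_F U_F\to(i_F)_!(i_F)^*$ is an isomorphism of functors, and hence of comonads.

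The final assertion is then a one-line consequence: from $t=S_{\overline{i_F}}\cdot S_{\phi_{F^g}}$ with $S_{\phi_{F^g}}$ invertible one gets $S_{\overline{i_F}}=t\cdot(S_{\phi_{F^g}})^{-1}$, so $S_{\overline{i_F}}$ is an isomorphism exactly when $t$ is. The argument is essentially formal once Lemma~\ref{L.3.3-N} is available; the one point that genuinely needs attention — and the only thing I would spell out — is the identification of $q_a$ as a regular epimorphism, i.e. recalling from \ref{adj.iF} that the coequaliser defining $(i_F)_!((i_F)^*(a,h_a))$ exists and that the relevant pair is reflexive. I do not anticipate any serious obstacle beyond that bookkeeping.
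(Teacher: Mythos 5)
Your argument is correct and is essentially the paper's own proof: identify the components of $S_{\phi_{F^g}}$ as the regular epimorphisms $q_a$ via Lemma \ref{L.3.3-N}, cancel against the componentwise monomorphism $t=S_{\overline{i_F}}\cdot S_{\phi_{F^g}}$ to see each $q_a$ is also monic and hence an isomorphism, and read off the last claim from the factorisation. The extra bookkeeping you supply (existence of the coequaliser, reflexivity of the pair, the regular-epi-plus-mono argument) is exactly what the paper leaves implicit.
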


\begin{proof} First note that, by the previous lemma,
$S_{\phi_{F^g}}$ is a componentwise regular epimorphism. Now,
since any regular epimorphism that is a monomorphism is an
isomorphism and since $t=S_{\overline{i_F}} \cdot S_{\phi_{F^g}}$
(see \ref{nat-trans-S}),
the result follows.
\end{proof}

\begin{thm}\label{Galois-entw}{\bf Galois entwinings.} \em
Write $\widetilde{G}$ for the comonad on the category $\A_F$
generated by the adjunction $(i_F)_! \dashv (i_F)^*$ and let $t_g :
\widetilde{G} \to \widehat{G}$ be the related comonad morphism
  (see \cite[Theorem 4.1]{M}). This leads to a commutative
diagram with the canonical comparison functor $K_{\widetilde{G}}$
(e.g. \cite[Lemma 4.3]{M})
$$
\xymatrix{\A_{F^g} \ar[rd]_{\overline{i_F}}
\ar[r]^{K_{\widetilde{G}}}&
(\A_F)^{\widetilde{G}} \ar[d]^{(\A_F)_{t_g}}\\
 & (\A_F)^{\widehat{G}}. }$$

By Definition \ref{def-galois}, the functor $(i_F)_!$ is
$\widehat{G}$-Galois provided $t_g : \widetilde{G} \to \widehat{G}$
is an isomorphism. If this is the case we call $(F,G,\lambda,g)$ a
{\em Galois entwining} and $g:I\to G$ a \emph{Galois (grouplike)
morphism} and we have:
\end{thm}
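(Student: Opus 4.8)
I read Theorem~\ref{Galois-entw} as setting up a characterisation of Galois entwinings analogous to Theorems~\ref{T.3.11} and~\ref{Char-T-Gal}: the condition that $(F,G,\lambda,g)$ be a Galois entwining (i.e.\ $t_g:\widetilde{G}\to\widehat{G}$ an isomorphism, equivalently $(i_F)_!$ being $\widehat{G}$-Galois by Definition~\ref{def-galois}) is to be matched with the statement that the comonad morphism $t:\phi_FU_F\to\widehat{G}$ attached to $K_g$ is an isomorphism, and, via formula~(\ref{t-comp}), with the statement that $F$ is $G$-Galois with respect to the coaction $\widetilde{g}=\lambda\cdot Fg$ (cf.\ \ref{Gal-mon}, \ref{P.3.5-ent}). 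The whole argument rests on the factorisation $t=S_{\overline{i_F}}\cdot S_{\phi_{F^g}}$ of~\ref{nat-trans-S}, together with the remark after Proposition~\ref{P.3.1-N} identifying $S_{\overline{i_F}}$ with $t_g:\widetilde{G}\to\widehat{G}$, since $\widetilde{G}=(i_F)_!(i_F)^*$ on underlying functors.

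\textbf{Steps.} First I would prove that $(F,G,\lambda,g)$ is a Galois entwining if and only if $t$ is an isomorphism of comonads on $\A_F$. One direction is essentially free: if $t$ is an isomorphism then, since $S_{\phi_{F^g}}$ is componentwise a regular epimorphism by Lemma~\ref{L.3.3-N} and $t=t_g\cdot S_{\phi_{F^g}}$, the morphism $S_{\phi_{F^g}}$ is a split monomorphism, hence an isomorphism, and therefore so is $t_g=S_{\overline{i_F}}$. For the converse one invokes Proposition~\ref{P.3.4-N}: under the hypothesis that $t$ be componentwise a monomorphism, $S_{\phi_{F^g}}$ is already an isomorphism, so $t_g$ an isomorphism forces $t$ to be one. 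Second, I would reduce $t$ to its restriction to the free $F$-modules, which by~(\ref{t-comp}) is the family $t_{\phi_F(a)}=G(m_a)\cdot\lambda_{F(a)}\cdot F(g_{F(a)})$, that is, precisely the condition of~\ref{P.3.5-ent} that $F$ be $G$-Galois with respect to $\widetilde{g}$. This reduction is a density argument in the style of Proposition~\ref{P.2.4}: the free $F$-modules being dense in $\A_F$, and $\phi_FU_F$ and $\widehat{G}$ being suitably well behaved under the standing hypotheses (either $F$ and $G$ admit right adjoints and $\A$ has equalisers of coreflexive pairs, so that $\widehat{G}$ acquires a right adjoint by the dual of \cite[Theorem~A.1]{D}, or $\A$ has small colimits preserved by $F$ and $G$), Lemma~\ref{L.2.3} (resp.\ \cite[Theorem~17.2.7]{S}) gives that $t$ is an isomorphism if and only if its restriction to the free modules is. Finally, for any ``descent'' clause I would combine Proposition~\ref{P.3.5-ent} (descent type plus the free-module Galois condition yields $F^g\cong I$) with Proposition~\ref{P.3.6.x} ($F^g\cong I$ precisely when $K_g$ is fully faithful), and, should the theorem assert a genuine equivalence $\A\simeq(\A_F)^{\widehat{G}}$, with Proposition~\ref{P.1.7} applied to $K_g$, which then additionally requires $\phi_F$ to be comonadic.

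\textbf{Main obstacle.} The delicate point is the density reduction: one must check carefully that $\phi_FU_F$ — not merely $\widehat{G}$ — preserves the relevant colimits, which is where the hypotheses on $F$, $G$ and $\A$ are actually consumed, and one must make sure the componentwise-monomorphism assumption on $t$ is used in exactly the one place (the converse half of the first step, through Proposition~\ref{P.3.4-N}) where it is genuinely needed. The remaining verifications are routine diagram chases, the relevant factorisations of comonad morphisms having already been assembled in~\ref{nat-trans-S} and~\ref{L.3.3-N}.
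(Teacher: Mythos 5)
Your proposal proves the wrong statement. Item \ref{Galois-entw} is a definition together with its setup, not a theorem: the paper offers no proof here, only citations to \cite[Theorem 4.1]{M} and \cite[Lemma 4.3]{M} and the constructions already carried out in \ref{adj.iF}--\ref{nat-trans-S}. What actually needs checking at this point is (i) that $(i_F)_!$ carries a left $\widehat{G}$-comodule structure $\alpha_{(i_F)_!}$ --- this is Lemma \ref{coequal}, obtained by descending $\alpha_V$ (built from $\tilde g$) along the coequaliser $q$ defining $(i_F)_!$ --- so that by \ref{com-fun} one obtains the functor $\overline{i_F}$ and, via the remark after Proposition \ref{P.3.1-N}, the comonad morphism $t_g=S_{\overline{i_F}}:\widetilde{G}=(i_F)_!(i_F)^*\to \widehat{G}$; and (ii) that the displayed triangle with $K_{\widetilde{G}}$ commutes, which is the general factorisation of a comodule functor through the comparison functor. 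You take all of this for granted and instead prove a characterisation of when $t_g$ is an isomorphism.

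That characterisation is not asserted in \ref{Galois-entw}, and as you state it, it is not an unconditional biconditional either: the implication ``$t_g$ iso implies $t$ iso'' needs $S_{\phi_{F^g}}$ to be invertible, which the paper obtains only under the extra hypothesis that $t$ is componentwise monic (Proposition \ref{P.3.4-N}); and your density reduction to free $F$-modules consumes hypotheses on $\A$, $F$ and $G$ (existence of adjoints, preservation of colimits) that are nowhere imposed in this item. The material you develop is essentially a correct reassembly of Lemma \ref{L.3.3-N}, Proposition \ref{P.3.4-N} and the reduction argument of Proposition \ref{P.2.4}, and it becomes relevant only in Theorem \ref{Gal-entw} and in Section 4 (e.g.\ \ref{H-galois}, \ref{P.Bim.1}); as a justification of \ref{Galois-entw} itself it both overshoots and leaves the content that genuinely requires verification unaddressed.
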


\begin{theorem}\label{Gal-entw} Let $\lambda : FG \to GF$ be an
entwining from a monad $\textbf{F}$ to a comonad $\textbf{G}$ on a
category $\A$. Suppose that $g: I \to G$ is a grouplike morphism
such that the corresponding functor $(i_F)^*:\A_{F} \to \A_{F^g}$
admits a left adjoint functor $(i_F)_!:\A_{F^g} \to \A_F$. Then the
comparison functor $\overline{i_F}:\A_{F^g}\to (\A_F)^{\widehat{G}}$
is an equivalence of categories if and only if $(\textbf{F},
\textbf{G}, \lambda, g)$ is a Galois entwining and the functor
$(i_F)_!$ is comonadic.
\end{theorem}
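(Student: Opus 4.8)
The statement is the specialisation, to the comodule functor $(i_F)_!:\A_{F^g}\to\A_F$ over the comonad $\widehat{G}$, of the general dichotomy recorded just after Definition \ref{def-galois}: the lift $\oF$ is an equivalence if and only if $F$ is $\widehat{G}$-Galois and comonadic (equivalently, this is Proposition \ref{P.1.7}). So the proof is essentially a matter of matching notation to the set-up of \ref{nat-trans-S}. First I would collect the data already available: under the standing hypothesis that $(i_F)^*$ admits a left adjoint $(i_F)_!$, the constructions of \ref{nat-trans-S} (together with Lemma \ref{L.3.8.x} and the corollary following it) show that $((i_F)_!,\alpha_{(i_F)_!})$ is a left $\widehat{G}$-comodule, that $(i_F)^*$ is right adjoint to $(i_F)_!$, and that the functor $\overline{i_F}:\A_{F^g}\to(\A_F)^{\widehat{G}}$ of diagram (\ref{D.3.3-N}) is exactly the functor associated to this comodule structure in the sense of \ref{com-fun}, so that $U^{\widehat{G}}\,\overline{i_F}=(i_F)_!$.

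Next I would identify the canonical comonad morphism $t_{\overline{i_F}}$ of \ref{com-fun} attached to the comodule functor $(i_F)_!$ with the morphism $t_g:\widetilde{G}\to\widehat{G}$ of Theorem \ref{Galois-entw}. The comonad on $\A_F$ generated by the adjunction $(i_F)_!\dashv(i_F)^*$ has underlying functor $(i_F)_!(i_F)^*$ and is, by definition, $\widetilde{G}$. By the remark following Proposition \ref{P.3.1-N}, the comonad morphism $S_{\overline{i_F}}:(i_F)_!(i_F)^*\to\widehat{G}$ coincides with $t_{\overline{i_F}}$; and $S_{\overline{i_F}}$ was introduced in \ref{nat-trans-S} as the comonad morphism corresponding to the right-hand triangle of (\ref{D.3.3-N}), which is precisely $t_g$. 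Hence $t_{\overline{i_F}}=t_g$.

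With these identifications, Proposition \ref{P.1.7} applied to $F:=(i_F)_!$ and $\overline{F}:=\overline{i_F}$ states that $\overline{i_F}$ is an equivalence of categories if and only if $(i_F)_!$ is comonadic and $t_{\overline{i_F}}=t_g$ is an isomorphism of comonads; by Theorem \ref{Galois-entw} the latter condition is, by definition, the assertion that $(\textbf{F},\textbf{G},\lambda,g)$ is a Galois entwining (equivalently, that $(i_F)_!$ is $\widehat{G}$-Galois). This yields the claim. The only step asking for genuine care is the bookkeeping of the first two paragraphs: confirming that the mere existence of the left adjoint $(i_F)_!$ suffices for the comparison functor $\overline{i_F}$ and the comonad morphism $t_g$ to be available precisely as described in \ref{adj.iF}--\ref{Galois-entw} and in \ref{nat-trans-S}. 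Beyond this translation there is no real obstacle, since the theorem is then just a restatement of Proposition \ref{P.1.7}.
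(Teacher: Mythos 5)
Your proposal is correct and follows exactly the route the paper intends: the theorem is the instance of Proposition \ref{P.1.7} (equivalently, the remark after Definition \ref{def-galois}) for the $\widehat{G}$-comodule functor $(i_F)_!$, once one identifies $t_{\overline{i_F}}=S_{\overline{i_F}}=t_g$ via \ref{nat-trans-S} and \ref{Galois-entw} and recalls that ``Galois entwining'' is by definition the statement that $t_g$ is an isomorphism. The paper leaves precisely this bookkeeping implicit, and your first two paragraphs supply it correctly.
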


 In the situation of the preceding
theorem, if $g$ is such that the corresponding comparison functor $K_g:
\A \to (\A_F)^{\widehat{G}}$ is full and faithful, it follows from
Proposition \ref{P.3.6.x} that the functor $\overline{i_F}$ reduces to the
functor $K_g$.

\section{Bimonads}

\begin{thm}\label{bimonad}{\bf Properties of bimondas.} \em
Recall from \cite[Definition 4.1]{MW} that
a  bimonad $\bH$ on a category $\A$ is an endofunctor $H : \A \to \A$
 which has a monad structure
 $\uH=(H, m, e)$ and a comonad structure $\oH=(H, \delta, \varepsilon)$
with an entwining $\lambda:HH\to HH$ inducing commutativity of the
diagrams
\begin{equation}\label{D.1.18b}
 \xymatrix{ HH  
 \ar@{->}@<0.5ex>[r]^-{\ve H} \ar@ {->}@<-0.5ex>[r]_-{H\ve}
\ar[d]_m
& H \ar[d]^\varepsilon \\
    H \ar[r]^\varepsilon & 1 , } \qquad
\xymatrix{
1 \ar[r]^-{e} \ar[d]_-{e} & H \ar[d]^-{\delta}\\
H 
\ar@{->}@<0.5ex>[r]^-{eH} \ar@ {->}@<-0.5ex>[r]_-{He}&
 HH,}
 \qquad
\xymatrix{ 1\ar[r]^e \ar[dr]_= & H \ar[d]^\vareps\\
         & 1 ,}
\end{equation}

\begin{equation}\label{D.1.18}
\xymatrix{
HH \ar[r]^-{m} \ar[d]_-{H \delta}& H \ar[r]^-{\delta}& HH \\
HHH \ar[rr]_-{\lambda H}&& HHH  \ar[u]_-{Hm}.}
\end{equation}

Joining $H$ from the left to the central diagram in (\ref{D.1.18b})
and attaching the resulting square on the left hand side of (\ref{D.1.18}),
one derives the relation
\begin{equation}\label{lambda-d}
 \lambda \cdot He = \delta.
\end{equation}

For the bimonad $\bH$ we obtain the comparison functor
$$K_H : \A \to \A_H^H,\,\, a \longrightarrow(H(a), m_a, \delta_a) ,$$
where $\A_H^H = \A_{\underline{H}}^{\overline{H}}(\lambda) $, with
commutative diagrams
\begin{equation}\label{Comp-ff}
\xymatrix{ \A \ar[r]^-{K_H} \ar[rd]_-{\phi_{\underline{H}}} &
 (\A_{\underline{H}})^{\widehat{\overline{H}}}
\ar[d]^-{U^{\widehat{\overline{H}}}}  \simeq \A_H^H \\
& \A_{\underline{H}}} ,\qquad
 \xymatrix{
\A \ar[r]^-{K_H} \ar[rd]_-{\phi^{\overline{H}}} &
(\A^{\overline{H}})_{\widehat{\underline{H}}}
\ar[d]^-{U_{\widehat{\underline{H}}}}\simeq \A_H^H\\
& \A^{\overline{H}}.}\end{equation} As noticed in \cite[5.13]{W},
the comparison functor $K_H$ is full and faithful by the isomorphism
 $$\Mor_H^H(H(a),H(b))\to \Mor_\A(a,b),\quad f\mapsto \ve_b\circ f\circ e_a.$$
\end{thm}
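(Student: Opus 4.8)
The statement to prove is that the comparison functor $K_H:\A\to\A_H^H$, $a\mapsto(H(a),m_a,\delta_a)$, is full and faithful, with the inverse of the hom-set map $\phi\mapsto K_H(\phi)=H(\phi)$ given by $f\mapsto\ve_b\circ f\circ e_a$. Since $K_H$ is already available as a functor, it suffices to fix $a,b\in\A$ and show that $\phi\mapsto H(\phi)$ and $f\mapsto\ve_b\circ f\circ e_a$ are mutually inverse bijections between $\Mor_\A(a,b)$ and $\Mor_H^H(K_H(a),K_H(b))$. Recall that an element of the latter is a morphism $f:H(a)\to H(b)$ in $\A$ which is simultaneously a morphism of $\uH$-modules, i.e. $f\circ m_a=m_b\circ H(f)$, and a morphism of $\oH$-comodules, i.e. $\delta_b\circ f=H(f)\circ\delta_a$ (the triple-compatibility (\ref{D.1.1}) is a condition on objects only, so it plays no role for morphisms).

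For the composite $\Mor_\A(a,b)\to\Mor_H^H(K_H(a),K_H(b))\to\Mor_\A(a,b)$, naturality of $\ve:H\to 1$ gives $\ve_b\circ H(\phi)=\phi\circ\ve_a$, hence $\ve_b\circ H(\phi)\circ e_a=\phi\circ\ve_a\circ e_a=\phi$ by the triangle $\ve\cdot e=1$ of (\ref{D.1.18b}). This is the routine direction and uses only the comonad part of $\bH$.

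The substantive direction is the other composite: given $f\in\Mor_H^H(K_H(a),K_H(b))$ one must show $H(\ve_b\circ f\circ e_a)=f$, and I would carry this out in three short steps. First, the $\uH$-module axiom for $f$ together with the unit law $m\cdot He=1$ gives $f=f\circ m_a\circ H(e_a)=m_b\circ H(f)\circ H(e_a)=m_b\circ H(f\circ e_a)$, reducing the problem to identifying $f\circ e_a:a\to H(b)$. Second, the $\oH$-comodule axiom for $f$ together with the bimonad identity $\delta\cdot e=He\cdot e$ (read off the middle diagram of (\ref{D.1.18b})) yields $\delta_b\circ(f\circ e_a)=H(f)\circ\delta_a\circ e_a=H(f)\circ H(e_a)\circ e_a=H(f\circ e_a)\circ e_a$; post-composing with $H(\ve_b)$ and using the counit law $H\ve\cdot\delta=1$ gives $f\circ e_a=H(\ve_b\circ f\circ e_a)\circ e_a$. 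Third, substitute this back: $f=m_b\circ H\bigl(H(\ve_b\circ f\circ e_a)\circ e_a\bigr)=m_b\circ HH(\ve_b\circ f\circ e_a)\circ H(e_a)$, and conclude by naturality of $m:HH\to H$ and once more $m\cdot He=1$ that $f=H(\ve_b\circ f\circ e_a)\circ m_a\circ H(e_a)=H(\ve_b\circ f\circ e_a)$.

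The one place that genuinely uses the interaction of the two structures on $\bH$ — and the step I expect to be the main obstacle — is the second one, where the compatibility between $e$ and $\delta$ (equivalently the consequence $\lambda\cdot He=\delta$ of (\ref{lambda-d}), on the relevant component) is what lets the comodule property of $f$ be leveraged against its module property. Steps one and three are purely monad-theoretic and the easy direction is purely comonad-theoretic; in particular the full entwining condition (\ref{D.1.18}) is not needed here, only the unital/counital compatibilities collected in (\ref{D.1.18b}).
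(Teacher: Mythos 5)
Your argument is correct and is essentially the paper's: the fullness computation $f=H(\ve_b\circ f\circ e_a)$ uses exactly the same ingredients (the module and comodule axioms for $f$, naturality of $m$, the counit law, and the compatibility $\delta\cdot e=He\cdot e$ from (\ref{D.1.18b})), merely serialized into three steps instead of assembled into one large commutative diagram. The only divergence is cosmetic: you get faithfulness by checking $\ve_b\circ H(\phi)\circ e_a=\phi$ directly, whereas the paper deduces it from the split monomorphy of the unit of $\phi_{\underline{H}}\dashv U_{\underline{H}}$; also note that your parenthetical identifying $\delta\cdot e=He\cdot e$ with (\ref{lambda-d}) is inaccurate (they are different identities), but this does not affect the proof, which correctly uses only the former.
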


We now reconsider bimonads and Hopf monads in view of the notions
introduced in the preceding sections.

It is clear from (\ref{D.1.18b}) that the unit $e:I\to H$ is a
grouplike morphisms (as defined in \ref{grouplike}).  Write $\gamma$
for the composite $Hm \cdot \delta H$. Then since $\gamma \cdot He=
\delta$ (see \cite[(5.2)]{MW}), it is easy to see that the functor
$K_H$ is just the functor $K_e$ corresponding to the grouplike
morphisms $e: 1 \to H$. Then, since the functor $K_H$ is full and
faithful, it follows from Proposition \ref{P.3.6.x} that the diagram
$$\xymatrix{ 1 \ar[r]^e &H\ar@{->}@<0.5ex>[rr]^-{eH } \ar@ {->}@<-0.5ex>
[rr]_-{\delta=\lambda \cdot He}&& H H }$$
is an equaliser diagram.
Therefore the functor $F^e$ from \ref{equal-func}, that is
$\uH^\oH$, is just the identity on $\A$. Thus $(i_\uH)^*$ turns out
to be the forgetful functor $U_\uH:\A_\uH\to \A$ and its left
adjoint $(i_\uH)_!$ is the free functor $\phi_\uH:\A\to \A_\uH$.
Now, since the unit of the adjunction $\phi_H \dashv U_H$ is a split
monomorphism, the functor $\phi_\uH:\A\to \A_\uH$ is always
comonadic, provided the category $\A$ is Cauchy complete (see
Corollary 3.19 in \cite{Me}), it follows from \ref{Gal-entw}:

\begin{thm}\label{H-galois}{\bf $\phi_H$ as $\woH$-Galois functor.}
For a 
bimonad $\textbf{H}$ on a Cauchy complete category
$\A$, the following are equivalent:
\begin{blist}
\item $\phi_H$ is a $\woH$-Galois functor;
\item the unit $e:I\to H$ is a Galois grouplike morphism;
\item  the functor $K_H : \A \to \A^H_H$ is an
equivalence of categories.
\end{blist}
\end{thm}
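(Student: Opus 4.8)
The plan is to feed the monad $\uH$, the comonad $\oH$, the canonical entwining $\lambda$, and the grouplike morphism $g=e$ into the machinery of Sections~2 and~3 and to observe that all three assertions become paraphrases of one and the same condition: that the comonad morphism $t_{g}\colon\widetilde{\oH}\to\woH$ of \ref{Galois-entw} is an isomorphism.

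First I would collect the identifications already made in the paragraphs preceding the statement. The equaliser presentation of $H$ displayed there shows that $\uH^{\oH}=F^{e}$ of \ref{equal-func} is the identity monad on $\A$; consequently $(i_{\uH})^{*}=U_{\uH}$, its left adjoint $(i_{\uH})_{!}$ is $\phi_{\uH}=\phi_{H}$ (so the standing hypothesis of Theorem~\ref{Gal-entw} is automatically met), and the comonad $\widetilde{\oH}$ on $\A_{\uH}$ generated by $(i_{\uH})_{!}\dashv(i_{\uH})^{*}$ is the one generated by $\phi_{\uH}\dashv U_{\uH}$. With these identifications in hand, Definitions~\ref{def-galois} and~\ref{Galois-entw} make the three phrases ``$\phi_{H}$ is $\woH$-Galois'', ``$e$ is a Galois grouplike morphism'' and ``$(\uH,\oH,\lambda,e)$ is a Galois entwining'' literally synonymous, so that (a)$\Leftrightarrow$(b) requires no work.

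For (b)$\Leftrightarrow$(c) I would invoke Theorem~\ref{Gal-entw} applied to $(\uH,\oH,\lambda,e)$: the comparison functor $\overline{i_{\uH}}\colon\A_{\uH^{\oH}}\to(\A_{\uH})^{\woH}$ is an equivalence of categories if and only if $(\uH,\oH,\lambda,e)$ is a Galois entwining \emph{and} $(i_{\uH})_{!}=\phi_{\uH}$ is comonadic. I would then rewrite the two categories using $\uH^{\oH}=I$, so that $\A_{\uH^{\oH}}=\A$ and $(\A_{\uH})^{\woH}\simeq\A^{H}_{H}$, and use that $K_{H}=K_{e}$ is full and faithful (\cite[5.13]{W}), together with the remark following Theorem~\ref{Gal-entw} and Proposition~\ref{P.3.6.x}, to identify $\overline{i_{\uH}}$ with $K_{e}=K_{H}$. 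This turns the biconditional into: $K_{H}$ is an equivalence iff $(\uH,\oH,\lambda,e)$ is a Galois entwining and $\phi_{\uH}$ is comonadic. The comonadicity clause is then discharged exactly as recorded in the text: the unit $e\colon I\to H$ of $\phi_{H}\dashv U_{H}$ is split monic (since $\varepsilon\cdot e=1$) and $\A$ is Cauchy complete, so $\phi_{\uH}$ is comonadic by Corollary~3.19 of \cite{Me}. Hence (c) is equivalent to the Galois-entwining condition, i.e.\ to (b), and combining this with (a)$\Leftrightarrow$(b) finishes the proof.

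The proof carries essentially no computational content; the one point that demands care is checking that the various incarnations of ``Galois'' in play genuinely agree on this data --- concretely, that the canonical comonad morphism attached to $\phi_{H}$ viewed as a $\woH$-comodule functor with right adjoint $U_{\uH}$, the comonad morphism generated by the adjunction $(i_{\uH})_{!}\dashv(i_{\uH})^{*}$, and $t_{g}$ of \ref{Galois-entw} are literally the same map. I would pin this down by tracing through \cite[Theorem~4.1]{M} and the construction of $t$ in \ref{nat-trans-S}, after which the rest is bookkeeping.
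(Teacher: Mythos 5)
Your proposal is correct and follows essentially the same route as the paper: the displayed equaliser (via Proposition \ref{P.3.6.x} and full faithfulness of $K_H=K_e$) gives $\uH^{\oH}=I$, hence $(i_\uH)_!=\phi_\uH$ and $(i_\uH)^*=U_\uH$, after which (a) and (b) collapse to the same condition on $t\colon\phi_H U_H\to\woH$ and (c) follows from Theorem \ref{Gal-entw} together with comonadicity of $\phi_\uH$ over a Cauchy complete base. The final point you flag --- that the comonad morphisms from \ref{def-galois}, \ref{nat-trans-S} and \ref{Galois-entw} coincide here --- is exactly what the paper's decomposition $t=S_{\overline{i_F}}\cdot S_{\phi_{F^g}}$ settles, since $S_{\phi_{F^g}}$ becomes the identity when $F^g=I$.
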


\begin{proposition}\label{P.Bim.1}
Assume that $\A$ admits equalisers and that $H$ has a right adjoint.
Then the following are equivalent:
\begin{blist}
\item  the functor $K_H : \A \to \A^H_H$ is an
equivalence of categories;
\item   $(H, m)$ is $\overline{H}$-Galois;
\item   $\textbf{H}$ has an antipode.
\end{blist}
\end{proposition}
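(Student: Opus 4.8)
The plan is to deduce (a)$\Leftrightarrow$(b) from the entwining--Galois machinery of Sections~2--3, and to treat (b)$\Leftrightarrow$(c) as the bimonad form of the classical fact that a bialgebra is a Hopf algebra precisely when its Galois (fusion) operator is invertible. I would start by recording three identifications. First, the unit $e:I\to H$ is a grouplike morphism by \eqref{D.1.18b}, so the comparison functor $K_H$ of \ref{bimonad} is the functor $K_e$ attached in Section~3 to $e$. Second, the monad $\uH^{\oH}=F^{e}$ of \ref{equal-func} is the identity (as noted before \ref{H-galois}), so that $(i_{\uH})^{*}=U_{\uH}$, $(i_{\uH})_{!}=\phi_{\uH}$, and $\overline{i_{\uH}}$ coincides with $K_H$. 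Third, the canonical left $\oH$-comodule structure $\tilde{e}=\lambda\cdot He$ on $H$ of \ref{group.com} equals $\delta$ by \eqref{lambda-d}, so that the composite $Hm\cdot\delta H$ appearing in \ref{Gal-mon} for this coaction is exactly the natural transformation $\gamma$ fixed after \ref{bimonad}.

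For (a)$\Leftrightarrow$(b): since $\A$ has equalisers, every idempotent splits (the splitting of $u$ being the equaliser of $u$ and the identity), so $\A$ is Cauchy complete and Theorem~\ref{H-galois} applies; it identifies (a), which is its clause~(c), with its clause~(a), namely that $(\phi_{\uH},\beta_{K_e})$ is $\woH$-Galois. Now Proposition~\ref{P.2.4}(i) is available with $T=G=H$: $\A$ has equalisers of coreflexive pairs, and $H$ has a right adjoint by hypothesis. It turns ``$(\phi_{\uH},\beta_{K_e})$ is $\woH$-Galois'' into the statement that the monad $(H,m)$, equipped with the $\oH$-comodule structure $U_{\uH}(\beta_{K_e})=\tilde{e}=\delta$, is $\oH$-Galois; by Definition~\ref{Gal-mon} this says exactly that $\gamma=Hm\cdot\delta H$ is an isomorphism, i.e. it is precisely (b).

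For (b)$\Leftrightarrow$(c) I would argue directly, following the bialgebra proof. If $\bH$ has an antipode $S$, then $Hm\cdot HSH\cdot\delta H$ is a two-sided inverse of $\gamma$; this is a diagram chase using coassociativity of $\delta$, associativity of $m$, the identities \eqref{D.1.18} and \eqref{lambda-d}, and the two antipode equations. Conversely, if $\gamma$ is invertible, set $S:=(\ve H)\cdot\gamma^{-1}\cdot(He)\colon H\to HH\to HH\to H$. Using that $\gamma$ commutes with $\delta H$ (a formal consequence of coassociativity), one sees that $\gamma^{-1}$ is ``$\delta$ in the outer variable followed by a correction multiplied into the inner one'', and then the relations $\gamma\cdot\gamma^{-1}=1=\gamma^{-1}\cdot\gamma$, after applying $\ve$ in the outer slot and simplifying via counitality, give $m\cdot SH\cdot\delta=e\ve=m\cdot HS\cdot\delta$, so $S$ is an antipode. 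Combined with the previous step, this yields the full cycle of equivalences.

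I expect (b)$\Rightarrow$(c) to be the main obstacle: one must extract from the bare invertibility of $\gamma$ the Hopf-module-type compatibilities of $\gamma^{-1}$ --- which is where the bimonad identities \eqref{D.1.18} and \eqref{lambda-d} genuinely enter --- and then convert $\gamma^{-1}\gamma=1=\gamma\gamma^{-1}$ into the two antipode equations; alternatively, (b)$\Leftrightarrow$(c) can be cited from \cite{MW}. The implication (a)$\Leftrightarrow$(b) is by comparison routine once Theorem~\ref{H-galois} and Proposition~\ref{P.2.4} are invoked, the one slightly hidden ingredient being that the presence of equalisers forces Cauchy completeness, which is what makes \ref{H-galois} applicable.
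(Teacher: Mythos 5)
Your proof is correct, and its first half coincides with the paper's argument: the paper proves (b)$\Rightarrow$(a) by exactly your chain (Proposition \ref{P.2.4} with $T=G=H$ to pass from ``$(H,\delta)$ is $\overline{H}$-Galois'' to ``$\phi_{\underline{H}}$ is $\widehat{\overline{H}}$-Galois'', then comonadicity of $\phi_{\underline{H}}$ on the Cauchy complete category $\A$ plus Proposition \ref{P.1.7}, which is precisely what Theorem \ref{H-galois} packages), and it dismisses (a)$\Rightarrow$(b) as clear. Where you genuinely diverge is the antipode: the paper closes the cycle by citing \cite[5.6]{MW} for (a)$\Leftrightarrow$(c), whereas you prove (b)$\Leftrightarrow$(c) directly via the fusion operator. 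Your formulas are the right ones --- $\gamma^{-1}=Hm\cdot HSH\cdot\delta H$ when $S$ exists, and $S=\varepsilon H\cdot\gamma^{-1}\cdot He$ when $\gamma$ is invertible --- and the verification does go through: one first checks $\varepsilon H\cdot\gamma=m$, $\gamma\cdot He=\delta$, and the colinearity $\delta H\cdot\gamma=H\gamma\cdot\delta H$ and linearity $\gamma\cdot Hm=Hm\cdot\gamma H$ (these transfer to $\gamma^{-1}$), and then $\gamma^{-1}\cdot He=HS\cdot\delta$ and $\gamma^{-1}=Hm\cdot HSH\cdot\delta H$ yield the two antipode identities from $m\cdot\gamma^{-1}=\varepsilon H$ and $\gamma^{-1}\cdot\delta=He$ respectively. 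One small correction to your expectations: contrary to what you write, the bimonad compatibilities \eqref{D.1.18} and \eqref{lambda-d} are not actually needed for (b)$\Leftrightarrow$(c) --- only naturality, (co)associativity and (co)unitality of the separate monad and comonad structures enter (\eqref{lambda-d} is used only earlier, to identify the canonical coaction $\tilde e=\lambda\cdot He$ with $\delta$). This is consistent with, and explains, the paper's economy of hypotheses: the standing assumptions (equalisers, right adjoint for $H$) are consumed entirely by the Galois-functor step (a)$\Leftrightarrow$(b), while the antipode equivalence is hypothesis-free. What your route buys is a self-contained proof not resting on \cite{MW}; what the paper's buys is brevity.
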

\begin{proof} Clearly (a) implies (b), while the equivalence
of (a) and (c) is proved in \cite[5.6]{MW}. So suppose that $(H,m)$
is $\oH$-Galois. Then it follows from Proposition \ref{P.2.4} that
$\phi_{\underline{H}}$ is $\woH$-Galois, i.e. the comonad morphism
$t_{\phi_{\underline{H}}}: \phi_{\underline{H}}U_{\underline{H}} \to
\widehat{\overline{H}}$ is an isomorphism. Now, since   the
category $\A$ admits equalisers, it is Cauchy complete, and as it
was noted above,  the functor $\phi_{\underline{H}}$ is always
comonadic, it follows from Proposition \ref{P.1.7} that $K_H$ is an
equivalence of categories. This completes the proof.
\end{proof}

Dually, one has

\begin{proposition}\label{P.Bim.2}
Assume that $\A$ admits coequalisers and that $H$ has a left
adjoint. Then the following are equivalent:
\begin{blist}
\item  the functor $K_H : \A \to \A^H_H$ is an
equivalence of categories;
\item   $(H, \delta)$ is $\underline{H}$-Galois;
\item   $\textbf{H}$ has an antipode.
\end{blist}
\end{proposition}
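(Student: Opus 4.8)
The statement is the formal dual of Proposition~\ref{P.Bim.1}: passing to $\A^{\mathrm{op}}$ interchanges the monad $\uH$ with the comonad $\oH$, equalisers with coequalisers and right adjoints with left adjoints, while the conditions ``$\mathbf H$ has an antipode'' and ``Cauchy complete'' are self-dual. So the plan is to run the proof of Proposition~\ref{P.Bim.1} through this duality. The preliminary ingredient I would record first is the dual of the observation made just before Theorem~\ref{H-galois}: the relevant comparison functor is $K_H:\A\to(\A^{\oH})_{\widehat{\uH}}\simeq\A_H^H$ with $U_{\widehat{\uH}}K_H=\phi^{\oH}$ (the second diagram of~(\ref{Comp-ff})), and $\phi^{\oH}$ is right adjoint to $U^{\oH}$, the counit of this adjunction being $\varepsilon:U^{\oH}\phi^{\oH}=H\to 1$, a split epimorphism since $\varepsilon\cdot e=1$. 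Since $\A$ has coequalisers it is Cauchy complete (idempotents split as coequalisers of $(e,1)$), so the dual of Corollary 3.19 in \cite{Me} gives that $\phi^{\oH}$ is always monadic under the hypotheses of the proposition.

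For (a) $\Leftrightarrow$ (c) I would simply quote \cite[5.6]{MW}, as in Proposition~\ref{P.Bim.1}. For (a) $\Leftrightarrow$ (b) I would use a short chain of equivalences. By the dual of Proposition~\ref{P.1.7}, namely Proposition~\ref{P.1.2}, the functor $K_H$ is an equivalence of categories if and only if $\phi^{\oH}$ is monadic and the monad morphism $t_{K_H}:\widehat{\uH}\to\phi^{\oH}U^{\oH}$ is an isomorphism; since $\phi^{\oH}$ is automatically monadic here, this is equivalent to $\phi^{\oH}$ being $\widehat{\uH}$-Galois. Then Proposition~\ref{P.2.5}, applied with $T=\uH$, $G=\oH$ and $K=K_H$, says that $(\phi^{\oH},\alpha_{K_H})$ is $\widehat{\uH}$-Galois if and only if $(\oH,U^{\oH}(\alpha_{K_H}))$ — the comonad $\oH$ with its canonical $\uH$-module structure, i.e. the datum written $(H,\delta)$ in the statement — is $\uH$-Galois; its hypothesis~(i) holds because $\A$ has coequalisers of reflexive pairs and, the underlying functor of both $T$ and $G$ being $H$, the single assumption ``$H$ has a left adjoint'' provides left adjoints for $T$ and $G$ at once. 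Chaining these gives (a) $\Leftrightarrow$ (b), and with (a) $\Leftrightarrow$ (c) the proposition follows.

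I do not expect a genuine obstacle, the argument being a mechanical transcription of the proof of Proposition~\ref{P.Bim.1} under this duality. The points needing attention are essentially bookkeeping — keeping the lifted monad $\widehat{\uH}$ on $\A^{\oH}$ separate from the lifted comonad $\woH$ on $\A_{\uH}$, and invoking the correct one of the two diagrams in~(\ref{Comp-ff}) — together with the one substantive check that hypothesis~(i) of Proposition~\ref{P.2.5} is really supplied by ``$\A$ admits coequalisers'' and ``$H$ has a left adjoint'', which is the case precisely because $\uH$ and $\oH$ share the underlying endofunctor $H$.
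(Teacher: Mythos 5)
Your proposal is correct and is exactly what the paper intends: the paper offers no separate argument for Proposition \ref{P.Bim.2} beyond the word ``Dually'', and your write-up is a faithful transcription of the proof of Proposition \ref{P.Bim.1} under the duality, with the right substitutions (Proposition \ref{P.1.2} for Proposition \ref{P.1.7}, Proposition \ref{P.2.5} for Proposition \ref{P.2.4}, monadicity of $\phi^{\oH}$ from the split-epi counit $\varepsilon$ in place of comonadicity of $\phi_{\uH}$ from the split-mono unit $e$). The only cosmetic difference is that you run (a)$\Leftrightarrow$(b) as a single chain of biconditionals where the paper's model proof separates the trivial direction from the substantive one; the verification that hypothesis (i) of Proposition \ref{P.2.5} is supplied by the stated assumptions is exactly the point that needed checking, and you check it correctly.
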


Combining the propositions \ref{P.2.4}, \ref{P.2.5}, \ref{P.Bim.1} and \ref{P.Bim.2}, we get
\begin{theorem}\label{T.Bim}
Assume that
\begin{rlist}
\item $\A$ has small limits or
colimits and $H$ preserves them, or
\item $\A$ admits
equalisers and $H$ has a right adjoint, or
\item $\A$
admits coequalisers and $H$ has a left adjoint.
\end{rlist}
Then the functor
$K_H : \A \to \A^H_H$ is an equivalence of categories if and only if
$\textbf{H}$ has an antipode.
\end{theorem}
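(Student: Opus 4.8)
The three hypotheses (i)--(iii) serve as three independent routes to the same chain of equivalences, so the plan is to dispatch (ii) and (iii) immediately and then build the chain by hand under (i). Under (ii), the equivalence ``$K_H$ is an equivalence $\Leftrightarrow$ $\bH$ has an antipode'' is precisely the equivalence of (a) and (c) in Proposition~\ref{P.Bim.1}; under (iii) it is precisely the equivalence of (a) and (c) in Proposition~\ref{P.Bim.2}. Nothing remains to be proved in these two cases, and all the work is in case (i).

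For case~(i), first observe that having all small limits, or all small colimits, forces $\A$ to have equalisers or coequalisers, hence to have split idempotents, hence to be Cauchy complete. Suppose for definiteness that $\A$ has small colimits and $H$ preserves them. Then the underlying functors of the monad $\uH$ and the comonad $\oH$ --- both equal to $H$ --- preserve small colimits, so hypothesis~(ii) of Proposition~\ref{P.2.4} is met for $\bT=\uH$, $\bG=\oH$ and the comparison functor $K_H$; recall from~\ref{bimonad} that $K_H$ is the functor $K_e$ attached to the grouplike morphism $e:I\to H$, whose induced $\oH$-coaction on $H$ is $\lambda\cdot He=\delta$ by~(\ref{lambda-d}). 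Proposition~\ref{P.2.4} therefore gives that $\phi_{\uH}$ is $\woH$-Galois if and only if $(H,m)$ is $\oH$-Galois. Since $\A$ is Cauchy complete and the unit of $\phi_H\dashv U_H$ is a split monomorphism, $\phi_{\uH}$ is comonadic (Corollary~3.19 of~\cite{Me}), so Theorem~\ref{H-galois} gives that $K_H$ is an equivalence if and only if $\phi_{\uH}$ is $\woH$-Galois. Finally, the equivalence ``$(H,m)$ is $\oH$-Galois $\Leftrightarrow$ $\bH$ has an antipode'' holds with no completeness or adjointness hypotheses: when the canonical comparison $Hm\cdot\delta H:HH\to HH$ is invertible, the antipode is manufactured from $m,\delta,e,\varepsilon$ and this inverse, as in \cite[Section~5]{MW}. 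Concatenating the three equivalences settles case~(i) when $\A$ is cocomplete with $H$ cocontinuous.

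The case in which $\A$ has small limits and $H$ preserves them is dual: $\A$ is again Cauchy complete; the cofree comodule functor $\phi^{\oH}:\A\to\A^{\oH}$ is monadic, because the counit of $U^{\oH}\dashv\phi^{\oH}$, namely $\varepsilon:H\to I$, is split by $e:I\to H$ (dualise Corollary~3.19 of~\cite{Me}); Proposition~\ref{P.1.2} then says that $K_H$ is an equivalence if and only if $\phi^{\oH}$ is $\widehat{\uH}$-Galois; Proposition~\ref{P.2.5} (hypothesis~(ii)) transfers this to ``$(H,\delta)$ is $\uH$-Galois''; and the dual structural fact identifies the latter with the existence of an antipode. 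This would complete the proof.

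I expect the only real subtlety to be the one used repeatedly above but never made explicit in the hypotheses of Propositions~\ref{P.Bim.1} and~\ref{P.Bim.2}: that the equivalence between the Galois condition on $(H,m)$ (respectively on $(H,\delta)$) and the existence of an antipode is purely structural, so that the one-sided adjoint hypotheses in those propositions are invoked only for the parts of the statement that cases (ii) and (iii) already cover. Pinning this down --- it is implicit in the proof of \cite[5.6]{MW} --- is the main point; the rest is a mechanical assembly of Theorem~\ref{H-galois} (and its dual), Proposition~\ref{P.1.2}, and Propositions~\ref{P.2.4} and~\ref{P.2.5}.
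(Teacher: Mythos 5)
Your proposal is correct and is essentially the expansion of the paper's own one-line proof (``combining Propositions \ref{P.2.4}, \ref{P.2.5}, \ref{P.Bim.1} and \ref{P.Bim.2}''): cases (ii) and (iii) are indeed read off verbatim from the equivalence (a)$\Leftrightarrow$(c) in Propositions \ref{P.Bim.1} and \ref{P.Bim.2}, and your case (i) assembles Theorem \ref{H-galois} (respectively the monadicity of $\phi^{\oH}$, whose counit $\ve$ is split by $e$, together with Proposition \ref{P.1.2}) with Propositions \ref{P.2.4}(ii) and \ref{P.2.5}(ii) exactly as the authors intend. The one genuine point of divergence is how the antipode enters in case (i): you close the chain with the claim that ``$(H,m)$ is $\oH$-Galois if and only if $\bH$ has an antipode'' is purely structural and hypothesis-free, whereas the paper never asserts this --- it imports the antipode only through the equivalence of ``$K_H$ is an equivalence'' with ``$\bH$ has an antipode'' from \cite[5.6]{MW}, and it establishes the Galois--antipode link only by routing through $K_H$ under the adjointness hypotheses of cases (ii) and (iii). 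Your structural claim is the familiar fusion-operator argument and is defensible, but since the present paper neither proves nor cites it in that form, the safer (and evidently intended) way to finish case (i) is to quote \cite[5.6]{MW} directly for the antipode equivalence, whose stated hypotheses are precisely those of case (i); with that substitution your argument coincides with the paper's.
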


\section{Bimonads in the sense of A. Brugui\`{e}res and A.
Virelizier}

Let $(\bV, \otimes, \II)$ be a strict monoidal category.

\begin{thm}\label{BV-mon}{\bf BV-bimonads.} \em
Let $\textbf{T}=(T,m, e)$ be a monad on $\V$, such that
the functor $T$ and the natural transformations $m$ and $e$ are comonoidal,
that is, there are natural transformations
\begin{center}
$\chi_{X,Y} : T(X\ot Y)\to T(X)\ot T(Y)$ for $X,Y\in \V$
\end{center}
and a morphism $\theta_\II : T(\II)\to \II$ satisfying certain
compatibility axioms. Such monads are named {\em bimonads} by
Brugui\`{e}res and   Virelizier in \cite[Section 2.3]{BV} and we
call them {\em BV-bimonads} to avoid confusion with other notions of
"bimonads".
 It follows from the definition
that the triple
$$(T(\II),\; \chi_{\II,\,\II}: T(\II) \to T(\II) \otimes T(\II),\;
\theta_\II: T(\II)\to \II)$$
 is a coalgebra in $\V$ (see \cite[p. 704]{BV}),
and thus one has a comonad $\textbf{G}$ on $\V$ with functor  $G=
- \otimes T(\II)$. Then the compatibility axioms ensure that
the natural transformation
$$\lambda:=(T(-) \otimes m_\II)\cdot \chi_{-,\, T(\II)}: TG \to GT,$$ is
a mixed distributive law (entwining) from the monad $\textbf{T}$ to the comonad
$\textbf{G}$.
\end{thm}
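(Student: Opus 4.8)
The statement comprises three claims: that $(T(\II),\chi_{\II,\II},\theta_\II)$ is a coalgebra in $\V$, that $G=-\otimes T(\II)$ underlies a comonad $\textbf{G}$, and that $\lambda$ is a mixed distributive law from $\textbf{T}$ to $\textbf{G}$. The first two I would dispose of formally. Write $C=T(\II)$ and $\Delta:=\chi_{\II,\II}\colon C\to C\otimes C$, and recall that $\II\otimes\II=\II$ by strictness; then the coassociativity of $\Delta$ is precisely the coassociativity coherence of the comonoidal functor $T$ evaluated at $X=Y=Z=\II$, while the two counit laws for $(\Delta,\theta_\II)$ are the left and right counit coherences of $T$ evaluated at $X=\II$. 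Since for any coalgebra $(C,\Delta,\theta_\II)$ in a monoidal category the endofunctor $-\otimes C$ carries a comonad structure with $\delta_X=\mathrm{id}_X\otimes\Delta$ and $\varepsilon_X=\mathrm{id}_X\otimes\theta_\II$, specialising to $C=T(\II)$ produces $\textbf{G}=(G,\delta,\varepsilon)$; I would merely record this.

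The substance of the proof is to check that $\lambda$, with component $\lambda_X=(T(X)\otimes m_\II)\cdot\chi_{X,C}$ (where $\chi_{X,C}\colon T(X\otimes C)\to T(X)\otimes T(C)$ and $m_\II\colon T(C)=TT(\II)\to T(\II)=C$), satisfies the four axioms of an entwining from $\textbf{T}$ to $\textbf{G}$, namely $\lambda\cdot eG=Ge$, $\varepsilon T\cdot\lambda=T\varepsilon$, $\lambda\cdot mG=Gm\cdot\lambda T\cdot T\lambda$ and $\delta T\cdot\lambda=G\lambda\cdot\lambda G\cdot T\delta$. The plan is to verify each one by a direct diagram chase, recording in each case which defining property of a BV-bimonad is invoked. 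For the $e$-axiom: comonoidality of $e$ gives $\chi_{X,C}\cdot e_{X\otimes C}=e_X\otimes e_C$, and composing with $T(X)\otimes m_\II$ and using the monad unit law $m_\II\cdot e_{T(\II)}=\mathrm{id}_C$ yields $e_X\otimes\mathrm{id}_C=G(e_X)$. For the $\varepsilon$-axiom: comonoidality of $m$ with respect to $\theta_\II$ gives $\theta_\II\cdot m_\II=\theta_\II\cdot T(\theta_\II)$, so $\varepsilon_{T(X)}\cdot\lambda_X=(T(X)\otimes(\theta_\II\cdot T(\theta_\II)))\cdot\chi_{X,C}$, which by naturality of $\chi$ in its second argument equals $(T(X)\otimes\theta_\II)\cdot\chi_{X,\II}\cdot T(\mathrm{id}_X\otimes\theta_\II)$; the counit coherence of $T$ then collapses the first two arrows to an identity, leaving $T(\varepsilon_X)=T(\mathrm{id}_X\otimes\theta_\II)$.

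For the $m$-axiom I would use comonoidality of $m\colon TT\to T$ in the form $\chi_{X,Y}\cdot m_{X\otimes Y}=(m_X\otimes m_Y)\cdot\chi_{TX,TY}\cdot T(\chi_{X,Y})$, together with monad associativity $m_\II\cdot m_{T(\II)}=m_\II\cdot T(m_\II)$, naturality of $\chi$ in the second argument, and the interchange law; tracing both sides through, each reduces to $(T(X)\otimes m_\II)\cdot(m_X\otimes T(m_\II))\cdot\chi_{TX,TC}\cdot T(\chi_{X,C})$. The $\delta$-axiom is the delicate one: one first rewrites $\Delta\cdot m_\II=(m_\II\otimes m_\II)\cdot\chi_{C,C}\cdot T(\Delta)$ using comonoidality of $m$ at $X=Y=\II$, then applies naturality of $\chi$ in its second argument and, crucially, the three-object coassociativity coherence $(\mathrm{id}_{TX}\otimes\chi_{C,C})\cdot\chi_{X,C\otimes C}=(\chi_{X,C}\otimes\mathrm{id}_{TC})\cdot\chi_{X\otimes C,C}$, after which both sides of the axiom reduce to $(\lambda_X\otimes m_\II)\cdot\chi_{X\otimes C,C}\cdot T(\delta_X)$. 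I expect this last axiom to be the main obstacle: there the two copies of $C=T(\II)$ occurring in $\delta T\cdot\lambda$ and in $G\lambda\cdot\lambda G\cdot T\delta$ must be reassociated correctly, which brings in the coassociativity coherence of $T$ on top of naturality and the comonoidal behaviour of $m$; the $m$-axiom is of comparable difficulty owing to the bookkeeping with the composite comonoidal structure on $TT$, whereas the $e$- and $\varepsilon$-axioms are immediate. Strictness of $\V$ removes all associativity and unit isomorphisms from these chases, keeping them tractable.
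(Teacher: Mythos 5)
Your verification plan is correct, and all of the reductions you claim do go through: I checked each of the four entwining axioms along the lines you indicate, and in particular both sides of the multiplication axiom do collapse to $(m_X\ot (m_\II\cdot T(m_\II)))\cdot\chi_{TX,TC}\cdot T(\chi_{X,C})$ via monad associativity and naturality of $\chi$ in its second slot, while both sides of the comultiplication axiom collapse to $(\lambda_X\ot m_\II)\cdot\chi_{X\ot C,C}\cdot T(\delta_X)$ once the coassociativity coherence of $\chi$ is used to reassociate the two copies of $C$. The one point of comparison worth recording is that the paper does not prove this statement at all: it is presented as part of the setup, with the coalgebra structure on $T(\II)$ attributed to Brugui\`eres--Virelizier (their p.~704) and the entwining property dismissed with ``the compatibility axioms ensure.'' So where the paper defers entirely to the cited source, you supply the actual diagram chases; your identification of exactly which comonoidality axiom (of $T$, of $m$, of $e$) feeds each entwining axiom is the content that the reference hides. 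The only caveat is that your text is written as a plan (``I would verify\dots'', ``I expect\dots'') rather than as completed chases; for the two easy axioms the computations you give are already complete, and for the two harder ones the intermediate normal forms you name are the correct ones, so writing them out fully is routine. No gap.
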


\begin{thm}\label{BV-mod}{\bf BV-Hopf modules.} \em
The {\em entwining modules} are objects $M \in \V$ with a
$T$-module structure $h:T(M)\to M$ and a comodule structure
$\rho:M\to M\ot T(\II)$ inducing commutativity of the diagram
$$\xymatrix{
 T(M) \ar[rr]^h \ar[d]_{T(\rho)} & &
        M \ar[rr]^{\rho} & &  M\ot T(\II) \\
T(M\ot T(\II)) \ar[rr]^{\chi_{M,T(\II)}}& & T(M)\otimes TT(\II)
  \ar[rr]^{T(M)m_\II}& &
                  T(M)\otimes T(\II) \ar[u]_{h \otimes T(\II)}.}$$
These are named {\em right Hopf $T$-modules} in \cite[Section
4.2]{BV} and we call them {\em right BV-Hopf modules}. Their
category  is just $\V^G_T$.

{}From the ingredients of the definition one obtains the commutative diagram
$$\xymatrix{
 TT \ar[rr]^m \ar[d]_{T\chi_{-,\,\II}} & &
        T \ar[rr]^{\chi_{-,\II}} & & T(-)\otimes T(\II) \\
T(T(-)\otimes T(\II)) \ar[rr]^{\chi_{T(-),T(\II)}}& & TT(-)\otimes TT(\II)
  \ar[rr]^{TT(-) \otimes m_\II}& &
                 TT(-)\otimes T(\II) \ar[u]_{m \otimes T(\II)}}$$
which shows that for any $X\in \V$, $T(X)$ is a right BV-Hopf module leading to the
commutative diagram
$$\xymatrix{\V \ar[r]^-{K} \ar[rd]_{\phi_T}&
(\V_T)^{\wG}=\V^G_T \ar[d]^{ U^{\wG} }\\
& \V_T\, ,}$$
 with a comparison functor $K(X)=(T(X), m_X, \chi_{X,\, I})$.

For the corresponding comonad morphism
$t_K :\phi_TU_T \to \widehat{G}$, it is easy to see that for any $(X, h_X)
\in V_T$, the $(X, h_X)$-component of $t_K$ is the
composite
$$\xymatrix{T(X) \ar[r]^-{\chi_{X,\,\II}}& T(X) \otimes T(\II)
\ar[rr]^-{h_X \otimes T(\II)}&& X \otimes T(\II).}$$
Since $t_K$ is a comonad morphism, we have the commutative diagram
$$
\xymatrix{ \phi_T U_T \ar[r]^-{t_K} \ar[rd]_{\varepsilon_T}
&\widehat{G} \ar[d]^{\varepsilon_{\widehat{G}}}\\
& 1,}$$
and since, for any $(X,h_X) \in \bV$,
$(\varepsilon_T)_{(X, h_X)}=h_X$ and
$(\varepsilon_{\widehat{G}})_{(X,h_X)}=X \otimes \theta_{\II}$, we
have
\end{thm}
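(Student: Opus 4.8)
The plan is to read the displayed identity
$$(X\ot\theta_\II)\cdot(h_X\ot T(\II))\cdot\chi_{X,\II}=h_X, \qquad (X,h_X)\in\V_T,$$
as simply the $(X,h_X)$-component of the counit compatibility $\varepsilon_{\widehat{G}}\cdot t_K=\varepsilon_T$. By the construction recalled in \ref{com-fun}, applied as in Proposition \ref{P.2.2} to the comparison functor $K:\V\to(\V_T)^{\widehat{G}}$, the natural transformation $t_K:\phi_TU_T\to\widehat{G}$ is a morphism of comonads on $\V_T$, hence compatible with counits. So the first step is to invoke that $t_K$ is a comonad morphism; evaluating $\varepsilon_{\widehat{G}}\cdot t_K=\varepsilon_T$ at $(X,h_X)$ and substituting the three component formulas already written down in \ref{BV-mod} --- $(t_K)_{(X,h_X)}=(h_X\ot T(\II))\cdot\chi_{X,\II}$, $(\varepsilon_{\widehat{G}})_{(X,h_X)}=X\ot\theta_\II$, $(\varepsilon_T)_{(X,h_X)}=h_X$ --- produces exactly the asserted equality.

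I would also record the direct verification, since it is short and is independent of the module axioms for $h_X$. By the interchange law in the strict monoidal category $\V$ the composite $(X\ot\theta_\II)\cdot(h_X\ot T(\II))$ coincides with $h_X\cdot(T(X)\ot\theta_\II)$, so the left-hand side becomes $h_X\cdot\big((T(X)\ot\theta_\II)\cdot\chi_{X,\II}\big)$; the inner composite is the identity on $T(X)$ by the counit axiom for the comonoidal functor $T$ (one of the compatibility axioms alluded to in \ref{BV-mon}), leaving $h_X$. This route makes clear that the identity is a property of $T$ as a comonoidal functor rather than of the particular $T$-module $(X,h_X)$.

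The one point that needs care is bookkeeping rather than a genuine obstacle: one must be sure that $(\varepsilon_{\widehat{G}})_{(X,h_X)}=X\ot\theta_\II$, which is the instance, for the comonad $G=-\ot T(\II)$ whose counit at $X$ is $X\ot\theta_\II$, of the general formula $(\widehat{\varepsilon})_{(a,h_a)}=\varepsilon_a$ for comonads lifted along an entwining as recalled in Section 2; and that $(\varepsilon_T)_{(X,h_X)}=h_X$, which is nothing but the counit of $\phi_T\dashv U_T$. With these identifications either route closes the argument in a couple of lines, and the resulting identity is precisely the ingredient needed when one later applies the equivalence criterion of Theorem \ref{T.2.1-ent} to decide when $K:\V\to\V^G_T$ is an equivalence of categories.
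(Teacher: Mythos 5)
Your proposal is correct and its main route is exactly the paper's: the identity is read off as the $(X,h_X)$-component of the counit compatibility $\varepsilon_{\widehat{G}}\cdot t_K=\varepsilon_T$ for the comonad morphism $t_K$, using the same three component formulas. The additional direct verification via the counit axiom $(T(X)\ot\theta_\II)\cdot\chi_{X,\II}=1_{T(X)}$ for the comonoidal functor $T$ is a valid bonus, but the argument as given coincides with the one in the paper.
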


\begin{lemma}\label{L.5.1} For any $(X, h_X)\in \V_T$,
$$(X \otimes \theta_\II) \cdot (t_K)_{(X,
\,h_X)}=h_X.$$
\end{lemma}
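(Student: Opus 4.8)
The plan is to unwind the definition of $(t_K)_{(X,h_X)}$ and compose it with $X\otimes\theta_\II$, then use the counit identity for the coalgebra $(T(\II),\chi_{\II,\II},\theta_\II)$ to collapse everything. Recall from \ref{BV-mod} that $(t_K)_{(X,h_X)}$ is the composite
$$\xymatrix{T(X) \ar[r]^-{\chi_{X,\,\II}}& T(X)\otimes T(\II) \ar[rr]^-{h_X\otimes T(\II)}&& X\otimes T(\II).}$$
Postcomposing with $X\otimes\theta_\II:X\otimes T(\II)\to X\otimes\II=X$ gives
$$(X\otimes\theta_\II)\cdot(h_X\otimes T(\II))\cdot\chi_{X,\,\II}.$$
By interchange (bifunctoriality of $\otimes$) this equals $(h_X\otimes\II)\cdot(T(X)\otimes\theta_\II)\cdot\chi_{X,\,\II}=h_X\cdot\bigl((T(X)\otimes\theta_\II)\cdot\chi_{X,\,\II}\bigr)$, identifying $h_X\otimes\II$ with $h_X$ under the strict monoidal unit isomorphism.

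So it remains to show $(T(X)\otimes\theta_\II)\cdot\chi_{X,\,\II}=\mathrm{id}_{T(X)}$, again using $T(X)\otimes\II=T(X)$. This is precisely one of the comonoidal-counit compatibility axioms for the comonoidal functor $T$: the composite
$$\xymatrix{T(X)=T(X\otimes\II)\ar[r]^-{\chi_{X,\,\II}}& T(X)\otimes T(\II)\ar[r]^-{T(X)\otimes\theta_\II}& T(X)\otimes\II=T(X)}$$
is the identity (this is the right-counit law built into the definition of a comonoidal functor, phrased in \cite[Section 2.3]{BV}; it is also exactly the statement that $\theta_\II$ is the counit for the coalgebra $(T(\II),\chi_{\II,\II},\theta_\II)$ with respect to the $T(\II)$-leg). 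Combining the two displays yields $(X\otimes\theta_\II)\cdot(t_K)_{(X,h_X)}=h_X$, as claimed.

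The only real content is invoking the correct axiom; there is no genuine obstacle. The one point to be careful about is bookkeeping with the strict-monoidal identification $Z\otimes\II=Z$ (so that $h_X\otimes\II$ really is $h_X$ and $T(X\otimes\II)$ really is $T(X)$), but since $\V$ is assumed strict this is automatic. No use is made of the $T$-module axiom on $h_X$ beyond the fact that $h_X$ is a morphism, so the lemma holds for any pair $(X,h_X)\in\V_T$.
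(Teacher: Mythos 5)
Your proof is correct, but it follows a different route from the paper's. The paper derives the identity purely formally: since $t_K:\phi_TU_T\to\widehat{G}$ is a morphism of comonads, it is compatible with the counits, i.e. $\varepsilon_{\widehat{G}}\cdot t_K=\varepsilon_T$; evaluating at $(X,h_X)$ and using $(\varepsilon_T)_{(X,h_X)}=h_X$ and $(\varepsilon_{\widehat{G}})_{(X,h_X)}=X\otimes\theta_\II$ gives the claim in one line. You instead unwind the explicit formula $(t_K)_{(X,h_X)}=(h_X\otimes T(\II))\cdot\chi_{X,\II}$, slide $\theta_\II$ past $h_X$ by bifunctoriality of $\otimes$, and reduce to the comonoidal right-counit axiom $(T(X)\otimes\theta_\II)\cdot\chi_{X,\II}=\mathrm{id}_{T(X)}$, which is indeed part of the definition of a comonoidal monad in the sense of Brugui\`eres--Virelizier. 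Both arguments are valid. The paper's version is shorter and shows that the identity is an instance of a completely general fact about comonad morphisms, with no reference to the monoidal structure; yours is more self-contained in that it needs only the explicit component formula and the comonoidal axioms, and it makes transparent your (correct) observation that the module axiom on $h_X$ plays no role. The strict-monoidal bookkeeping you flag is handled correctly.
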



\begin{thm}\label{rem.5.1}{\bf Remark.} \em Note that there are also functors
$$\begin{array}{rl}
\V_T\to \V^G_T, & (M,h) \mapsto (M\ot T(\II),
\tilde h:=(h\ot m_\II)\circ\chi_{M,T(\II)}
, M\ot \chi_{\II,\II}) \\[+1mm]
\V^G\to \V^G_T, & (N,\rho) \mapsto
(T(N),m_N,\widehat \rho:=(T(N)\ot m_\II)\circ\chi_{N,T\II}\circ T(\rho)).
\end{array}$$

The second functor corresponds to \cite[Lemma 4.3]{BV}.
\end{thm}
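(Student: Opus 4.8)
The plan is to identify each of the two assignments with a canonical (co)free functor for the lifted (co)monads of Section~2, so that both functoriality and the fact that the images genuinely lie in $\V^G_T$ become automatic. First I would recall from Section~2 that for the entwining $\lambda=(T(-)\ot m_\II)\cdot \chi_{-,\,T(\II)}:TG\to GT$ one has the isomorphisms of categories $\V^G_T\simeq (\V_T)^{\widehat G}\simeq (\V^G)_{\widehat T}$, together with the explicit liftings $\widehat G(a,h_a)=(G(a),G(h_a)\cdot\lambda_a)$ with $(\widehat\delta)_{(a,h_a)}=\delta_a$ on $\V_T$, and $\widehat T(a,\theta_a)=(T(a),\lambda_a\cdot T(\theta_a))$ with $(\widehat m)_{(a,\theta_a)}=m_a$ on $\V^G$. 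Here $G=-\ot T(\II)$ carries comultiplication $\delta_a=a\ot\chi_{\II,\II}$ and counit $a\ot\theta_\II$, since $(T(\II),\chi_{\II,\II},\theta_\II)$ is a coalgebra by \ref{BV-mon}.

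For the first functor $\V_T\to\V^G_T$ I would show it coincides with the cofree $\widehat G$-comodule functor $\phi^{\widehat G}:\V_T\to(\V_T)^{\widehat G}$. Indeed, for $(M,h)\in\V_T$ the underlying $T$-module of $\phi^{\widehat G}(M,h)$ is $\widehat G(M,h)=(M\ot T(\II),\,(h\ot T(\II))\cdot(T(M)\ot m_\II)\cdot\chi_{M,T(\II)})$, and by the interchange law $(h\ot T(\II))\cdot(T(M)\ot m_\II)=h\ot m_\II$, so this module structure is exactly $\tilde h$. The $\widehat G$-comodule structure of $\phi^{\widehat G}(M,h)$ is the comultiplication component $(\widehat\delta)_{(M,h)}=\delta_M=M\ot\chi_{\II,\II}$. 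Thus $\phi^{\widehat G}(M,h)$ is precisely the triple $(M\ot T(\II),\tilde h, M\ot\chi_{\II,\II})$, and since $\phi^{\widehat G}$ is a functor with values in $(\V_T)^{\widehat G}\simeq\V^G_T$, the first assignment is a well-defined functor.

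For the second functor $\V^G\to\V^G_T$ I would show it coincides with the free $\widehat T$-module functor $\phi_{\widehat T}:\V^G\to(\V^G)_{\widehat T}$. For $(N,\rho)\in\V^G$ the underlying $G$-comodule of $\phi_{\widehat T}(N,\rho)$ is $\widehat T(N,\rho)=(T(N),\,\lambda_N\cdot T(\rho))$, and expanding $\lambda_N=(T(N)\ot m_\II)\cdot\chi_{N,T(\II)}$ gives $\lambda_N\cdot T(\rho)=(T(N)\ot m_\II)\cdot\chi_{N,T\II}\cdot T(\rho)=\hat\rho$. The $\widehat T$-module structure of $\phi_{\widehat T}(N,\rho)$ is the multiplication component $(\widehat m)_{(N,\rho)}=m_N$. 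Hence $\phi_{\widehat T}(N,\rho)$ is exactly $(T(N),m_N,\hat\rho)$, and since $\phi_{\widehat T}$ is a functor into $(\V^G)_{\widehat T}\simeq\V^G_T$, the second assignment is a well-defined functor; this is the functor of \cite[Lemma 4.3]{BV}.

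The only genuine computation is thus the matching of the module/comodule data against the lifting formulas of Section~2, and there is no serious obstacle: the sole subtlety is bookkeeping with the entwining, namely using the interchange identity $(h\ot T(\II))\cdot(T(M)\ot m_\II)=h\ot m_\II$ in the first case and correctly reading off which tensor factor $m_\II$ acts on when expanding $\lambda$ in the second. In particular no entwining (compatibility) diagram need be verified by hand, as it is already encoded in the isomorphisms $\V^G_T\simeq(\V_T)^{\widehat G}\simeq(\V^G)_{\widehat T}$.
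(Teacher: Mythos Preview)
Your argument is correct. The paper itself states this remark without proof, merely pointing to \cite[Lemma~4.3]{BV} for the second functor; your identification of the two assignments with the cofree functor $\phi^{\widehat G}:\V_T\to(\V_T)^{\widehat G}$ and the free functor $\phi_{\widehat T}:\V^G\to(\V^G)_{\widehat T}$ respectively, using the lifting formulas of Section~2, is exactly the right way to supply the details and makes both functoriality and well-definedness automatic.
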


\bigskip
\begin{thm}\label{g-like} {\bf Grouplike morphism.} \em
  Since $T$ is a comonoidal monad on $\V$, the following two
diagrams
$$\xymatrix{\II  \ar[r]^-{e_{\II}}  \ar@{=}[rd] & T(\II)
\ar[d]^-{\theta_{\II}} &\text{and}& \II  \ar[r]^-{e_{\II}}
\ar[rd]_{e_{\II}\ot e_{\II}} &T(\II) \ar[d]^{\chi_{\II,X}} \\
& \II & & &T(\II)\ot T(\II)}$$
both are commutative, implying that
the natural transformation $$g:=- \ot e_{\II}: 1 \to - \ot T(\II)$$
is a grouplike morphism. Note that $gT : T \to T \ot T(\II)$ is the
natural transformation given by $T(X) \ot e_{\II}: T(X) \to T(X)\ot
T(\II)$, while $\lambda \cdot Tg : T \to T \ot T(\II)$ is given by
the composite
$$\xymatrix{ T(X) \ar[rr]^-{T(X \ot e_{\II})} && T(X \ot T(\II)) \ar[rr]^-{\chi_{X,
T(\II)}}&& T(X) \ot TT(\II)  \ar[rr]^-{T(X) \ot m_{\II}}&& T(X) \ot
T(\II)}.$$ Since $T$ is a comonoidal monad, the diagram
$$\xymatrix{T(X) \ar[rr]^-{T(X \ot e_{\II})}\ar[d]_{\chi_{X, \II}}&& T(X \ot T(\II)) \ar[d]^-{\chi_{X,
T(\II)}}\\
T(X) \ot T(\II) \ar[rr]_-{T(X) \ot T(e_{\II})}& & T(X) \ot
TT(\II)}$$is commutative. But since $m_{\II}\cdot e_{\II}=1$, we see
that $\lambda \cdot Tg$ is just the natural transformation $\chi_{-,
\II}$. Thus, for any $X \in \V$, $T^g(X)$ is the equaliser
$$\xymatrix{ T^g(X) \ar[r]&  T(X)
\ar@{->}@<0.5ex>[rr]^-{T(X) \ot e_{\II}} \ar@{->}@<-0.5ex>
[rr]_-{\chi_{X, \II}}&& T(X)\otimes T(\II).}$$

  Note that the functor $K : \V \to \V^G_T$ is just the functor
$K_g: \V \to \V^G_T=(\V_T)^{\widehat{G}}.$
\end{thm}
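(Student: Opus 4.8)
The plan is to unwind both functors and check that they carry the same data on objects and on morphisms; no essential difficulty should arise, since the one computation that matters has already been performed a few lines above.

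First I would recall how $K_g$ is produced. Combining Propositions~\ref{P.2.2}, \ref{group.com} and \ref{F.mixed}, applied with monad $\bF=\bT$, comonad $\bG$ with underlying functor $G=-\ot T(\II)$, and $\lambda$ the mixed distributive law of \ref{BV-mon}, the grouplike morphism $g=-\ot e_{\II}$ yields a functor
$$K_g\colon \V\longrightarrow (\V_T)^{\widehat{G}},\qquad K_g(X)=\bigl((T(X),m_X),\,\tilde g_X\bigr),$$
where $\tilde g=\lambda\cdot Tg$ is the left $\bG$-comodule structure on $T$ of \ref{group.com}(1), and $U^{\widehat{G}}K_g=\phi_T$.

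Next I would insert the identification of $\tilde g$ that has just been made: because $T$ is comonoidal, the naturality square linking $\chi_{X,\II}$, $\chi_{X,T(\II)}$, $T(X\ot e_{\II})$ and $T(X)\ot T(e_{\II})$ commutes, and together with $m_{\II}\cdot e_{\II}=1$ this gives $\lambda\cdot Tg=\chi_{-,\II}$. Hence $\tilde g_X=\chi_{X,\II}$, so that $K_g(X)=\bigl((T(X),m_X),\chi_{X,\II}\bigr)$. Under the isomorphism of categories $(\V_T)^{\widehat{G}}\simeq \V^G_T$ recalled in \ref{BV-mod}, this object is precisely the triple $(T(X),m_X,\chi_{X,\II})$, which is by definition $K(X)$.

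Finally, for the action on morphisms I would note that both functors lift the free functor: $U^{\widehat{G}}K=\phi_T=U^{\widehat{G}}K_g$ (the first equality is part of the defining diagram of $K$ in \ref{BV-mod}, the second is built into the construction of $K_g$), and $U^{\widehat{G}}\colon (\V_T)^{\widehat{G}}\to \V_T$ is faithful. Since $K$ and $K_g$ already agree on objects and become equal after composition with $U^{\widehat{G}}$, faithfulness forces $Kf=K_gf$ for every morphism $f$ of $\V$, whence $K=K_g$. The only point where a genuine argument rather than bookkeeping enters is the identity $\lambda\cdot Tg=\chi_{-,\II}$, and that is exactly the computation already displayed, so I expect no real obstacle here.
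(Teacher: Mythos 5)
Your proposal is correct and follows essentially the same route as the paper: the entire content is the identity $\lambda\cdot Tg=\chi_{-,\II}$, obtained from the comonoidality naturality square together with the monad unit law $m_{\II}\cdot T(e_{\II})=1$ (which is what the paper's ``$m_{\II}\cdot e_{\II}=1$'' is meant to express), after which $K_g(X)=((T(X),m_X),\tilde g_X)=((T(X),m_X),\chi_{X,\II})$ coincides with $K(X)$ under the isomorphism $(\V_T)^{\widehat{G}}\simeq\V^G_T$. Your additional observation that agreement on morphisms follows from $U^{\widehat{G}}K=\phi_T=U^{\widehat{G}}K_g$ and faithfulness of $U^{\widehat{G}}$ is a correct piece of bookkeeping that the paper leaves implicit.
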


\begin{thm}\label{BVH}{\bf BV-Hopf monads.} \em From now on, we suppose that $\textbf{T}$ is a right Hopf monad in
the sense of \cite[Section 3.6]{BV} on a right autonomous category $\V$,
we call it a {\em BV-Hopf monad}.

Consider the natural transformation $\Gamma: G \to TT$ defined in
\cite[Section 4.5]{BV}. We shall need the following simple properties of this
functor (see \cite[Lemma 4.9]{BV}):
\begin{equation}\label{E.16}
     m \cdot \Gamma= e \otimes \theta_\II.
\end{equation}
\begin{equation}\label{E.17}
Tm \cdot \Gamma T \cdot \chi_{-,\, \II}=T e.
\end{equation}

Using these, one can calculate (see \cite{BV}) that for any
$(X, h_X, \vth_X) \in \V_T^G$,
\begin{equation}\label{E.18}
h_X \cdot T(h_X)\cdot \Gamma_X \cdot \vth_X =1_X
\end{equation}
\end{thm}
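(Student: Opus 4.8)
The plan is to show the composite $h_X\cdot T(h_X)\cdot\Gamma_X\cdot\vth_X$ collapses to $h_X\cdot e_X$, which is $1_X$ by the unit axiom for the $T$-module $(X,h_X)$. The inputs I expect to need are: the two $T$-module axioms for $h_X$; the counit axiom for the $\textbf{G}$-comodule structure $\vth_X$ (recalling from \ref{BV-mon} that $\textbf{G}$ has underlying functor $-\ot T(\II)$ and underlying coalgebra $(T(\II),\chi_{\II,\II},\theta_\II)$, so that the $X$-component of its counit is $X\ot\theta_\II$); bifunctoriality of $\ot$; and the structural identity $(\ref{E.16})$. I do not expect the entwining-module compatibility of \ref{BV-mod}, nor the identity $(\ref{E.17})$, to be needed for this particular equality; those belong rather to the companion statement that the comparison functor of \ref{BV-mod} is an equivalence of categories.

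Concretely, I would proceed in four moves. First, the associativity axiom for the $T$-module $(X,h_X)$ reads $h_X\cdot T(h_X)=h_X\cdot m_X$, so the left-hand side of $(\ref{E.18})$ equals $h_X\cdot m_X\cdot\Gamma_X\cdot\vth_X$. Second, $(\ref{E.16})$ evaluated at $X$ gives $m_X\cdot\Gamma_X = e_X\ot\theta_\II : X\ot T(\II)\to T(X)$ (using strictness of $\V$, so that $T(X)\ot\II=T(X)$), whence the composite becomes $h_X\cdot(e_X\ot\theta_\II)\cdot\vth_X$. Third, bifunctoriality of $\ot$ factors $e_X\ot\theta_\II$ as $e_X\cdot(X\ot\theta_\II)$, and the counit axiom for the comodule $(X,\vth_X)$ gives $(X\ot\theta_\II)\cdot\vth_X=1_X$, so the composite reduces to $h_X\cdot e_X$. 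Fourth, $h_X\cdot e_X=1_X$ by the unit axiom for $(X,h_X)$, which finishes the proof.

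I do not anticipate a genuine obstacle here: the whole argument is a short rewriting. The only care required is with the strictness identifications $X\ot\II=X$ and $T(X)\ot\II=T(X)$, and with the observation (already implicit in Lemma \ref{L.5.1}) that $X\ot\theta_\II$ is precisely the counit component of the comonad $\textbf{G}$. If one prefers to stay closer to \cite{BV}, where both $(\ref{E.16})$ and $(\ref{E.17})$ are cited, the one extra step would be to rewrite their intermediate presentation of $\Gamma$ into the form of $(\ref{E.16})$ first; but with $(\ref{E.16})$ taken as given, the derivation above is immediate.
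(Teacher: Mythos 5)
Your computation is correct and is exactly the calculation the paper leaves to \cite{BV}: the chain is $h_X\cdot T(h_X)=h_X\cdot m_X$ (module associativity), then $m_X\cdot\Gamma_X=e_X\ot\theta_\II$ by (\ref{E.16}), then $e_X\ot\theta_\II=e_X\cdot(X\ot\theta_\II)$ with $(X\ot\theta_\II)\cdot\vth_X=1_X$ (the counit axiom for the $\textbf{G}$-comodule, recorded later as (\ref{E.22})), and finally $h_X\cdot e_X=1_X$. You are also right that neither the entwining compatibility of \ref{BV-mod} nor (\ref{E.17}) is needed for (\ref{E.18}); the latter enters only afterwards, in the proof of Lemma \ref{L.5.7}.
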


\begin{lemma}\label{L.5.7}For any
$(X, h_X, \vth_X) \in \V_T^G$, the morphism
$$(t_K)_{(X, \,h_X)}=(h_X \otimes T(\II)) \cdot \chi_{X,\,\II}:T(X) \to X
\otimes T(\II)$$
is a split monomorphism.
\end{lemma}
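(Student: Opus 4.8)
The goal is to exhibit a morphism $r_X : X \otimes T(\II) \to T(X)$ in $\V$ with $r_X \cdot (t_K)_{(X,h_X)} = 1_{T(X)}$; equivalently, a retraction of $(h_X \otimes T(\II)) \cdot \chi_{X,\II}$. The natural candidate is built from the map $\Gamma : G \to TT$ of \cite[Section 4.5]{BV} together with the module structure $h_X$. Concretely, I would propose
$$
r_X : \xymatrix{ X \otimes T(\II) \ar[r]^-{\Gamma_X}& TT(X) \ar[r]^-{T(h_X)} & T(X) },
$$
so that $r_X$ is exactly the composite $T(h_X) \cdot \Gamma_X$, which appears in the identity (\ref{E.18}). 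The plan is then to verify $r_X \cdot (t_K)_{(X,h_X)} = 1_{T(X)}$ using the defining relations (\ref{E.16}), (\ref{E.17}) and the comonoidality of $\textbf{T}$, without ever invoking the comodule structure $\vth_X$ — note $(t_K)_{(X,h_X)}$ only depends on $h_X$, so the retraction should too.

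First I would unwind the composite $r_X \cdot (t_K)_{(X,h_X)} = T(h_X) \cdot \Gamma_X \cdot (h_X \otimes T(\II)) \cdot \chi_{X,\II}$. The key move is to push $\Gamma$ past $h_X \otimes T(\II)$: by naturality of $\Gamma : G \to TT$ applied to the morphism $h_X : T(X) \to X$, one has $\Gamma_X \cdot (h_X \otimes T(\II)) = TT(h_X) \cdot \Gamma_{T(X)}$. This converts the expression into $T(h_X) \cdot TT(h_X) \cdot \Gamma_{T(X)} \cdot \chi_{X,\II} = T\bigl(h_X \cdot T(h_X)\bigr) \cdot \Gamma_{T(X)} \cdot \chi_{X,\II}$. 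Now I would like to recognize $\Gamma_{T(X)} \cdot \chi_{X,\II}$ as something governed by (\ref{E.17}): that identity reads $Tm \cdot \Gamma T \cdot \chi_{-,\II} = Te$, i.e. at the object $X$, $T(m_X) \cdot \Gamma_{T(X)} \cdot \chi_{X,\II} = T(e_X)$ — wait, one must be careful about whether $\Gamma T$ means $\Gamma_{T(-)}$ in the right variable; reading (\ref{E.17}) with $G = -\otimes T(\II)$, the composite is $T(X \otimes T(\II)) \to \cdots$, so the $\chi$ there is $\chi_{X,\II}$ landing in $T(X)\otimes T T(\II)$ and $\Gamma$ is applied to produce $TT$ of something. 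I would need to match indices precisely, but the upshot should be that $T(h_X) \cdot TT(h_X) \cdot \Gamma_{T(X)} \cdot \chi_{X,\II}$ collapses, via the $\textbf{T}$-module axiom $h_X \cdot m_X = h_X \cdot T(h_X)$ and then (\ref{E.17}) (which eliminates the $\Gamma$ against a $\chi$, yielding a $Te$), to $h_X \cdot T(e_X) = 1_{T(X)}$ by the unit axiom for the module $h_X$.

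The main obstacle I anticipate is bookkeeping the variance of $\Gamma$ and the precise placement of the tensor factors: $\Gamma : G \to TT$ with $G = -\otimes T(\II)$ is natural in the "$-$" slot, and both (\ref{E.16}) and (\ref{E.17}) are natural-transformation identities whose components at a given object must be read off correctly, including which $T$'s act on $X$ versus on $T(\II)$ versus on a tensor $X \otimes T(\II)$. A secondary subtlety is that $(t_K)_{(X,h_X)}$ factors through $\chi_{X,\II}$ followed by $h_X \otimes T(\II)$, so after the naturality step I must make sure the $\chi$ that appears is indeed the one $\Gamma T \cdot \chi_{-,\II}$ is built to cancel, and that the comonoidal compatibility axioms (rather than an extra unused hypothesis) suffice. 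Once the index matching is pinned down, the computation is a short chain of substitutions: naturality of $\Gamma$, the module associativity axiom, relation (\ref{E.17}), and the module unit axiom, giving $r_X \cdot (t_K)_{(X,h_X)} = 1_{T(X)}$ and hence $(t_K)_{(X,h_X)}$ is a split monomorphism.
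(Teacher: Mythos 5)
Your proposal is correct and is essentially the paper's own proof: the retraction $T(h_X)\cdot\Gamma_X$ is exactly the morphism $q_{(X,h_X)}$ used there, and the verification proceeds by the same three steps (naturality of $\Gamma$, the module associativity $h_X\cdot T(h_X)=h_X\cdot m_X$ under $T$, and relation (\ref{E.17})), ending with $T(h_X)\cdot T(e_X)=T(h_X\cdot e_X)=1_{T(X)}$ --- note your final expression ``$h_X\cdot T(e_X)$'' is a slip for this. Your observation that the splitting depends only on $h_X$ and not on $\vth_X$ is also consistent with the paper's argument.
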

\begin{proof}For any
$(X, h_X, \vth_X) \in  \V_T^G$, consider the
composite
$$q_{(X, \, h_X)}=T(h_X) \cdot \Gamma_X : X \otimes T(\II)\to T(X).$$
We claim that $q_{(X, \, h_X)} \cdot (t_K)_{(X,
\,h_X)}=1.$ Indeed, consider the diagram

$$\xymatrix{T(X) \ar[rr]^-{\chi_{X, \,\II}}\ar[rrdddd]_{T(e_X)}&& T(X)
\otimes T(\II) \ar@{}[rrdd]^{(1)} \ar[rr]^{h_X \otimes T(\II)}
\ar[dd]^{\Gamma_{T(X)}}&& X\otimes T(\II) \ar[dd]^{\Gamma_{X}}\\\\
&& TTT (X) \ar[rr]_{TT(h_X)} \ar[dd]^{T(m_X)} && TT(X)
\ar[dd]^{T(h_X)}\\\\
&& TT(X) \ar@{}[rruu]_{(2)}\ar[rr]_{T(h_X)} && T(X)\,.}$$ In this
diagram

\begin{itemize}
    \item square (1) commutes because $\Gamma$ is a functor,
    \item square (2) commutes because $(X, h_X)$ is a
    $\textbf{T}$-algebra, and
    \item the triangle commutes because of (\ref{E.17}).
\end{itemize} It follows that
$$q_{(X, \, h_X)} \cdot (t_K)_{(X, \, \,h_X)}=T(h_X) \cdot T(e_X)=T(h_X \cdot
e_X)=1_X.$$ Thus
\begin{equation}\label{E.19}
q_{(X, \, h_X)} \cdot (t_K)_{(X, \,\,h_X)}=1_X .
\end{equation}
\end{proof}

Since, by Lemma \ref{L.5.7}, $t_K$ is a componentwise (split)
monomorphism,
 Proposition \ref{P.3.4-N} yields the

\begin{thm}\label{Cor.5.1}{\bf Corollary.}
$t_K :\phi_TU_T \to \widehat{G}$ is an isomorphism if and only if
$e_\II : \II \to T(\II)$ is a Galois grouplike morphism.
\end{thm}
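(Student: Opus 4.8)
The plan is to combine the factorization of $t_K$ supplied by \ref{nat-trans-S} with Proposition \ref{P.3.4-N}, and then to recognize the surviving factor as exactly the comonad morphism whose invertibility \emph{defines} the Galois condition in \ref{Galois-entw}.

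First I would note that $t_K:\phi_TU_T\to\widehat{G}$ is componentwise a split monomorphism. Lemma \ref{L.5.7} is phrased for objects of $\V_T^G$, but its proof produces the retraction $q_{(X,h_X)}=T(h_X)\cdot\Gamma_X$ of $(t_K)_{(X,h_X)}$ using only the $\bT$-algebra axioms for $(X,h_X)$, the naturality of $\Gamma$, and the identity (\ref{E.17}); no coaction enters. Hence the same computation shows that $(t_K)_{(X,h_X)}$ is split monic for \emph{every} $(X,h_X)\in\V_T$, so $t_K$ is componentwise monic.

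Next I would invoke \ref{nat-trans-S} with $\bT$ in the role of $F$ and with the grouplike morphism $g=-\ot e_\II:1\to G=-\ot T(\II)$ of \ref{g-like} (so that $T^g$ is the equaliser monad described there and $K=K_g$). This supplies the adjunction $(i_T)_!\dashv(i_T)^*$ and the decomposition $t_K=S_{\overline{i_T}}\cdot S_{\phi_{T^g}}$ into comonad morphisms on $\V_T$. Since $t_K$ is componentwise monic, Proposition \ref{P.3.4-N} then gives that $S_{\phi_{T^g}}$ is an isomorphism, and hence that $S_{\overline{i_T}}:(i_T)_!(i_T)^*\to\widehat{G}$ is an isomorphism if and only if $t_K$ is.

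Finally I would identify $S_{\overline{i_T}}$ with the comonad morphism $t_g:\widetilde{G}\to\widehat{G}$ of \ref{Galois-entw}: the underlying functor of the comonad $\widetilde{G}$ generated by $(i_T)_!\dashv(i_T)^*$ is $(i_T)_!(i_T)^*$, and by the remark recorded after Proposition \ref{P.3.1-N} the morphism $S_{\overline{i_T}}$ is precisely the canonical comonad morphism $t_{\overline{i_T}}$ attached to the left $\widehat{G}$-comodule functor $(i_T)_!$, i.e.\ it is $t_g$. By the definition in \ref{Galois-entw}, $(\bT,\bG,\lambda,g)$ is a Galois entwining --- equivalently, $e_\II:\II\to T(\II)$ is a Galois grouplike morphism --- precisely when $t_g$ is an isomorphism; chaining this with the previous paragraph proves the corollary. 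The one delicate point is this last bookkeeping: one must check that the map produced by the factorization in \ref{nat-trans-S} and the map whose invertibility is the definition of ``Galois'' in \ref{Galois-entw} are literally the same natural transformation. Everything else is a direct appeal to results already in hand.
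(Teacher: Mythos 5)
Your argument is correct and follows the paper's own route: the paper likewise deduces the corollary from Lemma \ref{L.5.7} (componentwise split monicity of $t_K$) together with Proposition \ref{P.3.4-N}, with the Galois grouplike condition being by definition the invertibility of $t_g=S_{\overline{i_T}}$. Your extra observation that the retraction $q_{(X,h_X)}=T(h_X)\cdot\Gamma_X$ is built without using the coaction, so that $t_K$ is split monic on all of $\V_T$ and not merely on objects underlying $\V_T^G$, is a worthwhile point that the paper leaves implicit, since Proposition \ref{P.3.4-N} needs monicity at every component.
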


\begin{proposition}\label{P.3.3.x} For any $(X, h_X, \vth_X) \in
\V_T^G$, the diagram
\begin{equation}\label{E.20}
\xymatrix{ X \ar[rr]^-{q_{(X, h_X)}\cdot \vth_X}&&T(X)
\ar@{->}@<0.5ex>[rr]^-{T(\vth_X)} \ar@ {->}@<-0.5ex>
[rr]_-{T(X\otimes e_\II )}&& T(X\otimes T(\II)),}\end{equation} is a
split equaliser diagram.
\end{proposition}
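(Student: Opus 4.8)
The plan is to exhibit an explicit contracting homotopy, so that (\ref{E.20}) becomes a \emph{split} (in particular absolute) equaliser. Writing $\iota:=q_{(X,h_X)}\cdot\vartheta_X$ for the first morphism, $p_1:=T(\vartheta_X)$ and $p_2:=T(X\otimes e_\II)$ for the parallel pair, it suffices to produce morphisms $\sigma:T(X)\to X$ and $\tau:T(X\otimes T(\II))\to T(X)$ with $\sigma\cdot\iota=1_X$, $\tau\cdot p_2=1_{T(X)}$ and $\tau\cdot p_1=\iota\cdot\sigma$, together with the fork relation $p_1\cdot\iota=p_2\cdot\iota$.

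I would take $\sigma:=h_X$ and $\tau:=q_{(X,h_X)}\cdot\bar h_X$, where $\bar h_X:=(h_X\otimes T(\II))\cdot\lambda_X:T(X\otimes T(\II))\to X\otimes T(\II)$ is precisely the $T$-action making $X\otimes T(\II)$ into the cofree $\widehat{G}$-comodule $\widehat{G}(X,h_X)$ (recall from \ref{BV-mon} that $\lambda_X=(T(X)\otimes m_\II)\cdot\chi_{X,T(\II)}$). Then the three contraction identities are just reformulations of facts already at hand. First, since $q_{(X,h_X)}=T(h_X)\cdot\Gamma_X$, the identity $\sigma\cdot\iota=h_X\cdot T(h_X)\cdot\Gamma_X\cdot\vartheta_X=1_X$ is exactly (\ref{E.18}). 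Second, by the identification $\lambda_X\cdot T(X\otimes e_\II)=\chi_{X,\II}$ obtained in \ref{g-like}, we get $\tau\cdot p_2=q_{(X,h_X)}\cdot(h_X\otimes T(\II))\cdot\chi_{X,\II}=q_{(X,h_X)}\cdot(t_K)_{(X,h_X)}=1_{T(X)}$, which is (\ref{E.19}) (Lemma \ref{L.5.7}). Third, the compatibility axiom defining an entwined module (the square in \ref{BV-mod}) reads, for $(X,h_X,\vartheta_X)$, as $(h_X\otimes T(\II))\cdot\lambda_X\cdot T(\vartheta_X)=\vartheta_X\cdot h_X$, so that $\tau\cdot p_1=q_{(X,h_X)}\cdot\vartheta_X\cdot h_X=\iota\cdot\sigma$.

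The one step that I expect to require genuine work is the fork relation $p_1\cdot\iota=p_2\cdot\iota$. The idea is to unfold both sides. On the left, $T(\vartheta_X)\cdot q_{(X,h_X)}=T(\vartheta_X\cdot h_X)\cdot\Gamma_X$; now use the compatibility axiom of \ref{BV-mod} to rewrite $\vartheta_X\cdot h_X$, then push $\Gamma$ through the resulting $TT$-morphism by naturality of $\Gamma:G\to TT$, and finally apply coassociativity of the coaction, i.e. $(\vartheta_X\otimes T(\II))\cdot\vartheta_X=(X\otimes\chi_{\II,\II})\cdot\vartheta_X$. On the right, rewrite $(X\otimes e_\II)\cdot h_X=(h_X\otimes T(\II))\cdot(T(X)\otimes e_\II)$ and use naturality of the grouplike morphism $g=-\otimes e_\II$. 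One then matches the two expressions by invoking the relations (\ref{E.16}) and (\ref{E.17}) for $\Gamma$ together with the comonoidal-functor axioms for $T$. Granting all four identities, (\ref{E.20}) is a split equaliser and hence is preserved by every functor, which is the property needed in the sequel. The difficulty is entirely in closing this last diagram — keeping track of the naturality squares for $\Gamma$ and $\chi$ and of where the entwining and counit conditions enter — rather than anything conceptual.
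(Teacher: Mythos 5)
Your splitting is the paper's splitting in lightly disguised form, and your verifications are correct. The paper contracts the parallel pair by $h_X$ and by $q_{(X,h_X)}\cdot(X\ot T(\II)\ot\theta_\II)\cdot s_2$, where $s_2$ is the $t_K$-component at the module $\widehat{G}(X,h_X)=(X\ot T(\II),\bar h_X)$; by Lemma \ref{L.5.1} the factor $(X\ot T(\II)\ot\theta_\II)\cdot s_2$ is exactly $\bar h_X$, so your $\tau=q_{(X,h_X)}\cdot\bar h_X$ is the same morphism. Your route to the three contraction identities is slightly more direct than the paper's: you get $\tau\cdot T(X\ot e_\II)=1$ from the identity $\lambda\cdot Tg=\chi_{-,\II}$ of \ref{g-like} together with (\ref{E.19}), and $\tau\cdot T(\vth_X)=\iota\cdot\sigma$ straight from the entwined-module square of \ref{BV-mod}, whereas the paper threads both through the serially commutative diagram (\ref{E.21}) and the splitting of its bottom row. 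The one loose end is the fork relation $T(\vth_X)\cdot q_{(X,h_X)}\cdot\vth_X=T(X\ot e_\II)\cdot q_{(X,h_X)}\cdot\vth_X$: you rightly observe that it does not follow from the three contraction identities and must be proved separately, but you only sketch a strategy. The paper does not prove it either; it imports it wholesale from \cite[Lemma 4.11]{BV}. So your proposal is complete modulo exactly the same external input the paper relies on; to make it self-contained you would have to carry out the diagram chase you outline (naturality of $\Gamma$ and $\chi$, the entwining compatibility, and (\ref{E.16})--(\ref{E.17})), which is the content of that cited lemma.
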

\begin{proof}   Note first that, by \cite[Lemma 4.11]{BV}, the
composite $q_{(X, h_X)}\cdot \vth_X$ equalises the pair
$(T(\vth_X), T(X\otimes e_\II ))$. Next  the following diagram
is serially commutative (see \cite{G})
\begin{equation}\label{E.21}
\xymatrix{& &T(X) \ar[dd]_{s_1}\ar@{->}@<0.5ex>[rr]^-{T(\vth_X)}
\ar@ {->}@<-0.5ex> [rr]_-{T(X \otimes e_\II)}&& T(X \otimes T(\II))
\ar[dd]^{s_2}\\\\
X \ar[rr]_-{ \vth_X}&& X \otimes T(\II)
\ar@{->}@<0.5ex>[rr]^-{\vth_X \otimes T(\II)} \ar@ {->}@<-0.5ex>
[rr]_-{ X \otimes \chi_{\II,\,\II}}&& X \otimes T(\II)\otimes
T(\II)\,}
\end{equation}
where $s_1=(t_K)_{(X,\,h_X)}$ and
$s_2=(t_K)_{(X \otimes T(\II),\, h_{X \otimes T(\II)})}.$ Note that
the bottom row of this diagram is split by the morphisms  $X \otimes
\theta_\II$ and $X \otimes T(\II) \otimes \theta_\II$. Recall that
this means
\begin{equation}\label{E.22} (X\otimes \theta_\II) \cdot \vth_X=1,
\end{equation}
\begin{equation}\label{E.23}
(X \otimes T(\II) \otimes \theta_\II) \cdot (X \otimes \chi_{\II,\,\II})=1,
\mbox{ and }
\end{equation}
\begin{equation}\label{E.24}
(X \otimes T(\II) \otimes \theta_\II) \cdot (\vth_X
\otimes T(\II))=\vth_X \cdot (X \otimes \theta_\II).
\end{equation}
By \ref{E.18}, we now have
$$ h_X \cdot q_{(X, \, h_X)} \cdot \vth_X=h_X \cdot T(h_X) \cdot \Gamma_X
\cdot \vth_X=1_X.$$
Furthermore,  since $s_2 \cdot T(X \otimes e_\II)
 = (X \otimes \chi_{\II,\,\II}) \cdot s_1$,
$$\begin{array}{l}
q_{(X, \, h_X)}\cdot (X \otimes T(\II) \otimes \theta_\II) \cdot s_2
\cdot T(X \otimes e_\II)   \\[+1mm]
\hspace{2cm}= \; q_{(X, \, h_X)}\cdot (X\otimes T(\II) \otimes \theta_\II) \cdot (X \otimes \chi_{\II,\,\II}) \cdot s_1 \\[+1mm]
\hspace{2cm} =_{(\ref{E.23})} q_{(X, \, h_X)}\cdot s_1=q_{(X, \,
h_X)}\cdot (t_K)_{(X,\, h_X)}  =_{(\ref{E.19})}  1_X ,
 \end{array}  $$
and since $s_2 \cdot T(\vth_X)=(\vth_X \otimes T(\II)) \cdot s_1$,
$$\begin{array}{l}
q_{(X, \, h_X)}\cdot (X \otimes T(\II) \otimes \theta_\II) \cdot s_2
\cdot T(\vth_X) \\
\hspace{2cm} = \;
q_{(X,\,h_X)}\cdot (X\ot T(\II)\ot \theta_\II)\cdot (\vth_X \otimes
T(\II)) \cdot s_1 \\
\hspace{2cm} = _{(\ref{E.24})}
   q_{(X,\, h_X)}\cdot\vth_X\cdot (X\ot \theta_\II)\cdot s_1 \\
\hspace{2cm} = \; q_{(X,\,h_X)}\cdot\vth_X \cdot(X \ot \theta_\II) \cdot
     (t_K)_{(X,\,h_X}) \\
\hspace{2cm}  =_{L.\,\ref{L.5.1}} q_{(X, \, h_X)}\cdot \vth_X \cdot  h_X.
\end{array} $$
We have proved that
\begin{equation}
 h_X \cdot q_{(X, \, h_X)} \cdot \vth_X=1_X,
\end{equation}
\begin{equation}
q_{(X, \, h_X)}\cdot (X \otimes T(\II) \otimes \theta_\II) \cdot s_2
\cdot T(X \otimes e_\II)=1_X, \mbox{ and }
\end{equation}
\begin{equation}
q_{(X, \, h_X)}\cdot (X \otimes T(\II) \otimes \theta_\II) \cdot s_2
\cdot T(\vth_X)=q_{(X, \, h_X)}\cdot \vth_X  \cdot h_X,
\end{equation}
which just means that (\ref{E.20}) is a split equaliser: a splitting
is given by $h_X$ and by $q_{(X, \, h_X)}\cdot (X\otimes T(\II)
\otimes \theta_\II) \cdot s_2$.
\end{proof}

\begin{proposition} The functor $K:\V \to \V_T^G$
has a fully faithful right adjoint if and only if for any $(X, h_X, \vth_X)
\in \V_T^G$, the pair of morphisms
\begin{equation}\label{E.28}
\xymatrix{X \ar@{->}@<0.5ex>[r]^-{\vth_X} \ar@ {->}@<-0.5ex> [r]_-{X
\otimes e_\II}& X\otimes T(\II)}\end{equation} has an equaliser and
this equaliser is preserved by $T$.
\end{proposition}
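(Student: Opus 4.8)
The plan is to construct the right adjoint of $K$ by a pointwise representability argument, to write down its counit explicitly, and then to recognise invertibility of that counit as exactly the stated preservation property of $T$.

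First I would fix $(X,h_X,\vth_X)\in\V_T^G$ and analyse the functor $V\mapsto\Mor_{\V_T^G}(K(V),(X,h_X,\vth_X))$ on $\V^{\mathrm{op}}$. Since $K(V)=(T(V),m_V,\chi_{V,\II})$ has underlying $\textbf{T}$-module the free module $\phi_T(V)$ (see \ref{BV-mod}), a $\textbf{T}$-module morphism $f\colon T(V)\to X$ is the same datum as $g:=f\cdot e_V\colon V\to X$, with $f=h_X\cdot T(g)$. Both sides of the comodule-compatibility equation $\vth_X\cdot f=(f\otimes T(\II))\cdot\chi_{V,\II}$ are $\textbf{T}$-module morphisms $\phi_T(V)\to\widehat{G}(X,h_X)$ — the left one because $\vth_X$ is one by the entwining-module axiom, the right one because $\chi_{V,\II}$ is the comodule structure of $K(V)$ and $\widehat{G}$ is an endofunctor of $\V_T$ — so by freeness of $\phi_T(V)$ they agree iff they agree after precomposition with $e_V$. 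Using the comonoidality relation $\chi_{V,\II}\cdot e_V=e_V\otimes e_\II$ and $h_X\cdot e_X=1$, this reduces precisely to $\vth_X\cdot g=(X\otimes e_\II)\cdot g$. Hence $\Mor_{\V_T^G}(K(V),(X,h_X,\vth_X))\cong\{\,g\colon V\to X\mid\vth_X\cdot g=(X\otimes e_\II)\cdot g\,\}$ naturally in $V$, and this functor is representable exactly when the equaliser $E\xrightarrow{\,j\,}X\rightrightarrows X\otimes T(\II)$ of $\vth_X$ and $X\otimes e_\II$ exists, in which case $E$ represents it. By the standard representability criterion for adjoints, $K$ therefore has a right adjoint $R$ iff all these equalisers exist, and then $R(X,h_X,\vth_X)=E$.

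Next I would identify the counit. Unwinding the isomorphism above for $V=E$, the component $\varepsilon_{(X,h_X,\vth_X)}\colon K(E)\to(X,h_X,\vth_X)$ corresponds to $j$, i.e. it is the $\textbf{T}$-module morphism with $\varepsilon_{(X,h_X,\vth_X)}\cdot e_E=j$, namely $\varepsilon_{(X,h_X,\vth_X)}=h_X\cdot T(j)\colon T(E)\to X$. Since $R$ is fully faithful iff $\varepsilon$ is an isomorphism, and the forgetful functor $\V_T^G\to\V$ reflects isomorphisms (being a composite of Eilenberg--Moore forgetful functors), the claim reduces to showing that $h_X\cdot T(j)$ is invertible in $\V$ for every object iff $T$ preserves each equaliser $E\xrightarrow{j}X\rightrightarrows X\otimes T(\II)$. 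For this I would invoke Proposition \ref{P.3.3.x}: for each $(X,h_X,\vth_X)\in\V_T^G$ the morphism $q_{(X,h_X)}\cdot\vth_X\colon X\to T(X)$ is a split equaliser of the pair $\bigl(T(\vth_X),T(X\otimes e_\II)\bigr)$ and $h_X\cdot q_{(X,h_X)}\cdot\vth_X=1_X$ (both established in its proof). Since $j$ equalises $\vth_X$ and $X\otimes e_\II$, the morphism $T(j)$ equalises that pair, so there is a unique $v\colon T(E)\to X$ with $q_{(X,h_X)}\cdot\vth_X\cdot v=T(j)$; postcomposing with $h_X$ gives $v=h_X\cdot T(j)=\varepsilon_{(X,h_X,\vth_X)}$. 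Thus $T(j)=q_{(X,h_X)}\cdot\vth_X\cdot\varepsilon_{(X,h_X,\vth_X)}$ is a split equaliser of $\bigl(T(\vth_X),T(X\otimes e_\II)\bigr)$ composed with $\varepsilon_{(X,h_X,\vth_X)}$, so $T(j)$ is itself an equaliser of that pair — equivalently $T$ preserves $E\xrightarrow{j}X\rightrightarrows X\otimes T(\II)$ — if and only if $\varepsilon_{(X,h_X,\vth_X)}$ is an isomorphism. Combining the two halves yields both implications of the statement.

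The step I expect to be most delicate is the first one: one must verify that \emph{both} sides of the comodule-compatibility equation are genuine $\textbf{T}$-module morphisms into $\widehat{G}(X,h_X)$, so that the reduction by freeness is legitimate, and one must use the comonoidality of $e$ in the precise form $\chi_{V,\II}\cdot e_V=e_V\otimes e_\II$ so that the reduced condition lands exactly on the pair $(\vth_X,X\otimes e_\II)$ appearing in the statement rather than on a twisted variant. Once the right adjoint and its counit have been pinned down, the remaining equivalence is essentially formal, given Proposition \ref{P.3.3.x}.
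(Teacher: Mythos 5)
Your argument is correct and follows essentially the same route as the paper: the right adjoint is the pointwise equaliser of $(\vth_X,\,X\otimes e_\II)$, the counit component is $h_X\cdot T(i_X)$, and Proposition \ref{P.3.3.x}'s split equaliser converts invertibility of the counit into preservation of that equaliser by $T$. The only difference is cosmetic: where you derive the adjoint by a direct representability computation on $\Mor_{\V_T^G}(K(V),-)$, the paper simply invokes the general equaliser construction of \ref{right-adj}; both yield the same object and the same counit.
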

\begin{proof} By \ref{right-adj},
$K$ has a right adjoint if and only if (\ref{E.28}) has an equaliser
for all $(X, h_X,\vth_X) \in \V_T^G$. We
write $(\overline{X}, i_X :\overline{X}\to X)$ for this equaliser.
Thus $R(X, h_X, \vth_X)=(\overline{X}, i_X)$. Since the diagram
(\ref{E.21}) is commutative and since $T(i_X)$ equalises $T(\vth_X)$
and $T(X \otimes e_\II)$, there exists a unique morphism $k_X=k_{(X,
\,h_X, \,\vth_X)}:T(\overline{X})\to X $ making the diagram
\begin{equation}\label{E.29}
 \xymatrix{T(\overline{X})\ar[rr]^-{T(i_X)}
\ar[dd]_{k_X}&& T(X) \ar[dd]_{s_1}\ar@{->}@<0.5ex>[rr]^-{T(\vth_X)}
\ar@ {->}@<-0.5ex> [rr]_-{T(X \otimes e_\II)}&& T(X \otimes T(\II))
\ar[dd]^{s_2}\\\\
X \ar[rr]_-{\vth_X}&& X \otimes T(\II) \ar@{->}@<0.5ex>[rr]^-{\vth_X
\otimes T(\II)} \ar@ {->}@<-0.5ex> [rr]_-{X \otimes \delta_\II}&& X
\otimes T(\II)\otimes T(\II)\,}
\end{equation}
commute. Since $q_{(X,\,h_X)}\cdot \vth_X\cdot k_X =q_{(X,
\,h_X)}\cdot s_1 \cdot T(i_X)=T(i_X)$ and since $(X,q_{(X,
h_X)}\cdot \vth_X)$ is an equaliser of the pair $(T(\vth_X), T(X
\otimes e_\II))$ by Proposition \ref{P.3.3.x}, it follows from the
universal property of equalisers that $k_X$ is an isomorphism if and
only if the top row of diagram (\ref{E.29}) is an equaliser diagram,
i.e. if $T$ preserves the equaliser of (\ref{E.28}). Since according
to \cite{G}, $k_X=k_{(X, \,h_X, \,\vth_X)}$ is the $(X, h_X,
\vth_X)$-component of the counit $\overline{\varepsilon}$ of the
adjunction $K \dashv R$ and since $R$ is full and faithful if and
only if $\overline{\varepsilon}$ is an isomorphism, it follows that
$R$ is a fully faithful functor if and only if for any $(X, h_X,
\vth_X) \in \V_T^G$, the pair of morphisms $(\vth_X,X \otimes
e_\II)$ has an equaliser and this equaliser is preserved by $T$.
\end{proof}
\begin{theorem}\label{T.5.5} The functor $K:\V \to \V_T^G
$ is an equivalence of categories if and only if the functor $T$ is
conservative (=isomorphism reflecting) and for any $(X, h_X, \vth_X)
\in \V_T^G$, the pair of morphisms $(\vth_X,X
\otimes e_\II)$ has an equaliser and this equaliser is preserved by
$T$.
\end{theorem}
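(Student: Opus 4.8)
The plan is to reduce everything to the preceding proposition, which already pins down exactly when the comparison functor $K$ admits a \emph{fully faithful} right adjoint, and then to use conservativity of $T$ to upgrade ``$K$ has a fully faithful right adjoint'' to ``$K$ is an equivalence''.

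First I would set up two identifications. By the commutative triangle in \ref{BV-mod} one has $U^{\wG}K=\phi_T$, so the forgetful functor $\V_T^G=(\V_T)^{\wG}\to\V$ underlying $K$ is $U_TU^{\wG}$, and $U_TU^{\wG}K=T$. Since $U_T$ and $U^{\wG}$ are conservative (forgetful functors from the Eilenberg--Moore categories of a monad, respectively of a comonad, reflect isomorphisms), the composite $U_TU^{\wG}$ is conservative. Secondly, recall from \ref{g-like} that $K=K_g$ for the grouplike morphism $g=-\otimes e_\II$; hence, whenever $K$ has a right adjoint $R$, the monad it generates is $T^g$ and the unit of $K\dashv R$ is the canonical comparison $e':1\to T^g$. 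These remarks let me move freely between statements about $K$ and statements about $T$.

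For the implication ``$\Leftarrow$'', assume $T$ is conservative and that for every $(X,h_X,\vth_X)\in\V_T^G$ the pair $(\vth_X,X\otimes e_\II)$ has an equaliser preserved by $T$. By the preceding proposition $K$ then has a fully faithful right adjoint $R$, so the counit $\overline{\varepsilon}:KR\to 1$ is an isomorphism. From the triangular identity $\overline{\varepsilon}K\cdot K\eta=1_K$ (with $\eta$ the unit) it follows that each $K(\eta_X)$ is an isomorphism; applying $U_TU^{\wG}$ gives that each $T(\eta_X)$ is an isomorphism, and conservativity of $T$ then forces $\eta_X$ to be an isomorphism. With unit and counit both invertible, $K$ is an equivalence.

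For the implication ``$\Rightarrow$'', suppose $K$ is an equivalence. Its quasi-inverse is then a fully faithful right adjoint of $K$, so the preceding proposition yields the asserted existence and $T$-preservation of the equalisers of the pairs $(\vth_X,X\otimes e_\II)$. Moreover $K$, being fully faithful, is conservative, hence so is the composite $T=U_TU^{\wG}K$. I do not expect a genuine obstacle here: the entire equaliser analysis has already been done in the preceding proposition (via \ref{P.3.3.x}) and in \ref{P.3.6.x}, and the only delicate point is the bookkeeping that identifies the unit $\eta$ of $K\dashv R$ with the comparison $e':1\to T^g$ and the relevant underlying functors with $\phi_T$ and $T$, so that conservativity of $T$ can be fed in at the right place.
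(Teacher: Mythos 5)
Your proposal is correct and follows essentially the same route as the paper: both reduce to the preceding proposition on the existence of a fully faithful right adjoint and then transfer conservativity between $K$ and $T$ via the factorisation $T=U_TU^{\widehat{G}}K$ with $U_T$ and $U^{\widehat{G}}$ conservative. The only difference is cosmetic: where the paper cites the general fact that a fully faithful functor with a left adjoint is an equivalence if and only if that left adjoint is conservative, you inline its proof by checking directly, via the triangular identity, that the unit of $K\dashv R$ is invertible.
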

\begin{proof} According to the previous proposition it is enough to
show that the fully faithful  functor $R$ is an equivalence of
categories if and only if  $T$ is conservative. But since any fully faithful
functor with a left adjoint is an equivalence of categories if and only if the
left adjoint is conservative, it is sufficient to prove that $T$ is
conservative if and only if the functor $K$ is, which is indeed the case since
$T=U_T\phi_T=U_T U^{\widehat{G}}K$ and the functors $U_T$ and $
U^{\widehat{G}}$ are both conservative.
\end{proof}

\smallskip

Recall (e.g. \cite{Me}) that a monad $\textbf{T}$  on an arbitrary
category $\A$ is  of \emph{effective descent type} if the functor
$\phi_T : \A \to \A_T$ is comonadic.

\begin{theorem}\label{T.5.6} For any right BV-Hopf monad  $\textbf{T}$ on a right
autonomous Cauchy complete monoidal category $\V$, the functor
$K:\V\to \V_T^G$ is an equivalence
if and only if $\textbf{T}$ is of effective descent type.
\end{theorem}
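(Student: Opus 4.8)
The strategy is to combine Theorem~\ref{T.5.5} with the equivalence, established in Theorem~\ref{H-galois}-type arguments via Proposition~\ref{P.1.7}, between ``$\phi_T$ comonadic'' and the existence of the requisite equalisers. Recall that $\textbf{T}$ being of effective descent type means precisely that $\phi_T:\V\to\V_T$ is comonadic, and by Theorem~\ref{T.5.5} the functor $K$ is an equivalence iff $T$ is conservative and for every $(X,h_X,\vth_X)\in\V_T^G$ the pair $(\vth_X,\,X\otimes e_\II)$ has an equaliser preserved by $T$. So the whole theorem reduces to the assertion: \emph{under the standing hypotheses (right BV-Hopf monad on a right autonomous Cauchy complete $\V$), $\phi_T$ is comonadic if and only if $T$ is conservative and all the equalisers $(\vth_X, X\otimes e_\II)$ exist and are $T$-preserved.}

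\smallskip
First I would recall Beck's (dual) comonadicity criterion: $\phi_T$ is comonadic iff it is precomonadic (i.e.\ of descent type) and $\V$ has, and $\phi_T$ preserves, equalisers of all $\phi_T$-split (coreflexive) pairs. For the ``only if'' direction, assume $\phi_T$ is comonadic. Comonadic functors are conservative, and $\phi_T=U^{\wG}K$ with $U^{\wG}$ conservative and $K$ then conservative as well; since $T=U_T\phi_T$, conservativity of $T$ follows just as in the proof of Theorem~\ref{T.5.5}. For the equalisers: by Proposition~\ref{P.3.3.x} the pair $(T(\vth_X),\,T(X\otimes e_\II))$ is \emph{split} (hence $\phi_T$-split after transporting along the free functor), so its equaliser is created/preserved by the comonadic $\phi_T$; translating back through the splitting data and Lemma~\ref{L.5.7} shows the original pair $(\vth_X, X\otimes e_\II)$ has an equaliser that $T$ preserves. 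This is essentially repackaging the computations already done in Proposition~\ref{P.3.3.x} and the proposition preceding Theorem~\ref{T.5.5}.

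\smallskip
For the ``if'' direction, suppose $T$ is conservative and every pair $(\vth_X, X\otimes e_\II)$ has a $T$-preserved equaliser. By the proposition before Theorem~\ref{T.5.5} this is exactly the condition that $K\dashv R$ with $R$ fully faithful, and then by Theorem~\ref{T.5.5} $K$ is an equivalence. Since $\phi_T=U^{\wG}K$ and $U^{\wG}:(\V_T)^{\wG}\to\V_T$ is comonadic (it is the forgetful functor of the Eilenberg--Moore category of the comonad $\wG$ on $\V_T$), the composite $\phi_T$ is the composite of an equivalence with a comonadic functor, hence comonadic; thus $\textbf{T}$ is of effective descent type. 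The Cauchy completeness hypothesis enters here exactly as in the remark preceding Theorem~\ref{H-galois}: it is what guarantees $\phi_T$ is automatically comonadic once it is precomonadic in the relevant situations, and it also ensures the relevant idempotents split so that the splitting-based arguments go through.

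\smallskip
The main obstacle I anticipate is the careful bookkeeping in the ``only if'' direction: translating ``$\phi_T$ preserves the equaliser of the $\phi_T$-split coreflexive pair obtained from $(\vth_X, X\otimes e_\II)$'' back into ``$T$ preserves the equaliser of $(\vth_X, X\otimes e_\II)$ itself''. One must check that the equaliser formed in $\V_T$ (where it is created by $\phi_T$ from a split pair) has underlying object computed in $\V$ as the equaliser of the original pair, which requires knowing that $U_T$ creates these particular equalisers --- and that follows because the pair $(T(\vth_X), T(X\otimes e_\II))$ is literally split in $\V$ by Proposition~\ref{P.3.3.x}, so its equaliser is absolute and is preserved by $U_T$. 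Once this compatibility is pinned down, the rest is a formal chain of standard facts about comonadic functors and Beck's theorem.
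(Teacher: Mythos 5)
Your overall architecture matches the paper's: both directions are funneled through Theorem \ref{T.5.5}, the splitness supplied by Proposition \ref{P.3.3.x} does the real work, and the easy direction ($K$ an equivalence implies $\phi_T$ comonadic) follows by composing with the comonadic $U^{\wG}$ (the paper cites Proposition \ref{P.1.7}, which amounts to the same thing). However, your treatment of the hard implication has a genuine gap. You want to deduce, from comonadicity of $\phi_T$, that each pair $(\vth_X,\,X\otimes e_\II)$ has an equaliser preserved by $T$, and you propose to do this via Beck's criterion, asserting the pair is ``$\phi_T$-split after transporting along the free functor''. But Beck's criterion for $\phi_T$ concerns pairs whose image under $\phi_T$ is split \emph{in $\V_T$}, i.e., the contracting morphisms must themselves be $T$-module maps. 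The splitting produced in Proposition \ref{P.3.3.x} consists of $h_X$ and $q_{(X,h_X)}\cdot(X\otimes T(\II)\otimes\theta_\II)\cdot s_2$, which are plain morphisms of $\V$ with no module compatibility established; so the pair is $T$-split but not visibly $\phi_T$-split, and $T$-splitness does not imply $\phi_T$-splitness in general. Your closing remark that the split equaliser is absolute and preserved by $U_T$ addresses the easier question of where the equaliser is computed, not the question of whether Beck's criterion applies in the first place.

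The paper closes exactly this gap by invoking \cite[Proposition 3.11]{Me}: on a Cauchy complete category, $\phi_T$ is comonadic if and only if $T$ is conservative and $\V$ has equalisers of $T$-split pairs and these are preserved by $T$. That is the correct bridge between comonadicity of $\phi_T$ and the merely $T$-split data that Proposition \ref{P.3.3.x} actually provides, and it is also where the Cauchy completeness hypothesis genuinely enters. With that citation substituted for your Beck-plus-transport step, your argument coincides with the paper's.
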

\begin{proof} If $K$ is an equivalence of categories, then the
functor $\phi_T$ is comonadic by Proposition \ref{P.1.7}.

Conversely, suppose that $\textbf{T}$ is of effective descent type.
Since $\V$ is Cauchy complete, it follows from \cite[Proposition
3.11]{Me} that $\textbf{T}$ is of effective descent type if and only if  $T$ is
conservative and $\V$ has equalisers of $T$-split pairs and these
equalisers are preserved by $T$. Now, if $(X, h_X, \vth_X) \in
\V_T^G$, then the pair of morphisms
$(T(\vth_X),T(X \otimes e_\II))$ is split by Proposition
\ref{P.3.3.x}
 and thus there exists an equaliser $(\overline{X}, i_X)$ of the pair
$(\vth_X,X \otimes e_\II)$ and this equaliser is preserved by $T$.
The preceding theorem completes the proof.
\end{proof}

  In the light of Proposition \ref{P.1.7} and Corollary
\ref{Cor.5.1}, we have:

\begin{corollary}If a right BV-Hopf monad  $\textbf{T}$ on a right
autonomous Cauchy complete monoidal category $\V$ is of effective
descent type, then the natural transformation $t_K :\phi_TU_T \to
\widehat{G}$ is an isomorphism of comonads. Moreover, $e_\II: \II
\to T(\II)$ is a Galois grouplike morphism.
\end{corollary}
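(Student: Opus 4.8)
The plan is to deduce the statement directly from Theorem~\ref{T.5.6}, Proposition~\ref{P.1.7} and Corollary~\ref{Cor.5.1}, with essentially no new computation. The first thing I would do is pin down the bookkeeping: recall from \ref{BV-mod} and \ref{g-like} that the comparison functor $K:\V\to\V_T^G=(\V_T)^{\widehat{G}}$ is precisely the functor $\overline{\phi_T}$ associated (in the sense of \ref{com-fun}) to the $\widehat{G}$-comodule structure on $\phi_T:\V\to\V_T$ induced by the entwining $\lambda$; equivalently it is the functor $K_g$ for the grouplike morphism $g=-\otimes e_\II$. In particular $U^{\widehat{G}}K=\phi_T$ and the comonad morphism determined by this comodule functor is exactly $t_K:\phi_TU_T\to\widehat{G}$.

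Next I would invoke Theorem~\ref{T.5.6}: since $\V$ is right autonomous and Cauchy complete and $\textbf{T}$ is of effective descent type by hypothesis, that theorem applies and tells us that $K$ is an equivalence of categories. Then I would apply Proposition~\ref{P.1.7} with $F=\phi_T$ and $\overline{F}=K$: because $\overline{F}$ is an equivalence, Proposition~\ref{P.1.7} forces both that $\phi_T$ is comonadic (which we already knew, $\V$ being Cauchy complete) and that $t_K=t_{\overline{\phi_T}}$ is an isomorphism of comonads. This establishes the first assertion, that $t_K:\phi_TU_T\to\widehat{G}$ is an isomorphism of comonads.

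Finally, for the second assertion I would feed this into Corollary~\ref{Cor.5.1}, which states that $t_K$ is an isomorphism if and only if $e_\II:\II\to T(\II)$ is a Galois grouplike morphism. Since $t_K$ has just been shown to be an isomorphism, $e_\II$ is a Galois grouplike morphism, and the proof is complete.

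I do not expect a genuine obstacle here: the content of the corollary is just the conjunction of the two already-established equivalences read in the forward direction. The only point that needs care — a matter of bookkeeping rather than difficulty — is the identification in the first step, namely that the $K$ of \ref{BV-mod} really is the $\overline{\phi_T}$ of \ref{com-fun} (hence the $K_g$ of \ref{g-like}), so that Proposition~\ref{P.1.7} and Corollary~\ref{Cor.5.1} apply verbatim; but this has already been recorded in \ref{g-like}, so no further argument is required.
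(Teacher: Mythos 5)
Your argument is exactly the paper's: the authors state the corollary "in the light of Proposition \ref{P.1.7} and Corollary \ref{Cor.5.1}", i.e.\ Theorem \ref{T.5.6} gives that $K$ is an equivalence, Proposition \ref{P.1.7} then forces $t_K$ to be an isomorphism of comonads, and Corollary \ref{Cor.5.1} converts that into $e_\II$ being a Galois grouplike morphism. The identification of $K$ with $\overline{\phi_T}$ (and with $K_g$) that you flag is indeed the only bookkeeping point, and it is already recorded in \ref{g-like}, so the proposal is correct and matches the paper's route.
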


Since any monad on a Cauchy complete category  whose unit is a
split monomorphism is of effective descent type (see  \cite{Me}),
it follows from Theorem \ref{T.5.6} that

\begin{corollary}\label{C.5.7} For any right BV-Hopf monad  $\textbf{T}=(T,m,e)$ on a right
autonomous Cauchy complete monoidal category $\V$ with $e: 1\to T$ a
split monomorphism, the functor $K:\V \to \V_T^G$ is an equivalence
\end{corollary}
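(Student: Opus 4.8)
The plan is to derive the statement as a two-step consequence: first identify $\textbf{T}$ as a monad of effective descent type, and then quote Theorem \ref{T.5.6}. The only external ingredient needed is the standard fact (see \cite{Me}) that on a Cauchy complete category every monad whose unit is a split monomorphism has comonadic free functor $\phi_T$, i.e. is of effective descent type. So the first thing I would do is record that this fact applies here verbatim: by hypothesis $\V$ is Cauchy complete, and by hypothesis the unit $e : 1 \to T$ is a split monomorphism; hence $\textbf{T}$ is of effective descent type in the sense recalled just before Theorem \ref{T.5.6}.

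Next I would invoke Theorem \ref{T.5.6} itself. By assumption $\textbf{T}$ is a right BV-Hopf monad on the right autonomous Cauchy complete monoidal category $\V$, so the hypotheses of that theorem are met, and the equivalence "$K$ is an equivalence of categories $\iff$ $\textbf{T}$ is of effective descent type" applies. Combined with the conclusion of the previous step, this immediately yields that $K : \V \to \V_T^G$ is an equivalence of categories, which is the assertion.

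I do not expect any genuine obstacle: the entire content has already been established in Theorem \ref{T.5.6} (and, underneath it, in Proposition \ref{P.3.3.x} and the analysis of the comonad morphism $t_K$), and the descent-theoretic input is classical. The one point worth a sentence of care is to make sure the notion of "effective descent type" used in the cited lemma of \cite{Me} is the same one feeding into Theorem \ref{T.5.6}, namely comonadicity of $\phi_T$; this is exactly the definition recalled in the excerpt just above the corollary, so there is nothing to reconcile. Consequently the proof should be no longer than the two displayed implications above.
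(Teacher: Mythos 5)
Your proposal is correct and is exactly the argument the paper gives: the corollary is stated as an immediate consequence of the fact from \cite{Me} that a monad on a Cauchy complete category with split-monic unit is of effective descent type, combined with Theorem \ref{T.5.6}. Nothing to add.
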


 Combining Proposition \ref{P.3.6.x}, Theorem \ref{T.5.6} and
Corollary \ref{C.5.7}, we get

\begin{proposition}\label{P.5.8}
Let $\textbf{T}=(T,m,e)$ be a right BV-Hopf monad on a right
autonomous Cauchy complete monoidal category $\V$. If
$\textbf{T}$ is of effective descent type, then the monad
$\textbf{T}^g$ is (isomorphic to) the identity monad. In particular,
this is the case provided that the unit $e: 1\to T$ is a split
monomorphism.
\end{proposition}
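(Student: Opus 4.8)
The plan is to chain together three facts already available in the excerpt. First, recall the general principle quoted at the end of the section: any monad on a Cauchy complete category whose unit is a split monomorphism is of effective descent type (see \cite{Me}). Hence the ``in particular'' clause of the statement will follow immediately once we establish the main implication, namely that if $\textbf{T}$ is of effective descent type then $\textbf{T}^g$ is (isomorphic to) the identity monad, where $g = -\ot e_\II : 1 \to -\ot T(\II)$ is the grouplike morphism of \ref{g-like}.

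For the main implication I would invoke Proposition \ref{P.3.5-ent}. That proposition says: if the monad $F$ is of descent type (i.e.\ $\phi_F : \A \to \A_F$ is precomonadic) and $F$ is $G$-Galois with respect to the coaction $\widetilde{g}: F \to GF$, then $F^g$ is isomorphic to the identity monad. So the two hypotheses I must verify in our setting (with $F = T$, $G = -\ot T(\II)$, and the entwining $\lambda$ of \ref{BV-mon}) are: (i) $\phi_T$ is precomonadic — this is weaker than being comonadic, hence is implied by $\textbf{T}$ being of effective descent type, which is exactly the hypothesis; and (ii) $\textbf{T}$ is $\textbf{G}$-Galois w.r.t.\ $\widetilde{g}$, which by the discussion in \ref{BV-mod} and \ref{g-like} amounts to asking that $t_K : \phi_T U_T \to \widehat{G}$ be an isomorphism, equivalently (by Corollary \ref{Cor.5.1}) that $e_\II$ be a Galois grouplike morphism.

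So the real content is verifying (ii). For this I would use Theorem \ref{T.5.6}: for a right BV-Hopf monad on a right autonomous Cauchy complete monoidal category, $K : \V \to \V_T^G$ is an equivalence if and only if $\textbf{T}$ is of effective descent type. Under our hypothesis the right-hand side holds, so $K$ is an equivalence; then by Proposition \ref{P.1.7} the comonad morphism $t_K$ (which in the commutative triangle for $K$ plays the role of $t_{\overline{F}}$) is an isomorphism of comonads. Equivalently, by Corollary \ref{Cor.5.1}, $e_\II : \II \to T(\II)$ is a Galois grouplike morphism. That is precisely hypothesis (ii) of Proposition \ref{P.3.5-ent}. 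I expect the main (only) subtlety to be the bookkeeping identification that the ``$G$-coaction $\widetilde{g}$'' appearing in Proposition \ref{P.3.5-ent} is the same as the comodule structure $\tilde g = \lambda\cdot Tg$ of \ref{g-like} underlying the comparison functor $K = K_g$ (noted at the end of \ref{g-like}), so that ``$\textbf{T}$ is $\textbf{G}$-Galois w.r.t.\ $\widetilde g$'' and ``$t_K$ is an isomorphism'' are literally the same condition; once that is pinned down the proof is just the composition of the cited results.

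Assembling: assume $\textbf{T}$ is of effective descent type. Then $\phi_T$ is comonadic, hence precomonadic, giving (i). By Theorem \ref{T.5.6}, $K$ is an equivalence; by Proposition \ref{P.1.7} (or directly Corollary \ref{Cor.5.1}), $e_\II$ is a Galois grouplike morphism, giving (ii). Proposition \ref{P.3.5-ent} then yields $\textbf{T}^g \cong 1_\A$. Finally, if $e : 1 \to T$ is a split monomorphism, then since $\V$ is Cauchy complete $\textbf{T}$ is of effective descent type by \cite{Me}, so the previous paragraph applies. $\endproof$
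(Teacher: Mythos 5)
Your argument is correct, but it assembles the conclusion from a different subset of the paper's lemmas than the paper itself uses. The paper's proof is the one-line combination of Proposition \ref{P.3.6.x}, Theorem \ref{T.5.6} and Corollary \ref{C.5.7}: effective descent type gives, via Theorem \ref{T.5.6}, that $K=K_g$ is an equivalence, hence in particular full and faithful, and Proposition \ref{P.3.6.x} converts full faithfulness of $K_g$ directly into ``$\textbf{T}^g$ is the identity monad'' (since $\textbf{T}^g$ is the monad generated by the adjunction $K_g \dashv R_g$, whose unit is an isomorphism precisely when $K_g$ is full and faithful); the split-mono case is Corollary \ref{C.5.7}. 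You share the first step (Theorem \ref{T.5.6}) but replace Proposition \ref{P.3.6.x} by Proposition \ref{P.3.5-ent}, which obliges you to extract from the equivalence the two hypotheses ``$\phi_T$ precomonadic'' and ``$\textbf{T}$ is $\textbf{G}$-Galois w.r.t.\ $\tilde{g}$'' via Proposition \ref{P.1.7}. This does work: an isomorphism $t_K$ restricts to an isomorphism on free $\textbf{T}$-modules, and the $\phi_T(a)$-component of $t_K$ is exactly $(\gamma_T)_a$, which is the Galois condition needed in Proposition \ref{P.3.5-ent}. The only imprecision is your parenthetical claim that the $\textbf{G}$-Galois property of $\textbf{T}$ ``amounts to'' $t_K$ being an isomorphism: the Galois property per se only concerns the components of $t_K$ at free modules, and upgrading that to all of $\V_T$ is the content of Proposition \ref{P.2.4} under additional hypotheses; since you only use the easy direction (from $t_K$ an isomorphism everywhere to an isomorphism on free modules), nothing breaks. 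In sum, the paper's route is shorter and purely formal, while yours routes through the Galois/entwining machinery of Section 3 at the cost of the extra bookkeeping you correctly flag.
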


\begin{thm}\label{braid}{\bf Bimonads in braided categories.}
\em
 As before, let $(\bV, \otimes, \II)$ be a strict monoidal category and
$\textbf{T}=(T,m, e)$ a comonoidal monad on $\bV$, and consider the
corresponding mixed distributive law (entwining)
$$\lambda:=(T(-) \otimes m_\II)\cdot \chi_{-,\, T(\II)}: TG \to
GT,$$ from the monad $\textbf{T}$ to the comonad $\textbf{G}=-
\otimes T(\II)$. It is pointed out in \cite{BV} that, when $\bV$ is
a {\em braided monoidal category} with braiding $\tau_{X, Y}: X \otimes Y
\to Y \otimes X$, then for any bialgebra $\textbf{A}=(A,
e,m,\varepsilon, \delta)$ in $\bV$, the monad $A \otimes - $ is a
comonoidal monad, where the natural transformation $\chi_{X,Y} : A
\otimes X \otimes Y \to A \otimes X \otimes A  \otimes Y$ is the
composite
$$
\xymatrix{A \otimes X \otimes Y \ar[rr]^-{\delta \otimes X \otimes
Y}&& A \otimes A \otimes X \otimes Y \ar[rr]^-{A \otimes \tau_{A,
X}\otimes Y }&& A \otimes X \otimes A \otimes Y \, .}$$
Then, for any $X \in \bV$, $\lambda_X$ is the composite
$$
\xymatrix{A \otimes X \otimes A \ar[r]^-{\delta \otimes X \otimes
A}& A \otimes A \otimes X \otimes A \ar[rr]^-{A \otimes \tau_{A,
X}\otimes X }&& A \otimes X \otimes A \otimes A \ar[r]^-{A \otimes X
\otimes m}&  A \otimes X \otimes A \,.}$$
Consider now the diagram
$$
\xymatrix{A \otimes X \otimes A \ar[dd]_{\tau_{A, X}\otimes A}
\ar[r]^-{\delta \otimes X \otimes A}& A \otimes A \otimes X
\otimes A \ar@{}[rdd]^{(2)}\ar[dd]_{\tau_{A \otimes A, X}\otimes
A} \ar[rr]^-{A \otimes \tau_{A, X}\otimes X }&& A \otimes X
\otimes A \otimes A \ar@{}[dd]^{(3)}\ar[lldd]^{\tau^{-1}_{A,
X}\otimes A \otimes A}\ar[r]^-{A \otimes X \otimes m}& A \otimes X
\otimes A \ar[ldd]^{\tau^{-1}_{A, X}\otimes
A}\\\\
X \otimes A \otimes A \ar@{}[ruu]^{(1)}\ar[r]_-{X \otimes \delta
\otimes A}& X \otimes A \otimes A \otimes A \ar[rr]_-{X \otimes A
\otimes m}& &X \otimes A \otimes A\, ,}$$
in which the diagrams (1) and
(2) commute by naturality of $\tau$, while diagram (3) commutes by
naturality of composition. Since each component of $\tau$ is an
isomorphism, $\lambda_X$ is an isomorphism if and only if the composite
$(X\otimes A \otimes m)(X \otimes \delta \otimes A)$ is so. Since
$(X\otimes A \otimes m)(X \otimes \delta \otimes A)=X \otimes ((A
\otimes m)(\delta \otimes A))$ and since $(A \otimes m)(\delta
\otimes A)$ is an isomorphism if and only if $\textbf{A}$ has an antipode, it
follows that the composite $(X \otimes A \otimes m)(X \otimes \delta
\otimes A)$ - and hence $\lambda_X$- is an isomorphism for all $X \in
\bV$ if and only if $\textbf{A}$ has an antipode.
\end{thm}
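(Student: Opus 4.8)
The plan is to compute the component $\lambda_X$ explicitly in terms of the bialgebra structure maps $\delta,m$ of $\mathbf A$ and the braiding $\tau$, then — after conjugating by suitable braiding isomorphisms so as to slide the leading copy of $A$ past $X$ — to recognise $\lambda_X$ as the endofunctor $X\otimes(-)$ applied to the canonical (Galois) morphism of $\mathbf A$, and finally to invoke the classical fact that this canonical morphism is invertible precisely when $\mathbf A$ has an antipode.

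First I would unwind the relevant structure. With $T=A\otimes-$, the comonoidal data are $\chi_{X,Y}=(A\otimes\tau_{A,X}\otimes Y)\cdot(\delta\otimes X\otimes Y)$ and $\theta_\II=\varepsilon$, so $T(\II)=A$, the induced comonad is $G=-\otimes A$ with comultiplication $-\otimes\delta$ and counit $-\otimes\varepsilon$, and substituting $T(\II)=A$ into $\lambda=(T(-)\otimes m_\II)\cdot\chi_{-,\,T(\II)}$ gives, for each object $X$,
$$\lambda_X \;=\; (A\otimes X\otimes m)\cdot(A\otimes\tau_{A,X}\otimes A)\cdot(\delta\otimes X\otimes A)\colon\ A\otimes X\otimes A\to A\otimes X\otimes A .$$

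Next I would run a diagram chase. Conjugating $\lambda_X$ by a braiding isomorphism $A\otimes X\otimes A\to X\otimes A\otimes A$ on both source and target, naturality of $\tau$ — applied to $\delta$ (viewed as a morphism $A\to A\otimes A$) and to $m$ (viewed as a morphism $A\otimes A\to A$), each tensored on the appropriate side with $X$ — together with naturality of composition makes the resulting rectangle commute; every vertical leg of this rectangle is a component of $\tau$ or $\tau^{-1}$ tensored with identity morphisms, hence invertible. Its bottom edge is the composite $(X\otimes A\otimes m)\cdot(X\otimes\delta\otimes A)=X\otimes\big[(A\otimes m)\cdot(\delta\otimes A)\big]$. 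Thus $\lambda_X$ agrees, up to the invertible braiding morphisms, with $X\otimes\mathrm{can}$, where $\mathrm{can}:=(A\otimes m)\cdot(\delta\otimes A)\colon A\otimes A\to A\otimes A$ is the canonical (Galois/fusion) morphism of the bialgebra $\mathbf A$. Since the braiding is invertible, $\lambda_X$ is an isomorphism iff $X\otimes\mathrm{can}$ is; putting $X=\II$ forces $\mathrm{can}$ itself to be an isomorphism, and conversely invertibility of $\mathrm{can}$ yields invertibility of every $X\otimes\mathrm{can}$.

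Finally I would invoke the standard fact, valid for a bialgebra in any braided monoidal category, that $\mathrm{can}=(A\otimes m)\cdot(\delta\otimes A)$ is an isomorphism if and only if $\mathbf A$ admits an antipode $S$ (for one direction, $\mathrm{can}^{-1}=(A\otimes m)\cdot(A\otimes S\otimes A)\cdot(\delta\otimes A)$, checked using coassociativity, the bialgebra compatibility of $m$ and $\delta$, and the antipode identities; for the other, $S$ is recovered from $\mathrm{can}^{-1}$ by pre- and post-composing with units and counits). Chaining the two equivalences gives the assertion: $\lambda_X$ is an isomorphism for all $X\in\bV$ $\iff$ $\mathrm{can}$ is an isomorphism $\iff$ $\mathbf A$ has an antipode. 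The one genuine difficulty I anticipate is bookkeeping in the braided (non-symmetric) setting: keeping track of where $\tau$ versus $\tau^{-1}$ appears along the legs of the chase when sliding $A$ past $X$, and substituting the entwining and comonoidal structure maps correctly at $T(\II)=A$; the Hopf-theoretic input is classical, and I would cite it rather than reprove it.
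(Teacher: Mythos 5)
Your proposal is correct and follows essentially the same route as the paper: you unwind $\lambda_X$, conjugate by the braiding components $\tau_{A,X}\otimes A$ and $\tau^{-1}_{A,X}\otimes A$ using naturality of $\tau$ to reduce to $X\otimes\bigl[(A\otimes m)\cdot(\delta\otimes A)\bigr]$, and then invoke the standard equivalence between invertibility of this canonical morphism and the existence of an antipode, exactly as the text does. The only (harmless) addition is your explicit remark that setting $X=\II$ recovers invertibility of the canonical morphism itself.
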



\section{Categories with finite products and Galois objects}

In the category $\Set$ of sets, for any object $G$, the product
$G\times-$ defines an endofunctor. This is always a comonad with the
coproduct given by the diagonal map, and it is a monad provided
$G$ is a semigroup. In this case $G\times-$ is a (mixed) bimonad
and it is a Hopf monad if and only if $G$ is a group. We refer to
\cite[5.19]{W} for more details.

In this final section we study similar operations in more general
categories and this leads eventually to the {\em Galois objects}
in such categories as studied in Chase and Sweedler \cite{CS}.
\smallskip

Let $\A$ be a category with finite products. In particular, $\A$ has
a terminal object, which is the product over the empty set. Then
$(\A,\times, \texttt{1})$ is a symmetric monoidal category, where $a
\times b$ is some chosen product of $a$ and $b$, and $\texttt{1}$ is
a chosen terminal object in $\A$, while the symmetry $\tau_{a, b}:a
\times b \to b \times a$ is the unique morphism for which the
diagram
$$\xymatrix{&a& \\a\times b \ar[ru]^{p_1}\ar[rd]_{p_2}\ar[rr]^{\tau_{a, b}}&& b
\times a \ar[ld]^{p_1}\ar[lu]_{p_2}\\
& b& }$$ commutes. The associativity and unit constraints are
defined via the universal property for products.   Such a
category is called a \emph{cartesian monoidal category}.

Similarly, a \emph{cocartesian monoidal category} is a monoidal
category whose monoidal structure is given by the categorical
coproduct and  whose unit object is the initial object. Any
category with finite coproducts can be considered as a cocartesian
monoidal category.

Given morphisms $f: a \to x$ and $g: a \to y$ in $\A$, we write
$<f,g>:a \to x \times y$ for the unique morphism making the diagram
$$\xymatrix{&& a \ar[lldd]_{f} \ar[rrdd]^{g} \ar[dd]^{<f,g>}&&\\\\
x && x \times y \ar[ll]^{p_1} \ar[rr]_{p_2}&& y}$$ commute. In
particular, $\Delta_a=<1_a, 1_a> : a \to a \times a$ is the diagonal
morphism.

It is well known that every object $c$ of $\A$ has a unique
(cocommutative) comonoid structure in the monoidal category
$(\A,\times, \texttt{1})$. Indeed, the counit $\varepsilon: c \to
\texttt{1}$ is the unique map $!_c$ to the terminal object
$\texttt{1}$, and the comultiplication $\delta: c \to c \times c$
is the diagonal morphism $\Delta_c$. This yields an isomorphism of
categories $\Comon(\A) \simeq \A$. Given an arbitrary object $c
\in \A$, we write $\overline{\textbf{c}}$ for the corresponding comonoid in
$(\A, \times, \texttt{1})$.

\begin{proposition} The assignment
$$(a, \theta_a : a \to a \times c)\longrightarrow (p_2 \cdot \theta_a : a \to
c)$$ yields an isomorphism of categories
$$^{\overline{\textbf{c}}}\A\simeq {\A\!\downarrow \!c},$$ where
$^{\overline{\textbf{c}}}\A=\A_{c \times -}$, while
$\A\!\downarrow \!c$ is the comma-category of objects over $c$,
that is, {\em objects} are morphisms $f: a \to c$ with codomain
$c$ and {\em morphisms} are
commutative diagrams $$\xymatrix{a \ar[rd]_{f}\ar[rr]^h&& a'\ar[ld]^{f'}\\
&c&.}$$
\end{proposition}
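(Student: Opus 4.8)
The plan is to write down the inverse functor explicitly and verify that the two assignments are mutually inverse on the nose. The only axiom that carries content is counitality of a $\overline{\textbf{c}}$-coaction, which pins every coaction down to a rigid shape; coassociativity will be automatic because the comultiplication of $\overline{\textbf{c}}$ is the diagonal.

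First I would unwind the definition. Since the comonoid $\overline{\textbf{c}}$ has comultiplication $\Delta_c$ and counit ${!}_c$, an object of $^{\overline{\textbf{c}}}\A$ is an object $a$ with $\theta_a:a\to a\times c$ such that $p_1\circ\theta_a=1_a$ (counitality, after identifying $a\times\texttt{1}\simeq a$) and $(\theta_a\times 1_c)\circ\theta_a=(1_a\times\Delta_c)\circ\theta_a$ (coassociativity). Putting $f_a:=p_2\circ\theta_a$, counitality forces $\theta_a=\langle p_1\circ\theta_a,\,p_2\circ\theta_a\rangle=\langle 1_a,f_a\rangle$. So the stated functor $\Phi:(a,\theta_a)\mapsto f_a$ loses no information, and it is well defined on morphisms: if $h$ is a $\overline{\textbf{c}}$-comodule morphism, $\theta_{a'}\circ h=(h\times 1_c)\circ\theta_a$, then composing with $p_2$ and using $p_2\circ(h\times 1_c)=p_2$ gives $f_{a'}\circ h=f_a$, i.e. $h\in\A\!\downarrow\!c$.

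Next I would define the candidate inverse by $\Psi(f:a\to c):=(a,\langle 1_a,f\rangle)$ and $\Psi(h):=h$. Here one checks $\langle 1_a,f\rangle$ is a genuine coaction: counitality is $p_1\circ\langle 1_a,f\rangle=1_a$, while for coassociativity both $(\langle 1_a,f\rangle\times 1_c)\circ\langle 1_a,f\rangle$ and $(1_a\times\Delta_c)\circ\langle 1_a,f\rangle$ have components $1_a$, $f$, $f$ along the three projections $a\times c\times c\to a,c,c$, hence agree; this is the only use of the diagonal. Functoriality of $\Psi$ and the fact that it takes $\A\!\downarrow\!c$-morphisms to comodule morphisms are the same projection-chase in reverse. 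Finally $\Phi\Psi(f)=p_2\circ\langle 1_a,f\rangle=f$ and $\Psi\Phi(a,\theta_a)=(a,\langle 1_a,f_a\rangle)=(a,\theta_a)$ by the identity $\theta_a=\langle 1_a,f_a\rangle$, and both composites are the identity on underlying morphisms; so $\Phi$ and $\Psi$ are inverse isomorphisms of categories. I do not expect a genuine obstacle: the only thing to be careful about is inserting the coherence isomorphisms $a\times\texttt{1}\simeq a$ and $(a\times c)\times c\simeq a\times(c\times c)$ in the right places when stating the two comodule axioms — after that counitality does all the work and coassociativity is free.
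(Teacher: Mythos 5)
Your argument is correct and complete; the paper itself states this proposition without proof, treating it as an immediate consequence of the universal property of products, and your verification is exactly the expected one. The key observations — that counitality forces $\theta_a=\langle 1_a, p_2\cdot\theta_a\rangle$, that coassociativity then holds automatically because both sides of the coassociativity square have components $(1_a,f,f)$, and that the morphism conditions match under composition with $p_2$ — are all checked correctly, so nothing is missing.
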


If the category $\A$ has pullbacks, then for any morphism $f: c
\to d$ in $\A$, the functor $f_* : \A\!\downarrow \!c \to
\A\!\downarrow \! d$ given by the composition with $f$ has the
right adjoint $f^* : \A\!\downarrow \!d \to \A\!\downarrow \!c$
given by pulling back along the morphism $f$. Now, identifying $f:
c \to d$ with the morphism $f:\overline{\textbf{c}}\to
\overline{\textbf{d}}$ of the corresponding comonoids in $\A$, one
can see the functors $f^*$ and $f_*$ as the induction functor
$^{\overline{\textbf{c}}}\A\to ^{\overline{\textbf{d}}}\!\!\!\A$
and the coinduction functor $^{\overline{\textbf{d}}}\A \to
^{\overline{\textbf{c}}}\!\!\!\A$, respectively.  Given an object
$c \in \A$, we write $P_c$ and $U_c$ for the functors $(!_c)^*$
and $(!_c)_*$.

  Given a symmetric monoidal category $\bV=(V, \otimes, I)$,
the category $\Mon(\bV)$ of monoids in $\bV$ is again a
monoidal category. For two $\bV$-monoids $\textbf{A}=(A, m_A,
e_A)$ and $\textbf{B}=(B, m_B, e_B)$, their tensor product is
defined as
$$\textbf{A}\otimes\textbf{B}=(A \otimes B, (m_A
\otimes m_B)(1\otimes \tau_{A,B} \otimes 1), e_A \otimes e_B),$$
where $\tau$ is the symmetry in $\bV$. The unit object for this
tensor product is the trivial $\bV$-monoid $\textbf{I}=(I, 1_I,
1_I)$. Similarly, the category $\Comon(\bV)$ of $\bV$-comonoids
inherits, in a canonical way, the monoidal structure from $\bV$
making it a  monoidal category.

It is well-known that one can describe bimonoids in any symmetric
monoi\-dal category $\bV$ as monoids in the monoidal category of
comonoids in $\bV$. Thus, writing $\Bimon(\bV)$ for the category
of bimonoids in $\bV$, then $\Bimon(\bV)=\Mon(\Comon(\bV)).$ In
particular, since $\Comon(\A)\simeq\A$ for any cartesian monoidal
category $\A$, one has  $\Bimon(\A) = \Mon(\Comon(\A))
\simeq\Mon(\A)$. Thus, for any monoid $\textbf{b}=(b,m_b, e_b)$ in
$(\A, \times, \texttt{1})$, the 6-tuple
$$\widehat{\textbf{b}}=((b, m_b, e_b),(b,
\Delta_b, !_b))$$ is a bimonoid in $(\A, \times, \texttt{1})$. In
particular, then the functor $b \times - : \A \to \A$ is a
$(\tau_{b,b} \times -)$-bimonad (in the sense of \cite{MW}).

Fix now a monoid $\textbf{b}=(b,m_b, e_b)$ in $(\A, \times,
\texttt{1})$. Since $\widehat{\textbf{b}}$ is  a bimonoid in $(\A,
\times, \texttt{1})$, the category $ {_\mathbf{b}\A}:=\A_{b\times-}$ of
$\textbf{b}$-modules is monoidal. More precisely, if $(x,
\alpha_x), (y, \alpha_y)\in {_{\textbf{b}}\A}$, then their tensor
product is the pair $(x \times y, \alpha_{x \times y})$, where
$\alpha_{x \times y}$ is the composite
$$\xymatrix{b \times x \times y \ar[rr]^{\Delta_b \times x \times y}&&b \times b\times x \times y
\ar[rr]^{b \times \tau_{b, x} \times y}&& b \times x \times b \times
y \ar[rr]^{\alpha_x \times \alpha_y}&& x \times y.}$$
It is easy to see that this monoidal structure is cartesian and coincides with
the cartesian structure on ${_{\textbf{b}}\A}$ which can be lifted
from $\A$ along the forgetful functor ${_{\textbf{b}}\A} \to \A.$

Suppose now that $(c, \alpha_c : b \times c \to c) \in
{_{\textbf{b}}\A}$. Applying the previous proposition to the comonoid
$\overline{(c, \alpha_c)}$ in the cartesian monoidal category
${_{\textbf{b}}\A}$ gives

\begin{proposition} If $(c, \alpha_c ) \in {_\mathbf{b}\A}$, then the
assignment $$((x,\alpha_x), \theta_{(x,\alpha_x)})\longrightarrow
((x, \alpha_x),p_2\cdot \theta_{(x,\alpha_x)} )$$
yields an isomorphism of categories
$$^{\overline{(c, \alpha_c)}}({_{\mathbf{b}}\A})\simeq {_{\mathbf{b}}\A}\!\downarrow \!(c, \alpha_c).$$
\end{proposition}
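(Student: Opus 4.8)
The plan is to obtain this statement as an immediate instance of the proposition established above for a general cartesian monoidal category (the isomorphism $^{\overline{\textbf{c}}}\A\simeq\A\downarrow c$), applied with $\A$ replaced by the category ${_\mathbf{b}\A}$ of $\mathbf b$-modules and with $c$ replaced by the $\mathbf b$-module $(c,\alpha_c)$. Thus the only real point is that ${_\mathbf{b}\A}$ is itself a category with finite products, so that the earlier proposition applies to it verbatim.

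First I would recall, from the discussion preceding the statement, that the forgetful functor ${_\mathbf{b}\A}\to\A$ creates finite products; hence ${_\mathbf{b}\A}$ has a terminal object $(\texttt{1},!)$ — with $\texttt{1}$ terminal in $\A$ carrying its unique $\mathbf b$-module structure — and binary products $(x,\alpha_x)\times(y,\alpha_y)=(x\times y,\alpha_{x\times y})$ with projections inherited from $\A$, and this cartesian structure is exactly the monoidal structure on ${_\mathbf{b}\A}$ coming from the bimonoid $\widehat{\textbf{b}}$. Consequently all the elementary facts used to set up the earlier proposition hold inside ${_\mathbf{b}\A}$: by uniqueness of the cocommutative comonoid structure in a cartesian monoidal category, the comonoid $\overline{(c,\alpha_c)}$ has counit the canonical map $(c,\alpha_c)\to(\texttt{1},!)$ and comultiplication the diagonal $\Delta_{(c,\alpha_c)}$, and the comonad it determines by forming products is precisely $(c,\alpha_c)\times-$; so by definition $^{\overline{(c,\alpha_c)}}({_\mathbf{b}\A})={({_\mathbf{b}\A})}_{(c,\alpha_c)\times-}$.

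It then remains only to invoke the earlier proposition with these substitutions: it gives that $((x,\alpha_x),\theta_{(x,\alpha_x)})\mapsto((x,\alpha_x),p_2\cdot\theta_{(x,\alpha_x)})$, where $p_2$ is the second projection of the product in ${_\mathbf{b}\A}$ (computed as the second projection in $\A$), is an isomorphism of categories onto the comma category ${_\mathbf{b}\A}\downarrow(c,\alpha_c)$, with inverse sending a morphism $f\colon(x,\alpha_x)\to(c,\alpha_c)$ to the coaction $<1,f>$. If a self-contained argument is wanted, the same routine diagram chase as in the proof of the earlier proposition goes through unchanged. The only point that genuinely requires attention — and it has already been dealt with above — is the observation that ${_\mathbf{b}\A}$ really does carry all finite products, in particular a terminal object, so that it qualifies as a cartesian monoidal category; granting that, there is nothing further to verify, the statement being a pure specialisation.
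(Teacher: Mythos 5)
Your proposal is correct and is exactly the paper's own argument: the paper derives the statement by applying the earlier proposition ($^{\overline{\textbf{c}}}\A\simeq\A\!\downarrow\!c$) to the cartesian monoidal category ${_{\mathbf{b}}\A}$, having just noted that the monoidal structure induced by the bimonoid $\widehat{\textbf{b}}$ is the cartesian one lifted along the forgetful functor. Your additional care in checking that ${_{\mathbf{b}}\A}$ has a terminal object and that the comonoid structure on $(c,\alpha_c)$ is the canonical diagonal one simply makes explicit what the paper leaves implicit.
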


We have seen that the data
$$\widetilde{\textbf{b}}=(\underline{\textbf{b}}=(b \times -, m_b \times -,
e_b \times -), \overline{\textbf{b}}=(b \times -, \Delta_b \times
-, !_b \times - )\,,\tau_{b,b} \times -)$$
define  a
$(\tau_{b,b}\times -)$-bimonad on $\A$ and considering $b$ as an
object of ${_{\textbf{b}}\A}$ via the multiplication $m_b : b
\times b \to b$, one obtains easily that the categories
$\A^b_b:=\A^{\overline{\textbf{b}}}_{\underline{\textbf{b}}}(\tau_{b,b}\times
-)$ (compare \ref{bimonad})
and $^{\overline{(b, m_b)}}({_\textbf{b}\A})$ are isomorphic.
Thus, by the previous proposition, the categories $\A^b_b$ and $
{_\mathbf{b}\A}\!\downarrow \!(b, m_b)$ are also isomorphic.

\begin{theorem} Assume that
\begin{rlist}
\item $\A$ has small limits,  or
\item $\A$ has colimits and the
functor $b \times -$ preserves them, or
\item $\A$ admits equalisers and $b \times -$ has a right adjoint, or
\item $\A$ admits coequalisers and $b \times -$ has a left adjoint.
\end{rlist}
Then the functor
$$K: \A \to {_\mathbf{b}\A}\downarrow (b, m_b), \quad
 a \longrightarrow (b \times a, p_1 : b \times a \to b),$$
is an equivalence of categories if and only if $\mathbf{b}$ is a
group.
\end{theorem}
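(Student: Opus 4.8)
The plan is to recognise $K$ as the comparison functor of the bimonad $\widetilde{\mathbf{b}}$ and then to reduce the statement to Theorem~\ref{T.Bim} together with the elementary fact that $\widetilde{\mathbf{b}}$ has an antipode precisely when $\mathbf{b}$ is a group. First I would pin down the identification of functors. The discussion preceding the theorem records isomorphisms of categories
$${}_{\mathbf{b}}\A\!\downarrow\!(b,m_b)\;\simeq\;{}^{\overline{(b,m_b)}}({}_{\mathbf{b}}\A)\;\simeq\;\A^b_b=\A^{\overline{\mathbf{b}}}_{\underline{\mathbf{b}}}(\tau_{b,b}\times-),$$
and under the first of these the object $(b\times a\xrightarrow{p_1}b)$ corresponds to the coaction $\langle 1_{b\times a},p_1\rangle:b\times a\to(b\times a)\times b$; reinterpreting this along the second isomorphism (which costs one application of the symmetry $\tau_{b,b}$) turns it into the mixed bimodule $(b\times a,\;m_b\times a,\;\Delta_b\times a)=(H(a),m_a,\delta_a)$ with $H=b\times-$, which is exactly the value $K_H(a)$ of the comparison functor $K_H:\A\to\A^b_b$ of \ref{bimonad}. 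The assignment is natural in $a$, so $K$ is transported to $K_H$; hence $K$ is an equivalence of categories if and only if $K_H$ is.

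Next I would reduce to the antipode of $\widetilde{\mathbf{b}}$. In each of the four cases $\A$ is Cauchy complete --- it has equalisers (from (i), (iii)) or coequalisers (from (ii), (iv)), and both split idempotents --- so, the adjunction unit $e_b\times-$ of $\phi_{\underline{\mathbf{b}}}\dashv U_{\underline{\mathbf{b}}}$ being a split monomorphism, the functor $\phi_{\underline{\mathbf{b}}}$ is comonadic (see the remark preceding \ref{H-galois}, or \cite{Me}). As $K_H$ is the functor $\overline{F}$ of \ref{com-fun} attached to $F=\phi_{\underline{\mathbf{b}}}$, Proposition~\ref{P.1.7} then says that $K_H$ is an equivalence if and only if $t_{\phi_{\underline{\mathbf{b}}}}$ is an isomorphism. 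By Theorem~\ref{T.Bim} --- whose hypotheses (ii)--(iv) are literally the present ones, while under (i) one reruns the argument using only that $\A$ has equalisers and that $\phi_{\underline{\mathbf{b}}}$ is comonadic --- this last condition is equivalent to $\widetilde{\mathbf{b}}$ having an antipode. So $K_H$, and therefore $K$, is an equivalence of categories if and only if $\widetilde{\mathbf{b}}$ has an antipode.

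It remains to see that $\widetilde{\mathbf{b}}$ has an antipode exactly when $\mathbf{b}$ is a group. An antipode for $\widetilde{\mathbf{b}}$ is a natural transformation $S:b\times-\to b\times-$; by naturality together with its compatibility with the counit $!_b\times-$ it must have the form $S=s\times-$ for a unique morphism $s:b\to b$, and the antipode identities of \cite{MW} for $\widetilde{\mathbf{b}}$ amount, at the level of the monoid $\mathbf{b}$, to
$$m_b\cdot\langle s,1_b\rangle\cdot\Delta_b\;=\;e_b\cdot{!_b}\;=\;m_b\cdot\langle 1_b,s\rangle\cdot\Delta_b,$$
which says precisely that $s$ is a two-sided inverse morphism for $\mathbf{b}$, i.e.\ that $\mathbf{b}$ is a group object. (Equivalently, one may just quote that $b\times-$ is a Hopf monad if and only if $\mathbf{b}$ is a group, \cite[5.19]{W}.) Combining the three steps proves the theorem. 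I expect the only genuinely delicate point to be the bookkeeping in the first step --- checking that the coaction attached to $p_1$ is the one matching the comonad $\overline{\mathbf{b}}$, so that $K$ really is $K_H$ and not a twist of it --- together with the observation that, $b\times-$ not preserving arbitrary products, the limit case of \ref{T.Bim} must be reached for hypothesis (i) through Cauchy completeness rather than quoted verbatim; the appeal to \ref{T.Bim} and the antipode-versus-group translation are then routine.
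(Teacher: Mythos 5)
Your proposal is correct and follows essentially the same route as the paper: identify $K$ with the comparison functor $K_H:\A\to\A^b_b$ via the isomorphism ${_\mathbf{b}\A}\!\downarrow\!(b,m_b)\simeq\A^b_b$, invoke Theorem \ref{T.Bim} to reduce the equivalence question to the existence of an antipode for $\widetilde{\mathbf{b}}$, and then use that a Hopf monoid in a cartesian monoidal category is exactly a group. The only difference is that you supply the bookkeeping the paper leaves implicit (and your worry about hypothesis (i) is unnecessary, since $b\times-$ automatically preserves all limits that exist in $\A$, so \ref{T.Bim}(i) applies verbatim).
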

\begin{proof}
It is easy to see that modulo the isomorphism $\A^b_b\simeq
_\textbf{b}\!\!\A\!\downarrow \!(b, m_b)$, the functor $K: \A \to
{_\mathbf{b}\A}\!\downarrow \!(b, m_b)$ can be identified with the
comparison functor $K: \A\to \A^b_b $, which by \ref{T.Bim} is an
equivalence of categories if and only if the bimonad
$\widetilde{\textbf{b}}$ has an antipode, which is the case if and
only if the $\A$-bimonoid $\widehat{\textbf{b}}$ has one, i.e.,
$\widehat{\textbf{b}}$ is a Hopf monoid in $(\A, \times,
\texttt{1})$. Now the result follows from the fact that in any
cartesian monoidal category, a Hopf algebra is nothing but a group
(see, for example, \cite[5.20]{W}).
\end{proof}

Consider now an object $(c, \alpha_c) \in {_\mathbf{b}\A}$. Since
$(c, \alpha_c)$ is a comonoid in the cartesian monoidal category $
({_\mathbf{b}\A}, \times, \texttt{1})$, the composite
$$\xymatrix{
b \times c \times - \ar[r]^-{\Delta_b\times c\times -}& b \times b
\times c \times - \ar[rr]^-{b \times \tau_{b,c}\times -}&& b
\times c \times b  \times- \ar[rr]^-{\alpha_c \times b \times -}&&
c \times b \times - }$$ is an entwining from the monad
$\textbf{T}_{\textbf{b}}=b \times -$ to the comonad
$\textbf{G}_{\overline{\textbf{c}}}=c \times -$. Then one has a
lifting $\widetilde{\textbf{T}_{\textbf{b}}}$ of the monad
$\textbf{T}_{\textbf{b}}$ along the forgetful functor
$^{\overline{\textbf{c}}}\!\A=\A \! \downarrow \! c \to \A.$ It is
easy to see that if $(x, f: x \to c) \in \A \!\downarrow \!c$,
then
$$\widetilde{T_{\textbf{b}}}(x, f)=(b \times x, \alpha_c \cdot (b
\times f): b \times x \to c).$$ We write $_{\textbf{b}}(\A
\!\downarrow \!c)$ for the category $(\A \!\downarrow
\!c)_{\widetilde{\textbf{T}_{\textbf{b}}}}$. It is also easy to
see that the functor
$$K : \A \to {_{\mathbf{b}}(\A \!\downarrow \!c)}
$$ that takes an object $a \in \A$ to the object $$(c \times a,
\alpha_c \times a : b \times c \times a \to c \times a)$$ makes the
diagram
$$\xymatrix{& {_{\mathbf{b}}(\A \!\downarrow \!c)}\ar[d]^{U}\\
\A \ar[r]_{P_c} \ar[ru]^{K}& \A \!\downarrow \!c}$$
commute, where
$U$ is the evident forgetful functor. Then the corresponding
$\widetilde{\textbf{T}_{\textbf{b}}}$-module structure on $P_c$ is
given by the morphism $ \alpha_c \times - :b \times c \times - \to
c \times -$. Since the forgetful functor $U_c : \A \!\downarrow
\!c \to \A$ that takes $f: x \to c$ to $x$ is left adjoint to the
functor $P_c$ and since the $(f: x \to c)$-component of the unit
of the adjunction $U_c \dashv P_c$ is the morphism $<f, 1_x>:x \to
c \times x$, the $(f: x \to c)$-component $t_f$ of the monad
morphism $t: \widetilde{\textbf{T}_{\textbf{b}}} \to P_cU_c $ is
the composite
$$\xymatrix{b \times x \ar[rr]^-{b \times <f, 1_x>}&& b
\times c \times x \ar[r]^-{\alpha_c \times 1_x}& c \times x.}$$ We
write $\gamma_c$ for the morphism $t_{1_c}: b \times c \to c \times c.$

One says that a morphism $f: a \to b$ in $\A$ is an
\emph{(effective) descent morphism} if the corresponding functor
$f^* : \A\!\downarrow \! b \to \A \! \downarrow \!a$ is
precomonadic (resp. monadic).

\begin{theorem} \label{Th.6.3}Let $\mathbf{b}=(b, m_b, e_b)$ be a monoid in $\A$ and
let $(c, \alpha_c)\in {_\mathbf{b}}\A$. Suppose that
  \begin{itemize}
    \item [(i)] $\A$ admits all small limits, or
    \item [(ii)] $\A$ admits coequalisers of reflexive pairs and
    the functors $b \times - :\A \to \A$ and $c \times - :\A \to \A$
    both have left adjoints.
  \end{itemize}
Then the functor $K : \A \to {_\mathbf{b}(\A \!\downarrow \!c)}$
is an equivalence of categories if and only if   $\gamma_c:b
\times c \to c \times c$ is an isomorphism and $!_c : c \to
\texttt{1}$ is an effective descent morphism.
\end{theorem}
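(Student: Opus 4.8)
The strategy is to reduce the statement to the general characterisation of Galois entwinings via the functor $K$, which was set up for arbitrary entwinings in Sections 2 and 3, and then to translate each abstract hypothesis into the concrete cartesian setting. Concretely, recall that $K:\A\to{_\mathbf{b}(\A\!\downarrow\!c)}=(\A\!\downarrow\!c)_{\widetilde{\mathbf T_{\mathbf b}}}$ fits into the commutative triangle with $U$ and $P_c=(!_c)^*$, and that $P_c$ has the left adjoint $U_c$. So we are exactly in the situation of Theorem \ref{T.2.2-ent}: $K$ is an equivalence of categories if and only if the functor $\phi^{\mathbf G_{\overline{\mathbf c}}}=P_c$ is monadic \emph{and} the comonad morphism $t:\widetilde{\mathbf T_{\mathbf b}}\to P_cU_c$ is an isomorphism, i.e. $t_f$ is an isomorphism for every $(f:x\to c)\in\A\!\downarrow\!c$. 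The first of these two conditions is, by the definition just recalled, precisely the statement that $!_c:c\to\texttt1$ is an effective descent morphism (``$(!_c)^*$ monadic''). So the whole content of the theorem is to show that, under hypothesis (i) or (ii), the second condition — $t$ an isomorphism — is equivalent to the single requirement that $\gamma_c=t_{1_c}:b\times c\to c\times c$ be an isomorphism.

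For that reduction I would invoke Proposition \ref{P.2.5} (the dual of \ref{P.2.4}): it says precisely that, when $\A$ has all small limits (hypothesis (i)) or $\A$ has coequalisers of reflexive pairs and both $b\times-$ and $c\times-$ have left adjoints (hypothesis (ii)), the lifted entwined functor $(\phi^G,\alpha_K)$ is $\widehat{\mathbf T}$-Galois iff $(G,U^G(\alpha_K))$ is $\mathbf T$-Galois — equivalently, $t$ is an isomorphism iff its restriction to cofree $\mathbf G$-comodules is, i.e. iff $t_{\phi^{\mathbf G}(a)}=t_{(c\times a,\,\Delta_c\times a)}$ is an isomorphism for all $a\in\A$. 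Here one must first check the bookkeeping: the comonad $\mathbf G$ in Proposition \ref{P.2.5} is $\mathbf G_{\overline{\mathbf c}}=c\times-$, its cofree comodules are $(c\times a,\Delta_c\times a)$, and the $(c\times a)$-component of $t$, spelled out from the formula for $t_f$ with $f=p_1:c\times a\to c$, is exactly $\gamma_c\times a:b\times c\times a\to c\times c\times a$ (using naturality of the entwining and that $p_1:c\times a\to c$ composed into the coaction gives back $\Delta_c\times a$). Since $-\times a$ reflects isomorphisms when applied to an isomorphism and, conversely, $\gamma_c=\gamma_c\times\texttt1$, the family $\{\gamma_c\times a\}_{a\in\A}$ consists of isomorphisms iff $\gamma_c$ alone is an isomorphism. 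Then, combining this with the translation of monadicity of $P_c$ as effective descent of $!_c$, Theorem \ref{T.2.2-ent} gives the stated equivalence.

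The one genuinely delicate point is verifying that the hypotheses of Proposition \ref{P.2.5} are met. Under (i) there is nothing to do: ``$\A$ has all small limits'' is assumed verbatim, and $b\times-$, $c\times-$ automatically preserve limits, being right adjoints to the diagonal restricted appropriately — more simply, product functors preserve all limits. Under (ii), hypothesis (ii) of the theorem is literally hypothesis (i) of Proposition \ref{P.2.5} with $T=b\times-$ and $G=c\times-$, so again it matches on the nose. Thus no real obstacle arises there; the main thing to get right is the identification of the $(c\times a)$-component of $t$ with $\gamma_c\times a$, which is a short diagram chase using the explicit form of the entwining $b\times c\times-\to c\times b\times-$ and the fact that in a cartesian category every object carries its unique diagonal comultiplication. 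I would write out that one diagram and otherwise assemble the proof from Theorem \ref{T.2.2-ent}, Proposition \ref{P.2.5}, and the already-recorded isomorphism $\A^b_b\simeq{_\mathbf b}\A\!\downarrow\!(b,m_b)$-style translations, exactly as the preceding theorem in this section was proved from \ref{T.Bim}.
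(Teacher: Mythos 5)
Your proposal is correct and follows essentially the same route as the paper: reduce via Proposition \ref{P.1.2} (equivalently, its entwining incarnation \ref{T.2.2-ent}) to ``$P_c$ monadic and $t$ an isomorphism'', use the density argument underlying Proposition \ref{P.2.5} to restrict the latter to the free objects $(c\times x,p_1)$, and identify the component there as $\gamma_c\times x$. The only quibble is terminological: $t:\widetilde{\mathbf{T}_{\mathbf{b}}}\to P_cU_c$ is a morphism of monads, not comonads.
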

\begin{proof}According to Proposition \ref{P.1.2}, the functor $K$ is an
equivalence of categories if and only if  the functor $P_c$ is
comonadic (i. e. if the morphism $!_c : c \to \texttt{1}$ is an
effective descent morphism) and $t:
\widetilde{\textbf{T}_{\textbf{b}}} \to P_cU_c $ is an isomorphism
of monads. Since the functors $b \times - :\A \to \A$ and $c
\times - :\A \to \A$ both preserve those limits that exist in
$\A$, it follows from \ref{P.2.5} that if $\A$ satisfies (i) or
(ii), $t$ is an isomorphism if and only if its restriction on free
$P_cU_c$-algebras is so. But any free $P_cU_c$-algebra has the
form $(c \times x, p_1)$ for some $x \in \A$ and it is not hard to
see that the $(c \times x, p_1)$-component $t_{(c \times x, p_1)}$
of $t$ is the morphism $\gamma_c \times x$. It follows that $t_{(c
\times x, p_1)}$ is an isomorphism for all $x \in \A$ if and only
if  the morphism $\gamma_c$ is an isomorphism. This completes the
proof.
\end{proof}

We call an object $a \in \A$ \emph{faithful} if the functor $a
\times - : \A \to \A$ is faithful. Note that $a$ is faithful if and only if
the unique morphism $!_a : a \to \texttt{1}$ is a descent morphism.

We follow Chase and Sweedler \cite{CS}  in calling an object $(c,
\alpha_c) \in {_{\textbf{b}}\A}$ a \emph{Galois
$\mathbf{b}$-object} if $c$ is a faithful object in $\A$ such that
the morphism $\gamma_c : b \times c \to c \times c$ is an
isomorphism. Using this notion, we can rephrase the previous
theorem as follows.

\begin{theorem} In the situation of the previous theorem, if $(c,
\alpha_c)\in {_{\mathbf{b}}\A}$ is a Galois $\mathbf{b}$-object,
then the functor $K: \A \to {_\mathbf{b}(\A \!\downarrow \!c)}$
is an equivalence of categories if and only if
 $!_c : c \to \texttt{1}$ is an effective descent morphism.
\end{theorem}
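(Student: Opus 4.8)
The plan is to obtain this statement as an immediate corollary of Theorem \ref{Th.6.3}. Since we work ``in the situation of the previous theorem'', the ambient category $\A$ still satisfies either hypothesis (i) (all small limits) or hypothesis (ii) (coequalisers of reflexive pairs, with $b \times -$ and $c \times -$ both having left adjoints); hence Theorem \ref{Th.6.3} applies verbatim and tells us that the functor $K : \A \to {_\mathbf{b}(\A \!\downarrow \!c)}$ is an equivalence of categories if and only if \emph{both} of the following hold: $\gamma_c : b \times c \to c \times c$ is an isomorphism, and $!_c : c \to \texttt{1}$ is an effective descent morphism.

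Next I would simply unwind the hypothesis that $(c, \alpha_c)$ is a Galois $\mathbf{b}$-object. By the definition adopted above (following Chase and Sweedler), this says precisely that $c$ is a faithful object of $\A$ and that $\gamma_c$ is an isomorphism. Thus the first of the two conditions delivered by Theorem \ref{Th.6.3} is automatically in force, and the biconditional it provides collapses to the single clause: $K$ is an equivalence of categories if and only if $!_c : c \to \texttt{1}$ is an effective descent morphism. This is exactly the asserted characterisation.

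There is essentially no technical obstacle here; the only point worth a remark is the mutual consistency of the faithfulness clause built into the Galois hypothesis with the effective-descent conclusion. Recall that $c$ is faithful exactly when $!_c$ is a descent morphism, and every effective descent morphism is in particular a descent morphism; so in the ``only if'' direction the faithfulness of $c$ would already follow from $!_c$ being an effective descent morphism, while in the ``if'' direction it is guaranteed outright by the Galois hypothesis. Hence the two appearances of the faithfulness condition cannot conflict, and beyond this bookkeeping the proof is nothing more than substitution into Theorem \ref{Th.6.3}.
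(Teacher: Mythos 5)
Your proposal is correct and is essentially the paper's own argument: the paper offers no separate proof, presenting this theorem as a direct rephrasing of Theorem \ref{Th.6.3} once the Galois hypothesis is unwound to supply the condition that $\gamma_c$ is an isomorphism. Your additional remark reconciling the faithfulness clause with effective descent is a harmless (and accurate) bit of bookkeeping that the paper leaves implicit.
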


If any descent morphism in $\A$ is effective (as surely it is when
$\A$ is an exact category in the sense of Barr, see \cite{JT}),
then one has

\begin{corollary} If every descent morphism in $\A$ is
effective, then for any Galois $\mathbf{b}$-object $(c,
\alpha_c)$, the functor $K: \A \to {_\mathbf{b}(\A \!\downarrow\!c})$
is an equivalence of categories.
\end{corollary}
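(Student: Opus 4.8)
The plan is to derive this corollary directly from the preceding theorem. Recall that the preceding theorem asserts that, for a Galois $\mathbf{b}$-object $(c, \alpha_c)$, the functor $K: \A \to {_\mathbf{b}(\A \!\downarrow \!c)}$ is an equivalence of categories if and only if $!_c : c \to \texttt{1}$ is an effective descent morphism. So the only thing that needs to be checked is that, under the standing hypothesis that every descent morphism in $\A$ is effective, the map $!_c : c \to \texttt{1}$ is indeed an effective descent morphism.

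First I would unwind the definition of a Galois $\mathbf{b}$-object: by definition such a $(c,\alpha_c)$ has $c$ a \emph{faithful} object of $\A$, i.e. the functor $c \times - : \A \to \A$ is faithful. As remarked in the text just before the statement, an object $a$ is faithful precisely when the unique morphism $!_a : a \to \texttt{1}$ is a descent morphism; thus faithfulness of $c$ says exactly that $!_c : c \to \texttt{1}$ is a descent morphism. Invoking the hypothesis of the corollary — that every descent morphism in $\A$ is effective — we conclude that $!_c : c \to \texttt{1}$ is an effective descent morphism.

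Now I would simply feed this back into the previous theorem: its "if" direction gives that $K : \A \to {_\mathbf{b}(\A \!\downarrow \!c)}$ is an equivalence of categories. (One should also note that the ambient hypotheses (i)/(ii) of the previous theorem are still in force here, since the corollary is stated "in the situation of the previous theorem.") This completes the argument.

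There is essentially no obstacle: the proof is a one-line chaining of definitions, with the single substantive identification being "faithful object $\iff$ $!_c$ is a descent morphism", which is already recorded in the excerpt immediately preceding the Galois-object definition. The only thing to be careful about is to state explicitly that we are using the hypothesis of the corollary to upgrade "descent" to "effective descent", and then that we are quoting the previous theorem verbatim.

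\begin{proof}
By definition of a Galois $\mathbf{b}$-object, $c$ is a faithful object of $\A$; as noted above, this means precisely that $!_c : c \to \texttt{1}$ is a descent morphism. Since by hypothesis every descent morphism in $\A$ is effective, $!_c : c \to \texttt{1}$ is an effective descent morphism. The previous theorem now applies and yields that $K: \A \to {_\mathbf{b}(\A \!\downarrow \!c)}$ is an equivalence of categories.
\end{proof}
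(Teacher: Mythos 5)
Your argument is correct and is exactly the (implicit) reasoning behind the paper's corollary, which is stated without proof: faithfulness of $c$ in the definition of a Galois $\mathbf{b}$-object gives that $!_c$ is a descent morphism, the hypothesis upgrades it to an effective descent morphism, and the preceding theorem then applies. Nothing is missing.
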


  Note that if $g: 1 \to G_{\bar{\textbf{c}}}$ is a grouplike morphism
for the comonad $G_{\bar{\textbf{c}}}$, then the composite
$\texttt{1}\xrightarrow {g_{\texttt{1}}}
G_{\bar{\textbf{c}}}(\texttt{1})=c \times \texttt{1} \xrightarrow
{p_2}\texttt{1}$ is the identity morphism, implying that the
morphism $!_c: c\to\texttt{1}$ is a split epimorphism. It is then
easy to see that the counit of the adjunction $U_c \dashv P_c$ is
a split epimorphism, and it follows from the dual of
\cite[Proposition 3.16]{Me} that the functor $P_c$ is monadic
(i.e., $!_c : c \to \texttt{1}$ is an effective descent morphism)
provided  that the category $\A$ is Cauchy complete. In the light
of the previous theorem, we get:

\begin{theorem} In the situation of Theorem \ref{Th.6.3}, if $\A$
is Cauchy complete and if there exists a grouplike morphism for
the comonad  $G_{\bar{\textbf{c}}}$, then the functor
$K: \A \to {_\mathbf{b}(\A \!\downarrow \!c)}$ is an equivalence of
categories if and only if $(c, \alpha_c)\in {_\mathbf{b}\A}$ is
a Galois $\mathbf{b}$-object.
\end{theorem}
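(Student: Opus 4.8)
The plan is to derive the theorem by merging the remark that immediately precedes it with Theorem \ref{Th.6.3}. The guiding observation is that, under the present hypotheses, the two clauses in the criterion of Theorem \ref{Th.6.3} --- that $\gamma_c\colon b\times c\to c\times c$ be an isomorphism and that $!_c\colon c\to\texttt{1}$ be an effective descent morphism --- reduce respectively to the only substantive clause in the definition of a Galois $\mathbf{b}$-object and to a condition which now holds automatically, so that both characterisations collapse to the single statement ``$\gamma_c$ is an isomorphism''.

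First I would invoke the remark above: the existence of a grouplike morphism $g\colon 1\to G_{\bar{\textbf{c}}}$ makes the composite $\texttt{1}\xrightarrow{g_{\texttt{1}}}c\times\texttt{1}\xrightarrow{p_2}\texttt{1}$ the identity, whence $!_c$ is a split epimorphism; the counit of $U_c\dashv P_c$ is then a componentwise split epimorphism, and since $\A$ is Cauchy complete, the dual of \cite[Proposition 3.16]{Me} gives that $P_c$ is monadic, i.e. that $!_c$ is an effective descent morphism. In particular $!_c$ is a descent morphism, so $c$ is a faithful object of $\A$. Consequently, under our hypotheses, ``$(c,\alpha_c)$ is a Galois $\mathbf{b}$-object'' is equivalent to ``$\gamma_c$ is an isomorphism''.

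Next I would apply Theorem \ref{Th.6.3}, whose standing hypotheses (i) or (ii) are part of ``the situation of Theorem \ref{Th.6.3}'': the functor $K$ is an equivalence of categories if and only if $\gamma_c$ is an isomorphism and $!_c$ is an effective descent morphism. Since the second condition is automatic by the previous paragraph, ``$K$ is an equivalence'' is equivalent to ``$\gamma_c$ is an isomorphism'', and comparing with the reformulation of the Galois condition yields the theorem. Equivalently, one can split the argument into its two implications: if $(c,\alpha_c)$ is Galois, then $\gamma_c$ is an isomorphism and $!_c$ is effective descent, so $K$ is an equivalence by Theorem \ref{Th.6.3}; conversely, if $K$ is an equivalence then Theorem \ref{Th.6.3} gives that $\gamma_c$ is an isomorphism, while faithfulness of $c$ is free, so $(c,\alpha_c)$ is a Galois $\mathbf{b}$-object.

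I expect no genuine obstacle here; the only step needing care is the implication ``$!_c$ split epimorphism $\Rightarrow$ $P_c$ monadic'', which rests on the split-epi counit together with Cauchy completeness through \cite[Proposition 3.16]{Me} --- and this is exactly the content already isolated in the remark. In the written proof I would therefore cite that remark for the effective-descent claim and use the remaining two or three lines to assemble Theorem \ref{Th.6.3} with the definition of a Galois $\mathbf{b}$-object.
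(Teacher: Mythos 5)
Your proposal is correct and follows essentially the same route as the paper: the paper likewise uses the preceding remark (grouplike morphism $\Rightarrow$ $!_c$ split epimorphism $\Rightarrow$ $P_c$ monadic by the dual of \cite[Proposition 3.16]{Me} under Cauchy completeness) to make the effective-descent clause of Theorem \ref{Th.6.3} automatic, and then reads off the equivalence ``$K$ is an equivalence $\Leftrightarrow$ $\gamma_c$ is an isomorphism $\Leftrightarrow$ $(c,\alpha_c)$ is Galois'', with faithfulness of $c$ coming for free from $!_c$ being a descent morphism. No gaps.
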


Recall from  \cite{CS}  that an object $a \in \A$ is
\emph{(faithfully) coflat} if the functor $$a \times - : \A \to
\A$$ preserves coequalisers (resp. preserves and reflects
coequalisers).

\begin{theorem} Let $\A$ be a category with finite products and
coequalisers, and  $\mathbf{b}=(b, m_b, e_b)$ a  monoid in the
cartesian monoidal category $\A$ with $b$ coflat and let $(c,
\alpha_c)\in {_\mathbf{b}\A}$ be a $\mathbf{b}$-Galois object with
$!_c: C \to 1$ an effective descent morphism. Assume
\begin{itemize}
    \item [(i)] $\A$ admits all small limits, or
    \item [(ii)] the functors $b \times - :\A \to \A$ and $c \times - :\A \to \A$
    both have left adjoints.
  \end{itemize} Then $c$ is (faithfully) coflat.
\end{theorem}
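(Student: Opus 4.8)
The plan is to derive the conclusion from the equivalence of categories already available in the paper, combined with the coflatness of $b$ and the good behaviour of forgetful functors out of slice categories and Eilenberg--Moore categories. First, I would note that $K:\A\to {_{\mathbf b}(\A\!\downarrow c)}$ is an equivalence of categories: since $(c,\alpha_c)$ is a Galois $\mathbf b$-object, the morphism $\gamma_c:b\times c\to c\times c$ is an isomorphism by definition, the morphism $!_c:c\to\texttt{1}$ is an effective descent morphism by hypothesis, and one of the conditions of Theorem \ref{Th.6.3} holds (our (i) is the same as its (i), while our (ii) together with the standing assumption that $\A$ has coequalisers gives its (ii)); hence Theorem \ref{Th.6.3} applies. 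I would then keep in view the commutative triangle $U\cdot K=P_c$, where $U:{_{\mathbf b}(\A\!\downarrow c)}=(\A\!\downarrow c)_{\widetilde{T_{\mathbf b}}}\to\A\!\downarrow c$ is the forgetful functor and $P_c:\A\to\A\!\downarrow c$ sends $a$ to $(c\times a,p_1)$, and also the identity $U_c P_c=c\times-:\A\to\A$, with $U_c:\A\!\downarrow c\to\A$ the domain functor.

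Next, the key step: I would show that $U$ creates coequalisers. The functor $U_c:\A\!\downarrow c\to\A$ creates all colimits (standard for a slice category), so $\A\!\downarrow c$ has coequalisers and $U_c$ preserves and reflects them. Since $\widetilde{T_{\mathbf b}}$ is the lifting of $T_{\mathbf b}=b\times-$ along $U_c$, we have $U_c\widetilde{T_{\mathbf b}}=T_{\mathbf b}U_c$; as $b$ is coflat, $T_{\mathbf b}$ preserves coequalisers, so $U_c\widetilde{T_{\mathbf b}}$ carries a coequaliser to a coequaliser, and since $U_c$ reflects coequalisers (and $\A\!\downarrow c$ possesses them) it follows that $\widetilde{T_{\mathbf b}}$ preserves coequalisers. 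By the standard fact that the forgetful functor from the Eilenberg--Moore category of a monad creates those colimits that the monad preserves, $U$ creates coequalisers; in particular ${_{\mathbf b}(\A\!\downarrow c)}$ has coequalisers and $U$ preserves and reflects them.

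Finally, I would assemble the pieces. Since $K$ is an equivalence (hence preserves and reflects all colimits) and $U$ preserves and reflects coequalisers, the composite $P_c=U\cdot K$ preserves and reflects coequalisers; composing with $U_c$, which likewise preserves and reflects coequalisers, shows that $c\times-=U_c P_c$ preserves and reflects coequalisers, i.e.\ $c$ is (faithfully) coflat. The argument is essentially mechanical once $K$ is known to be an equivalence; the one point needing care — and the only place where the hypothesis that $b$ be coflat is used — is the verification that $\widetilde{T_{\mathbf b}}$ preserves coequalisers, which is exactly what lets the forgetful functor $U$ out of the lifted Eilenberg--Moore category create them.
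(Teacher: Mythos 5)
Your proposal is correct, and it shares with the paper's proof the one step where the hypothesis on $b$ enters: from $U_c\widetilde{T_{\mathbf b}}=T_{\mathbf b}U_c$, coflatness of $b$, and the fact that the domain functor $U_c$ creates (hence preserves and reflects) coequalisers, one deduces that the lifted monad $\widetilde{T_{\mathbf b}}$ preserves coequalisers. Where you diverge is in how this is propagated to $c\times-$. The paper does not invoke the equivalence $K$ as such: it transports preservation of coequalisers along the monad isomorphism $t:\widetilde{T_{\mathbf b}}\to P_cU_c$ (re-established as in Theorem \ref{Th.6.3}), notes that $P_c$ is monadic because $!_c$ is an effective descent morphism, and then cites the dual of \cite[Proposition 3.11]{Me} to conclude that the composite $U_cP_c=c\times-$ preserves coequalisers. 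You instead invoke Theorem \ref{Th.6.3} wholesale to get that $K$ is an equivalence (your verification that its hypotheses are met, including that condition (ii) here plus the standing assumption of coequalisers yields condition (ii) there, is correct), use the standard creation of colimits by the Eilenberg--Moore forgetful functor for a colimit-preserving monad to see that $U$ creates coequalisers, and read off the conclusion from the factorisation $c\times-=U_c\cdot U\cdot K$. Since, by Proposition \ref{P.1.2}, the equivalence of $K$ is exactly the conjunction of ``$t$ is an isomorphism'' and ``$P_c$ is monadic'', the two arguments consume the same hypotheses; what your route buys is self-containedness (no appeal to \cite{Me}) and the fact that reflection of coequalisers comes out of the factorisation for free, so the parenthetical ``faithfully'' in the statement is obtained explicitly rather than left implicit as in the paper.
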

\begin{proof} Note first that since $\A \!\downarrow \! c
\simeq ^{\overline{\textbf{c}}}\!\!\A$ and since the category $\A$
admits coequalisers, the category $\A\!\downarrow \! c$ also
admits coequalisers and the forgetful functor $U_c :
\A\!\downarrow \! c \to \A$ creates them. Now, if $b$ is coflat,
then the functor $b \times - : \A \to \A$ preserves coequalisers,
and it follows from the commutativity of the diagram
$$\xymatrix{\A\downarrow c \ar[d]_{U_c}\ar[r]^{\widetilde{T_{\textbf{b}}}}& \A\downarrow c \ar[d]^{U_c}\\
\A \ar[r]_{T_{\textbf{b}}=b \times -}& \A}$$ that the functor
$\widetilde{T_{\textbf{b}}}$ also preserves coequalisers. As in
the proof of \ref{Th.6.3}, one can show that the morphism $t:
\textbf{T}_{\textbf{b}} \to P_cU_c $ is an isomorphism of monads.
Thus, in particular, the monad $P_cU_c $ preserves coequalisers.
Since the morphism $!_c : c \to \texttt{1}$ is an effective
descent morphism by our assumption on $c$, the functor $P_c$ is
monadic. Applying now the dual of
 \cite[Proposition 3.11]{Me}, one gets that the functor $U_c P_c=c
\times -$ also preserves coequalisers. Thus $c$ is coflat.
\end{proof}

As a consequence, we have:
\begin{theorem}\label{Th.6.7} Let $\A$ be a category with finite products and
coequalisers in which all descent morphisms are effective. Suppose
that $\mathbf{b}=(b, m_b, e_b)$ is a  monoid in the cartesian
monoidal category $\A$ with $b$ coflat and that $(c, \alpha_c)\in
{_\mathbf{b}\A}$ is a $\textbf{b}$-Galois object. If
\begin{itemize}
    \item [(i)] $\A$ admits all small limits, or
    \item [(ii)] the functors $b \times - :\A \to \A$ and $c \times - :\A \to \A$
    both have left adjoints,
  \end{itemize} then $c$ is (faithfully) coflat.
\end{theorem}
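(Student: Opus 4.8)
The plan is to deduce Theorem~\ref{Th.6.7} from the preceding theorem: the only extra hypothesis occurring there, namely that $!_c : c \to \texttt{1}$ be an \emph{effective} descent morphism, will be shown to follow automatically from the standing assumptions of Theorem~\ref{Th.6.7}.

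First I would unwind the definition of a Galois $\mathbf{b}$-object. The requirement that $(c, \alpha_c) \in {_\mathbf{b}\A}$ be a Galois $\mathbf{b}$-object includes, among other things, that $c$ be a \emph{faithful} object of $\A$, i.e.\ that the functor $c \times - : \A \to \A$ be faithful. As remarked just before Theorem~\ref{Th.6.3}, the object $c$ is faithful precisely when the unique morphism $!_c : c \to \texttt{1}$ is a descent morphism. Since by hypothesis every descent morphism in $\A$ is effective, this immediately yields that $!_c : c \to \texttt{1}$ is an effective descent morphism.

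It then remains only to invoke the preceding theorem. All of its hypotheses are in force: $\A$ has finite products and coequalisers, $\mathbf{b}$ is a monoid with $b$ coflat, $(c, \alpha_c)$ is a $\mathbf{b}$-Galois object, the morphism $!_c$ is an effective descent morphism (just established), and one of the alternatives (i), (ii) holds by assumption. Its conclusion is exactly that $c$ is (faithfully) coflat, which is the desired statement.

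I do not expect a real obstacle: the argument is a short bookkeeping of definitions, and the only point that needs a moment's attention is the identification of ``$c$ is faithful'' with ``$!_c$ is a descent morphism'', since it is precisely this identification that lets the hypothesis ``every descent morphism in $\A$ is effective'' be brought to bear; everything else is a direct appeal to the previous theorem.
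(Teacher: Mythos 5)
Your proposal is correct and coincides with the paper's own (implicit) argument: the paper states Theorem~\ref{Th.6.7} as an immediate consequence of the preceding theorem, relying exactly on the chain ``Galois object $\Rightarrow$ $c$ faithful $\Rightarrow$ $!_c$ a descent morphism $\Rightarrow$ $!_c$ effective by hypothesis.'' The only cosmetic slip is that the equivalence of ``$c$ faithful'' with ``$!_c$ a descent morphism'' is noted \emph{after} Theorem~\ref{Th.6.3} in the text, not before it.
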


\begin{thm}{\bf Opposite category of commutative algebras.} \em
Let $k$ be a commutative ring (with unit) and let $\A$ be the
opposite of the category of commutative unital $k$-algebras.

It is well-known that $\A$ has finite products and coequalisers.
If $\textbf{A}=(A, m_A, e_A)$ and $\textbf{B}=(B, m_B, e_B)$ are
objects of $\A$ (i.e. if $A$ and $B$ are commutative
$k$-algebras), then $A \otimes_k B$ with the obvious $k$-algebra
structure is the product of $\textbf{A}$ and $\textbf{B}$ in $\A$:
the projections $p_1: A \otimes_k B \to A$ and  $p_2: A \otimes_k
B \to B$ are given by $1_A \otimes_k e_B : A \to A \otimes_kB $
and $e_A \otimes_k 1_B:
 B \to  A \otimes_k B $, respectively. Furthermore, if $f, g: A \to B$
are morphisms in $\A$, then the pair $(C, i)$, where $C=\{b \in B
|f(b)=g(b)\}$ and $i:C \to B$ is the canonical embedding of
$k$-algebras, defines a coequaliser in $\A$. The terminal object in
$\A$ is $k$.

An object $A$ in $\A$ (i.e. a commutative $k$-algebra) is
(faithfully) coflat if and only if  $A$ is a (faithfully) flat
$k$-module (see, \cite{CS}). Moreover, a monoid in the cartesian
monoidal category $\A$ is a commutative $k$-bialgebra, which is a
group in $\A$ iff it has an antipode, and if $\textbf{B}$ is a
commutative $k$-bialgebra, then
$(C, \alpha_C) \in {_\textbf{B}\A}$ if and only if  $C$ is a commutative
$\textbf{B}$-comodule algebra.

Note that in the present context,
$(C, \alpha_C) \in {_\textbf{B}\A}$ is a Galois $\textbf{B}$-object if $C$ is a
faithful $k$-module and the composite
$$\xymatrix{\gamma_C : C \otimes_k C \ar[rr]^-{\alpha_C \otimes_k
C}&& B \otimes_k C \otimes_k C \ar[rr]^-{B \otimes_k m_C} & & B
\otimes_k C\,,}
$$ where $m_C :C\otimes_k C \to C$ is the multiplication in $C$,
is an isomorphism.
\end{thm}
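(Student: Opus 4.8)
The statement is a dictionary: it identifies the abstract cartesian-monoidal data of the preceding sections with classical algebraic structures under the anti-equivalence $\A \simeq (\mathrm{CommAlg}_k)^{\mathrm{op}}$, where $\mathrm{CommAlg}_k$ denotes commutative unital $k$-algebras. The plan is to verify each clause by transporting the relevant universal property across this opposite, invoking \cite{CS} only at the two points where the categorical notion is genuinely weaker than its module-theoretic shadow.

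First I would record the elementary correspondences. Limits in $\mathrm{CommAlg}_k$ are created by the forgetful functor to $k$-modules, so products in $\A$ are coproducts of commutative algebras, whence $\mathbf{A}\times\mathbf{B}=A\otimes_k B$ with the two product projections given, as algebra maps, by the coprojections $1_A\otimes_k e_B\colon A\to A\otimes_k B$ and $e_A\otimes_k 1_B\colon B\to A\otimes_k B$; dually, coequalisers in $\A$ are equalisers of parallel algebra homomorphisms, computed as the agreement subalgebra $\{b : f(b)=g(b)\}$ with its inclusion, and the terminal object of $\A$ is the initial commutative $k$-algebra $k$, so $\texttt{1}=k$. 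Under the isomorphism $\Comon(\A)\simeq\A$ the unique comonoid structure on an object $c$ transports to the multiplication $m_C\colon C\otimes_k C\to C$ (from the diagonal) and the unit $e_C\colon k\to C$ (from $!_c$).

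Next comes the algebraic dictionary. A monoid $(b,m,e)$ in $(\A,\times,\texttt{1})$ is an algebra $B$ equipped with algebra homomorphisms $\Delta=m\colon B\to B\otimes_k B$ and $\varepsilon=e\colon B\to k$; the monoid axioms become coassociativity and counitality, while the demand that $\Delta,\varepsilon$ be algebra maps is exactly the bialgebra compatibility, so $\Mon(\A)$ is the category of commutative $k$-bialgebras, consistent with the identification $\Bimon(\A)=\Mon(\Comon(\A))\simeq\Mon(\A)$ noted earlier. A group object is then a bialgebra admitting an inverse morphism, i.e.\ an antipode, via the quoted fact that a Hopf algebra is nothing but a group in any cartesian monoidal category (\cite[5.20]{W}). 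In the same way an object of ${}_\mathbf{B}\A=\A_{b\times-}$ is an algebra $C$ together with an algebra map $\rho=\alpha_C\colon C\to B\otimes_k C$ whose $\mathbf{B}$-module axioms are precisely the comodule axioms, i.e.\ a commutative $\mathbf{B}$-comodule algebra.

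Finally I would translate the Galois data. Since the diagonal becomes $m_C$ and $\alpha_c$ becomes $\rho=\alpha_C$, the morphism $\gamma_c=t_{1_c}\colon b\times c\to c\times c$ computed just before Theorem~\ref{Th.6.3} transports to the composite $(B\otimes_k m_C)\cdot(\alpha_C\otimes_k C)\colon C\otimes_k C\to B\otimes_k C$ displayed in the statement, so $\gamma_c$ is an isomorphism in $\A$ iff this $\gamma_C$ is one in $\mathrm{CommAlg}_k$. It remains to match the two ``size'' conditions: that $C$ is coflat (resp.\ faithfully coflat) in $\A$ iff $C$ is flat (resp.\ faithfully flat) over $k$, and that $c$ is a faithful object iff $C$ is a faithful $k$-module. \emph{Here lies the only real obstacle.} One inclusion is soft, since a flat module makes $C\otimes_k-$ preserve the kernels underlying equalisers of algebra maps; but the converse is subtle, because coflatness tests preservation only against coequalisers in $\A$, equivalently equalisers of pairs of algebra homomorphisms, a strictly narrower class than arbitrary $k$-linear maps. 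Recovering genuine $k$-flatness, and the parallel statement for faithfulness through $!_c$ being a descent morphism, is exactly the input supplied by Chase and Sweedler, so at this step I would cite \cite{CS} rather than reprove it.
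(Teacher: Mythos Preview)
Your reading is correct, and in fact the paper does not supply a proof of this statement at all: it is typeset in roman (via \verb|\em| inside the \texttt{thm} environment), functioning as a remark that records well-known translations, with a bare citation to \cite{CS} at the coflatness clause. Your verification matches this in both content and emphasis---you unwind each universal property across the opposite and correctly isolate the coflat $\Leftrightarrow$ flat and faithful-object $\Leftrightarrow$ faithful-module identifications as the only places where something beyond formal duality is needed, deferring to \cite{CS} exactly as the paper does.
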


  Since the category $\A$ admits all small limits and since in
$\A$ every descent morphism is effective (see \cite{Mes}), one can
apply Theorem \ref{Th.6.7} to deduce the following

\begin{theorem} Let $\textbf{B}$ be a commutative $k$-bialgebra with
$B$ a flat $k$-module. Then any Galois $\textbf{B}$-object in $\A$
is a faithfully flat $k$-module.
\end{theorem}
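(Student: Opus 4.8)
The plan is to recognise the present situation as a special case of Theorem~\ref{Th.6.7} and then to read off the conclusion through the usual dictionary between $\A$ and commutative $k$-algebras. First I would make that dictionary explicit, using the remarks preceding the statement: $\A$, the opposite of the category of commutative unital $k$-algebras, has finite products and coequalisers and in fact admits all small limits; an object $A$ of $\A$ is coflat (resp.\ faithfully coflat) if and only if $A$ is a flat (resp.\ faithfully flat) $k$-module; a monoid in the cartesian monoidal category $\A$ is precisely a commutative $k$-bialgebra; and an object of ${}_{\textbf{B}}\A$ is a Galois $\textbf{B}$-object exactly when its underlying $k$-module $C$ is faithful and the morphism $\gamma_C$ is an isomorphism.

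Next I would check, one by one, the hypotheses of Theorem~\ref{Th.6.7}. The category $\A$ has finite products and coequalisers, and by \cite{Mes} every descent morphism in $\A$ is effective. Take $\mathbf{b}=\textbf{B}$: this is a monoid in the cartesian monoidal category $\A$, and since $B$ is flat over $k$ by hypothesis, the object $b=B$ is coflat. A Galois $\textbf{B}$-object $(C,\alpha_C)$ is, by definition, a $\textbf{B}$-Galois object of ${}_{\textbf{B}}\A$ in the sense required by Theorem~\ref{Th.6.7}; note in particular that the faithfulness of $C$ contained in this hypothesis makes $!_C:C\to\texttt{1}$ a descent morphism, hence an effective descent one. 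Finally, condition (i) of Theorem~\ref{Th.6.7} applies because $\A$ admits all small limits.

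Applying Theorem~\ref{Th.6.7} then yields that $c=C$ is faithfully coflat; here the ``faithful'' reading is the pertinent one, since it is precisely the faithfulness of $C$ built into the Galois hypothesis that is being retained. Translating back through the dictionary, this says exactly that $C$ is a faithfully flat $k$-module, which is the assertion.

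I do not foresee a substantive difficulty: the argument is essentially bookkeeping, matching the abstract hypotheses of Theorem~\ref{Th.6.7} to the concrete algebraic data and invoking the known equivalences (coflat $\leftrightarrow$ flat, faithfully coflat $\leftrightarrow$ faithfully flat, monoid in $\A$ $\leftrightarrow$ commutative $k$-bialgebra, Galois object $\leftrightarrow$ faithful $k$-module with $\gamma_C$ invertible). The one point deserving a moment's care is to confirm that the faithfulness clause in the definition of a Galois $\mathbf{b}$-object plays the double role of supplying the input ``$!_c$ is an effective descent morphism'' used in Theorem~\ref{Th.6.7} and of legitimising the reading of its conclusion as ``faithfully coflat'' rather than merely ``coflat''.
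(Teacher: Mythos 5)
Your proposal is correct and follows essentially the same route as the paper, whose entire argument is the one-line observation that $\A$ admits all small limits and every descent morphism in $\A$ is effective (citing \cite{Mes}), so that Theorem~\ref{Th.6.7} applies with $\mathbf{b}=\textbf{B}$ coflat (i.e.\ $B$ flat) and the dictionary coflat/flat, faithfully coflat/faithfully flat. Your extra remark that the faithfulness of $C$ is what makes $!_C$ a descent morphism is exactly the point the paper uses to pass from the unnumbered theorem preceding Theorem~\ref{Th.6.7} to Theorem~\ref{Th.6.7} itself.
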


Note finally that when $\textbf{B}$ is a Hopf algebra which is
finitely generated and projective as a $k$-module, the result was
obtained by Chase and Sweedler, see  \cite[Theorem 12.5]{CS}.

\bigskip

{\bf Acknowledgements.} The work on this paper was started
during a visit of the first author at the Department of
Mathematics at the Heinrich Heine University of D\"usseldorf
supported by the German Research Foundation (DFG)
and continued with support by Volkswagen
Foundation  (Ref.: I/84 328 and GNSF/ST06/3-004).
The authors express their thanks to all these 
institutions.

\bigskip

\noindent
{\bf Addresses:} \\[+1mm]
{Razmadze Mathematical Institute, 1, M. Aleksidze st., Tbilisi
0193,  } {\small and} \\
 {Tbilisi Centre for Mathematical Sciences,
Chavchavadze Ave. 75, 3/35, Tbilisi 0168}, \\
 Republic of Georgia,
    {\small bachi@rmi.acnet.ge}\\[+1mm]
{Department of Mathematics of HHU, 40225 D\"usseldorf, Germany},
  {\small wisbauer@math.uni-duesseldorf.de}

\end{document}